\documentclass{amsart}[12pt]
\usepackage{amsmath}
\usepackage{amsfonts}
\usepackage{amssymb}
\usepackage{color}
\usepackage{graphicx}
\usepackage{blkarray}
\usepackage{subfigure}
\usepackage[margin=1.2in]{geometry}

\usepackage[numbers]{natbib} \setcitestyle{open={},close={}}

\usepackage{tikz}
\usetikzlibrary{matrix,arrows,decorations.pathmorphing}

\allowdisplaybreaks[4]
\usepackage[all]{xy}
\newtheorem{theorem}{Theorem}[section]
\newtheorem{proposition}[theorem]{Proposition}
\newtheorem{lemma}[theorem]{Lemma}
\newtheorem{remark}[theorem]{Remark}

\newtheorem{definition}[theorem]{Definition}
\newtheorem{corollary}[theorem]{Corollary}

\newcommand{\be}{\begin{equation}}
\newcommand{\ee}{\end{equation}}
\newcommand{\bea}{\begin{eqnarray}}
\newcommand{\eea}{\end{eqnarray}}
\newcommand{\ben}{\begin{eqnarray*}}
\newcommand{\een}{\end{eqnarray*}}

\begin{document}

\title{Relative orbifold Pandharipande-Thomas theory and the degeneration formula}

\author[Yijie Lin]{Yijie Lin}
\address{Yijie Lin: School of Mathematics (Zhuhai)\\Sun Yat-Sen University\\Zhuhai, 519082, China}
\email{yjlin12@163.com}

\maketitle

\begin{abstract}
We construct relative moduli spaces of semistable pairs	on a family of projective Deligne-Mumford stacks. We define  moduli stacks of stable orbifold Pandharipande-Thomas pairs on stacks of expanded degenerations and pairs, and then show they are separated and proper Deligne-Mumford stacks of finite type.
As an application, we present the degeneration formula for the absolute and relative orbifold Pandharipande-Thomas invariants.	
\end{abstract}

%{\bf Keywords:} 

%{\bf MSC(2010):} 14N35.

%\date{\today}

\tableofcontents

\section{Introduction}
The notion of degeneration is  extensively applied in moduli problems  and curve counting theories in  enumerative geometry. Good degenerations are introduced by Jun Li [\cite{Li3}] as an appropriate class of degenerations  to investigate the geometry of moduli spaces, and their constructions depend on the notion of the stack of expanded degenerations. In  [\cite{Li1}], the author constructs the good degeneration of moduli of stable morphisms to a simple degeneration of projective varieties, which leads to a degeneration formula of Gromov-Witten (GW) invariants [\cite{Li2}]. With the construction of good degenerations of Grothendieck's Quot schemes (including Hilbert schemes of curves) and of moduli of coherent systems (or Pandharipande-Thomas (PT) stable pairs) on any simple degeneration  [\cite{LW}],  the authors derive degeneration formulas of Donaldson-Thomas (DT) invariants and of PT invariants (see also [\cite{MPT}]) for projective 3-folds. Degeneration formulas of the above three curve counting theories  are not only useful for computing their invariants and  even determining their structures (e.g., [\cite{BP,OP1,PP3}]), but also important for proving GW/DT and GW/PT correspondence [\cite{BP,OP1,MOOP,MPT}].  

In the generalized orbifold case, or more generally the case of smooth projective Deligne-Mumford stacks, the GW theory has been studied in [\cite{CR1,AGV}]. A degeneration formula of this orbifold GW theory is obtained in [\cite{AF}] by using the notion of stacks of twisted  expanded degenerations and pairs  in [\cite{ACFW}] which generalizes the original one defined by Jun Li [\cite{Li1}]. Without the twisted condition, the author in [\cite{Zhou1}] directly generalizes Jun Li's stacks of expanded degenerations and pairs to the case of smooth projective Deligne-Mumford stacks, which leads to the construction of  good degenerations of Hilbert stacks generalizing the one in [\cite{LW}]. In [\cite{Zhou1}], the author also gives the definition of  the absolute and relative orbifold DT theory (see also  [\cite{BCY,GT}] for the prior definition with Calabi-Yau condition) and derives the corresponding degeneration formulas. In [\cite{Lyj}], the author constructs moduli spaces of orbifold PT stable pairs and defines virtual fundamental classes which are integrated to give the absolute orbifold PT invariants for $3$-dimensional smooth projective Deligne-Mumford stacks  (see also [\cite{BCR}] for the prior definition in the case of Calabi-Yau 3-orbifolds). However, it is naturally expected to have the relative orbifold PT theory and the corresponding degeneration formula. 

In order to  define the relative orbifold PT invariants, one needs to consider the  construction of relative moduli spaces of semistable pairs, which is a relative version of   the  one in [\cite{Lyj}]. We first state the result of this construction as follows. Let $p: \mathcal{X}\to S$ be a family of projective Deligne-Mumford stacks   with a moduli scheme $\pi:\mathcal{X}\to X$ and a relative polarization  $(\mathcal{E}, \mathcal{O}_{X}(1))$, see Definition \ref{polarizations}. Let  $\mathcal{F}_{0}$ be a fixed $S$-flat coherent sheaf on $\mathcal{X}$. Assume that $\delta\in\mathbb{Q}[m]$ is a given polynomial with positive leading coefficient or zero, and $P$ is a given polynomial of degree $d$ where $d\leq\dim\mathcal{X}_{s}$ for any geometric point  $\mathrm{Spec}\,k\xrightarrow{s} S$. We consider the following contravariant functor
\ben
\mathcal{M}^{(s)s}_{\mathcal{X}/S}(\mathcal{F}_{0},P,\delta): (\mathrm{Sch}/S)^\circ\to(\mathrm{Sets})
\een
where for  an $S$-scheme $T$ of finite type,  $\mathcal{M}^{(s)s}_{\mathcal{X}/S}(\mathcal{F}_{0},P,\delta)(T)$ is the set of isomorphism classes of flat families of $\delta$-(semi)stable pairs  $(\mathcal{F},\varphi)$ on $\mathcal{X}\times_{S}T$ with  
modified Hilbert polynomial $P$ parametrized by  $T$, see Definition \ref{moduli-functor} for more details. The existence of relative moduli spaces of $\delta$-(semi)stable pairs for the moduli functor $\mathcal{M}^{(s)s}_{\mathcal{X}/S}(\mathcal{F}_{0},P,\delta)$ is proven in the following
\begin{theorem} [see Theorem \ref{corepresent} and Theorem \ref{fine-coarse}]
\label{relative-moduli}	There exists a projective $S$-scheme $\widetilde{M}^{ss}:=\widetilde{M}^{ss}_{\mathcal{X}/S}(\mathcal{F}_{0},P,\delta)$ which is a moduli space for the moduli functor $\mathcal{M}^{ss}_{\mathcal{X}/S}(\mathcal{F}_{0},P,\delta)$. Moreover, there is an open subscheme  $\widetilde{M}^{s}:=\widetilde{M}^{s}_{\mathcal{X}/S}(\mathcal{F}_{0},P,\delta)$ of $\widetilde{M}^{ss}$ which is a fine moduli space  for the moduli functor $\mathcal{M}^s_{\mathcal{X}/S}(\mathcal{F}_{0},P,\delta)$.
\end{theorem}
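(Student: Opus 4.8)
The plan is to realize $\widetilde{M}^{ss}$ as a relative GIT quotient of a parameter scheme over $S$, following the classical construction of moduli of pairs (Le Potier, Huybrechts--Lehn) in the orbifold formulation of [\cite{Lyj}], but carried out in families over the base $S$. Throughout one uses the moduli morphism $\pi\colon\mathcal{X}\to X$, the generating sheaf $\mathcal{E}$, and for a coherent sheaf $\mathcal{G}$ on $\mathcal{X}$ the associated sheaf $F_{\mathcal{E}}(\mathcal{G})=\pi_{*}\mathcal{H}om(\mathcal{E},\mathcal{G})$ on $X$, whose Hilbert polynomial with respect to $\mathcal{O}_{X}(1)$ is the modified Hilbert polynomial. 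The first step is boundedness: the set of $\delta$-semistable pairs $(\mathcal{F},\varphi)$ on the geometric fibers $\mathcal{X}_{s}$, $\mathrm{Spec}\,k\xrightarrow{s}S$, with modified Hilbert polynomial $P$ is bounded, uniformly in $s$. The $\delta$-semistability inequality bounds the modified Hilbert polynomials of all subsheaves of $\mathcal{F}$ from above; translating through $F_{\mathcal{E}}$ and applying a Kleiman/Grothendieck-type boundedness criterion on $X$ relative to $\mathcal{O}_{X}(1)$ bounds the Castelnuovo--Mumford regularity of $F_{\mathcal{E}}(\mathcal{F})$, hence bounds $\mathcal{F}$; since $\mathcal{F}_{0}$ is a fixed $S$-flat sheaf the datum $\varphi$ adds nothing. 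So there is an $n_{0}$ such that for all $n\geq n_{0}$ and every such pair, $F_{\mathcal{E}}(\mathcal{F})(n)$ is globally generated with vanishing higher cohomology and $h^{0}=P(n)$.

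Next I would set up the parameter scheme and its linearization. Fix $n\geq n_{0}$ and a free $\mathcal{O}_{S}$-module $\mathcal{V}$ of rank $P(n)$, and let $\mathrm{Quot}$ be the relative Quot scheme over $S$ of quotients $\mathcal{V}\otimes\pi^{*}\mathcal{O}_{X}(-n)\otimes\mathcal{E}\twoheadrightarrow\mathcal{F}$ with modified Hilbert polynomial $P$; it is projective over $S$ and, for $m\gg n$, Grothendieck's embedding realizes it inside a relative Grassmannian of quotients of $\mathcal{V}\otimes H^{0}(\mathcal{O}_{X}(m-n))$. Over $\mathrm{Quot}$ one forms the $S$-scheme $\mathcal{R}$ parametrizing in addition a homomorphism $\varphi\colon\mathcal{F}_{0}\to\mathcal{F}$ (a locally closed subscheme of a projective bundle over $\mathrm{Quot}$ once $\varphi$ is suitably normalized), which is quasi-projective over $S$; the group $G:=GL(\mathcal{V})$ acts on $\mathcal{R}$ over $S$, its center acting trivially, so effectively $PGL(\mathcal{V})$ acts. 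Combining the Grassmannian embedding with the $\varphi$-data yields a $G$-linearized $S$-ample line bundle $L_{m}$ on (a suitable compactification of) $\mathcal{R}$, whose linearization depends on $\delta$, $m$ and $n$.

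The crux is the Hilbert--Mumford analysis: for the appropriate choice of the linearization one shows, by the numerical criterion applied on geometric fibers, that a point of $\mathcal{R}$ is $L_{m}$-semistable (resp. stable) if and only if the induced map $\mathcal{V}\to H^{0}(F_{\mathcal{E}}(\mathcal{F})(n))$ is an isomorphism and the pair $(\mathcal{F},\varphi)$ is $\delta$-semistable (resp. $\delta$-stable). This is precisely the orbifold pair estimate of [\cite{Lyj}], and one checks it is compatible with base change, so it holds uniformly over $S$. Consequently $\mathcal{R}^{ss}$ parametrizes exactly the pairs occurring in $\mathcal{M}^{ss}_{\mathcal{X}/S}(\mathcal{F}_{0},P,\delta)$ equipped with a basis of $H^{0}(F_{\mathcal{E}}(\mathcal{F})(n))$, so the equivalence relation "$(\mathcal{F},\varphi)\cong(\mathcal{F}',\varphi')$" coincides with the $G$-orbit relation on $\mathcal{R}^{ss}$. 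Relative GIT (Seshadri; compare the relative setups of [\cite{Li1}, \cite{LW}] and of Huybrechts--Lehn over a noetherian base) then produces the good quotient $\widetilde{M}^{ss}:=\mathcal{R}^{ss}/\!\!/ G$ as a projective $S$-scheme, with the geometric quotient $\widetilde{M}^{s}:=\mathcal{R}^{s}/G$ as an open subscheme.

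Finally, a standard argument using the local universal pair on $\mathcal{R}^{ss}$ together with descent along the good quotient shows that $\widetilde{M}^{ss}$ corepresents $\mathcal{M}^{ss}_{\mathcal{X}/S}(\mathcal{F}_{0},P,\delta)$ (this is Theorem \ref{corepresent}); and since a $\delta$-stable pair has only scalar automorphisms, $PGL(\mathcal{V})$ acts freely on $\mathcal{R}^{s}$, the quotient $\mathcal{R}^{s}\to\widetilde{M}^{s}$ is a principal bundle, the universal pair descends, and $\widetilde{M}^{s}$ is a fine moduli space (Theorem \ref{fine-coarse}). I expect the principal obstacle to be carrying out the boundedness and the Hilbert--Mumford weight computation \emph{uniformly in the fiber} $s\in S$ and \emph{compatibly with arbitrary base change} $T\to S$ — arranging the choices of $n$ and $m$ and the numerical estimate so that they globalize — all while threading the orbifold bookkeeping with $\mathcal{E}$ and the modified Hilbert polynomial through each step; this is what upgrades the absolute construction of [\cite{Lyj}] to the relative statement.
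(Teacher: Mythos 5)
Your proposal is correct and follows essentially the same route as the paper: uniform boundedness via the Kleiman/Grothendieck criteria, a parameter scheme built from the relative Quot scheme together with projective data encoding $\varphi$, a $\delta$-dependent linearization matched fiberwise to $\delta$-(semi)stability by the Hilbert--Mumford criterion, Seshadri's relative GIT to produce the projective quotient, and descent of the universal pair along the free $\mathrm{PGL}(V)$-action for fineness. The only cosmetic difference is that the paper encodes $\varphi$ by a point of $\mathbb{P}(\mathrm{Hom}(H^0(X,F_{\mathcal{E}}(\mathcal{F}_{0})(m)),V))$ subject to a factorization condition rather than directly as a sheaf homomorphism, which is equivalent for $m$ large.
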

As an application of Theorem \ref{relative-moduli}, we consider the special case that $k=\mathbb{C}$, $\mathcal{F}_{0}=\mathcal{O}_{\mathcal{X}}$, $\deg\delta\geq\deg P=1$ and $p: \mathcal{X}\to S$ is a family of  projective Deligne-Mumford stacks of relative dimension three. Then we have the fine moduli space $\mathrm{PT}_{\mathcal{X}/S}^{(P)}:=\widetilde{M}^{s}_{\mathcal{X}/S}(\mathcal{O}_{\mathcal{X}},P,\delta)$ parameterizing orbifold PT stable pairs on $\mathcal{X}/S$, which is a projective $S$-scheme, see the first paragraph in Section 4 for more details. Since $\widetilde{\mathbf{M}}^s:=\mathrm{PT}_{\mathcal{X}/S}^{(P)}$ is a fine moduli space, there is a universal complex 
$\mathbb{I}^\bullet=\{\mathcal{O}_{\mathcal{X}\times_{S} \widetilde{\mathbf{M}}^s}\to\mathbb{F}\}$  on $\mathcal{X}\times_{S} \widetilde{\mathbf{M}}^s$. Let $\tilde{\pi}_{\widetilde{\mathbf{M}}^s}:\mathcal{X}\times_{S} \widetilde{\mathbf{M}}^s\to \widetilde{\mathbf{M}}^s$ and $\tilde{\pi}_{\mathcal{X}}:\mathcal{X}\times_{S} \widetilde{\mathbf{M}}^s\to\mathcal{X}$ be the projections. As in the absolute case (see  [\cite{Lyj}, Theorem 1.6]), if each fiber of  $p$ is smooth, we have the  relative version.
\begin{theorem}[see Theorem \ref{perf-ob1}]
	Let $p: \mathcal{X}\to S$ be a family of smooth projective Deligne-Mumford stacks of relative dimension three   with a moduli scheme $\pi:\mathcal{X}\to X$ and a relative polarization  $(\mathcal{E}, \mathcal{O}_{X}(1))$.
The map 
	\ben
	\widetilde{\Phi}: \widetilde{\mathbf{E}}^\bullet:=R\tilde{\pi}_{\widetilde{\mathbf{M}}^s*}(R\mathcal{H}om(\mathbb{I}^{\bullet},\mathbb{I}^{\bullet})_{0}\otimes\tilde{\pi}_{\mathcal{X}}^*\omega_{\mathcal{X}/S})[2]\to\mathbb{L}_{\widetilde{\mathbf{M}}^s/S}
	\een
	 is a perfect relative obstruction theory for $\widetilde{\mathbf{M}}^s$ over $S$ in the sense of [\cite{BF}]. And there exists a virtual fundamental class $[\widetilde{\mathbf{M}}^s]^{\mathrm{vir}}\in A_{\mathrm{vdim}+\dim S}(\widetilde{\mathbf{M}}^s)$ of virtual dimension $\mathrm{vdim}=\mathrm{rk}(\widetilde{\mathbf{E}}^\bullet)$.
\end{theorem}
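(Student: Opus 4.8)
The plan is to transport the construction of [\cite{Lyj}, Theorem 1.6] to the $S$-relative setting, using the reduced truncated Atiyah class of the universal complex $\mathbb{I}^\bullet$ to produce the map $\widetilde{\Phi}$ and then checking the two defining properties of a perfect relative obstruction theory of [\cite{BF}]: that $\widetilde{\mathbf{E}}^\bullet$ is perfect of amplitude $[-1,0]$, and that $h^0(\widetilde{\Phi})$ is an isomorphism while $h^{-1}(\widetilde{\Phi})$ is surjective. Two structural inputs make the relative version go through. First, by Theorem \ref{relative-moduli} the projection $\tilde{\pi}_{\widetilde{\mathbf{M}}^s}$ is proper (each fibre of $p$ is a projective Deligne-Mumford stack and $\widetilde{\mathbf{M}}^s\to S$ is proper), and since each fibre of $p$ is smooth it is moreover smooth of relative dimension three, so Grothendieck-Verdier duality for it takes the form $\tilde{\pi}_{\widetilde{\mathbf{M}}^s}^!(-)=\tilde{\pi}_{\widetilde{\mathbf{M}}^s}^*(-)\otimes\tilde{\pi}_{\mathcal{X}}^*\omega_{\mathcal{X}/S}[3]$. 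Second, the relative cotangent complex of a fibre product over $S$ splits, $\mathbb{L}_{\mathcal{X}\times_S\widetilde{\mathbf{M}}^s/S}\simeq\tilde{\pi}_{\mathcal{X}}^*\mathbb{L}_{\mathcal{X}/S}\oplus\tilde{\pi}_{\widetilde{\mathbf{M}}^s}^*\mathbb{L}_{\widetilde{\mathbf{M}}^s/S}$.

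To construct $\widetilde{\Phi}$ I would take the Atiyah class of $\mathbb{I}^\bullet$ relative to $S$, an element of $\mathrm{Ext}^1(\mathbb{I}^\bullet,\mathbb{I}^\bullet\otimes\mathbb{L}_{\mathcal{X}\times_S\widetilde{\mathbf{M}}^s/S})$, compose it with the projection onto the second summand of the splitting above to obtain the relative Kodaira-Spencer class $\kappa\in\mathrm{Ext}^1(\mathbb{I}^\bullet,\mathbb{I}^\bullet\otimes\tilde{\pi}_{\widetilde{\mathbf{M}}^s}^*\mathbb{L}_{\widetilde{\mathbf{M}}^s/S})$, and apply the reduced (traceless) trace to land in the direct summand $\mathrm{Ext}^1(\mathbb{I}^\bullet,\mathbb{I}^\bullet\otimes\tilde{\pi}_{\widetilde{\mathbf{M}}^s}^*\mathbb{L}_{\widetilde{\mathbf{M}}^s/S})_0$; this last reduction is legitimate because $\det\mathbb{I}^\bullet\cong\mathcal{O}$ is rigid, so the trace part of the deformation theory is trivial, exactly as in the absolute case. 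Writing $\mathcal{G}:=R\mathcal{H}om(\mathbb{I}^\bullet,\mathbb{I}^\bullet)_0$, which is perfect and self-dual, $\mathcal{G}^\vee\simeq\mathcal{G}$, via the trace pairing, the adjunction $\tilde{\pi}_{\widetilde{\mathbf{M}}^s}^*\dashv R\tilde{\pi}_{\widetilde{\mathbf{M}}^s*}$, Grothendieck-Verdier duality for $\tilde{\pi}_{\widetilde{\mathbf{M}}^s}$, and the self-duality of $\mathcal{G}$ give
\begin{align*}
\mathrm{Ext}^1\big(\mathbb{I}^\bullet,\mathbb{I}^\bullet\otimes\tilde{\pi}_{\widetilde{\mathbf{M}}^s}^*\mathbb{L}_{\widetilde{\mathbf{M}}^s/S}\big)_0
&\cong\mathrm{Hom}\big(\mathcal{G}^\vee\otimes\tilde{\pi}_{\mathcal{X}}^*\omega_{\mathcal{X}/S}[2],\ \tilde{\pi}_{\widetilde{\mathbf{M}}^s}^!\mathbb{L}_{\widetilde{\mathbf{M}}^s/S}\big)\\
&\cong\mathrm{Hom}\big(R\tilde{\pi}_{\widetilde{\mathbf{M}}^s*}(\mathcal{G}\otimes\tilde{\pi}_{\mathcal{X}}^*\omega_{\mathcal{X}/S})[2],\ \mathbb{L}_{\widetilde{\mathbf{M}}^s/S}\big)\\
&=\mathrm{Hom}\big(\widetilde{\mathbf{E}}^\bullet,\ \mathbb{L}_{\widetilde{\mathbf{M}}^s/S}\big),
\end{align*}
and I would define $\widetilde{\Phi}$ to be the morphism corresponding under these identifications to the image of $\kappa$.

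The complex $\widetilde{\mathbf{E}}^\bullet$ is perfect because $\mathcal{G}$ is perfect on $\mathcal{X}\times_S\widetilde{\mathbf{M}}^s$ and $\tilde{\pi}_{\widetilde{\mathbf{M}}^s}$ is smooth and proper, so $R\tilde{\pi}_{\widetilde{\mathbf{M}}^s*}$ preserves perfectness; I would check its amplitude fibrewise. By cohomology and base change along the flat proper morphism $\tilde{\pi}_{\widetilde{\mathbf{M}}^s}$, the derived restriction of $\widetilde{\mathbf{E}}^\bullet$ to a geometric point $[I^\bullet]\in\widetilde{\mathbf{M}}^s$ lying over $s\in S$ equals $R\Gamma\big(\mathcal{X}_s,\,R\mathcal{H}om_{\mathcal{X}_s}(I^\bullet,I^\bullet)_0\otimes\omega_{\mathcal{X}_s}\big)[2]$, and Serre duality on the smooth proper Deligne-Mumford $3$-fold $\mathcal{X}_s$ identifies this with $\big(R\mathrm{Hom}_{\mathcal{X}_s}(I^\bullet,I^\bullet)_0\big)^\vee[-1]$, so that $h^i(\widetilde{\mathbf{E}}^\bullet|_{[I^\bullet]})\cong\mathrm{Ext}^{1-i}_{\mathcal{X}_s}(I^\bullet,I^\bullet)_0^\vee$. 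Since $I^\bullet=[\mathcal{O}_{\mathcal{X}_s}\to F]$ is the two-term complex of an orbifold PT stable pair, stability of the pair gives $\mathrm{Hom}_{\mathcal{X}_s}(I^\bullet,I^\bullet)_0=0$ and $\mathrm{Ext}^3_{\mathcal{X}_s}(I^\bullet,I^\bullet)_0=0$, exactly as in the absolute case [\cite{Lyj}]; hence $h^i(\widetilde{\mathbf{E}}^\bullet|_{[I^\bullet]})$ vanishes unless $i\in\{-1,0\}$, $\widetilde{\mathbf{E}}^\bullet$ has amplitude $[-1,0]$, and $\mathrm{rk}(\widetilde{\mathbf{E}}^\bullet)$ is locally constant, defining $\mathrm{vdim}$.

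The remaining and principal point is the Behrend-Fantechi condition [\cite{BF}]: that $h^0(\widetilde{\Phi})$ is an isomorphism and $h^{-1}(\widetilde{\Phi})$ is surjective. I would run the deformation-theoretic argument of [\cite{Lyj}, Theorem 1.6] $S$-relatively: the reduced relative Atiyah class of $\mathbb{I}^\bullet$ governs the $S$-relative deformations and obstructions of $\mathbb{I}^\bullet$, and since $\widetilde{\mathbf{M}}^s$ is a \emph{fine} moduli space whose objects are orbifold PT stable pairs, the deformation theory of a pair $(F,\varphi)$ coincides with that of its complex $I^\bullet=[\mathcal{O}_{\mathcal{X}}\to F]$ (the complexes $I^\bullet$ satisfy $\mathrm{Hom}(I^\bullet,I^\bullet)=\mathbb{C}$ and have no negative self-extensions, and $\det I^\bullet\cong\mathcal{O}$ is rigid, so the traceless reduction loses no data); comparing this with the intrinsic $\mathbb{L}_{\widetilde{\mathbf{M}}^s/S}$ over Artinian $S$-schemes then yields the two required properties of $\widetilde{\Phi}$. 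I expect this to be the main obstacle: faithfully carrying the Atiyah and Kodaira-Spencer class comparison of [\cite{Lyj}] through the relative setting, keeping track of the splitting of $\mathbb{L}_{\mathcal{X}\times_S\widetilde{\mathbf{M}}^s/S}$ and of the base-change statements for the stacky morphism $\tilde{\pi}_{\widetilde{\mathbf{M}}^s}$; everything else is formal. Once $\widetilde{\Phi}$ is established as a perfect relative obstruction theory, the construction of [\cite{BF}] produces the virtual fundamental class $[\widetilde{\mathbf{M}}^s]^{\mathrm{vir}}\in A_{\mathrm{vdim}+\dim S}(\widetilde{\mathbf{M}}^s)$ with $\mathrm{vdim}=\mathrm{rk}(\widetilde{\mathbf{E}}^\bullet)$.
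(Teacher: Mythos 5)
Your proposal is correct and follows essentially the same route as the paper: the paper constructs $\widetilde{\Phi}$ exactly as you do, via the truncated Atiyah class of $\mathbb{I}^\bullet$, projection to $\tilde{\pi}_{\widetilde{\mathbf{M}}^s}^*\mathbb{L}_{\widetilde{\mathbf{M}}^s/S}$, the traceless reduction, and Serre duality, and its proof then simply transports the deformation-theoretic argument of the absolute case ([\cite{Lyj}, Theorem 5.17]) together with the relative version of [\cite{BF}, Theorem 4.5]. Your fibrewise amplitude check via the vanishing of $\mathrm{Hom}(I^\bullet,I^\bullet)_0$ and $\mathrm{Ext}^3(I^\bullet,I^\bullet)_0$ is precisely the input the paper imports from [\cite{Lyj}, Lemma 5.16].
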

However, the moduli space $\mathrm{PT}_{\mathcal{X}/S}^{(P)}$ with its virtual fundamental class $[\mathrm{PT}_{\mathcal{X}/S}^{(P)}]^\mathrm{vir}$ is not the right space to   define directly relative orbifold PT invariants and hence
to derive a degeneration formula of orbifold PT theory, but it provides an elementary basis to construct the desired good degeneration of moduli spaces of PT stable pairs, see Section 5.
Next, we will deal with the orbifold PT side parallel to the DT theory in [\cite{LW,Zhou1}]. We  briefly describe the construction of this good degeneration as follows. Now, let $\pi:\mathcal{X}\to\mathbb{A}^1$ be a simple degeneration of relative dimension three (see Definition \ref{simple-degeneration}) and also a family of projective Deligne-Mumford stacks with a coarse moduli scheme $X$ and a relative polarization $(\mathcal{E}_{\mathcal{X}},\mathcal{O}_{X}(1))$. Following the method in [\cite{Li1,LW}],  a good degeneration constructed here is to fill some suitable central fiber in the following family \ben\coprod_{c\neq0}\mathrm{PT}_{\mathcal{X}_{c}/\mathbb{C}}^{P},
\een where $\mathrm{PT}_{\mathcal{X}_{c}/\mathbb{C}}^{P}$  is defined  in Remark \ref{absolute} as an open and closed subscheme of $\mathrm{PT}_{\mathcal{X}_{c}/\mathbb{C}}$  parameterizing orbifold PT stable pairs on the smooth fiber $\mathcal{X}_{c}$ with a fixed Hilbert homomorphism $P\in \mathrm{Hom}(K^0(\mathcal{X}_{c}),\mathbb{Z})$, see Section 5.3 for  Hilbert homomorphisms here and below. Here, $\mathrm{PT}_{\mathcal{X}_{c}/\mathbb{C}}$ is a disjoint union of  $\mathrm{PT}_{\mathcal{X}_{c}/\mathbb{C}}^{(P)}$, see Section 5.1. And we adopt the notations  that the superscript $(P)$ means the associated modified Hilbert polynomial is $P$ while the superscript $P$ without parentheses is used to indicate the Hilbert homomorphism $P$, see also Section 4. The orbifold PT stable pairs on the central fiber which is any possible expanded degeneration $\mathcal{X}_{0}[k]$ with $\mathcal{X}_{0}:=\mathcal{Y}_{-}\cup_{\mathcal{D}}\mathcal{Y}_{+}$ (see Definition \ref{expanded-deg/pair}) are required to satisfy some stability condition in Definition \ref{stable} and have the fixed Hilbert homomorphism $P\in \mathrm{Hom}(K^0(\mathcal{X}_{0}),\mathbb{Z})$. To avoid the confusion of two notions of stability appeared above, we make the convention that an orbifold PT stable pair is simply called an orbifold PT pair in this paper, see also the first paragraph in Section 5. Then  the good degeneration is constructed  as the moduli stack of stable orbifold PT pairs with fixed Hilbert homomorphism $P\in \mathrm{Hom}(K^0(\mathcal{X}),\mathbb{Z})$, denoted by $\mathfrak{PT}_{\mathfrak{X}/\mathfrak{C}}^{P}$ in the degeneration case, see Sections 5.2 and 5.3. Similarly, if $(\mathcal{Y},\mathcal{D})$ is a smooth pair (see Definition \ref{simple-degeneration}) and $\mathcal{Y}$
is  a $3$-dimensional projective Deligne-Mumford stack with a coarse moduli scheme $Y$ and  a  polarization $(\mathcal{E}_{\mathcal{Y}},\mathcal{O}_{Y}(1))$, one can define the moduli stack $\mathfrak{PT}_{\mathfrak{Y}/\mathfrak{A}}^{P}$ parameterizing stable orbifold PT pairs with  a fixed Hilbert homomorphism   $P\in\mathrm{Hom}(K^0(\mathcal{Y}),\mathbb{Z})$  in the relative case. Here, $\mathfrak{C}$ and $\mathfrak{A}$ are stacks of expanded degenerations and pairs with $\mathfrak{X}$ and $\mathfrak{Y}$ as their universal families defined in [\cite{Zhou1}], see also Definitions \ref{degen-stacks} and \ref{pair-stacks}. We obtain the properties of these stacks  as follows.

\begin{theorem}[see Theorem \ref{stack1} and Propositions \ref{boundedness}, \ref{separatedness}, \ref{properness}]
	The stack	$\mathfrak{PT}_{\mathfrak{X}/\mathfrak{C}}^{P}$ $($resp. $\mathfrak{PT}_{\mathfrak{Y}/\mathfrak{A}}^{P}$$)$ is a  proper Deligne-Mumford stack of finite type over $\mathbb{A}^1$ $($resp. over $\mathbb{C}$$)$.
\end{theorem}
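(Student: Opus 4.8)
The plan is to verify, in turn, the four properties packaged in the cited results: algebraicity together with the Deligne--Mumford property (Theorem \ref{stack1}), finite type via boundedness (Proposition \ref{boundedness}), separatedness (Proposition \ref{separatedness}), and universal closedness (Proposition \ref{properness}); combining these, together with the observation that properness means separated $+$ finite type $+$ universally closed, yields the statement. I will treat $\mathfrak{PT}_{\mathfrak{X}/\mathfrak{C}}^{P}$ throughout, the relative case $\mathfrak{PT}_{\mathfrak{Y}/\mathfrak{A}}^{P}$ being entirely parallel with $\mathfrak{C}$ replaced by the stack of expanded pairs $\mathfrak{A}$ and $\mathbb{A}^1$ by $\mathbb{C}$.

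For algebraicity, note that $\mathfrak{PT}_{\mathfrak{X}/\mathfrak{C}}^{P}$ carries a natural morphism to the stack of expanded degenerations $\mathfrak{C}$ (algebraic by [\cite{Li1,Zhou1}]), sending a stable orbifold PT pair on an expansion $\mathcal{X}_{0}[k]$ to $\mathcal{X}_{0}[k]$ itself. Working over an atlas of $\mathfrak{C}$, each expanded degeneration gives a family of projective Deligne--Mumford stacks to which Theorem \ref{relative-moduli} applies, producing a relative moduli scheme of $\delta$-stable pairs with the prescribed numerical invariants; the predeformability and stability conditions of Definition \ref{stable} cut out a locally closed substack, and gluing these over $\mathfrak{C}$ realizes $\mathfrak{PT}_{\mathfrak{X}/\mathfrak{C}}^{P}$ as an algebraic stack over $\mathbb{A}^1$. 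To see it is Deligne--Mumford one checks that a stable orbifold PT pair has no infinitesimal automorphisms: for $\varphi\colon\mathcal{O}\to\mathcal{F}$ with $\mathcal{F}$ pure one-dimensional and $\operatorname{coker}\varphi$ supported in dimension zero the automorphisms of the pair form an unramified group scheme (the PT condition rigidifies the sheaf side), while the stability condition on the expansion forbids the residual $\mathbb{G}_m^{k}$-action on the chain of bubbles from fixing the pair; hence all automorphism groups are finite and reduced.

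Finite type reduces to boundedness (Proposition \ref{boundedness}): fixing the Hilbert homomorphism $P\in\operatorname{Hom}(K^0(\mathcal{X}),\mathbb{Z})$ bounds all numerical data. One shows that in a stable orbifold PT pair on $\mathcal{X}_{0}[k]$ every bubble component must carry a nonzero one-dimensional part of the support of $\mathcal{F}$ --- otherwise the pair is destabilized or descends to a shorter expansion --- which forces $k$ to be bounded in terms of $P$; on the resulting bounded family of expanded targets the relative PT moduli spaces of Theorem \ref{relative-moduli} are of finite type, and $\mathfrak{C}$ is of finite type, so $\mathfrak{PT}_{\mathfrak{X}/\mathfrak{C}}^{P}$ is too. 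Separatedness and universal closedness I would check by the valuative criteria over a complete DVR $R$ with fraction field $K$. For separatedness (Proposition \ref{separatedness}), given two families of stable orbifold PT pairs over $\operatorname{Spec} R$ agreeing over $\operatorname{Spec} K$, first match the two expansions by separatedness of $\mathfrak{C}$, then invoke separatedness of the relative PT moduli space from Theorem \ref{relative-moduli} on the common expanded target, using that PT stability on the sheaf side leaves no residual $\mathbb{G}_m^{k}$-ambiguity; the two limits then coincide.

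The substantive step is universal closedness (Proposition \ref{properness}): starting from a stable orbifold PT pair over $\operatorname{Spec} K$, possibly valued in an expansion, one must produce, after a finite base change, a limiting stable orbifold PT pair over $\operatorname{Spec} R$ with the same Hilbert homomorphism. Following the strategy of [\cite{Li1,LW}] adapted to the Deligne--Mumford setting of [\cite{Zhou1,Lyj}], one takes a flat limit of the underlying pair in a relative PT moduli space over the generic expansion; if the limit already satisfies predeformability and stability we are done, and otherwise one expands the central fibre further --- bubbling off additional components to absorb the non-predeformable or unstable behaviour of the limit along the singular divisor $\mathcal{D}$ --- and shows that after finitely many such modifications (the number controlled by the boundedness above) one reaches a predeformable, stable limit, unique by the separatedness just established. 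The main obstacle is precisely this construction: making the expansion/flat-limit procedure terminate and verifying that the resulting central pair is genuinely PT-stable on the expanded orbifold target with the prescribed $P$, which requires care with purity of the limit sheaf, zero-dimensionality of the cokernel after restriction to each component, and the orbifold structure along $\mathcal{D}$.
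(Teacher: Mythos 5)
Your overall architecture --- DM property and local finite type, then boundedness, then the two valuative criteria --- is exactly the paper's (Theorem \ref{stack1} and Propositions \ref{boundedness}, \ref{separatedness}, \ref{properness}, all following [\cite{LW}] and [\cite{Zhou1}]), and your treatments of algebraicity, boundedness (each bubble must meet the support of $\mathcal{F}$ in dimension one, else the fiberwise $\mathbb{C}^*$ fixes the pair, so admissibility plus the fixed $P$ bounds the length of the expansion) and universal closedness match the paper's in substance.

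There is, however, a genuine gap in your separatedness step. You propose to ``first match the two expansions by separatedness of $\mathfrak{C}$,'' but $\mathfrak{C}$ is \emph{not} separated: it is an Artin stack whose points over $0\in\mathbb{A}^1$ include $\mathcal{X}_0[k]$ for every $k\geq 0$ with positive-dimensional $(\mathbb{C}^*)^k$-stabilizers, and a constant family over $\mathrm{Spec}\,K$ admits extensions over $\mathrm{Spec}\,R$ landing in expansions of arbitrary length. So the two limiting expansions cannot be identified before looking at the pairs; the non-uniqueness of the limit in $\mathfrak{C}$ is precisely what the stability condition on the pair is there to kill. The correct order of argument (as in [\cite{Zhou1}, Proposition 5.9], which Proposition \ref{separatedness} follows) is the reverse of yours: push both extended families forward along the contraction to the un-expanded target, where the separatedness of the ordinary moduli space $\mathrm{PT}_{\mathcal{X}(k)/\mathbb{A}^{k+1}}$ (Theorem \ref{corepresent}) forces the two flat limits to agree; then use admissibility of the two limiting pairs, together with the blowup description of the expansions, to reconstruct both central expansions from the common pushforward and conclude that they, and the pairs on them, coincide up to the equivalence relation. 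Without this use of admissibility the step fails, since for non-admissible limits the expansion length is genuinely ambiguous. A second, minor, imprecision: stability (admissibility plus finiteness of $\mathrm{Aut}$) cuts out an \emph{open}, not merely locally closed, locus (Proposition \ref{open}), which is what makes $\mathrm{PT}^{\mathrm{st},P}_{\mathcal{X}(k)/\mathbb{A}^{k+1}}$ a quasi-projective $\mathbb{A}^{k+1}$-scheme.
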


Let $\Lambda:=\mathrm{Hom}(K^0(\mathcal{X}),\mathbb{Z})$. Taking elements in $\Lambda$ as the weight assignments on  expanded degenerations $\mathcal{X}_{0}[k]$ is useful for the decomposition of the central fiber of the good degeneration	$\mathfrak{PT}_{\mathfrak{X}/\mathfrak{C}}^{P}$ over $\mathbb{A}^1$, see Definiton \ref{weight-assignment}. Denote by
\ben
\Lambda_{P}^{\mathrm{spl}}:=\{\varrho=(\varrho_{-},\varrho_{+},\varrho_{0})|\varrho_{\pm},\varrho_{0}\in\Lambda, \varrho_{-}+\varrho_{+}-\varrho_{0}=P\}
\een
the splitting data of $P\in\Lambda$. For $\varrho\in\Lambda_{P}^{\mathrm{spl}}$, stacks $\mathfrak{PT}_{\mathfrak{X}_{0}^\dagger/\mathfrak{C}_{0}^\dagger}^P$, $\mathfrak{PT}_{\mathfrak{X}_{0}^\dagger/\mathfrak{C}_{0}^\dagger}^\varrho$, $\mathfrak{PT}_{\mathfrak{Y}_{-}/\mathfrak{A}}^{\varrho_{-},\varrho_{0}}$, and  $\mathfrak{PT}_{\mathfrak{Y}_{+}/\mathfrak{A}}^{\varrho_{+},\varrho_{0}}$ are defined in Section 6.1. Similarly, one can also define perfect relative obstruction theories on them and prove the existence of virtual fundamental classes, see Section 6.2.
Now, we have the following three Cartesian diagrams:
\ben
\xymatrixcolsep{3pc}\xymatrix{
	\mathrm{PT}_{\mathcal{X}_{c}/\mathbb{C}}^P\ar[d] \ar[r] &  \mathfrak{PT}^P_{\mathfrak{X}/\mathfrak{C}} \ar[d] & \mathfrak{PT}_{\mathfrak{X}_{0}^\dagger/\mathfrak{C}_{0}^\dagger}^\varrho\ar@{^{(}->}[r]^-{\varsigma_{\varrho}}	& \mathfrak{PT}_{\mathfrak{X}_{0}^\dagger/\mathfrak{C}_{0}^\dagger}^P\ar[d] \ar[r] & \mathfrak{PT}^P_{\mathfrak{X}/\mathfrak{C}} \ar[d]\\
	\{c\}\ar@{^{(}->}[r]^-{i_{c}} & \mathbb{A}^1   & 	&\{0\}\ar@{^{(}->}[r]^-{i_{0}} & \mathbb{A}^1
}
\een
and
\ben
\xymatrixcolsep{3pc}\xymatrix{
	\mathfrak{PT}_{\mathfrak{X}_{0}^\dagger/\mathfrak{C}_{0}^\dagger}^\varrho 	&\ar[l]_-{\Phi_{\varrho}}^-{\cong}\mathfrak{PT}_{\mathfrak{Y}_{-}/\mathfrak{A}}^{\varrho_{-},\varrho_{0}}\times_{\mathrm{Hilb}_{\mathcal{D}}^{\varrho_{0}}}\mathfrak{PT}_{\mathfrak{Y}_{+}/\mathfrak{A}}^{\varrho_{+},\varrho_{0}}\ar[d] \ar[r] &  	\mathfrak{PT}_{\mathfrak{Y}_{-}/\mathfrak{A}}^{\varrho_{-},\varrho_{0}}\times \mathfrak{PT}_{\mathfrak{Y}_{+}/\mathfrak{A}}^{\varrho_{+},\varrho_{0}} \ar[d] \\
	& \mathrm{Hilb}_{\mathcal{D}}^{\varrho_{0}}\ar[r]^-{\Delta}  &  \mathrm{Hilb}_{\mathcal{D}}^{\varrho_{0}}\times\mathrm{Hilb}_{\mathcal{D}}^{\varrho_{0}} &  
}
\een
where maps $\varsigma_{\varrho}$, $i_{0}$, $i_{c}$ $(c\neq0)$ are the inclusions and  $\mathrm{Hilb}_{\mathcal{D}}^{\varrho_{0}}$ is the Hilbert stack on $\mathcal{D}$, see Section 6.3. Then we have the  following cycle version of degeneration formula.
\begin{theorem}[see Theorem \ref{cycle-degenerate}]\label{intr-cyc-deg}
	We  have 
	\ben
	&&i_{c}^![\mathfrak{PT}^P_{\mathfrak{X}/\mathfrak{C}}]^\mathrm{vir}=[	\mathrm{PT}_{\mathcal{X}_{c}/\mathbb{C}}^P]^\mathrm{vir},\\
	&&i_{0}^![\mathfrak{PT}^P_{\mathfrak{X}/\mathfrak{C}}]^\mathrm{vir}=\sum_{\varrho\in\Lambda_{P}^\mathrm{spl}}\varsigma_{\varrho*}\Delta^!([\mathfrak{PT}_{\mathfrak{Y}_{-}/\mathfrak{A}}^{\varrho_{-},\varrho_{0}}]^\mathrm{vir}\times[\mathfrak{PT}_{\mathfrak{Y}_{+}/\mathfrak{A}}^{\varrho_{+},\varrho_{0}}]^\mathrm{vir}).
	\een
\end{theorem}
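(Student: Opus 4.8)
The plan is to deduce both identities from compatibilities between the perfect relative obstruction theories of Section 6.2 and the refined Gysin maps $i_c^!$, $i_0^!$, $\Delta^!$ appearing in the Cartesian diagrams above, and then to invoke the functoriality of virtual pullback (equivalently, the compatibility of Behrend--Fantechi virtual classes with base change along regular embeddings, as in [\cite{BF}]). Throughout, the properness and Deligne--Mumford properties of the relevant stacks supplied by the preceding theorem guarantee that all the cycle-theoretic operations are defined.

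For the first identity, I would use that over $c\neq 0$ the stack of expanded degenerations is trivial, so the left-hand Cartesian square identifies $\mathrm{PT}^P_{\mathcal{X}_c/\mathbb{C}}$ with the fiber of $\mathfrak{PT}^P_{\mathfrak{X}/\mathfrak{C}}\to\mathbb{A}^1$ over $\{c\}$. First I would check that the relative obstruction theory $\widetilde{\mathbf{E}}^\bullet$ on $\mathfrak{PT}^P_{\mathfrak{X}/\mathfrak{C}}$ over $\mathbb{A}^1$ pulls back along $i_c$ to the absolute obstruction theory on $\mathrm{PT}^P_{\mathcal{X}_c/\mathbb{C}}$; this is a base-change statement for $R\tilde{\pi}_*(R\mathcal{H}om(\mathbb{I}^\bullet,\mathbb{I}^\bullet)_{0}\otimes\tilde{\pi}_{\mathcal{X}}^*\omega)[2]$, using flatness of the families and smoothness of $\mathcal{X}_c$. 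Given this, and since $\{c\}\hookrightarrow\mathbb{A}^1$ is a regular embedding, the standard compatibility of virtual classes with Gysin pullback for a compatible pair of obstruction theories yields $i_c^![\mathfrak{PT}^P_{\mathfrak{X}/\mathfrak{C}}]^\mathrm{vir}=[\mathrm{PT}^P_{\mathcal{X}_c/\mathbb{C}}]^\mathrm{vir}$.

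For the second identity I would proceed in four steps. (i) Exactly as above, the relative obstruction theory restricts along $i_0$ to the obstruction theory of $\mathfrak{PT}^P_{\mathfrak{X}_0^\dagger/\mathfrak{C}_0^\dagger}$, giving $i_0^![\mathfrak{PT}^P_{\mathfrak{X}/\mathfrak{C}}]^\mathrm{vir}=[\mathfrak{PT}^P_{\mathfrak{X}_0^\dagger/\mathfrak{C}_0^\dagger}]^\mathrm{vir}$. (ii) By local constancy of the Hilbert homomorphism, the restrictions of a stable orbifold PT pair on an expanded degeneration of $\mathcal{X}_0$ to $\mathcal{Y}_-$, $\mathcal{Y}_+$ and $\mathcal{D}$ determine a well-defined splitting datum, so $\mathfrak{PT}^P_{\mathfrak{X}_0^\dagger/\mathfrak{C}_0^\dagger}$ decomposes into the open and closed substacks $\mathfrak{PT}^\varrho_{\mathfrak{X}_0^\dagger/\mathfrak{C}_0^\dagger}$, $\varrho\in\Lambda_P^\mathrm{spl}$; restricting the obstruction theory to each and using additivity of virtual classes gives $[\mathfrak{PT}^P_{\mathfrak{X}_0^\dagger/\mathfrak{C}_0^\dagger}]^\mathrm{vir}=\sum_\varrho\varsigma_{\varrho*}[\mathfrak{PT}^\varrho_{\mathfrak{X}_0^\dagger/\mathfrak{C}_0^\dagger}]^\mathrm{vir}$. (iii) I would transport along the gluing isomorphism $\Phi_\varrho$: the point is that $\Phi_\varrho$ carries the fiber-product obstruction theory on $\mathfrak{PT}_{\mathfrak{Y}_-/\mathfrak{A}}^{\varrho_-,\varrho_0}\times_{\mathrm{Hilb}_\mathcal{D}^{\varrho_0}}\mathfrak{PT}_{\mathfrak{Y}_+/\mathfrak{A}}^{\varrho_+,\varrho_0}$, built from the obstruction theories of the two factors and the diagonal square, to the obstruction theory on $\mathfrak{PT}^\varrho_{\mathfrak{X}_0^\dagger/\mathfrak{C}_0^\dagger}$. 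This is a Mayer--Vietoris computation: from $\mathcal{X}_0=\mathcal{Y}_-\cup_\mathcal{D}\mathcal{Y}_+$ and its expansions one obtains a distinguished triangle relating $R\mathcal{H}om(\mathbb{I}^\bullet,\mathbb{I}^\bullet)_{0}$ on the glued space to the analogous complexes on $\mathcal{Y}_\pm$ and a $\mathcal{D}$-term governed by $\mathrm{Hilb}_\mathcal{D}^{\varrho_0}$; twisting by the relative dualizing sheaf, pushing forward, and comparing with the triangle defining the fiber-product obstruction theory yields the identification, once one tracks the log/relative structures coming from $\mathfrak{C}_0^\dagger$ and $\mathfrak{A}$ and the predeformability constraint along $\mathcal{D}$ as in [\cite{LW,Zhou1}]. (iv) Since $\Delta$ is a regular embedding of the smooth Deligne--Mumford stack $\mathrm{Hilb}_\mathcal{D}^{\varrho_0}$, functoriality of virtual pullback applied to the diagonal square gives $[\mathfrak{PT}^\varrho_{\mathfrak{X}_0^\dagger/\mathfrak{C}_0^\dagger}]^\mathrm{vir}=\Phi_{\varrho*}\Delta^!([\mathfrak{PT}_{\mathfrak{Y}_-/\mathfrak{A}}^{\varrho_-,\varrho_0}]^\mathrm{vir}\times[\mathfrak{PT}_{\mathfrak{Y}_+/\mathfrak{A}}^{\varrho_+,\varrho_0}]^\mathrm{vir})$. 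Combining (i)--(iv) proves the formula.

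The main obstacle is step (iii): constructing the Mayer--Vietoris distinguished triangle for the trace-free $R\mathcal{H}om$ complexes across the orbifold normal-crossing locus $\mathcal{D}$ inside the expanded central fiber, and checking that under $\Phi_\varrho$ it matches the triangle assembled from the cotangent complex of the product $\mathfrak{PT}_{\mathfrak{Y}_-/\mathfrak{A}}^{\varrho_-,\varrho_0}\times\mathfrak{PT}_{\mathfrak{Y}_+/\mathfrak{A}}^{\varrho_+,\varrho_0}$, that of $\mathrm{Hilb}_\mathcal{D}^{\varrho_0}$, and the diagonal. The remaining steps are a careful but essentially routine transcription of the scheme-theoretic PT arguments of [\cite{LW}] and the orbifold DT arguments of [\cite{Zhou1}] to the present orbifold PT setting.
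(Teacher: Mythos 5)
Your treatment of $i_c^!$ and your steps (iii)--(iv) coincide with the paper's argument for Theorem \ref{cycle-degenerate}: the proof there builds exactly the Mayer--Vietoris short exact sequence $0\to\bar{\mathbb{I}}^\bullet_{\varrho}\to\Delta^*(\mathrm{pr}_{-}^*\bar{\mathbb{I}}^\bullet_{-}\oplus\mathrm{pr}_{+}^*\bar{\mathbb{I}}^\bullet_{+})\to \iota_{\mathcal{D}*}(\mathrm{id}_{\mathcal{D}}\times\mathrm{ev}_{\varrho})^*\bar{\mathbb{I}}_{0}\to0$ you anticipate, dualizes it into a map of distinguished triangles onto the triangle of truncated cotangent complexes, identifies the $\mathcal{D}$-term with $\mathbb{L}_{\mathrm{Hilb}_{\mathcal{D}}^{\varrho_{0}}/\mathrm{Hilb}_{\mathcal{D}}^{\varrho_{0}}\times\mathrm{Hilb}_{\mathcal{D}}^{\varrho_{0}}}$, and concludes that the obstruction theories are compatible over $\Delta$ in the sense of [\cite{BF}, Section 7], so that [\cite{BF}, Proposition 7.5] gives $[\mathfrak{PT}_{\mathfrak{X}_{0}^\dagger/\mathfrak{C}_{0}^\dagger}^\varrho]^\mathrm{vir}=\Delta^!([\mathfrak{PT}_{\mathfrak{Y}_{-}/\mathfrak{A}}^{\varrho_{-},\varrho_{0}}]^\mathrm{vir}\times[\mathfrak{PT}_{\mathfrak{Y}_{+}/\mathfrak{A}}^{\varrho_{+},\varrho_{0}}]^\mathrm{vir})$; the first identity is [\cite{BF}, Proposition 7.2].

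The genuine gap is in your steps (i)--(ii). The stack $\mathfrak{PT}^{P}_{\mathfrak{X}_{0}^\dagger/\mathfrak{C}_{0}^\dagger}$ lives over the \emph{disjointified} central fiber $\mathfrak{C}_{0}^{\dagger,P}$, built from $\coprod_{i}H_{i}$ rather than from $\bigcup_{i}H_{i}=\mathbf{m}^{-1}(0)$. A stable pair supported on $\mathcal{X}_{0}[k]$ with $k\geq1$ determines $k+1$ splitting data, one for each node $\mathcal{D}_{i}$, so the closed substacks of $\mathfrak{PT}^{P}_{\mathfrak{X}/\mathfrak{C}}$ cut out by the sections $\pi_{P}^*s_{\varrho}$, $\varrho\in\Lambda_{P}^{\mathrm{spl}}$, genuinely overlap inside the scheme-theoretic central fiber; they become disjoint (open and closed) only after passing to the dagger version. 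Consequently one cannot compute $i_{0}^![\mathfrak{PT}^{P}_{\mathfrak{X}/\mathfrak{C}}]^{\mathrm{vir}}$ by ``restricting the obstruction theory to the central fiber and summing over connected components'': the obstruction theory $\mathbf{E}^\bullet$ is relative to $\mathfrak{C}^{P}$, not to $\mathbb{A}^{1}$, and the central fiber of $\mathfrak{C}^{P}\to\mathbb{A}^{1}$ is the non-reduced divisor $\prod_{\varrho}s_{\varrho}=\pi^*t$ whose components intersect along the deeper strata. What replaces your (i)--(ii) is Proposition \ref{split-cycles}, which rests on Propositions \ref{linebd-split} and \ref{split-data}: the line bundles with sections $(L_{\varrho},s_{\varrho})$ on $\mathfrak{C}^{P}$ satisfy $\bigotimes_{\varrho}L_{\varrho}\cong\mathcal{O}_{\mathfrak{C}^{P}}$ and $\prod_{\varrho}s_{\varrho}=\pi^*t$, so $i_{0}^!$ decomposes as a sum of localized first Chern classes $c_{1}^{\mathrm{loc}}(\pi_{P}^*L_{\varrho},\pi_{P}^*s_{\varrho})$, each supported on $\mathfrak{PT}_{\mathfrak{X}_{0}^\dagger/\mathfrak{C}_{0}^\dagger}^{\varrho}$ and each identified with $[\mathfrak{PT}_{\mathfrak{X}_{0}^\dagger/\mathfrak{C}_{0}^\dagger}^{\varrho}]^{\mathrm{vir}}$ by [\cite{BF}, Proposition 7.2] applied to the zero locus of the single section $s_{\varrho}$. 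Without this input, summing over $\Lambda_{P}^{\mathrm{spl}}$ as if the pieces were disjoint in the central fiber would overcount the loci where several $s_{\varrho}$ vanish simultaneously.
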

Let $\mathcal{Z}$ be a  smooth projective Deligne-Mumford stack of dimension 3 over $\mathbb{C}$. Assume  $K(\mathcal{Z}):=K^0(\mathrm{Coh}(\mathcal{Z}))_{\mathbb{Q}}$ is the Grothendieck group of $\mathcal{Z}$ over $\mathbb{Q}$. Then  $\mathrm{Hom}(K(\mathcal{Z}),\mathbb{Q})$ can be identified with  $K(\mathcal{Z})$, see Section 6.2. Denote by $F_{i}K(\mathcal{Z})$  the subgroup of $K(\mathcal{Z})$ generated by all sheaves $\mathcal{G}$ with $\dim\mathrm{Supp}\,\mathcal{G}\leq i$. On the one hand, let $P\in F_{1}K(\mathcal{X}_{c})$, one can define the absolute orbifold PT invariants of the form $\langle\prod_{i=1}^r\tau_{k_{i}}(\gamma_{i})\rangle_{\mathcal{X}_{c}}^P$on $\mathcal{X}_{c}$, see Definition \ref{absolute-def}. Here, $\gamma_{i}\in A_{\mathrm{orb}}^*(\mathcal{X})$ and denote again by $\gamma_{i}$ the restriction of $\gamma_{i}$ on $\mathcal{X}_{c}$. On the other hand, if $P\in F_{1}K(\mathcal{Y})$, one has the moduli stack $\mathfrak{PT}^{P,P_{0}}_{\mathfrak{Y}/\mathfrak{A}}$ with  $P_{0}:=i^*P\in F_{0}K(\mathcal{D})$ where $i:\mathcal{D}\to\mathcal{Y}$ is the inclusion, which is the stack we use to define the relative orbifold PT invariants $\langle\prod_{i=1}^r\tau_{k_{i}}(\gamma_{i})|C\rangle_{\mathcal{Y},\mathcal{D}}^P$ where $\gamma_{i}\in A^*_{\mathrm{orb}}(\mathcal{Y})$ and $C\in A^*(\mathrm{Hilb}^{P_{0}}_{\mathcal{D}})$, see Definition \ref{relative-def}. It follows from Theorem \ref{intr-cyc-deg} that we have the numerical version of degeneration formula.
\begin{theorem}[see Theorem \ref{numerical-deg1}]
	Assume $\gamma_{i}\in A_{\mathrm{orb}}^*(\mathcal{X})$ and  $\gamma_{i,\pm}$ are the restriction of $\gamma_{i}$ on $\mathcal{Y}_{\pm}\subset\mathcal{X}_{0}$  for $1\leq i\leq r$. Suppose  $\gamma_{i,\pm}$ are disjoint with $\mathcal{D}$. For a fixed $P\in F_{1}K(\mathcal{X}_{c})$, we have
	\ben
	\bigg\langle\prod_{i=1}^r\tau_{k_{i}}(\gamma_{i})\bigg\rangle^P_{\mathcal{X}_{c}}=\sum_{\substack{\varrho_{-}+\varrho_{+}-P_{0}=P\\T\subset\{1,\cdots,r\},k,l}}\bigg\langle\prod_{i\in T}\tau_{k_{i}}(\gamma_{i,-})\bigg|C_{k}\bigg\rangle_{\mathcal{Y}_{-},\mathcal{D}}^{\varrho_{-}}g^{kl}\bigg\langle\prod_{i\notin T}\tau_{k_{i}}(\gamma_{i,+})\bigg|C_{l}\bigg\rangle_{\mathcal{Y}_{+},\mathcal{D}}^{\varrho_{+}}
	\een
	where $\varrho_{\pm}\in F_{1}K(\mathcal{Y}_{\pm})$ in the sum is taken over all possible splitting data satisfying $\varrho_{-}+\varrho_{+}-P_{0}=P$, and the set $\{C_{k}\}$ is a basis of $A^*(\mathrm{Hilb}^{P_{0}}_{\mathcal{D}})$ satisfying
	$
	\int_{\mathrm{Hilb}^{P_{0}}_{\mathcal{D}}}C_{k}\cup C_{l}=g_{kl}.
	$
Here, $(g^{kl})$ is the inverse matrix of $(g_{kl})$. 
\end{theorem}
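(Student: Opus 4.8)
The plan is to deduce the numerical formula from the cycle-level identities of Theorem~\ref{intr-cyc-deg} in three moves: first use deformation invariance to trade $[\mathrm{PT}_{\mathcal{X}_{c}/\mathbb{C}}^P]^\mathrm{vir}$ for $i_{0}^![\mathfrak{PT}^P_{\mathfrak{X}/\mathfrak{C}}]^\mathrm{vir}$ when integrating the descendent insertions; then substitute the decomposition of the central‑fibre virtual class into a sum over $\varrho\in\Lambda_{P}^{\mathrm{spl}}$ of classes pushed forward from the fibre products $\mathfrak{PT}_{\mathfrak{Y}_{-}/\mathfrak{A}}^{\varrho_{-},\varrho_{0}}\times_{\mathrm{Hilb}_{\mathcal{D}}^{\varrho_{0}}}\mathfrak{PT}_{\mathfrak{Y}_{+}/\mathfrak{A}}^{\varrho_{+},\varrho_{0}}$; and finally evaluate each resulting integral by splitting the descendent classes between the two sides and contracting the fibre product over $\mathrm{Hilb}_{\mathcal{D}}^{\varrho_{0}}$ against a K\"unneth decomposition of the diagonal. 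Throughout, the insertions $\tau_{k_{i}}(\gamma_{i})$ are regarded as operational Chow classes on $\mathfrak{PT}^P_{\mathfrak{X}/\mathfrak{C}}$, built from the universal complex $\mathbb{I}^\bullet$ on $\mathfrak{X}\times_{\mathfrak{C}}\mathfrak{PT}^P_{\mathfrak{X}/\mathfrak{C}}$, the contraction maps to $\mathcal{X}$, the inertia‑stack pullback of $\gamma_{i}\in A^*_{\mathrm{orb}}(\mathcal{X})$, and the orbifold Chern character; by construction these restrict, along the two Cartesian squares of the statement, to the descendents defining $\langle\,\cdot\,\rangle^P_{\mathcal{X}_{c}}$ on $\mathrm{PT}_{\mathcal{X}_{c}/\mathbb{C}}^P$ and their analogues on $\mathfrak{PT}_{\mathfrak{X}_{0}^\dagger/\mathfrak{C}_{0}^\dagger}^P$.

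For the first move, set $\Xi:=\big(\prod_{i}\tau_{k_{i}}(\gamma_{i})\big)\cap[\mathfrak{PT}^P_{\mathfrak{X}/\mathfrak{C}}]^\mathrm{vir}\in A_{1}(\mathfrak{PT}^P_{\mathfrak{X}/\mathfrak{C}})$. Since refined Gysin maps commute with proper pushforward and with capping by operational classes, and since $\mathfrak{PT}^P_{\mathfrak{X}/\mathfrak{C}}$ is proper over $\mathbb{A}^1$, we have $p_{*}\Xi=n[\mathbb{A}^1]$ for some $n\in\mathbb{Q}$ and $\deg\big(i_{c}^!\Xi\big)=\deg\big(i_{c}^!p_{*}\Xi\big)=n$ for every $c$. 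Combining this with the identity $i_{c}^!\Xi=\big(\prod_{i}\tau_{k_{i}}(\gamma_{i})|_{\mathrm{PT}_{\mathcal{X}_{c}/\mathbb{C}}^P}\big)\cap[\mathrm{PT}_{\mathcal{X}_{c}/\mathbb{C}}^P]^\mathrm{vir}$ (which uses the first part of Theorem~\ref{intr-cyc-deg} and the base‑change compatibility of $\mathbb{I}^\bullet$) and with Definition~\ref{absolute-def} gives
\[
\Big\langle\prod_{i=1}^r\tau_{k_{i}}(\gamma_{i})\Big\rangle^P_{\mathcal{X}_{c}}=\int_{i_{0}^![\mathfrak{PT}^P_{\mathfrak{X}/\mathfrak{C}}]^\mathrm{vir}}\prod_{i=1}^r\tau_{k_{i}}(\gamma_{i}).
\]
Feeding in the second identity of Theorem~\ref{intr-cyc-deg} and applying the projection formula to the closed embedding $\varsigma_{\varrho}$ and to the isomorphism $\Phi_{\varrho}$ of the last Cartesian diagram, the right‑hand side becomes $\sum_{\varrho}\int_{\Delta^![\mathfrak{PT}_{\mathfrak{Y}_{-}/\mathfrak{A}}^{\varrho_{-},\varrho_{0}}]^\mathrm{vir}\times[\mathfrak{PT}_{\mathfrak{Y}_{+}/\mathfrak{A}}^{\varrho_{+},\varrho_{0}}]^\mathrm{vir}}\Phi_{\varrho}^{*}\varsigma_{\varrho}^{*}\prod_{i}\tau_{k_{i}}(\gamma_{i})$. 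Boundedness (Proposition~\ref{boundedness}) ensures the sum is finite, and restricting $\varrho_{-}+\varrho_{+}-\varrho_{0}=P$ to $\mathcal{D}$ forces $\varrho_{0}=P_{0}$ and $\varrho_{\pm}\in F_{1}K(\mathcal{Y}_{\pm})$, so the sum is over splittings $\varrho_{-}+\varrho_{+}-P_{0}=P$ as stated.

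The heart of the argument is the splitting of the descendent classes. Because the restrictions $\gamma_{i,\pm}$ of $\gamma_{i}$ to $\mathcal{Y}_{\pm}$ avoid $\mathcal{D}$, the pullback of $\gamma_{i}$ to any expanded degeneration $\mathcal{X}_{0}[k]$ through the contraction $\mathcal{X}_{0}[k]\to\mathcal{X}_{0}$ is supported away from all bubble components and decomposes as a sum of classes carried by $\mathcal{Y}_{-}\setminus\mathcal{D}$ and $\mathcal{Y}_{+}\setminus\mathcal{D}$; moreover, under $\Phi_{\varrho}$ the universal complex restricted to the locus lying over $\mathcal{Y}_{\mp}$ is identified with the pullback of the universal complex of $\mathfrak{PT}_{\mathfrak{Y}_{\mp}/\mathfrak{A}}^{\varrho_{\mp},P_{0}}$. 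Hence $\Phi_{\varrho}^{*}\varsigma_{\varrho}^{*}\tau_{k_{i}}(\gamma_{i})=\mathrm{pr}_{-}^{*}\tau_{k_{i}}(\gamma_{i,-})+\mathrm{pr}_{+}^{*}\tau_{k_{i}}(\gamma_{i,+})$, with all cross terms vanishing for support reasons, and expanding the product over $i$ produces exactly the sum over subsets $T\subseteq\{1,\dots,r\}$ of $\prod_{i\in T}\mathrm{pr}_{-}^{*}\tau_{k_{i}}(\gamma_{i,-})\cdot\prod_{i\notin T}\mathrm{pr}_{+}^{*}\tau_{k_{i}}(\gamma_{i,+})$. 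Contracting the fibre product over $\mathrm{Hilb}_{\mathcal{D}}^{P_{0}}$ against the class of the diagonal, and using that $\mathrm{Hilb}_{\mathcal{D}}^{P_{0}}$ is a smooth proper Deligne--Mumford stack so that $[\Delta]=\sum_{k,l}g^{kl}\,C_{k}\boxtimes C_{l}$ for a basis $\{C_{k}\}$ with Gram matrix $(g_{kl})$, the projection formula applied to the evaluation maps $\mathrm{ev}_{\pm}$ to $\mathrm{Hilb}_{\mathcal{D}}^{P_{0}}$ turns each summand into
\[
\int_{[\mathfrak{PT}_{\mathfrak{Y}_{-}/\mathfrak{A}}^{\varrho_{-},P_{0}}]^\mathrm{vir}}\Big(\prod_{i\in T}\tau_{k_{i}}(\gamma_{i,-})\Big)\cup\mathrm{ev}_{-}^{*}C_{k}\;\cdot\;g^{kl}\;\cdot\;\int_{[\mathfrak{PT}_{\mathfrak{Y}_{+}/\mathfrak{A}}^{\varrho_{+},P_{0}}]^\mathrm{vir}}\Big(\prod_{i\notin T}\tau_{k_{i}}(\gamma_{i,+})\Big)\cup\mathrm{ev}_{+}^{*}C_{l},
\]
which by Definition~\ref{relative-def} are precisely $\langle\prod_{i\in T}\tau_{k_{i}}(\gamma_{i,-})\,|\,C_{k}\rangle^{\varrho_{-}}_{\mathcal{Y}_{-},\mathcal{D}}$, the factor $g^{kl}$, and $\langle\prod_{i\notin T}\tau_{k_{i}}(\gamma_{i,+})\,|\,C_{l}\rangle^{\varrho_{+}}_{\mathcal{Y}_{+},\mathcal{D}}$; summing over $\varrho_{\pm}$, $T$, $k$, $l$ yields the asserted formula.

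I expect the main obstacle to be the precise bookkeeping in the splitting step: one must check that, along $\Phi_{\varrho}$ and after all contractions, the universal complex genuinely restricts to the universal complexes of the two relative moduli stacks twisted by the gluing data along $\mathcal{D}$; that the orbifold Chern character of a sheaf on the reducible central fibre $\mathcal{Y}_{-}\cup_{\mathcal{D}}\mathcal{Y}_{+}$ decomposes additively once paired with a class disjoint from $\mathcal{D}$, so that no correction supported on $\mathcal{D}$ enters the $\gamma_{i}$‑insertions and all the gluing is carried solely by the classes $C_{k}$; and that $\mathrm{ev}_{\pm}$ are compatible with the fibre‑product structure of the last Cartesian diagram. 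A secondary point needing care is that the descendent classes are defined coherently on $\mathfrak{PT}^P_{\mathfrak{X}/\mathfrak{C}}$ over all of $\mathbb{A}^1$, including over the expanded central fibre, so that the deformation‑invariance argument via $p_{*}\Xi=n[\mathbb{A}^1]$ applies verbatim.
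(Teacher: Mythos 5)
Your proposal is correct and follows essentially the same route as the paper's proof: deformation invariance of the degree of the one-dimensional part over $\mathbb{A}^1$, substitution of the cycle-level degeneration formula, splitting of the descendent insertions via the exact sequence relating $\mathbb{F}_{\varrho}$ to $\mathrm{pr}_{\pm}^*\mathbb{F}_{\pm}$ and the term supported on $\mathcal{D}$ (which dies against $\gamma_{i,\pm}$ disjoint from $\mathcal{D}$), and finally the K\"unneth decomposition of the diagonal of $\mathrm{Hilb}^{P_{0}}_{\mathcal{D}}$ together with the projection formula for the evaluation maps. The only cosmetic difference is that you phrase the splitting as an identity of operational classes $\Phi_{\varrho}^{*}\varsigma_{\varrho}^{*}\tau_{k_{i}}(\gamma_{i})=\mathrm{pr}_{-}^{*}\tau_{k_{i}}(\gamma_{i,-})+\mathrm{pr}_{+}^{*}\tau_{k_{i}}(\gamma_{i,+})$, whereas the paper carries out the same computation directly on the orbifold Chern characters of the universal sheaves acting on the virtual classes.
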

As an application, we follow [\cite{Zhou1}, Section 8.2] to consider the multi-regular classes, see Definition \ref{multi-regular-class}. Let $F_{1}^{\mathrm{mr}}K(\mathcal{Z})$ be the subgroup of $F_{1}K(\mathcal{Z})$ spanned by multi-regular classes.
Then we have

\begin{theorem}[see Theorem \ref{multi-deg-form}]\label{intro-multi-deg-form}
	Assume $\gamma_{i}\in A_{\mathrm{orb}}^*(\mathcal{X})$ and  $\gamma_{i,\pm}$ are disjoint with $\mathcal{D}$  for $1\leq i\leq r$. For a fixed $\beta\in F_{1}^{\mathrm{mr}}K(\mathcal{X}_{c})/F_{0}K(\mathcal{X}_{c})$, we have
	\ben
	\bigg\langle\prod_{i=1}^r\tau_{k_{i}}(\gamma_{i})\bigg\rangle_{\mathcal{X}_{c}}^{(\beta,\epsilon)}=\sum_{\substack{\beta_{-}+\beta_{+}=\beta\\ \epsilon_{-}+\epsilon_{+}=\epsilon+n\\ T\subset\{1,\cdots,r\},k,l}}\bigg\langle\prod_{i\in T}\tau_{k_{i}}(\gamma_{i,-})\bigg|C_{k}\bigg\rangle_{\mathcal{Y}_{-},\mathcal{D}}^{(\beta_{-},\epsilon_{-})}g^{kl}\bigg\langle\prod_{i\notin T}\tau_{k_{i}}(\gamma_{i,+})\bigg|C_{l}\bigg\rangle_{\mathcal{Y}_{+},\mathcal{D}}^{(\beta_{+},\epsilon_{+})}.
	\een
where classes $(\beta_{\pm},\epsilon_{\pm})\in \left(F_{1}^{\mathrm{mr}}K(\mathcal{Y}_{\pm})/F_{0}K(\mathcal{Y}_{\pm})\right)\oplus F_{0}K(\mathcal{Y}_{\pm})$ and  $(\beta,\epsilon)\in \left(F_{1}^{\mathrm{mr}}K(\mathcal{X}_{c})/F_{0}K(\mathcal{X}_{c})\right)\oplus F_{0}K(\mathcal{X}_{c})$ satisfy $\beta_{-}+\beta_{+}=\beta$, $\epsilon_{-}+\epsilon_{+}=\epsilon+n$ and $\beta\cdot\mathcal{D}=n[\mathcal{O}_{x}]$. Here, $x\in\mathcal{D}$ is the preimage of a point in the coarse moduli space of $\mathcal{D}$.
\end{theorem}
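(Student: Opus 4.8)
The plan is to deduce the identity from the numerical degeneration formula of Theorem~\ref{numerical-deg1} (equivalently, the cycle-level formula of Theorem~\ref{cycle-degenerate}) by regrouping the sum over Hilbert homomorphisms according to the multi-regular class each of them induces, in parallel with the argument for orbifold DT invariants in [\cite{Zhou1}, Section~8.2]. The starting point is the dictionary of Section~6.2: for a smooth projective Deligne-Mumford stack $\mathcal{Z}$ of dimension $3$, a multi-regular class in $F_{1}^{\mathrm{mr}}K(\mathcal{Z})$ is recorded precisely by the pair $(\beta,\epsilon)\in\bigl(F_{1}^{\mathrm{mr}}K(\mathcal{Z})/F_{0}K(\mathcal{Z})\bigr)\oplus F_{0}K(\mathcal{Z})$ formed by its class in the quotient and its zero-dimensional part, and by definition of the multi-regular invariants, $\bigl\langle\prod_{i}\tau_{k_{i}}(\gamma_{i})\bigr\rangle_{\mathcal{X}_{c}}^{(\beta,\epsilon)}$ is the absolute invariant $\bigl\langle\prod_{i}\tau_{k_{i}}(\gamma_{i})\bigr\rangle_{\mathcal{X}_{c}}^{P}$ attached to the multi-regular $P\in F_{1}K(\mathcal{X}_{c})$ representing $(\beta,\epsilon)$, while $\bigl\langle\prod_{i}\tau_{k_{i}}(\gamma_{i,\pm})\,\big|\,C\bigr\rangle_{\mathcal{Y}_{\pm},\mathcal{D}}^{(\beta_{\pm},\epsilon_{\pm})}$ is the relative invariant attached to the multi-regular $\varrho_{\pm}\in F_{1}K(\mathcal{Y}_{\pm})$ representing $(\beta_{\pm},\epsilon_{\pm})$, with boundary datum $\varrho_{0}=P_{0}:=i^{*}P$ on $\mathcal{D}$.

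With this dictionary, apply Theorem~\ref{numerical-deg1} to the multi-regular $P$ attached to $(\beta,\epsilon)$; this writes the left-hand side as a sum over all splitting data with $\varrho_{\pm}\in F_{1}K(\mathcal{Y}_{\pm})$, $\varrho_{0}=P_{0}$ and $\varrho_{-}+\varrho_{+}-\varrho_{0}=P$, with the weights $g^{kl}$ and the insertion partitions $T\subset\{1,\dots,r\}$. The step I expect to be the main obstacle is to restrict this sum to multi-regular $\varrho_{\pm}$: I would prove, as a lemma, that for this multi-regular $P$ any splitting datum $(\varrho_{-},\varrho_{+},\varrho_{0})\in\Lambda_{P}^{\mathrm{spl}}$ for which $\mathfrak{PT}_{\mathfrak{X}_{0}^{\dagger}/\mathfrak{C}_{0}^{\dagger}}^{\varrho}$ is nonempty has both $\varrho_{\pm}$ multi-regular. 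The point is that the restrictions of an orbifold PT pair on an expanded degeneration $\mathcal{X}_{0}[k]$ to the two sides $\mathcal{Y}_{-},\mathcal{Y}_{+}$ have classes $\varrho_{\pm}$ with $\varrho_{0}$ their common restriction to $\mathcal{D}$; the non-multi-regular part of a class (in the sense of Definition~\ref{multi-regular-class}) is a non-negative quantity for the class of any genuine pair on $\mathcal{Y}_{\pm}$ and is additive under this gluing, while $\varrho_{0}$ is zero-dimensional of the generic form $n[\mathcal{O}_{x}]$ and hence multi-regular; since the non-multi-regular part of $P$ vanishes, so do those of $\varrho_{-}$ and $\varrho_{+}$. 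Consequently every non-multi-regular term in Theorem~\ref{numerical-deg1} has an empty factor and contributes zero, so the sum runs over multi-regular $\varrho_{-},\varrho_{+}$ only, on which the relative PT invariants coincide with $\bigl\langle\cdots\,\big|\,C\bigr\rangle_{\mathcal{Y}_{\pm},\mathcal{D}}^{(\beta_{\pm},\epsilon_{\pm})}$ by the dictionary.

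It remains to reindex the surviving sum. Under the dictionary, the linear relation $\varrho_{-}+\varrho_{+}-P_{0}=P$ among multi-regular classes separates into its curve-class part and its zero-dimensional part: the former reads $\beta_{-}+\beta_{+}=\beta$ in $F_{1}^{\mathrm{mr}}K/F_{0}K$ (as $P_{0}$ is zero-dimensional), exactly as for a simple degeneration in the DT case; for the latter I would invoke the Mayer-Vietoris identity for the glued pair, which gives $\chi(\mathcal{F})=\chi(\mathcal{F}_{-})+\chi(\mathcal{F}_{+})-\chi(\mathcal{F}|_{\mathcal{D}})$ with $\mathcal{F}|_{\mathcal{D}}$ of length $n$, where $n$ is the contact order $\beta\cdot\mathcal{D}=n[\mathcal{O}_{x}]$; hence $\epsilon=\epsilon_{-}+\epsilon_{+}-n$, i.e.\ $\epsilon_{-}+\epsilon_{+}=\epsilon+n$. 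The pairing datum $g^{kl}$ with $\{C_{k}\}$ a basis of $A^{*}(\mathrm{Hilb}_{\mathcal{D}}^{P_{0}})$ and the sum over partitions $T$ of $\{1,\dots,r\}$ are inherited verbatim from Theorem~\ref{numerical-deg1}; the hypothesis that the $\gamma_{i,\pm}$ are disjoint from $\mathcal{D}$ ensures that each $\tau_{k_{i}}(\gamma_{i})$ contributes $\tau_{k_{i}}(\gamma_{i,\pm})$ to a single side with no extra boundary insertion. Assembling these identifications converts the sum over splitting data into the claimed sum over $\beta_{-}+\beta_{+}=\beta$, $\epsilon_{-}+\epsilon_{+}=\epsilon+n$ and $T,k,l$, which is the desired formula.

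Finally, a few routine points I would only sketch: both sums are finite, by the boundedness established in Sections~5 and~6 (only finitely many multi-regular $\varrho_{\pm}$ admit a nonempty moduli stack), and the correspondence $(\beta,\epsilon)\leftrightarrow P$ used throughout is canonical for multi-regular classes; moreover, for a splitting datum coming from an actual pair one has automatically $i_{-}^{*}\varrho_{-}=\varrho_{0}=i_{+}^{*}\varrho_{+}$ with $i_{\pm}:\mathcal{D}\to\mathcal{Y}_{\pm}$ the inclusions and $\varrho_{0}=P_{0}$, so the relative invariants on the two sides are taken with the same Hilbert stack $\mathrm{Hilb}_{\mathcal{D}}^{P_{0}}$ and the pairing $g^{kl}$ is the one appearing in Theorem~\ref{cycle-degenerate}.
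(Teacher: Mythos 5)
Your proposal is correct and follows essentially the same route as the paper, which derives this statement directly from Theorem \ref{numerical-deg1} by specializing to a multi-regular $P$ representing $(\beta,\epsilon)$ and reading off the splitting relations $\beta_{-}+\beta_{+}=\beta$ and $\epsilon_{-}+\epsilon_{+}=\epsilon+n$ from $\varrho_{-}+\varrho_{+}-P_{0}=P$ with $P_{0}=n[\mathcal{O}_{x}]=\beta\cdot\mathcal{D}$, exactly as in the discussion preceding the theorem and in [\cite{Zhou1}, Section 8.2]. Your extra lemma that non-multi-regular splittings contribute empty moduli is the right point to make explicit (the paper leaves it implicit), though the phrase ``non-multi-regular part is a non-negative additive quantity'' would be better replaced by the observation that multi-regularity is detected at the generic points of the support, which distribute between $\mathcal{Y}_{-}$ and $\mathcal{Y}_{+}$ away from $\mathcal{D}$.
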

By Theorem \ref{intro-multi-deg-form} and  the (relative) descendent PT generating function in Definition \ref{PT-gen-func}, we have 

\begin{theorem}[see Theorem \ref{mul-deg-form-genfun}]
	Assume $\gamma_{i}\in A_{\mathrm{orb}}^*(\mathcal{X})$ and  $\gamma_{i,\pm}$ are disjoint with $\mathcal{D}$  for $1\leq i\leq r$. For a fixed $\beta\in F_{1}^{\mathrm{mr}}K(\mathcal{X}_{c})/F_{0}K(\mathcal{X}_{c})$, we have
	\ben
	&&Z_{\beta}^{\mathrm{PT}}\bigg(\mathcal{X}_{c};q\bigg|\prod_{i=1}^r\tau_{k_{i}}(\gamma_{i})\bigg)\\
	&=&\sum_{\substack{\beta_{-}+\beta_{+}=\beta\\T\subset\{1,\cdots,r\},k,l}}\frac{g^{kl}}{q^n}Z_{\beta_{-},C_{k}}^{\mathrm{PT}}\bigg(\mathcal{Y}_{-},\mathcal{D};q\bigg|\prod_{i\in T}\tau_{k_{i}}(\gamma_{i,-})\bigg)\times Z_{\beta_{+},C_{l}}^{\mathrm{PT}}\bigg(\mathcal{Y}_{+},\mathcal{D};q\bigg|\prod_{i\notin T}\tau_{k_{i}}(\gamma_{i,+})\bigg).
	\een
where we use the same notation as in Theorem \ref{intro-multi-deg-form}.	
\end{theorem}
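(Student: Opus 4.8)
The plan is to derive this generating function identity as a formal consequence of the numerical degeneration formula in Theorem \ref{intro-multi-deg-form} together with the definition of the descendent PT generating function (Definition \ref{PT-gen-func}). First I would recall that, for a fixed class $\beta\in F_{1}^{\mathrm{mr}}K(\mathcal{X}_{c})/F_{0}K(\mathcal{X}_{c})$, the absolute generating function $Z_{\beta}^{\mathrm{PT}}(\mathcal{X}_{c};q\,|\,\prod_{i}\tau_{k_{i}}(\gamma_{i}))$ is by definition the sum over $\epsilon\in F_{0}K(\mathcal{X}_{c})$ (equivalently over the relevant integer Euler-characteristic parameter) of the invariants $\langle\prod_{i}\tau_{k_{i}}(\gamma_{i})\rangle_{\mathcal{X}_{c}}^{(\beta,\epsilon)}$ weighted by $q$ raised to the power recording $\epsilon$, and similarly on the relative side $Z_{\beta_{\pm},C}^{\mathrm{PT}}(\mathcal{Y}_{\pm},\mathcal{D};q\,|\,\cdots)$ is the generating series in $q$ of the relative invariants $\langle\cdots\,|\,C\rangle_{\mathcal{Y}_{\pm},\mathcal{D}}^{(\beta_{\pm},\epsilon_{\pm})}$.

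Next I would substitute the numerical degeneration formula of Theorem \ref{intro-multi-deg-form} for each coefficient $\langle\prod_{i}\tau_{k_{i}}(\gamma_{i})\rangle_{\mathcal{X}_{c}}^{(\beta,\epsilon)}$ into the absolute generating function, obtaining a triple sum over $\epsilon$, over splittings $\beta_{-}+\beta_{+}=\beta$ with $\epsilon_{-}+\epsilon_{+}=\epsilon+n$, and over subsets $T$ and dual-basis indices $k,l$. The key manipulation is to interchange the order of summation: sum first over $T,k,l$ and over $\beta_{-},\beta_{+}$, and then recognize the inner sum over all pairs $(\epsilon_{-},\epsilon_{+})$ with $\epsilon_{-}+\epsilon_{+}=\epsilon+n$, as $\epsilon$ itself ranges, as an unconstrained double sum over $\epsilon_{-}$ and $\epsilon_{+}$ — this is exactly a Cauchy product of the two relative generating series. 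The shift $\epsilon_{-}+\epsilon_{+}=\epsilon+n$ translates into the factor $q^{-n}$ in front of the product: writing $q^{\epsilon}=q^{\epsilon_{-}}\cdot q^{\epsilon_{+}}\cdot q^{-n}$ under the convention that $q$ tracks the parameter $\epsilon$ linearly, the $q^{-n}$ factors out of the Cauchy product, yielding $\frac{g^{kl}}{q^{n}}Z_{\beta_{-},C_{k}}^{\mathrm{PT}}(\mathcal{Y}_{-},\mathcal{D};q\,|\,\prod_{i\in T}\tau_{k_{i}}(\gamma_{i,-}))\times Z_{\beta_{+},C_{l}}^{\mathrm{PT}}(\mathcal{Y}_{+},\mathcal{D};q\,|\,\prod_{i\notin T}\tau_{k_{i}}(\gamma_{i,+}))$. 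Summing this over $\beta_{-}+\beta_{+}=\beta$ and over $T,k,l$ gives precisely the claimed identity.

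The main obstacle I expect is purely bookkeeping: making sure that the exponent conventions in Definition \ref{PT-gen-func} are compatible on the absolute and relative sides, so that the constraint $\epsilon_{-}+\epsilon_{+}=\epsilon+n$ (with $\beta\cdot\mathcal{D}=n[\mathcal{O}_{x}]$, where the integer $n$ is determined by $\beta$ alone and hence is constant throughout the sum) produces exactly a single overall factor of $q^{-n}$ and nothing more. One has to check that the $q$-grading really is additive under the splitting of the Euler characteristic / $F_{0}$-class, and that the finiteness properties needed to justify rearranging the (a priori infinite) sums hold — these follow from the properness and finite-type statements already established for the relevant moduli stacks, which guarantee that for fixed $\beta_{\pm}$ and fixed insertions only finitely many $\epsilon_{\pm}$ contribute in each bounded range of $q$-degree, so the Cauchy product is well-defined as a formal series in $q$. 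Once these conventions are pinned down, the proof is a one-line reindexing, so I would keep the writeup short and refer back to Theorem \ref{intro-multi-deg-form} and Definition \ref{PT-gen-func} for all substantive content.
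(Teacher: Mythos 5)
Your proposal is correct and matches the paper's own treatment: the paper derives Theorem \ref{mul-deg-form-genfun} directly from Theorem \ref{multi-deg-form} and Definition \ref{PT-gen-func} with no further argument (the text reads simply ``And hence we have''), and the reindexing you describe — substituting the numerical formula, exchanging the order of summation, and recognizing the constrained sum over $\epsilon_{-}+\epsilon_{+}=\epsilon+n$ as a Cauchy product with an overall factor $q^{-n}$ — is exactly the intended content. Your additional remarks on the constancy of $n$ for fixed $\beta$ and on the formal well-definedness of the rearrangement are sensible bookkeeping that the paper leaves implicit.
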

In [\cite{Zhou2,ZZ}], the authors compute the relative orbifold DT invariants and relative orbifold GW invariants of $[\mathbb{C}^2/\mathbb{Z}_{n+1}]\times\mathbb{P}^1$  respectively and prove the crepant resolution conjecture relating with their corresponding theories of $\mathcal{A}_{n}\times\mathbb{P}^1$, where $\mathcal{A}_{n}\to\mathbb{C}^2/\mathbb{Z}_{n+1}$ is a crepant resolution. The GW/DT/Hilb/Sym correspondence is proved for the surface $\mathcal{A}_{n}$ in [\cite{Mau,MO,MO1,CA}] and also  for the stacky quotient $[\mathbb{C}^2/\mathbb{Z}_{n+1}]$ in [\cite{Zhou2,ZZ}], see the beautiful diagram in [\cite{ZZ}, Section 6.2] for more details. We will  explore the computation of the relative orbifold PT invariants of $[\mathbb{C}^2/\mathbb{Z}_{n+1}]\times\mathbb{P}^1$ elsewhere as in DT case [\cite{Zhou2}] and consider  putting this orbifold PT theory in the GW/DT/Hilb/Sym correspondence for $[\mathbb{C}^2/\mathbb{Z}_{n+1}]$.

This paper is organized as follows. In Section 2, we recall the notions of (families) of projective Deligne-Mumford stacks with some topological data and of stacks of expanded degenerations and pairs, and then recall the admissibility of  sheaves and its numerical criterion. We construct  relative moduli spaces of $\delta$-(semi)stable pairs  in Section 3 with moduli spaces of orbifold PT stable pairs as a special case where  we construct a perfect relative obstruction theory and prove the existence of the virtual fundamental class in Section 4. On stacks of expanded degenerations and pairs, we define  moduli stacks of stable orbifold PT pairs which are proved to be separated and  proper Deligne-Mumford stacks of finite type in Section 5. Finally, in Section 6, we derive the degeneration formula of the orbifold PT theory.

{\bf Acknowledgements.} The author thank an anonymous referee for his helpful  comments. This work was partially  supported by Chinese Universities Scientific Fund (74120-31610010)
and Guangdong Basic and Applied Basic Research Foundation (2019A1515110255).

\section{Preliminaries}
In this section, we first briefly recall the definition of (families of) projective Deligne-Mumford stacks and of the modified Hilbert polynomial and Hilbert homomorphism of sheaves. See [\cite{OS03,Kre,Nir1}] for more details. We next recall definitions and properties of stacks (families) of expanded degenerations and pairs on smooth Deligne-Mumford stacks in [\cite{Zhou1}], which are important for obtaining degeneration formulas. Then we recall the definition of admissible sheaves and the associated numerical criterion, which is useful for showing the properness of good degenerations in Section 5.4. 
\subsection{Projective Deligne-Mumford stacks}
Let $k$ be an algebraically closed field of characteristic 0.
Let the base scheme $S$ be connected, noetherian and  of finite type over $k$. 
Each Deligne-Mumford $S$-stack is assumed to be separated, notherian and of finite type over $S$. One may refer to [\cite{DM,LMB,Ols,Vis}] for the notion of stacks and their properties.
It is shown in [\cite{KM,Con}] that a Deligne-Mumford stack $\mathcal{X}$ over  $S$ has a coarse moduli space $\pi:\mathcal{X}\to X$ which is proper and quasi-finite. When $X$ is a scheme, we simply call it a moduli scheme. Denote by $\mathrm{QCoh}$ the category of quasi-coherent sheaves.  
\begin{definition}(see [\cite{OS03}, Definition 5.1] and [\cite{Nir1}, Definition 2.4 and 2.6])
Let $\mathcal{X}$ be a Deligne-Mumford stack over $S$ with a moduli scheme $\pi:\mathcal{X}\to X$. A locally free sheaf $\mathcal{E}$ is said to be a generator for the quasi-coherent sheaf $\mathcal{F}$ if the left adjoint of the identity $\pi_{*}(\mathcal{F}\otimes\mathcal{E}^\vee)\xrightarrow{id}\pi_{*}(\mathcal{F}\otimes\mathcal{E}^\vee)$, i.e.,
\ben
\theta_{\mathcal{E}}(\mathcal{F}): G_{\mathcal{E}}\circ F_{\mathcal{E}} (\mathcal{F}) =\pi^{*}\pi_{*}\mathcal{H}om_{\mathcal{O}_{\mathcal{X}}}(\mathcal{E},\mathcal{F})\otimes\mathcal{E}\to\mathcal{F}
\een
is surjective where two functors $F_{\mathcal{E}}$ and $G_{\mathcal{E}}$ are defined as
\ben
&&F_{\mathcal{E}}:\mathrm{QCoh}(\mathcal{X}/S)\to\mathrm{QCoh}(X/S),\;\;\;\;\mathcal{F}\mapsto \pi_{*}\mathcal{H}om_{\mathcal{O}_{\mathcal{X}}}(\mathcal{E},\mathcal{F});\\
&&G_{\mathcal{E}}:\mathrm{QCoh}(X/S)\to\mathrm{QCoh}(\mathcal{X}/S),\;\;\;\;
F\mapsto\pi^*F\otimes\mathcal{E}.
\een
The sheaf $\mathcal{E}$ is called a generating sheaf for $\mathcal{X}$ if it is a generator for every quasicoherent sheaf on $\mathcal{X}$.
\end{definition}
\begin{remark}
As in [\cite{Nir1}], we also have the right adjoint of the identity $\pi^{*}F\otimes\mathcal{E}\xrightarrow{id}\pi^{*}F\otimes\mathcal{E}$ defined by
$\varphi_{\mathcal{E}}(F): F\to\pi_{*}\mathcal{H}om(\mathcal{E},\pi^*F\otimes\mathcal{E})=F_{\mathcal{E}}(G_{\mathcal{E}}(F))$ for any quasi-coherent sheaf $F$ on $X$.
\end{remark}	
\begin{definition}(see [\cite{Nir1}, Definition 2.20 and 2.23] and [\cite{Kre}, Corollary 5.4])
A Deligne-Mumford stack $\mathcal{X}$ over $k$ is a  projective stack if $\mathcal{X}$  has a projective moduli scheme and possesses a generating sheaf.

Let $p:\mathcal{X}\to S$ be a Deligne-Mumford $S$-stack with a moduli scheme $X$ and assume $\mathcal{X}$ possesses a generating sheaf. We call $p:\mathcal{X}\to S$ a family of projective Deligne-Mumford stacks if $p$ factorizes as $\pi:\mathcal{X}\to X$  followed by a projective morphism $\rho:X\to S$. 
\end{definition}

\begin{remark}\label{quo-gen}
By [\cite{EHKV}, Theorem 2.14] and [\cite{OS03}, Theorem 5.5], if a Deligne-Mumford $S$-stack $\mathcal{X}$ has a projective coarse moduli scheme, then $\mathcal{X}$ is global quotient stack over $S$ if and only if $\mathcal{X}$ possesses a generating sheaf.
\end{remark}

\begin{definition}(see [\cite{Nir1}, Section 3] and [\cite{BS}, Definition 2.16])\label{polarizations}
Let $\mathcal{X}$ be a projective Deligne-Mumford stack over $k$ with a moduli scheme $X$. Suppose $\mathcal{E}$ is a generating sheaf for $\mathcal{X}$ and $\mathcal{O}_{X}(1)$ is a very ample invertible sheaf on $X$ relative to $\mathrm{Spec}\,k$,  we define   $(\mathcal{E}, \mathcal{O}_{X}(1))$ as a polarization of $\mathcal{X}$. 
	
 A relative polarization of a family of projective Deligne-Mumford stacks $p:\mathcal{X}\to S$ is a pair $(\mathcal{E}, \mathcal{O}_{X}(1))$ where $\mathcal{E}$ is a generating sheaf for $\mathcal{X}$ and $\mathcal{O}_{X}(1)$ is a very ample invertible sheaf on $X$ relative to $S$.
\end{definition}
Now let $\mathcal{X}$ be a projective Deligne-Mumford stack over $k$ with a moduli scheme $\pi:\mathcal{X}\to X$ and a polarization $(\mathcal{E},\mathcal{O}_{X}(1))$. 
\begin{definition}(see [\cite{Nir1}, Definition 3.1 and 3.2] or [\cite{Lieb}, Section 2.2.6])
	Let $\mathcal{F}$ be a coherent sheaf on $\mathcal{X}$. Define the support of $\mathcal{F}$, denoted by $\mathrm{Supp}(\mathcal{F})$, to be the closed substack associated to the ideal $\mathcal{I}$:
	\ben
	0\to\mathcal{I}\to \mathcal{O}_{\mathcal{X}}\to \mathcal{E}nd_{\mathcal{O}_{\mathcal{X}}}(\mathcal{F}).
	\een	
	The dimension of $\mathcal{F}$ is defined as the dimension of its support.
	The $d$-dimensional sheaf $\mathcal{F}$  is called pure   if $\mathrm{dim}(\mathcal{G})=d$ for every nonzero subsheaf $\mathcal{G}\subset \mathcal{F}$.
\end{definition}
We have the following topological data.
\begin{definition}([\cite{Nir1}, Definition 3.10, Remark 3.11 and Definition 3.12]) \label{m-h-p}Let  
 $\mathcal{F}$ be a coherent sheaf   on $\mathcal{X}$.  The modified Hilbert polynomial of $\mathcal{F}$ is defined as
	\ben
	P_{\mathcal{E}}(\mathcal{F})(m):=\chi(\mathcal{X},\mathcal{F}\otimes\mathcal{E}^\vee\otimes\pi^*\mathcal{O}_{X}(m))=\chi(X,F_{\mathcal{E}}(\mathcal{F})(m))=P(F_{\mathcal{E}}(\mathcal{F})(m)).
	\een
Suppose $\mathcal{F}$ is a coherent sheaf of dimension $d$, we have 	
\ben
P_{\mathcal{E}}(\mathcal{F})(m)=\sum_{i=0}^{d}\alpha_{\mathcal{E},i}(\mathcal{F})\frac{m^i}{i!},
\een
where $r(F_{\mathcal{E}}(\mathcal{F})):=\alpha_{\mathcal{E},d}(\mathcal{F})$ is the multiplicity of $F_{\mathcal{E}}(\mathcal{F})$. Then the reduced Hilbert polynomial of $\mathcal{F}$ is defined as
\ben
p_{\mathcal{E}}(\mathcal{F})=\frac{P_{\mathcal{E}}(\mathcal{F})}{\alpha_{\mathcal{E},d}(\mathcal{F})}
\een
and the slope of $\mathcal{F}$ is defined by
\ben
\hat{\mu}_{\mathcal{E}}(\mathcal{F})=\frac{\alpha_{\mathcal{E},d-1}(\mathcal{F})}{\alpha_{\mathcal{E},d}(\mathcal{F})}.
\een
\end{definition}
Let $K^0(\mathcal{X})$ be the Grothendieck group of $\mathcal{X}$. There is another topological data in the following
\begin{definition}([\cite{Zhou1}, Definition 4.10])\label{h-h}
Let $\mathcal{F}$ be a coherent sheaf on $\mathcal{X}$. The Hilbert homomorphism of $\mathcal{F}$ with respect to $\mathcal{E}$ is  a group homomorphism $P_{\mathcal{F}}^{\mathcal{E}}: K^0(\mathcal{X})\to\mathbb{Z}$ defined by
\ben
[V]\mapsto\chi(\mathcal{X}, V\otimes_{\mathcal{O}_{\mathcal{X}}}\mathcal{F}\otimes_{\mathcal{O}_{\mathcal{X}}}\otimes\mathcal{E}^\vee)
\een
where $V$ is a vector bundle on $\mathcal{X}$, and extended to $K^0(\mathcal{X})$ additively.	
\end{definition} 
By Definitions \ref{m-h-p} and \ref{h-h}, we have $P_{\mathcal{E}}(\mathcal{F})(m)=P_{\mathcal{F}}^{\mathcal{E}}(H^{\otimes m})$, where $H:=\pi^*\mathcal{O}_{X}(1)$.

\subsection{Stacks of expanded degenerations and pairs}
In  [\cite{Li1}], Jun Li introduces the notion of stacks of expanded degenerations and pairs for variety cases, whose appearance is stimulated by Gieseker's degeneration of some moduli space  and Floer-Donaldson theory as pointed out in [\cite{Li3}]. These stacks are  crucial  to constructing good degenerations of moduli spaces of stable morphisms, ideal sheaves and stable pairs [\cite{Li1,Li2,LW}], and are furtherly investigated through several comparable approaches and generalized to the twisted version in [\cite{ACFW}]. In this subsection, we will recall the construction of stacks of expanded degenerations and pairs in [\cite{Zhou1}] for the case of Deligne-Mumford stacks, which is a direct generalization of Li's approach [\cite{Li1,Li3}].

\begin{definition}([\cite{Zhou1}, Definition 2.1 and Definition 2.2])\label{simple-degeneration}
	Suppose $\mathcal{Y}$ is a separated Deligne-Mumford stack of finite type over $\mathbb{C}$, and $\mathcal{D}\subset\mathcal{Y}$ is a  connected effective divisor. We call a pair $(\mathcal{Y}, \mathcal{D})$ locally smooth if $\acute{e}$tale locally near a point on $\mathcal{D}$,  $(\mathcal{Y}, \mathcal{D})$ is a pair of a smooth stack and a smooth divisor. A pair $(\mathcal{Y}, \mathcal{D})$ is called a  smooth pair if both $\mathcal{Y}$ and $\mathcal{D}$ are smooth.
	
	Let $\mathcal{X}$ be a separated Deligne-Mumford stack of finite type over $\mathbb{C}$, and 
	$0\in C$ be a smooth pointed curve over $\mathbb{C}$. A locally simple degeneration is a flat morphism $\pi:\mathcal{X}\to C$ with central fiber
	$\mathcal{X}_{0}=\mathcal{Y}_{-}\cup_{\mathcal{D}}\mathcal{Y}_{+}$, where $\mathcal{Y}_{\pm}\subset \mathcal{X}_{0}$ are closed substacks, $(\mathcal{Y}_{\pm}, \mathcal{D})$ are locally smooth pairs, and the intersection $\mathcal{Y}_{-}\cap \mathcal{Y}_{+}=\mathcal{D}$ is transversal, i.e., for any point $p\in \mathcal{D}$, we have the $\acute{e}$tale neighborhoods $V$ of $0\in C$ and $U$ of $p\in \mathcal{X}$ such that the following  diagram is  commutative:
	\ben
	\xymatrixcolsep{4pc}\xymatrix{
		\mathrm{Spec}\left(\frac{\mathbb{C}[x,y,t]}{(xy-t)}\right)\times \mathbb{A}^{n} \ar[d]^{\pi}  &    U \ar[l]_-{\acute{e}t} \ar[d]^{\pi|_{U}}\ar[r]^{\acute{e}t} & \mathcal{X} \ar[d]^{\pi} \\
		\mathrm{Spec}\,\mathbb{C}[t] & \ar[l]_-{\acute{e}t} V \ar[r]^{\acute{e}t}& C &
	}
	\een
	where $\acute{e}t$ means the $\acute{e}tale$ map. We call $\pi:\mathcal{X}\to C$ a simple degeneration if in addition $(\mathcal{Y}_{\pm}, \mathcal{D})$ are smooth pairs and for any $0\neq c\in C$, the fiber $\mathcal{X}_{c}:=\pi^{-1}(c)$ is smooth.
\end{definition}

As in  [\cite{Zhou1}, Remark 2.4], we simply  assume $C=\mathbb{A}^1$ from now on since we will only investigate the local behavior for degenerations.

\begin{definition}([\cite{Zhou1}, Section 2.1])\label{expanded-deg/pair}
For $\mathcal{X}_{0}=\mathcal{Y}_{-}\cup_{\mathcal{D}}\mathcal{Y}_{+}$ in a locally smooth simple degeneration, let $\mathcal{D}_{\pm}$ be the divisor $\mathcal{D}$ in $\mathcal{Y}_{\pm}$ and  $\mathcal{N}_{\pm}:=N_{\mathcal{D}_{\pm}/\mathcal{Y}_{\pm}}$  be the normal line bundles of $\mathcal{D}_{\pm}$ in $\mathcal{Y}_{\pm}$. There is a $\mathbb{P}^1$-bundle over $\mathcal{D}$
\ben
\Delta:=\mathbb{P}_{\mathcal{D}}(\mathcal{O}_{\mathcal{D}}\oplus \mathcal{N}_{+})\cong\mathbb{P}_{\mathcal{D}}(\mathcal{N}_{-}\oplus\mathcal{O}_{\mathcal{D}})
\een
with two distinguished sections $\mathcal{D}_{-}=\mathbb{P}_{\mathcal{D}}(\mathcal{O}_{\mathcal{D}}\oplus\, 0)$ and $\mathcal{D}_{+}=\mathbb{P}_{\mathcal{D}}(\mathcal{N}_{-}\oplus\,0)$. Here, we have $N_{\mathcal{D}_{+}/\Delta}\cong \mathcal{N}_{+}$, $N_{\mathcal{D}_{-}/\Delta}\cong \mathcal{N}_{-}$ and $\mathcal{N}_{-}\otimes\mathcal{N}_{+}=\mathcal{O}_{\mathcal{D}}$. For any integer $k\geq0$, let $\Delta_{0}:=\mathcal{Y}_{-}$ and $\Delta_{k+1}:=\mathcal{Y}_{+}$.  An expanded degeneration of length $k$ with respect to $\mathcal{X}_{0}$ is defined by
\ben
\mathcal{X}_{0}[k]:=\mathcal{Y}_{-}\cup_{\mathcal{D}}\Delta_{1}\cup_{\mathcal{D}}\cdots\cup_{\mathcal{D}}\Delta_{k}\cup_{\mathcal{D}}\mathcal{Y}_{+}
\een
where $\Delta_{i}$ is the $i$-th copy of $\Delta$ for $1\leq i\leq k$, and $\mathcal{D}_{-}\subset \Delta_{i}$ is glued with $\mathcal{D}_{+}\subset \Delta_{i+1}$  for $0\leq i\leq k$. The divisors in $\mathcal{X}_{0}[k]$ are denoted in order by $\mathcal{D}_{0}, \cdots, \mathcal{D}_{k}$. 

Similarly, for a locally smooth pair $(\mathcal{Y}, \mathcal{D})$,  the $\mathbb{P}^1$-bundle $\Delta:=\mathbb{P}_{\mathcal{D}}(\mathcal{O}_{\mathcal{D}}\oplus \mathcal{N})$  also has two distinguished sections $\mathcal{D}_{-}=\mathbb{P}_{\mathcal{D}}(\mathcal{N})$ and $\mathcal{D}_{+}=\mathbb{P}_{\mathcal{D}}(\mathcal{O}_{\mathcal{D}})$  where $\mathcal{N}:=N_{\mathcal{D}/\mathcal{Y}}$. For any integer $l\geq0$, define with  the same gluing rule as above
\ben
\mathcal{Y}[l]:=\mathcal{Y}\cup_{\mathcal{D}}\Delta_{1}\cup_{\mathcal{D}}\cdots\cup_{\mathcal{D}}\Delta_{l}.
\een
Let $\mathcal{D}[l]$ be the last divisor $\mathcal{D}_{-}\subset\Delta_{l}$. Now the pair $(\mathcal{Y}[l],\mathcal{D}[l])$ is also a locally smooth pair. An expanded pair of length $l$ with respect to $(\mathcal{Y},\mathcal{D})$ is defined as $(\mathcal{Y}[l],\mathcal{D}[l])$. The divisors in $(\mathcal{Y}[l],\mathcal{D}[l])$ are denoted in order by $\mathcal{D}_{0}, \cdots, \mathcal{D}_{l-1}$, $\mathcal{D}_{l}=\mathcal{D}[l]$. 
\end{definition}
%We can define a $\mathbb{C}^*$-action on $\Delta=\mathbb{P}_{\mathcal{D}}(\mathcal{O}_{\mathcal{D}}\oplus \mathcal{N}_{+})$
%induced by the $\mathbb{C}^*$-action on $\mathcal{O}_{\mathcal{D}}\oplus \mathcal{N}_{+}$ via $(a,b)^\sigma=(a,\sigma b)$ See also [\cite{Li1}, Lemma 1.2] or . [\cite{LW}, Section 2.2]. Then 

We recall  the construction of standard families of expanded degenerations and pairs  in [\cite{Zhou1}, Section 2.3].  They are related to the expanded degeneration family of a standard local model  which is originally described  in  [\cite{Li1}] (see also [\cite{Zhou1}, Section 2.2]). Consider the standard local model
\ben
\underline{\pi}:\underline{U}=\mathrm{Spec}\,\frac{\mathbb{C}[x,y,t]}{(xy-t)}\to \mathbb{A}^1=\mathrm{Spec}\,\mathbb{C}[t],
\een
with singular divisor $D=\mathrm{pt}$. Let 
\ben
\mathbf{m}:\mathbb{A}^2\to\mathbb{A}^1,\;\;\;\;(t_{0},t_{1})\mapsto t_{0}t_{1},
\een
and hence we have the following map
\ben
\underline{\pi}^\prime:\underline{U}\times_{\mathbb{A}^1}\mathbb{A}^2=\mathrm{Spec}\,\frac{\mathbb{C}[x,y,t_{0},t_{1}]}{(xy-t_{0}t_{1})}\to \mathbb{A}^2=\mathrm{Spec}\,\mathbb{C}[t_{0},t_{1}].
\een
We first blow up $\underline{U}\times_{\mathbb{A}^1}\mathbb{A}^2$  along the singular divisor $D\times_{\mathbb{A}^1}0$ to get $\widetilde{\underline{U}(1)}$ which is verified to be isomorphic to $\mathcal{O}_{\mathbb{P}^1\times\mathbb{P}^1}(-1,-1)$, and then contract one factor of $\mathbb{P}^1\times\mathbb{P}^1$ to obtain $\underline{U}(1):=\mathcal{O}_{\mathbb{P}^1}(-1)^{\oplus2}$. This process gives the associated projection $\underline{\pi}_{1}:\underline{U}(1)\to\mathbb{A}^2$, which is called the expanded degeneration family of the standard local model $\underline{\pi}:\underline{U}\to\mathbb{A}^1$. One can refer to [\cite{Li1}, Section 1.1] for more details.

The standard families of expanded degenerations are constructed in [\cite{Zhou1}, Section 2.3] via the associated stacks of the groupoid induced by  some $\acute{e}$tale covering of a Deligne-Mumford stack by schemes as follows. For a locally simple  degeneration
$\pi:\mathcal{X}\to \mathbb{A}^1$, if $p\in \mathcal{D}$, one can choose a local family $\pi_{p}:U_{p}\to V_{p}$, where $V_{p}$ is an $\acute{e}tale$ neighborhood of $0\in \mathbb{A}^1$, and $p\in U_{p}$ is a common $\acute{e}tale$ neighborhood of $\mathcal{X}\times_{\mathbb{A}^1}V_{p}$ and $\mathrm{Spec}\left(\frac{\mathbb{C}[x,y,t,\mathbf{z}]}{(xy-t)}\right)\times_{\mathrm{Spec}\,\mathbb{C}[t]}V_{p}$ due to Definition \ref{simple-degeneration} with $\mathbf{z}=(z_{1},\cdots,z_{n})$ and $n=\dim \mathcal{D}$.
Define
$U_{p}(1)$ to be the resulting space obtained by  restricting the above construction of expanded degeneration of the standard local model $\mathrm{Spec}\left(\frac{\mathbb{C}[x,y,t,\mathbf{z}]}{(xy-t)}\right)\to\mathrm{Spec}\,\mathbb{C}[t]$   to $U_{p}$. If $p\notin \mathcal{D}$, one can select an  $\acute{e}tale$ neighborhood $U_{p}$ of $p$ such that $U_{p}\cap \mathcal{D}=\emptyset$, and denote by $U_{p}(1)=U_{p}\times_{\mathbb{A}^1}\mathbb{A}^{2}$. Then $\{U_{p}|p\in \mathcal{X}\}$ provides an  $\acute{e}tale$
covering $U=\coprod_{p}U_{p}\to \mathcal{X}$. And  two natural maps $U(1)\to U$ and $U(1)\to\mathbb{A}^2$ are  defined by using the disjoint union of all $U_{p}(1)$.
Define the following two relations:
\ben
R:=U\times_{\mathcal{X}}U\rightrightarrows
U,\;\;\;\;R(1):=R\times_{q_{1},U}U(1)\rightrightarrows U(1),
\een
where in the first relation the upper and lower arrows are denoted by two projection maps $q_{1}$ and $q_{2}$, and   the upper and lower arrows in the second relation are defined by $\mathrm{pr}_{2}:R(1):=R\times_{q_{1},U}U(1)\rightarrow U(1)$ and $\mathrm{pr}_{2}\circ i$.  Here $i$ is the map $R(1)\to R(1)$ induced by switching two factors of $R:=U\times_{\mathcal{X}}U$. Then we have $\mathcal{X}\cong[R\rightrightarrows U]$ as a stack-theoretic quotient.
It  is shown in [\cite{Zhou1}, Section 2.3] that the stack-theoretic quotient $\mathcal{X}(1):=[R(1)\rightrightarrows U(1)]$ is a Deligne-Mumford stack of finite type and we have the following commutative diagram
\ben
\xymatrix{
	\mathcal{X}(1)\ar[d]_{\pi_{1}} \ar[r]^{p} &  \mathcal{X} \ar[d]^{\pi}  \\
	\mathbb{A}^2 \ar[r]^{\mathbf{m}}& \mathbb{A}^1 &
}
\een
where $p:\mathcal{X}(1)\to\mathcal{X}$ is the projection, and 
the map $\pi_{1}:\mathcal{X}(1)\to \mathbb{A}^2$ is called the standard family of length 1-expanded degenerations with respect to $\mathcal{X}\to\mathbb{A}^1$. Now $U$ and $U(1)$ can be glued globally on the stack $\mathcal{X}$ and $\mathcal{X}(1)$ respectively. Assume that $\pi_{k}:\mathcal{X}(k)\to\mathbb{A}^{k+1}$ is constructed, $\pi_{k+1}:\mathcal{X}(k+1)\to\mathbb{A}^{k+2}$ can be constructed by induction (starting with $k=1$) as follows. Let $\mathrm{pr}_{k+1}: \mathbb{A}^{k+1}\to\mathbb{A}^1$ be the projection which maps to the last factor. Since the composition map
$\mathcal{X}(k)\xrightarrow{\pi_{k}} \mathbb{A}^{k+1}\xrightarrow{\mathrm{pr}_{k+1}}\mathbb{A}^1$  is also a locally simple degeneration, by repeating the above argument for constructing $\mathcal{X}(1)\to\mathbb{A}^2$ from $\mathcal{X}\to\mathbb{A}^1$, one has the map $\overline{\pi}_{k+1}:\mathcal{X}(k)(1)\to\mathbb{A}^{2}$. Now we define $\mathcal{X}(k+1):=\mathcal{X}(k)(1)$ and the map 
\ben
\pi_{k+1}:=(\mathrm{pr}_{[k]}\circ \pi_{k}\circ p_{k},\overline{\pi}_{k+1}):\mathcal{X}(k+1)\to\mathbb{A}^k\times\mathbb{A}^2\cong\mathbb{A}^{k+2}
\een
where $p_{k}:\mathcal{X}(k)(1)\to\mathcal{X}(k)$ is the projection map and  $\mathrm{pr}_{[k]}:\mathbb{A}^{k+1}\to\mathbb{A}^k$ is the projection mapping to the first $k$ factors.
Then for any $k\geq1$, we have the standard family of length $k$-expanded degenerations
$\pi_{k}:\mathcal{X}(k)\to \mathbb{A}^{k+1}$ with respect to $\mathcal{X}\to\mathbb{A}^1$ and  the following commutative diagram
\be \label{sim-deg-comm}
\xymatrix{
	\mathcal{X}(k)\ar[d]_{\pi_{k}} \ar[r]^{p} &  \mathcal{X}\ar[d]^{\pi}  \\  
	\mathbb{A}^{k+1} \ar[r]^{\mathbf{m}}& \mathbb{A}^1 &
}
\ee
where $p:\mathcal{X}(k)\to\mathcal{X}$ is the projection and  $\mathbf{m}:\mathbb{A}^{k+1}\to\mathbb{A}^1$ is defined as $(t_{0},\cdots, t_{k})\mapsto t_{0}t_{1}\cdots t_{k}$.

For the standard families of expanded pairs, they are constructed by two equivalent approaches in [\cite{Zhou1}, Section 2.3] as follows. Let $(\mathcal{Y}, \mathcal{D})$ be a locally  smooth pair. One approach is applying $\mathcal{X}(k)$-construction to the locally simple degeneration $\pi_{1}:\mathcal{Y}(1):=\mathrm{Bl}_{\mathcal{D}\times 0}(\mathcal{Y}\times \mathbb{A}^1)\to\mathbb{A}^1$ with the central fiber $\mathcal{Y}\cup_{\mathcal{D}}\Delta$. Define $\mathcal{Y}(k):=\mathcal{Y}(1)(k-1)$ for $k\geq2$. Let $\mathcal{D}(1)$ be the proper transform of $\mathcal{D}\times\mathbb{A}^1$, and $\mathcal{D}(k)\subset\mathcal{Y}(k)$ be the base change of $\mathcal{D}(1)\subset\mathcal{Y}(1)$ in the following commutative diagram
\ben
\xymatrix{
	\mathcal{Y}(1)(k-1)=\mathcal{Y}(k)\ar[d]_{\pi_{k}} \ar[r] &  	\mathcal{Y}(1)\ar[d]^{\pi_{1}}  \\
	\mathbb{A}^{k} \ar[r]^{\mathbf{m}}& \mathbb{A}^1
}
\een 
Then $(\mathcal{Y}(k), \mathcal{D}(k))$ is a locally smooth pair for any $k\geq1$.
The second approach is to take successive blowup construction, i.e., define  $\mathcal{Y}(k+1)=\mathrm{Bl}_{\mathcal{D}(k)\times0}(\mathcal{Y}(k)\times\mathbb{A}^1)$ inductively from the initial case $\mathcal{Y}(1):=\mathrm{Bl}_{\mathcal{D}\times 0}(\mathcal{Y}\times \mathbb{A}^1)$ where $\mathcal{D}(k+1)\subset\mathcal{Y}(k+1)$ is the proper transform of $\mathcal{D}(k)\times\mathbb{A}^1$ for $k\geq1$. These two approaches are shown to be equivalent in [\cite{Zhou1}, Proposition 2.11].  The obtained family $\pi_{k}: \mathcal{Y}(k)\to\mathbb{A}^k$ is called the standard family of length $k$-expanded pairs with respect to $(\mathcal{Y}, \mathcal{D})$ and $\mathcal{D}(k)$ is called the distinguished divisor of  $\mathcal{Y}(k)$  where $\mathcal{D}(k)\cong\mathcal{D}\times\mathbb{A}^k$. For the later use, there is a standard family of expanded pairs $(\mathcal{Y}(k)^\circ,\pi_{k}^\circ)$ defined in [\cite{Zhou1}, Remark 2.12] where $\mathcal{Y}(k)^\circ:=\mathcal{Y}(k)$ and $\pi_{k}^\circ:=\mathbf{r}\circ \pi_{k}$ with $\mathbf{r}:\mathbb{A}^k\to\mathbb{A}^k$ defined by $(t_{1},\cdots,t_{k})\mapsto(t_{k},\cdots,t_{1})$.

Next, we recollect the properties of standard families of expanded degenerations and pairs in [\cite{Zhou1}, Section 2.4] as follows. For the family $\pi_{k}:\mathcal{X}(k)\to\mathbb{A}^{k+1}$, we define the $(\mathbb{C}^*)^k$-action on $\mathbb{A}^{k+1}$ by
\ben
\lambda\cdot t=(\lambda_{1}t_{0},\lambda_{1}^{-1}\lambda_{2}t_{1},\cdots,\lambda_{k-1}^{-1}\lambda_{k}t_{k-1},\lambda_{k}^{-1}t_{k})
\een
where $\lambda=(\lambda_{1},\cdots,\lambda_{k})\in(\mathbb{C}^*)^k$ and $t=(t_{0},\cdots,t_{k})\in\mathbb{A}^{k+1}$. 
%Combined with the trivial $\mathbb{C}^*$-action on $\mathbb{A}^1$ and the group homomorphism $(\mathbb{C}^*)^k\to\mathbb{C}^*$ defined by $(\lambda_{1},\cdots,\lambda_{k})\to \lambda_{1}\cdots \lambda_{k}$, we have the equivariant map $\mathbf{m}:\mathbb{A}^{k+1}\to\mathbb{A}^1$. 
For the base $\mathbb{A}^{k+1}$  with coordinates $(t_{0},\cdots,t_{k})$ and  a subset $I=\{j_{0},\cdots,j_{l}\}\subset\{0,\cdots,k\}$ ($j_{0}<\cdots<j_{l}$), define the embedding $\tau_{I}: \mathbb{A}^{l+1}\to\mathbb{A}^{k+1}$ as 
\[
t_{j}=
\left\{
\begin{aligned}
&t_{j_{i}}, \;\;\mbox{if $j=j_{i}$ for some $i$}, \\
& 1, \;\;\;\;\mbox{if $j\neq j_{i}$ for any $i$}.
\end{aligned}
\right.\]
Set $U_{I}:=\{(t_{0},\cdots,t_{k})\,|\, t_{j}\neq0, \;\forall\; j\notin I\}$, then $U_{I}\cong\mathbb{A}^{l+1}\times(\mathbb{C}^*)^{k-l}$ and we have the natural open immersion $\widehat{\tau}_{I}: U_{I}\to\mathbb{A}^{k+1}$. Given two subsets $I, I^\prime\in\{0,\cdots,k\}$ with $|I|=|I^\prime|$, we have the isomorphism $\widehat{\tau}_{I,I^\prime}: U_{I}\xrightarrow{\cong} U_{I^\prime}$ by reordering the coordinates.
While for the family $\pi_{k}:\mathcal{Y}(k)\to\mathbb{A}^k$, we define the $(\mathbb{C}^*)^k$-action on $\mathbb{A}^k$ for the first approach by
\ben
\lambda\cdot t=(\lambda_{1}t_{1},\lambda_{1}^{-1}\lambda_{2}t_{2},\cdots,\lambda_{k-1}^{-1}\lambda_{k}t_{k})
\een
and for the second approach by
\ben
\lambda\cdot t=(\lambda_{1}t_{1},\lambda_{2}t_{2},\cdots,\lambda_{k}t_{k})
\een
where $\lambda=(\lambda_{1},\cdots,\lambda_{k})\in(\mathbb{C}^*)^k$ and $t=(t_{1},\cdots,t_{k})\in\mathbb{A}^{k}$.  For the base $\mathbb{A}^k$, one can similarly define  $\tau_{I}, \widehat{\tau}_{I}$ and $\widehat{\tau}_{I,I^\prime}$ as above with the  index set $\{0,\cdots,k\}$ replaced by $\{1,\cdots,k\}$.
For the standard families of expanded degenerations $\pi_{k}:\mathcal{X}(k)\to\mathbb{A}^{k+1}$, we have the following properties.
\begin{proposition}([\cite{Zhou1}, Proposition 2.13]) \label{property1}
$(i)$ The stack $\mathcal{X}(k)$ is a separated Deligne-Mumford stack of finite type over $\mathbb{C}$ and of dimension $\dim\mathcal{X}_{0}+k+1$. Moreover, if $\pi:\mathcal{X}\to\mathbb{A}^1$ is proper, $\pi_{k}$ is proper; if $\mathcal{X}$ is smooth, $\mathcal{X}(k)$ is smooth.

$(ii)$ For $t=(t_{0},\cdots,t_{k})\in\mathbb{A}^{k+1}$, if $t_{0}\cdots t_{k}\neq0$, the fiber of $\pi_{k}$ over $t$ is isomorphic to the generic fibers $\mathcal{X}_{c}$ of $\pi:\mathcal{X}\to\mathbb{A}^1$ over $0\neq c\in\mathbb{A}^1$. Let $I\subset\{0,\cdots,k\}$ be a subset with $1\leq|I|=l+1\leq k+1$. If $t_{j}=0$ for any $j\in I$ and $t_{j}\neq0$ for all $j\notin I$, then the fiber over $t$ is isomorphic to $\mathcal{X}_{0}[l]$.

$(iii)$ There  is a $(\mathbb{C}^*)^k$-action on $\mathcal{X}(k)$ such that the diagram \eqref{sim-deg-comm} is $(\mathbb{C}^*)^k$-equivariant. This action gives isomorphisms of fibers in the same strata as described in $(ii)$. The induced action of the stabilizer on a fiber isomorphic to $\mathcal{X}_{0}[l]$ is the induced $(\mathbb{C}^*)^l$-action on $\mathcal{X}_{0}[l]$: the $i$-th factor of $(\mathbb{C}^*)^l$ acts on $\Delta_{i}$ fiberwise and trivially on $\mathcal{Y}_{\pm}$. There are also discrete symmetries in $\mathcal{X}(k)$, away from the central fiber. For two subsets $I, I^\prime\subset\{0,\cdots,k\}$ with $|I|=|I^\prime|=l+1$, the natural isomorphisms $\widehat{\tau}_{I, I^\prime}: U_{I}\xrightarrow{\cong} U_{I^\prime}$ induce isomorphisms on the families $\widehat{\tau}_{I, I^\prime,\mathcal{X}}: \mathcal{X}(k)|_{U_{I}}\xrightarrow{\cong}\mathcal{X}(k)|_{U_{I^\prime}}$ extending those given by $(\mathbb{C}^*)^k$-actions on smooth fibers. Moreover, we have $\mathcal{X}(k)\times_{\mathbb{A}^{k+1},\tau_{I}}\mathbb{A}^{l+1}\cong\mathcal{X}(l)$ after restricting to the embedding $\tau_{I}:\mathbb{A}^{l+1}\to\mathbb{A}^{k+1}$ and we have the identification $\mathcal{X}(k)|_{U_{I}}\cong \mathcal{X}(l)\times(\mathbb{C}^*)^{k-l}$.
\end{proposition}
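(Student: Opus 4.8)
The plan is to run an induction on $k$, in each step reducing the verification to the explicit local model and then descending along the \'etale groupoid presentation $\mathcal{X}(k)=[R(k)\rightrightarrows U(k)]$ set up above. For the base case $k=1$: $U(1)$ is glued from the charts $U_{p}(1)$, which away from $\mathcal{D}$ are $U_{p}\times\mathbb{A}^2$ and near a point of $\mathcal{D}$ are the restriction to $U_{p}$ of the expanded-degeneration family of the standard local model, namely (after blow-up-and-contract) $\underline{U}(1)\times\mathbb{A}^n\cong\mathcal{O}_{\mathbb{P}^1}(-1)^{\oplus2}\times\mathbb{A}^n\to\mathbb{A}^2$. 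Thus essentially every assertion of the proposition can be checked on this concrete model and then transported. For the inductive step one first verifies, exactly as in the construction recalled just before Proposition \ref{property1}, that the composite $\mathcal{X}(k)\xrightarrow{\pi_{k}}\mathbb{A}^{k+1}\xrightarrow{\mathrm{pr}_{k+1}}\mathbb{A}^1$ is again a locally simple degeneration, and then applies the $\mathcal{X}(1)$-construction to it, putting $\mathcal{X}(k+1):=\mathcal{X}(k)(1)$ with $\pi_{k+1}$ as defined above.

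For part $(i)$: the properties of being separated, Deligne--Mumford and of finite type descend from the same properties of $U(k)$ and $R(k)$ via the fact that $R(k)\rightrightarrows U(k)$ is an \'etale equivalence relation; these hold for $U(k)$ and $R(k)$ because $\mathcal{X}$ has them and blow-ups and contractions preserve them. The dimension count is additive along one blow-up-and-contract step (blowing up $\underline{U}\times_{\mathbb{A}^1}\mathbb{A}^2$, of dimension $\dim\mathcal{X}_{0}+2$, along a divisor keeps the dimension, and the contraction keeps it), and iterating $k$ times against $\mathbb{A}^{k+1}$ yields $\dim\mathcal{X}_{0}+k+1$. Properness: $p:\mathcal{X}(k)\to\mathcal{X}$ factors through a projective morphism onto $\mathcal{X}\times_{\mathbb{A}^1}\mathbb{A}^{k+1}$, since it is built \'etale-locally from blow-ups (projective) followed by the $\mathbb{P}^1$-bundle contraction (also projective, being a blow-down); hence if $\pi$ is proper then so is $\pi_{k}$. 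Smoothness: if $\mathcal{X}$ is smooth then each chart $U_{p}\times\mathbb{A}^2$ and each $\mathcal{O}_{\mathbb{P}^1}(-1)^{\oplus2}\times\mathbb{A}^n$ is smooth, so $\mathcal{X}(k)$ is smooth.

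For parts $(ii)$ and $(iii)$: over the locus $\{t_{0}\cdots t_{k}\neq0\}$ the diagram \eqref{sim-deg-comm} identifies the fiber of $\pi_{k}$ over $t$ with $\mathcal{X}_{t_{0}\cdots t_{k}}$, a generic fiber of $\pi$. On the stratum where exactly the coordinates indexed by $I$, with $|I|=l+1$, vanish, the identification $\mathcal{X}(k)|_{U_{I}}\cong\mathcal{X}(l)\times(\mathbb{C}^*)^{k-l}$ --- itself a consequence of $\mathcal{X}(k)\times_{\mathbb{A}^{k+1},\tau_{I}}\mathbb{A}^{l+1}\cong\mathcal{X}(l)$ together with the product structure of $U_{I}$ --- reduces the fiber to the central fiber of $\pi_{l}$, and the local model shows each vanishing coordinate inserts one copy of $\Delta$, so this fiber is $\mathcal{X}_{0}[l]$. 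The $(\mathbb{C}^*)^k$-action is the one which on the local model $\mathcal{O}_{\mathbb{P}^1}(-1)^{\oplus2}$ acts by scaling the $\mathbb{P}^1$-base fiberwise (together with the induced scalings on the inserted $\mathbb{A}^2$ factors); since $\mathbf{m}(\lambda\cdot t)=t_{0}\cdots t_{k}=\mathbf{m}(t)$, the square \eqref{sim-deg-comm} is equivariant, and the stabilizer of a point lying in the stratum of type $I$ acts on the corresponding $\mathcal{X}_{0}[l]$ precisely by scaling the $i$-th bubble $\Delta_{i}$ fiberwise and trivially on $\mathcal{Y}_{\pm}$. Finally, for two subsets $I,I^\prime$ with $|I|=|I^\prime|=l+1$ the discrete symmetries lift to isomorphisms $\widehat{\tau}_{I,I^\prime,\mathcal{X}}:\mathcal{X}(k)|_{U_{I}}\xrightarrow{\cong}\mathcal{X}(k)|_{U_{I^\prime}}$, because reordering the coordinates $t_{j}$ reorders the blow-up centers, which commute since they are pairwise disjoint or transverse --- the same comparison underlying the equivalence of the two constructions of expanded-pair families.

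The step I expect to be the main obstacle is the globalization: checking that the \'etale-locally defined pieces $U_{p}(1)$ really do glue, i.e.\ that $R(1)\rightrightarrows U(1)$ satisfies the cocycle/associativity condition, so that $\mathcal{X}(1)=[R(1)\rightrightarrows U(1)]$ is a bona fide Deligne--Mumford stack and $p,\pi_{1}$ are well-defined morphisms. Concretely this amounts to showing the blow-up-and-contract construction is canonical enough to be independent, up to canonical isomorphism, of the chosen \'etale charts, and compatible on triple overlaps. Once this is in place the remaining statements propagate through the induction via the local computations sketched above; the complete details are carried out in [\cite{Zhou1}, Proposition 2.13], which this proposition quotes.
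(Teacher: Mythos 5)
Your proposal is correct and follows essentially the same route as the source: the paper itself offers no proof of this proposition, merely recalling it from [\cite{Zhou1}, Proposition 2.13], and your induction on $k$ via the local model $\underline{U}(1)\cong\mathcal{O}_{\mathbb{P}^1}(-1)^{\oplus 2}$ together with descent along the \'etale groupoid $R(k)\rightrightarrows U(k)$ is exactly the argument carried out there. Two small slips that do not affect the conclusions: the charts away from $\mathcal{D}$ are the fiber products $U_{p}\times_{\mathbb{A}^1}\mathbb{A}^{2}$ rather than direct products, and the blow-up center $D\times_{\mathbb{A}^1}0$ is not a divisor (though blow-ups preserve dimension in any case, so your count stands).
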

For the standard families of expanded pairs $\pi_{k}:\mathcal{Y}(k)\to\mathbb{A}^k$, we have the properties as follows.
\begin{proposition}([\cite{Zhou1}, Proposition 2.16])\label{property2}
$(i)$ The stack $\mathcal{Y}(k)$ is a separated Deligne-Mumford stack of finite type over $\mathbb{C}$ and of dimension $\dim\mathcal{Y}+k$. If $\mathcal{Y}$ is proper, then $\pi_{k}$ is proper; if $(\mathcal{Y},\mathcal{D})$ is a smooth pair, so is $(\mathcal{Y}(k),\mathcal{D}(k))$.

$(ii)$ For $t=(t_{1},\cdots,t_{k})\in\mathbb{A}^k$, if $t_{1}\cdots t_{k}\neq0$, then the fiber of the pair $(\mathcal{Y}(k),\mathcal{D}(k))$ over $t$ is isomorphic to the original pair $(\mathcal{Y},\mathcal{D})$. Let $I\subset\{1,\cdots,k\}$ be a subset with $1\leq|I|=l\leq k$. If $t_{j}=0$ for all $j\in I$ and $t_{j}\neq0$ for all $j\notin I$, then the fiber over $t$ is isomorphic to $(\mathcal{Y}[l],\mathcal{D}[l])$.

$(iii)$ There is a $(\mathbb{C}^*)^k$-action on $\mathcal{Y}(k)$ in both approaches, compatible with the corresponding actions on $\mathbb{A}^k$. This action gives isomorphisms of fibers in the same strata as described in $(ii)$. The induced action of the stabilizer on a fiber isomorphic to $\mathcal{Y}[l]$ is the induced $(\mathbb{C}^*)^l$-action on $\mathcal{Y}[l]$: the $i$-th factor of $(\mathbb{C}^*)^l$ acts on $\Delta_{i}$ fiberwise and trivially on $\mathcal{Y}$. There are also discrete symmetries in $\mathcal{Y}(k)$, away from the central fiber. For two subsets $I, I^\prime\subset\{1,\cdots,k\}$ with $|I|=|I^\prime|=l$, the natural isomorphisms  $\widehat{\tau}_{I, I^\prime}: U_{I}\xrightarrow{\cong} U_{I^\prime}$ induce isomorphisms on the families $\widehat{\tau}_{I, I^\prime, \mathcal{Y}}: \mathcal{Y}(k)|_{U_{I}}\xrightarrow{\cong}\mathcal{Y}(k)|_{U_{I^\prime}}$ extending those given by $(\mathbb{C}^*)^k$-actions on smooth fibers. The discrete actions restricted to $\mathcal{D}(k)\cong\mathcal{D}\times\mathbb{A}^k$ are just a reordering of the coordinates of the $\mathbb{A}^k$-factors. Moreover, we have $\mathcal{Y}(k)\times_{\mathbb{A}^k,\tau_{I}}\mathbb{A}^l\cong\mathcal{Y}(l)$ after restricting to the embedding $\tau_{I}:\mathbb{A}^l\to\mathbb{A}^k$ and we have the identification $\mathcal{Y}(k)|_{U_{I}}\cong\mathcal{Y}(l)\times(\mathbb{C}^*)^{k-l}$.
\end{proposition}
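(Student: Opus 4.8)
The plan is to deduce every assertion from Proposition~\ref{property1} by exploiting the first of the two constructions, namely the identification $\mathcal{Y}(k)=\mathcal{Y}(1)(k-1)$, where $\pi_{1}:\mathcal{Y}(1)=\mathrm{Bl}_{\mathcal{D}\times 0}(\mathcal{Y}\times\mathbb{A}^1)\to\mathbb{A}^1$ is a locally simple degeneration whose central fiber $\mathcal{Y}\cup_{\mathcal{D}}\Delta$ plays the role of $\mathcal{X}_{0}=\mathcal{Y}_{-}\cup_{\mathcal{D}}\mathcal{Y}_{+}$ with $\mathcal{Y}_{-}=\mathcal{Y}$ and $\mathcal{Y}_{+}=\Delta$; whenever the iterated--blowup description $\mathcal{Y}(k+1)=\mathrm{Bl}_{\mathcal{D}(k)\times 0}(\mathcal{Y}(k)\times\mathbb{A}^1)$ is more convenient I would switch to it, which is legitimate by the equivalence of the two constructions in [\cite{Zhou1}, Proposition~2.11]. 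For part $(i)$: $\mathcal{Y}(1)$ is a blowup of the separated Deligne--Mumford stack $\mathcal{Y}\times\mathbb{A}^1$ of finite type over $\mathbb{C}$ along the closed substack $\mathcal{D}\times 0$, hence is again a separated Deligne--Mumford stack of finite type; the morphism $\pi_{1}$ is proper when $\mathcal{Y}$ is proper, and $\mathcal{Y}(1)$ is smooth when $(\mathcal{Y},\mathcal{D})$ is a smooth pair, since then $\mathcal{D}\times 0$ is a smooth center in a smooth ambient stack. Applying Proposition~\ref{property1}$(i)$ to $\mathcal{Y}(1)\to\mathbb{A}^1$ then gives that $\mathcal{Y}(k)=\mathcal{Y}(1)(k-1)$ is a separated Deligne--Mumford stack of finite type of dimension $\dim(\mathcal{Y}\cup_{\mathcal{D}}\Delta)+(k-1)+1=\dim\mathcal{Y}+k$ (using $\dim\Delta=\dim\mathcal{D}+1=\dim\mathcal{Y}$), proper when $\mathcal{Y}$ is proper, and smooth when $\mathcal{Y}$ is smooth. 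That $(\mathcal{Y}(k),\mathcal{D}(k))$ is in fact a \emph{smooth pair} with $\mathcal{D}(k)\cong\mathcal{D}\times\mathbb{A}^k$ is obtained by induction in the iterated--blowup picture: given $(\mathcal{Y}(k),\mathcal{D}(k))$ smooth with $\mathcal{D}(k)\cong\mathcal{D}\times\mathbb{A}^k$, the center $\mathcal{D}(k)\times 0$ is a smooth closed substack of the smooth $\mathcal{Y}(k)\times\mathbb{A}^1$, so $\mathcal{Y}(k+1)$ is smooth, and $\mathcal{D}(k+1)$ --- the proper transform of $\mathcal{D}(k)\times\mathbb{A}^1$, which contains the center as a Cartier divisor --- is isomorphic to $\mathcal{D}(k)\times\mathbb{A}^1\cong\mathcal{D}\times\mathbb{A}^{k+1}$, in particular smooth.

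For part $(ii)$, I would apply Proposition~\ref{property1}$(ii)$ to $\mathcal{Y}(1)\to\mathbb{A}^1$, whose generic fiber is $\mathcal{Y}$ (the blowup is an isomorphism away from the center, which lies over $0$). For a stratum of the base $\mathbb{A}^{(k-1)+1}=\mathbb{A}^k$ on which exactly the coordinates indexed by a nonempty $I$ vanish, that proposition yields the fiber $(\mathcal{Y}\cup_{\mathcal{D}}\Delta)[\,|I|-1\,]=\mathcal{Y}\cup_{\mathcal{D}}\Delta_{1}\cup_{\mathcal{D}}\cdots\cup_{\mathcal{D}}\Delta_{|I|}=\mathcal{Y}[\,|I|\,]$; translating through the relabeling $\{0,\dots,k-1\}\leftrightarrow\{1,\dots,k\}$ of base coordinates and writing $l=|I|$ gives exactly the stated fiber $\mathcal{Y}[l]$, while the case $I=\emptyset$ recovers the claim about the fiber over $\{t_{1}\cdots t_{k}\neq 0\}$. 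Tracking the proper transform of $\mathcal{D}\times\mathbb{A}^1$ through the construction identifies the restriction of the distinguished divisor $\mathcal{D}(k)$ to this fiber with $\mathcal{D}[l]$.

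For part $(iii)$: in the iterated--blowup picture the $(\mathbb{C}^*)^k$-action of the second approach is built by the evident induction --- if $\mathcal{Y}(k)$ carries a $(\mathbb{C}^*)^k$-action under which $\mathcal{D}(k)$ is invariant, then $(\mathbb{C}^*)^{k+1}$ acts on $\mathcal{Y}(k)\times\mathbb{A}^1$ with the last factor scaling the new $\mathbb{A}^1$, fixes $\mathcal{D}(k)\times 0$, and hence lifts to $\mathcal{Y}(k+1)$ with $\mathcal{D}(k+1)$ invariant, covering the action $\lambda\cdot t=(\lambda_{1}t_{1},\dots,\lambda_{k+1}t_{k+1})$ on $\mathbb{A}^{k+1}$ and restricting on $\mathcal{D}(k)\cong\mathcal{D}\times\mathbb{A}^k$ to scaling of the $\mathbb{A}^k$-coordinates. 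The twisted first--approach action, and the compatibility of the two, come either from [\cite{Zhou1}, Proposition~2.11] or, equivalently, by combining the scaling $\mathbb{C}^*$ on the $\mathbb{A}^1$-factor of $\mathcal{Y}\times\mathbb{A}^1$ --- which makes $\pi_{1}:\mathcal{Y}(1)\to\mathbb{A}^1$ equivariant --- with the $(\mathbb{C}^*)^{k-1}$-action supplied by Proposition~\ref{property1}$(iii)$ for the $\mathcal{X}(k-1)$-construction on $\mathcal{Y}(1)\to\mathbb{A}^1$, producing a $(\mathbb{C}^*)^1\times(\mathbb{C}^*)^{k-1}=(\mathbb{C}^*)^k$-action whose induced action on $\mathbb{A}^k$ one identifies, after a change of variables, with the stated one. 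The description of the stabilizer action on a fiber isomorphic to $\mathcal{Y}[l]$, the discrete symmetries $\widehat{\tau}_{I,I',\mathcal{Y}}:\mathcal{Y}(k)|_{U_{I}}\xrightarrow{\cong}\mathcal{Y}(k)|_{U_{I'}}$, and the restriction isomorphisms $\mathcal{Y}(k)\times_{\mathbb{A}^k,\tau_{I}}\mathbb{A}^l\cong\mathcal{Y}(l)$ are then inherited from the corresponding assertions of Proposition~\ref{property1}$(iii)$ applied to $\mathcal{X}(k-1)=\mathcal{Y}(1)(k-1)$; and on $\mathcal{D}(k)\cong\mathcal{D}\times\mathbb{A}^k$ the discrete symmetries act by permuting the $\mathbb{A}^k$-coordinates, because $\mathcal{D}(k)$ sits over the locus of $\mathbb{A}^k$ where the maps $\widehat{\tau}_{I,I'}$ are literally coordinate permutations.

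The step I expect to be the main obstacle is the combinatorial bookkeeping underlying part $(iii)$: one must reconcile the two conventions for the base --- $\mathbb{A}^k$ with index set $\{1,\dots,k\}$ as in the statement, versus $\mathbb{A}^{(k-1)+1}$ with index set $\{0,\dots,k-1\}$ coming from the $\mathcal{X}(k-1)$-construction --- and then check that, under the resulting relabeling, the combined $(\mathbb{C}^*)^k$-action, the stratum--to--fiber correspondence of $(ii)$, and the discrete symmetries all match the stated (twisted) formulas; and one must keep track, via [\cite{Zhou1}, Proposition~2.11], that the two constructions yield one and the same pair $(\mathcal{Y}(k),\mathcal{D}(k))$, so that the blowup picture (cleanest for smoothness of the pair and for $\mathcal{D}(k)\cong\mathcal{D}\times\mathbb{A}^k$) and the degeneration picture (cleanest for inheriting Proposition~\ref{property1}) may be used interchangeably.
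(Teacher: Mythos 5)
The paper does not actually prove this proposition --- it is quoted from [\cite{Zhou1}, Proposition 2.16] with no argument given --- but your reduction to Proposition \ref{property1} via $\mathcal{Y}(k)=\mathcal{Y}(1)(k-1)$, switching to the iterated-blowup picture (licensed by [\cite{Zhou1}, Proposition 2.11]) for the smoothness of the pair and for $\mathcal{D}(k)\cong\mathcal{D}\times\mathbb{A}^{k}$, is exactly the route the construction in Section 2.2 is set up to support, and the argument is correct. The only delicate points are the ones you already flag --- the relabeling $\{0,\dots,k-1\}\leftrightarrow\{1,\dots,k\}$ of base coordinates and the identification of the combined $\mathbb{C}^{*}\times(\mathbb{C}^{*})^{k-1}$-action with the stated twisted action of the first approach --- and neither conceals a real obstruction.
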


%\begin{remark}
%The $(\mathbb{C}^*)^k$-action on the local model of $\mathcal{X}(k)$ (see [\cite{Li1}, Section 1.2] for more details) induces a $(\mathbb{C}^*)^k$-equivariant $\acute{e}$tale covering of $\mathcal{X}(k)$, which can be glued to give a $(\mathbb{C}^*)^k$-action on $\mathcal{X}(k)$. 
%\end{remark}

There are some relations between standard families of expanded degenerations and standard families of expanded pairs as follows, which are important for formulating the degeneration formula later.

\begin{proposition}([\cite{Zhou1}, Proposition 2.18])\label{decom}
Let $\pi_{k}:\mathcal{X}(k)\to\mathbb{A}^{k+1}$ be the standard family of length $k$-expanded degenerations and $\mathrm{pr}_{i}:\mathbb{A}^{k+1}\to\mathbb{A}^1$ be the projection to the factor $t_{i}$ $($$0\leq i\leq k$$)$. Then the composition $\pi_{k,i}:=\mathrm{pr}_{i}\circ\pi_{k}:\mathcal{X}(k)\to\mathbb{A}^1$ is a locally simple degeneration. Let $\mathcal{D}_{i}(k)$ be the singular divisor of $\pi_{k,i}$ such that $\mathcal{X}(k)\times_{\pi_{k,i},\mathbb{A}^1}0:=\mathcal{X}(k)_{-}^i\cup_{\mathcal{D}_{i}(k)}\mathcal{X}(k)_{+}^i$ is the central fiber decomposition. Then we have
\ben
\mathcal{D}_{i}(k)\cong\mathbb{A}^i\times\mathcal{D}\times\mathbb{A}^{k-i},\;\;\;\mathcal{X}(k)_{-}^i\cong\mathcal{Y}_{-}(i)\times\mathbb{A}^{k-i},\;\;\;\mathcal{X}(k)_{+}^i\cong\mathbb{A}^i\times\mathcal{Y}_{+}(k-i)^\circ.
\een
Moreover, the restrictions of $\pi_{k}:\mathcal{X}(k)\to\mathbb{A}^{k+1}$ to  $\mathcal{X}(k)_{-}^i$ and $\mathcal{X}(k)_{+}^i$ are given as 
\ben
\xymatrix{
	 & \mathcal{X}(k)_{-}^i\ar[d]^{\pi_{k,i}}\ar[dl]_{\pi_{k,\{0,\cdots,i-1\}}} \ar[rd]^{\mathrm{Id}_{\mathbb{A}^{k-i}}} &    &  &\mathcal{X}(k)_{+}^i\ar[d]^{\pi_{k,i}} \ar[dl]_{\mathrm{Id}_{\mathbb{A}^i}}\ar[rd]^{\pi_{k,\{i+1,\cdots,k\}}^\circ}& \\
\mathbb{A}^i	&  0 &\mathbb{A}^{k-i}, &  \mathbb{A}^i & 0   &\mathbb{A}^{k-i}
}
\een
where the corresponding maps from $\mathcal{Y}_{-}(i)$ to $\mathbb{A}^i$ and  from $\mathcal{Y}_{+}(k-i)^\circ$ to $\mathbb{A}^{k-i}$ are the standard families of expanded pairs. Here, the gluing is along  $\mathcal{D}_{i}(k)$ via isomorphisms $\mathcal{D}_{-}(i)\times\mathbb{A}^{k-i}\cong\mathbb{A}^i\times\mathcal{D}\times\mathbb{A}^{k-i}\cong\mathbb{A}^i\times\mathcal{D}_{+}(k-i)^\circ$, and $\mathcal{D}_{i}(k)\cap\mathcal{D}_{j}(k)=\emptyset$ for $i\neq j$.

All the statements above are compatible with the corresponding $(\mathbb{C}^*)^k$-actions.
\end{proposition}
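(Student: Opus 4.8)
The plan is to reduce all the assertions to an explicit computation on the standard local model and then to globalize via the \'etale groupoid presentation. Recall from the construction above that $\mathcal{X}(k)$ is the stack-theoretic quotient $[R(k)\rightrightarrows U(k)]$ with $U(k)=\coprod_{p}U_{p}(k)$, where for $p\in\mathcal{D}$ the chart $U_{p}(k)$ is the restriction to $U_{p}$ of the length-$k$ expanded degeneration family $\underline{\pi}_{k}\colon\underline{U}(k)\to\mathbb{A}^{k+1}$ of the standard local model $\mathrm{Spec}\,(\mathbb{C}[x,y,t,\mathbf{z}]/(xy-t))\to\mathrm{Spec}\,\mathbb{C}[t]$, while for $p\notin\mathcal{D}$ it is $U_{p}\times_{\mathbb{A}^1}\mathbb{A}^{k+1}$. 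All the claims---that $\pi_{k,i}$ is a locally simple degeneration, the identification of its singular divisor $\mathcal{D}_{i}(k)$ and of the two pieces $\mathcal{X}(k)_{\pm}^{i}$ of its central fibre, the description of the restricted maps, and the disjointness $\mathcal{D}_{i}(k)\cap\mathcal{D}_{j}(k)=\emptyset$---are \'etale-local on $\mathcal{X}(k)$ and compatible with the groupoid relations. Moreover, over a point $p\notin\mathcal{D}$ the map $\pi_{k,i}$ is the locally simple degeneration $U_{p}\to\mathbb{A}^1$ precomposed with the smooth projection $U_{p}\times_{\mathbb{A}^1}\mathbb{A}^{k+1}\to U_{p}$, hence smooth there. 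So it suffices to verify everything on $\underline{\pi}_{k}$, and to match the local models that appear with the ones used above to build $\mathcal{Y}_{-}(i)$ and $\mathcal{Y}_{+}(k-i)^{\circ}$.

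For the local model I would work directly in toric charts. The space $\underline{U}(k)$ is covered by charts $\underline{V}_{0},\dots,\underline{V}_{k}$, where $\underline{V}_{j}$ is a neighborhood of the $j$-th node carrying coordinates $(\xi_{j},\eta_{j})$ together with $(s_{m})_{m\neq j}$, in which $\underline{\pi}_{k}$ is the monomial map given by $t_{m}=s_{m}$ for $m\neq j$ and $t_{j}=\xi_{j}\eta_{j}$; the chart $\underline{V}_{j}$ is glued to $\underline{V}_{j+1}$ along the $(j+1)$-st component of the chain (the transition being defined where $\eta_{j}\neq0$, equivalently where $\xi_{j+1}\neq0$). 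Fix $i$. On $\underline{V}_{i}$ the map $\pi_{k,i}=\mathrm{pr}_{i}\circ\underline{\pi}_{k}$ is $(\xi_{i},\eta_{i},(s_{m})_{m\neq i})\mapsto\xi_{i}\eta_{i}$, whereas on every chart $\underline{V}_{j}$ with $j\neq i$ it is the coordinate $s_{i}$, which is smooth; hence $\pi_{k,i}$ is a locally simple degeneration whose singular divisor is $\mathcal{D}_{i}(k)=\{\xi_{i}=\eta_{i}=0\}\subset\underline{V}_{i}$, with central fibre $\{\xi_{i}=0\}\cup\{\eta_{i}=0\}$ meeting transversally along $\mathcal{D}_{i}(k)$. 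The surviving parameters $t_{0},\dots,t_{i-1}$ and $t_{i+1},\dots,t_{k}$ supply the two affine factors, so after globalizing one obtains $\mathcal{D}_{i}(k)\cong\mathbb{A}^{i}\times\mathcal{D}\times\mathbb{A}^{k-i}$. Reading off the component $\{\eta_{i}=0\}$ in the charts $\underline{V}_{0},\dots,\underline{V}_{i}$ and the component $\{\xi_{i}=0\}$ in the charts $\underline{V}_{i},\dots,\underline{V}_{k}$, and comparing with the iterated blow-up description of standard families of expanded pairs, yields $\mathcal{X}(k)_{-}^{i}\cong\mathcal{Y}_{-}(i)\times\mathbb{A}^{k-i}$ and $\mathcal{X}(k)_{+}^{i}\cong\mathbb{A}^{i}\times\mathcal{Y}_{+}(k-i)^{\circ}$, the superscript $\circ$ being forced because the coordinates $t_{i+1},\dots,t_{k}$ enter the $\mathcal{Y}_{+}$-side in the reversed order. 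For the disjointness, since $\mathcal{D}_{i}(k)$ has $\xi_{i}=\eta_{i}=0$ it avoids the overlaps of $\underline{V}_{i}$ with $\underline{V}_{i\pm1}$, and more generally a look at the charts shows that for every $t\in\mathbb{A}^{k+1}$ the substacks $\mathcal{D}_{i}(k)$ and $\mathcal{D}_{j}(k)$ with $i\neq j$ meet the fibre of $\pi_{k}$ over $t$ in distinct nodes of the corresponding $\mathcal{X}_{0}[l]$; hence $\mathcal{D}_{i}(k)\cap\mathcal{D}_{j}(k)=\emptyset$.

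It then remains to verify the two displayed diagrams. Restricting $\underline{\pi}_{k}$ to $\{\eta_{i}=0\}$ and to $\{\xi_{i}=0\}$ and keeping track of which blow-up centres survive, one sees that the induced map on $\mathcal{Y}_{-}(i)\times\mathbb{A}^{k-i}$ is the standard family $\mathcal{Y}_{-}(i)\to\mathbb{A}^{i}$ in the coordinates $t_{0},\dots,t_{i-1}$ together with $\mathrm{Id}_{\mathbb{A}^{k-i}}$ in $t_{i+1},\dots,t_{k}$, and symmetrically that the induced map on $\mathbb{A}^{i}\times\mathcal{Y}_{+}(k-i)^{\circ}$ is $\mathrm{Id}_{\mathbb{A}^{i}}$ together with the standard family $\mathcal{Y}_{+}(k-i)^{\circ}\to\mathbb{A}^{k-i}$; this is precisely the asserted diagram. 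The gluing isomorphisms $\mathcal{D}_{-}(i)\times\mathbb{A}^{k-i}\cong\mathbb{A}^{i}\times\mathcal{D}\times\mathbb{A}^{k-i}\cong\mathbb{A}^{i}\times\mathcal{D}_{+}(k-i)^{\circ}$ are instances of $\mathcal{D}(i)\cong\mathcal{D}\times\mathbb{A}^{i}$ from Proposition \ref{property2}. Finally, since all the charts, blow-up centres and contractions involved are invariant under the coordinate rescalings defining the $(\mathbb{C}^*)^{k}$-actions of Propositions \ref{property1} and \ref{property2}, every map constructed above is equivariant, which gives the last clause. The main obstacle is the bookkeeping in the middle step---pinning down the toric charts of $\underline{U}(k)$ and the monomial form of $\underline{\pi}_{k}$ in each, and matching these, with the correct ordering of the $t$-coordinates (which is exactly what produces the superscript $\circ$), to the local models defining $\mathcal{Y}_{\pm}(i)$; once the local picture is fixed, the globalization and the equivariance are routine.
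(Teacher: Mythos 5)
The paper offers no proof of this proposition: it is recalled from [\cite{Zhou1}, Proposition 2.18] as part of the preliminaries, so the only meaningful comparison is with the proof in that cited source. Your argument --- reduction to the standard local model via the \'etale groupoid presentation of $\mathcal{X}(k)$, the monomial description of $\pi_{k}$ in the charts around the nodes (so that $\pi_{k,i}$ is $\xi_{i}\eta_{i}$ in the chart at the $i$-th node and a smooth coordinate projection everywhere else), and the matching of the two components of the central fibre with the iterated-blowup constructions of $\mathcal{Y}_{-}(i)$ and $\mathcal{Y}_{+}(k-i)^{\circ}$ (the reversed ordering of $t_{i+1},\dots,t_{k}$ accounting for the superscript $\circ$) --- is essentially the proof given there, which in turn rests on Jun Li's local computation in the variety case [\cite{Li1}], and it is correct.
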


Now, we briefly recall the definition of stacks of expanded degenerations and pairs in [\cite{Zhou1}, Section 2.5] as follows.
Let $\pi:\mathcal{X}\to\mathbb{A}^1$ be a locally simple degeneration, and let  $I, I^\prime\subset\{0,\cdots,k\}$ be two subsets  with $|I|=|I^\prime|=l+1$. 
We first consider the following $\acute{e}$tale equivalence relations:
\ben
R_{\mathrm{discrete},\mathbb{A}^{k+1}}:=\coprod_{1\leq|I|=|I^\prime|\leq k+1}R_{I,I^\prime,\mathbb{A}^{k+1}},\;\;\;\; R_{\mathrm{discrete},\mathcal{X}(k)}:=\coprod_{1\leq|I|=|I^\prime|\leq k+1}R_{I,I^\prime,\mathcal{X}(k)}
\een
where $R_{I,I^\prime,\mathbb{A}^{k+1}}:=\mathbb{A}^{l+1}\times(\mathbb{C}^*)^{k-l}\rightrightarrows\mathbb{A}^{k+1}$ and $R_{I,I^\prime,\mathcal{X}(k)}:=\mathcal{X}(l)\times(\mathbb{C}^*)^{k-l}\rightrightarrows\mathcal{X}(k)$ are the discrete relations induced by open immersions $\mathbb{A}^{l+1}\times(\mathbb{C}^*)^{k-l}\xrightarrow{\cong}U_{I}\hookrightarrow\mathbb{A}^{k+1}$, $\mathbb{A}^{l+1}\times(\mathbb{C}^*)^{k-l}\xrightarrow{\cong}U_{I^\prime}\hookrightarrow\mathbb{A}^{k+1}$ and $\mathcal{X}(l)\times(\mathbb{C}^*)^{k-l}\xrightarrow{\cong}\mathcal{X}(k)|_{U_{I}}\hookrightarrow\mathcal{X}(k)$, $\mathcal{X}(l)\times(\mathbb{C}^*)^{k-l}\xrightarrow{\cong}\mathcal{X}(k)|_{U_{I^\prime}}\hookrightarrow\mathcal{X}(k)$ (see  Proposition \ref{property1}) respectively.
Then we have the discrete relation $R_{\mathrm{discrete},\mathbb{A}^{k+1}}\rightrightarrows\mathbb{A}^{k+1}$, which can be viewed as a subrelation of the $S_{k+1}$-action as follows
\ben
R_{\mathrm{discrete},\mathbb{A}^{k+1}}\hookrightarrow S_{k+1}\times\mathbb{A}^{k+1}\rightrightarrows\mathbb{A}^{k+1}
\een
where $S_{k+1}$ is the symmetric group acting on $\mathbb{A}^{k+1}$ by permutation. Using the  following map
\ben
(\mathbb{C}^*)^k\hookrightarrow(\mathbb{C}^*)^{k+1},\;\;\;\;(\lambda_{1},\cdots,\lambda_{k})\mapsto(\lambda_{1},\lambda_{1}^{-1}\lambda_{2},\cdots,\lambda_{k-1}^{-1}\lambda_{k},\lambda_{k}^{-1}),
\een
one can view the $(\mathbb{C}^*)^k$-action on $\mathbb{A}^{k+1}$ as some  $(\mathbb{C}^*)^{k+1}$-action on $\mathbb{A}^{k+1}$ via the above map. When $k=0$, we have the trivial action on $\mathbb{A}^1$. Then the smooth equivalence relation generated by the $(\mathbb{C}^*)^k$-action and discrete relations is 
\ben
R_{\sim,\mathbb{A}^{k+1}}:=(\mathbb{C}^*)^k\times R_{\mathrm{discrete},\mathbb{A}^{k+1}}\hookrightarrow(\mathbb{C}^*)^{k+1}\rtimes S_{k+1}\times\mathbb{A}^{k+1}\rightrightarrows\mathbb{A}^{k+1},
\een
where the semidirect product $(\mathbb{C}^*)^{k+1}\rtimes S_{k+1}$ is induced by the group $S_{k+1}\subset \mathrm{GL}(k+1)$ acting on $(\mathbb{C}^*)^{k+1}$ by conjugation.
Similarly, one can also define the smooth equivalence relation $R_{\sim,\mathcal{X}(k)}$ on $\mathcal{X}(k)$. Now we have two Artin stacks $[\mathbb{A}^{k+1}/R_{\sim, \mathbb{A}^{k+1}}]$ and $[\mathcal{X}(k)/R_{\sim,\mathcal{X}(k)}]$ with the induced $1$-morphism $\pi:[\mathcal{X}(k)/R_{\sim,\mathcal{X}(k)}]\to[\mathbb{A}^{k+1}/R_{\sim, \mathbb{A}^{k+1}}]$. And we  have embeddings (also open immersions) of Artin stacks $[\mathbb{A}^{l+1}/R_{\sim,\mathbb{A}^{l+1}}]\to[\mathbb{A}^{k+1}/R_{\sim,\mathbb{A}^{k+1}}]$ and $[\mathcal{X}(l)/R_{\sim,\mathcal{X}(l)}]\to[\mathcal{X}(k)/R_{\sim,\mathcal{X}(k)}]$ induced by embeddings $\tau_{I}:\mathbb{A}^{l+1}\hookrightarrow\mathbb{A}^{k+1}$ and $\tau_{I,\mathcal{X}}: \mathcal{X}(l)\cong\mathcal{X}(k)\times_{\mathbb{A}^{k+1},\tau_{I}}\mathbb{A}^{l+1}\hookrightarrow\mathcal{X}(k)$ respectively. Then we have the following defnition of inductive limits.
\begin{definition}([\cite{Zhou1}, Definition 2.22])\label{degen-stacks}
We define
$\mathfrak{C}:=\lim\limits_{\longrightarrow}[\mathbb{A}^{k+1}/R_{\sim,\mathbb{A}^{k+1}}]$ to be the stack of expanded degenerations with respect to the locally simple degeneration $\pi:\mathcal{X}\to\mathbb{A}^1$, and  $\mathfrak{X}:=\lim\limits_{\longrightarrow}[\mathcal{X}(k)/R_{\sim,\mathcal{X}(k)}]
$  to be the universal family of expanded degenerations. We have the following commutative (not Cartesian) diagram
\ben
\xymatrix{
	\mathfrak{X}\ar[d]_{\pi} \ar[r]^{p} &  \mathcal{X} \ar[d]^{\pi}  \\
	\mathfrak{C} \ar[r]^{p}& \mathbb{A}^1 &
}
\een
where the family map $\pi:\mathfrak{X}\to\mathfrak{C}$  is of Deligne-Mumford type and proper.
\end{definition}

For a locally smooth pair $(\mathcal{Y},\mathcal{D})$, one can similarly define smooth equivalence relations $R_{\sim,\mathbb{A}^k}$ on $\mathbb{A}^k$ and $R_{\sim,\mathcal{Y}(k)}$ on $\mathcal{Y}(k)$, and then lead to the following  definition.
\begin{definition}([\cite{Zhou1}, Definition 2.20])\label{pair-stacks}
We define $\mathfrak{A}:=\lim\limits_{\longrightarrow}[\mathbb{A}^{k}/R_{\sim,\mathbb{A}^{k}}]$ to be the stack of expanded pairs with respect to $(\mathcal{Y},\mathcal{D})$, and $\mathfrak{Y}:=\lim\limits_{\longrightarrow}[\mathcal{Y}(k)/R_{\sim,\mathcal{Y}(k)}]$ to be the universal family of expanded pairs. There is a family map $\pi:\mathfrak{Y}\to\mathfrak{A}$, which is of Deligne-Mumford type and proper.
\end{definition}	

The above two definitions have the  categorical interpretations  in [\cite{Zhou1}, Section 2.5] as follows. For a given $\mathbb{A}^1$-map $\xi: S\to\mathbb{A}^{k+1}$, let $\mathtt{X}_{S}:=\xi^*\mathcal{X}(k)$, we call $\pi: \mathtt{X}_{S}\to S$ a family of expanded degenerations over $(S,\xi)$. For the map $\xi$ and  another map obtained from acting on $\xi:S\to\mathbb{A}^{k+1}$  by the equivalence relation $R_{\sim,\mathbb{A}^{k+1}}$, they induce  isomorphic families of expanded degenerations. Given a morphism of $\mathbb{A}^1$-schemes $f:T\to S$ and a $\mathbb{A}^1$-map $\xi: S\to\mathbb{A}^{k+1}$, we have the $\mathbb{A}^1$-map $\xi\circ f: T\to\mathbb{A}^{k+1}$ and the  corresponding family of expanded degenerations $\mathtt{X}_{T}=f^*\mathtt{X}_{S}$ over $T$, which is unique up to $2$-isomorphisms by  the similar argument in the proof of [\cite{Zhou1}, Lemma 2.21]. Similarly, given a map $\xi: S\to\mathbb{A}^k$,  we call $\pi:\mathtt{Y}_{S}:=\xi^*\mathcal{Y}(k)\to S$ a family of expanded pairs over $(S,\xi)$. For a given morphism $f: T\to S$,  the pull-back family of expanded pairs $\mathtt{Y}_{T}=f^*\mathtt{Y}_{S}$ is unique up to $2$-isomorphisms.

\subsection{Admissibility of  sheaves and its numerical criterion}
The admissible sheaves on singular schemes are introduced in [\cite{LW}, Section 3.2], which is an important condition imposed on the objects in moduli stacks of stable quotients and stable pairs (or coherent systems). And there is a numerical criterion to measure the failure of the admissibility of a coherent sheaf in [\cite{LW}, Section 3.3]. In this subsection, we briefly recall the generalized ones for the case of Deligne-Mumford stacks in [\cite{Zhou1}, Section 3 and 5].
Let $\mathcal{U}$ be a separated Deligne-Mumford stack of finite type and $\mathcal{W}\subseteq\mathcal{U}$ be a closed substack.
\begin{definition}([\cite{Zhou1}, Definition 3.1])
We say that a coherent sheaf $\mathcal{G}$ on $\mathcal{U}$ is normal to 	$\mathcal{W}$ if $\mathrm{Tor}_{1}^{\mathcal{O}_{\mathcal{U}}}(\mathcal{G},\mathcal{O}_{\mathcal{W}})=0$. Moreover, $\mathcal{G}$ is said to be normal to $\mathcal{W}$ at a point $p\in\mathcal{W}$ if there is an $\acute{e}$tale neighborhood $i: U_{p}\to\mathcal{U}$ of $p$ such that $i^*\mathcal{G}$ is normal to $\mathcal{W}\times_{\mathcal{U}}U_{p}$.
\end{definition}	
It is shown in [\cite{Zhou1}, Lemma 3.2] that normality is a local property in the $\acute{e}$tale topology. Now, admissible sheaves are defined on (families of) expanded degenerations  and  pairs as follows.
\begin{definition}([\cite{Zhou1}, Definition 3.13])
	A coherent sheaf $\mathcal{G}$ on $\mathcal{X}_{0}[k]$ (resp. $\mathcal{Y}[k]$) is called admissible if it is normal to each $\mathcal{D}_{i}\subset \mathcal{X}_{0}[k]$ (resp. $\mathcal{D}_{i}\subset \mathcal{Y}[k]$) for $0\leq i\leq k$.
	
	Let $\mathtt{X}_{S}\to S$ (resp. $\mathtt{Y}_{S}\to S$) be a family of expanded degenerations (resp. pairs), and let $\mathcal{G}$ be a coherent sheaf on  $\mathtt{X}_{S}$ (resp. $\mathtt{Y}_{S}$) flat over $S$. We say that $\mathcal{G}$ is admissible if for every point  $s\in S$, the fiber $\mathcal{G}_{s}$ is admissible on the fiber $\mathtt{X}_{S,s}$ (resp. $\mathtt{Y}_{S,s}$).
\end{definition}
Admissibility of sheaves is an open condition in the following sense.
\begin{proposition}([\cite{Zhou1}, Proposition 3.16])\label{adm-open}
Let $\mathtt{X}_{S}\to S$ (resp. $\mathtt{Y}_{S}\to S$) be a family of expanded degenerations (resp. pairs), and let $\mathcal{G}$ be a coherent sheaf on  $\mathtt{X}_{S}$ (resp. $\mathtt{Y}_{S}$) and  flat over $S$. Then the set $\{s\in S\;|\; \mathcal{G}_{s} \mbox{ is admissible}\}$ is open in $S$.
\end{proposition}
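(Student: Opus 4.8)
The plan is to reduce the assertion to the openness of flatness, together with elementary facts about flat morphisms of finite type, once the contributions of the individual boundary divisors have been isolated and the base-change behaviour of $\mathrm{Tor}$ has been controlled. I will treat the case of $\mathtt{X}_{S}\to S$; the case of expanded pairs is handled in exactly the same way.

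First, since the statement is local on $S$, I would assume $\mathtt{X}_{S}=\xi^{*}\mathcal{X}(k)$ for a fixed $k$ and an $\mathbb{A}^{1}$-map $\xi\colon S\to\mathbb{A}^{k+1}$, and use that $\pi_{k}\colon\mathcal{X}(k)\to\mathbb{A}^{k+1}$ is flat (see [\cite{Li1}], [\cite{Zhou1}]), so that $\mathtt{X}_{S}\to S$ is flat. By Propositions \ref{property1} and \ref{decom}, the singular divisors of the fibre $\mathtt{X}_{S,s}$ are precisely the nonempty fibres of the closed substacks $Z_{i}:=\mathcal{D}_{i}(S)\subset\mathtt{X}_{S}$ obtained by pulling back $\mathcal{D}_{0}(k),\dots,\mathcal{D}_{k}(k)$; moreover, since $\mathcal{D}_{i}(k)\cong\mathbb{A}^{i}\times\mathcal{D}\times\mathbb{A}^{k-i}$ lies over the hyperplane $\{t_{i}=0\}$, the substack $Z_{i}$ lies over the closed subscheme $S_{i}:=\xi^{-1}(\{t_{i}=0\})\subseteq S$ and is flat and of finite type over $S_{i}$. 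By the definition of admissibility for families (see [\cite{Zhou1}, Definition 3.13]), $\mathcal{G}_{s}$ is admissible if and only if it is normal to each $Z_{i,s}$ (the condition being vacuous where $Z_{i,s}=\emptyset$); hence it suffices to prove that, for each fixed $i$, the bad locus $B_{i}:=\{\,s\in S_{i}\mid \mathcal{G}_{s}\text{ is not normal to }Z_{i,s}\,\}$ is closed in $S_{i}$, and therefore in $S$ since $S_{i}$ is closed in $S$; the non-admissible locus is then the finite union $\bigcup_{i}B_{i}$.

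Next, fix $i$ and write $Z=Z_{i}$. By [\cite{Zhou1}, Lemma 3.2] normality is \'etale-local, so I would pass to a scheme atlas and work with ordinary $\mathrm{Tor}$-sheaves. Restricting over $S_{i}$, the morphism $\mathtt{X}_{S_{i}}\to S_{i}$ and the sheaves $\mathcal{G}|_{\mathtt{X}_{S_{i}}}$ and $\mathcal{O}_{Z}$ are all flat over $S_{i}$; because the last two are $S_{i}$-flat, the complex $\mathcal{G}|_{\mathtt{X}_{S_{i}}}\otimes^{\mathbb{L}}_{\mathcal{O}_{\mathtt{X}_{S_{i}}}}\mathcal{O}_{Z}$ commutes with restriction to the fibre over any $s\in S_{i}$, whose derived fibre is accordingly $\mathcal{G}_{s}\otimes^{\mathbb{L}}_{\mathcal{O}_{\mathtt{X}_{S,s}}}\mathcal{O}_{Z_{s}}$. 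Analysing the associated base-change spectral sequence $E^{2}_{p,q}=\mathrm{Tor}^{\mathcal{O}_{S_{i}}}_{p}\big(H_{q},k(s)\big)\Rightarrow\mathrm{Tor}^{\mathcal{O}_{\mathtt{X}_{S,s}}}_{p+q}(\mathcal{G}_{s},\mathcal{O}_{Z_{s}})$ in total degree $1$ shows that $\mathrm{Tor}^{\mathcal{O}_{\mathtt{X}_{S,s}}}_{1}(\mathcal{G}_{s},\mathcal{O}_{Z_{s}})$ vanishes if and only if two conditions hold: (i) the coherent sheaf $\mathcal{G}|_{Z}:=\mathcal{G}|_{\mathtt{X}_{S_{i}}}\otimes_{\mathcal{O}_{\mathtt{X}_{S_{i}}}}\mathcal{O}_{Z}$ on $Z$ is flat over $S_{i}$ at every point of $Z_{s}$ (controlling the $H_{0}$-contribution $E^{2}_{1,0}$), and (ii) the support of the relative $\mathrm{Tor}$-sheaf $\mathcal{T}:=\mathrm{Tor}^{\mathcal{O}_{\mathtt{X}_{S_{i}}}}_{1}(\mathcal{G}|_{\mathtt{X}_{S_{i}}},\mathcal{O}_{Z})$ is disjoint from $Z_{s}$ (controlling the $H_{1}$-contribution $E^{2}_{0,1}$, once (i) has forced the incoming differential to vanish). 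Hence $s\in B_{i}$ exactly when $Z_{s}$ meets the subset $W\subseteq Z$ equal to the union of the non-flat locus of $\mathcal{G}|_{Z}$ over $S_{i}$ and the support of $\mathcal{T}$.

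Finally, $W$ is closed in $Z$: the non-flat locus of a coherent sheaf over a Noetherian base is closed by the openness of flatness, and the support of a coherent sheaf is closed. Thus $B_{i}=\pi_{Z}(W)$ where $\pi_{Z}\colon Z\to S_{i}$ is flat of finite type. This image is constructible by Chevalley's theorem, and it is stable under specialization: if $s$ is a specialization of a point $s'$ of $S_{i}$ with $W\cap Z_{s'}\neq\emptyset$, then, picking $z'\in W\cap Z_{s'}$ and applying the going-down property of the flat morphism $\pi_{Z}$, the specialization lifts to a specialization $z$ of $z'$ with $z$ in $Z_{s}$, and $z$ lies in $\overline{\{z'\}}\subseteq W$, so $W\cap Z_{s}\neq\emptyset$; as $S_{i}$ is Noetherian, a constructible set stable under specialization is closed. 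Hence $B_{i}$ is closed, and taking the union over the finitely many $i$ shows the non-admissible locus is closed, so the admissible locus is open. The one genuinely delicate point is the middle step: $\mathrm{Tor}$ does not commute with base change in general, so one must first descend to the closed subscheme $S_{i}$ over which both input sheaves become flat before the base-change spectral sequence can be used to pin down exactly which failures of $\mathrm{Tor}_{1}$-vanishing can occur in a fibre — and it is precisely here that the $S$-flatness of $\mathcal{G}$ is used essentially.
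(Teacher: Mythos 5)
Your overall architecture is sound and matches the standard argument: the paper itself gives no proof of this proposition (it only cites [\cite{Zhou1}, Proposition 3.16]), and the proof there likewise reduces to one boundary divisor at a time, isolates a closed ``bad'' subset $W$ of that divisor using the $S$-flatness of $\mathcal{G}$, and characterizes non-admissibility of $\mathcal{G}_{s}$ as the condition that the fibre of the divisor meets $W$. Your $\mathrm{Tor}$ base-change analysis over the stratum $S_{i}$ is correct — in particular condition (i) kills $E^{2}_{2,0}$ along with $E^{2}_{1,0}$, so the incoming differential into $E^{2}_{0,1}$ does vanish and the stated equivalence with the vanishing of $\mathrm{Tor}_{1}$ on the fibre holds.

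The final step is where the proof breaks. Going-down for a flat morphism lifts \emph{generalizations}, not specializations: given $z'$ over $s'$ and $s\in\overline{\{s'\}}$, flatness of $\pi_{Z}$ does not produce a point of $\overline{\{z'\}}$ lying over $s$ (in prime-ideal terms you are asking for going-up, which flat morphisms do not satisfy). Accordingly the image of a closed subset under a flat morphism of finite type need not be closed nor stable under specialization: the image of the closed subset $\{y=0\}$ of $\mathbb{A}^{2}\setminus\{0\}$ under the first projection to $\mathbb{A}^{1}$ is $\mathbb{A}^{1}\setminus\{0\}$. The gap is not cosmetic, because without properness of the divisor the statement itself can fail: for the trivial length-zero family of the pair $(\mathbb{A}^{3},\{x=0\})$ over $\mathbb{A}^{1}_{t}$, the sheaf $\mathcal{G}=\mathcal{O}_{\Gamma}$ with $\Gamma=V(x,z,ty-1)$ is $t$-flat, $\mathcal{G}_{t}$ is a length-one sheaf supported on the divisor (hence not normal to it) for every $t\neq0$, while $\mathcal{G}_{0}=0$ is vacuously normal, so the admissible locus is $\{0\}$, which is not open. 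What saves the argument in the setting actually used here is that $\mathcal{D}$ is projective (and $\pi_{k}$ is proper, Proposition \ref{property1}), so $Z_{i}\to S_{i}$ — a base change of the projection $\mathbb{A}^{i}\times\mathcal{D}\times\mathbb{A}^{k-i}\to\mathbb{A}^{i}\times\mathbb{A}^{k-i}$ — is proper, and $B_{i}=\pi_{Z}(W)$ is closed simply because proper morphisms are closed. This is exactly where [\cite{Zhou1}] and [\cite{LW}] invoke properness of the divisor over the base; with that substitution in place of the Chevalley/going-down detour, your proof is complete.
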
	

Next, we briefly recall the numerical criterion defined in [\cite{Zhou1}, Section 5.3].
Suppose we have the central fiber $\mathcal{X}_{0}=\mathcal{Y}_{-}\cup_{\mathcal{D}}\mathcal{Y}_{+}$ and a smooth pair $(\mathcal{Y},\mathcal{D})$ as in Definition \ref{simple-degeneration}. Assume that $\mathcal{X}_{0}$ and $\mathcal{Y}$ are also projective Deligne-Mumford stacks with moduli schemes $\pi:\mathcal{X}_{0}\to X_{0}$ and $\pi:\mathcal{Y}\to Y$ with polarizations $(\mathcal{E}_{\mathcal{X}_{0}},\mathcal{O}_{X_{0}}(1))$ and $(\mathcal{E}_{\mathcal{Y}},\mathcal{O}_{Y}(1))$ respectively. Let $\mathcal{G}$  be a coherent sheaf on $\mathcal{U}$ and $\mathcal{J}\subset\mathcal{O}_{\mathcal{U}}$ be the ideal sheaf of an effective Cartier divisor $\mathcal{W}\subset\mathcal{U}$. As in [\cite{Zhou1}, Section 3.1],  for an affine $\acute{e}$tale neighborhood $U=\mathrm{Spec}\,A$ of a point in $\mathcal{W}$, the sheaf $\mathcal{G}$ on $U$ is represented by an $A$-module $M$. The sheaf $\mathcal{G}_{\mathcal{J}}$ is defined  locally in an affine neighborhood $U$ with $\mathcal{J}(U)=J$ such that
\ben
\mathcal{G}_{\mathcal{J}}(U):=\{m\in M|\mathrm{ann}(m)\supset J^k \mbox{ for some } k\in\mathbb{Z}_{+}\},
\een
which is the maximal subsheaf of $\mathcal{G}$ supported on $\mathcal{W}$. %Define $\mathcal{G}^{t.f.}:=\mathcal{G}/\mathcal{G}_{\mathcal{J}}$. 
Now, let $\mathcal{H}$ be a coherent sheaf on $\mathcal{X}_{0}[k]$.
Suppose $\mathcal{I}_{i}^{-}$  is the ideal sheaf of $\mathcal{D}_{i-1}\subset\Delta_{i}$ and $\mathcal{I}_{i}^{+}$ is the ideal sheaf of $\mathcal{D}_{i}\subset\Delta_{i}$. Let $\mathcal{I}_{0}^{+}$ be the ideal sheaf of $\mathcal{D}_{0}\subset\mathcal{Y}_{-}$ and $\mathcal{I}_{k+1}^{-}$ be the ideal sheaf of $\mathcal{D}_{k}\subset\mathcal{Y}_{+}$. Assume that $\mathcal{J}_{i}$ is the ideal sheaf of $\mathcal{D}_{i}\subset\mathcal{X}_{0}[k]$. Define $\mathcal{H}^{t.f.}$ to be the quotient in the following short exact sequence
\ben
0\to\bigoplus_{i=0}^{k}\mathcal{H}_{\mathcal{J}_{i}}\to\mathcal{H}\to\mathcal{H}^{t.f.}\to0.
\een
One can similarly define $\mathcal{H}^{t.f.}$ for a coherent sheaf $\mathcal{H}$ on $\mathcal{Y}[k]$, see also [\cite{Zhou1}, Section 5.3].
\begin{definition}([\cite{Zhou1}, Definition 5.12])
For a  sheaf $\mathcal{H}$ on $\mathcal{X}_{0}[k]$, define the $i$-th error of $\mathcal{H}$ by
\ben
\mathrm{Err}_{i}(\mathcal{H})(v):&=&P_{\mathcal{H}_{\mathcal{J}_{i}}}^{\mathcal{E}_{\mathcal{X}_{0}}}(H^{\otimes v})+P_{(\mathcal{H}^{t.f.}|_{\Delta_{i}})_{\mathcal{I}_{i}^{+}}}^{\mathcal{E}_{\mathcal{X}_{0}}}(H^{\otimes v})+P_{(\mathcal{H}^{t.f.}|_{\Delta_{i+1}})_{\mathcal{I}_{i+1}^{-}}}^{\mathcal{E}_{\mathcal{X}_{0}}}(H^{\otimes v})\\
&&-\frac{1}{2}P_{((\mathcal{H}^{t.f.}|_{\Delta_{i}})_{\mathcal{I}_{i}^{+}})|_{\mathcal{D}_{i}}}^{\mathcal{E}_{\mathcal{X}_{0}}}(H^{\otimes v})-\frac{1}{2}P_{((\mathcal{H}^{t.f.}|_{\Delta_{i+1}})_{\mathcal{I}_{i+1}^{-}})|_{\mathcal{D}_{i}}}^{\mathcal{E}_{\mathcal{X}_{0}}}(H^{\otimes v})
\een
for $0\leq i\leq k$, where $H=\pi^*\mathcal{O}_{X_{0}}(1)$ and $P^{\mathcal{E}_{\mathcal{X}_{0}}}_{\bullet}:K^0(\mathcal{X}_{0})\to\mathbb{Z}$ is defined in Section 5.3.

If $\mathcal{H}$ is a sheaf on $\mathcal{Y}[k]$, define the $i$-th error of $\mathcal{H}$ by
\ben
\mathrm{Err}_{i}(\mathcal{H})(v):&=&P_{\mathcal{H}_{\mathcal{J}_{i}}}^{\mathcal{E}_{\mathcal{Y}}}(H^{\otimes v})+P_{(\mathcal{H}^{t.f.}|_{\Delta_{i}})_{\mathcal{I}_{i}^{+}}}^{\mathcal{E}_{\mathcal{Y}}}(H^{\otimes v})+P_{(\mathcal{H}^{t.f.}|_{\Delta_{i+1}})_{\mathcal{I}_{i+1}^{-}}}^{\mathcal{E}_{\mathcal{Y}}}(H^{\otimes v})\\
&&-\frac{1}{2}P_{((\mathcal{H}^{t.f.}|_{\Delta_{i}})_{\mathcal{I}_{i}^{+}})|_{\mathcal{D}_{i}}}^{\mathcal{E}_{\mathcal{Y}}}(H^{\otimes v})-\frac{1}{2}P_{((\mathcal{H}^{t.f.}|_{\Delta_{i+1}})_{\mathcal{I}_{i+1}^{-}})|_{\mathcal{D}_{i}}}^{\mathcal{E}_{\mathcal{Y}}}(H^{\otimes v})
\een
for $0\leq i\leq k-1$ and 
\ben
\mathrm{Err}_{k}(\mathcal{H})(v):=P_{(\mathcal{H}^{t.f.}|_{\Delta_{k}})_{\mathcal{I}_{k}^{+}}}^{\mathcal{E}_{\mathcal{Y}}}(H^{\otimes v})
\een
where $H:=\pi^*\mathcal{O}_{Y}(1)$ and $P^{\mathcal{E}_{\mathcal{Y}}}_{\bullet}:K^0(\mathcal{Y})\to\mathbb{Z}$ is defined in [\cite{Zhou1}, Section 4.3], see also Section 5.3.

For both cases above, define the total error by
\ben
\mathrm{Err}(\mathcal{H})(v):=\sum_{i=0}^k\mathrm{Err}_{i}(\mathcal{H})(v)
\een
\end{definition}
Since  polynomials $P^{\mathcal{E}_{\mathcal{X}_{0}}}_{\bullet}(H^{\otimes v})$ and $P^{\mathcal{E}_{\mathcal{Y}}}_{\bullet}(H^{\otimes v})$  in both cases are Hilbert polynomials due to the  ampleness of $H|_{\mathcal{D}_{i}}$, then $\mathrm{Err}_{i}(\mathcal{H})(v)\geq0$ for $v$ large enough for any $0\leq i\leq k$. And  for each $0\leq i\leq k$, a sheaf $\mathcal{H}$ on $\mathcal{X}_{0}[k]$ is normal to $\mathcal{D}_{i}$ if and only if $\mathcal{H}_{\mathcal{J}_{i}}=(\mathcal{H}^{t.f.}|_{\Delta_{i}})_{\mathcal{I}_{i}^{+}}=(\mathcal{H}^{t.f.}|_{\Delta_{i+1}})_{\mathcal{I}_{i+1}^{-}}=0$, if and only if $\mathrm{Err}_{i}(\mathcal{H})=0$ by [\cite{Zhou1}, Corollary 3.11]. The same result holds for the normality of a sheaf $\mathcal{H}$ on $\mathcal{Y}[k]$ to $\mathcal{D}_{i}$ for each $0\leq i\leq k-1$. While for the distinguished divisor $\mathcal{D}_{k}$ of $\mathcal{Y}[k]$, the sheaf $\mathcal{H}$ is normal to $\mathcal{D}_{k}$ if and only if $(\mathcal{H}^{t.f.}|_{\Delta_{k}})_{\mathcal{I}_{k}^{+}}=0$, if and only if $\mathrm{Err}_{k}(\mathcal{H})=0$  by [\cite{Zhou1}, Proposition 3.5].

Therefore, the numerical criterion is shown  that a coherent sheaf $\mathcal{H}$ on $\mathcal{X}_{0}[k]$ (resp. $\mathcal{Y}[k]$) is admissible if and only if $\mathrm{Err}_{i}(\mathcal{H})=0$ for all $0\leq i\leq k$, if and only if $\mathrm{Err}(\mathcal{H})=0$.

\section{Relative moduli spaces of semistable pairs}
In this section, we will give a construction of relative moduli spaces of semistable pairs, which is a relative version of the one in [\cite{Lyj}]. Let $p: \mathcal{X}\to S$ be a family of projective Deligne-Mumford stacks   with a moduli scheme $\pi:\mathcal{X}\to X$ and a relative polarization  $(\mathcal{E}, \mathcal{O}_{X}(1))$. Assume the stability parameter $\delta\in\mathbb{Q}[m]$ is a polynomial with positive leading coefficient or 0.
\subsection{Semistable pairs on the family of projective Deligne-Mumford stacks}

Let  $\mathcal{F}_{0}$ be a fixed $S$-flat coherent sheaf on $\mathcal{X}$.

\begin{definition}\label{def-pair}
	A pair $(\mathcal{F},\varphi)$  on $\mathcal{X}/S$ consists of  a coherent sheaf $\mathcal{F}$ on $\mathcal{X}/S$ and a morphism $\varphi:\mathcal{F}_{0}\to\mathcal{F}$.  A morphism of pairs $\phi:(\mathcal{F},\varphi)\to(\mathcal{G},\psi)$ is a  morphism of sheaves $\phi:\mathcal{F}\to\mathcal{G}$ such that
	there is an element $\lambda\in k$ making the following  diagram commute
	\ben
	\xymatrix{
		\mathcal{F}_{0}\ar[d]_{\varphi} \ar[r]^{\lambda \cdot \mathrm{id}} &  \mathcal{F}_{0} \ar[d]^{\psi}  \\
		\mathcal{F}\ar[r]_{\phi}&\mathcal{G}  &
	}
	\een
A subpair $(\mathcal{F}^\prime,\varphi^\prime)$ of $(\mathcal{F},\varphi)$ consists of a coherent subsheaf $\mathcal{F}^\prime\subset\mathcal{F}$ and a  morphism $\varphi^\prime:\mathcal{F}_{0}\to\mathcal{F}^\prime$ satisfying
	$i\circ\varphi^\prime=\varphi$ if $\mathrm{im}\varphi\subset\mathcal{F}^\prime$, and $\varphi^\prime=0$ otherwise, where $i$ denotes the inclusion $\mathcal{F}^\prime\hookrightarrow\mathcal{F}$. A quotient pair $(\mathcal{F}^{\prime\prime},\varphi^{\prime\prime})$ consists of a coherent quotient sheaf $q:\mathcal{F}\to\mathcal{F}^{\prime\prime}$ and a  morphism
	$\varphi^{\prime\prime}=q\circ\varphi:\mathcal{F}_{0}\to\mathcal{F}^{\prime\prime}$.
\end{definition}

A pair $(\mathcal{F},\varphi)$ on $\mathcal{X}/S$ or a coherent sheaf $\mathcal{F}$ on $\mathcal{X}/S$ is said to be of dimension $d$ if $\dim\mathcal{F}_{s}=d$ for any geometric point $\mathrm{Spec}\,k\xrightarrow{s} S$. We say a pair $(\mathcal{F},\varphi)$ on $\mathcal{X}/S$ or a coherent sheaf $\mathcal{F}$ on $\mathcal{X}/S$ is  pure if $\mathcal{F}_{s}$ is pure for every geometric point  $s$ of  $S$.
 A pair $(\mathcal{F},\varphi)$ on $\mathcal{X}/S$ is called nontrivial if for each geometric point  $s$ of  $S$, the  morphism $\varphi_{s}: \mathcal{F}_{0,s}\to\mathcal{F}_{s}$ is not zero (also called nontrivial). By [\cite{Lyj}, Remark 2.8],  for each geometric point  $s$ of  $S$, the fiber $\mathcal{X}_{s}$ is a projective Deligne-Mumford stack with a moduli scheme $X_{s}$ and possesses a generating sheaf $\mathcal{E}_{s}$.  Let $P$ be a polynomial of degree $d$, we call  a pair $(\mathcal{F},\varphi)$ on $\mathcal{X}/S$ of type $P$ if  $P_{\mathcal{E}_{s}}(\mathcal{F}_{s})=P$ for each geometric point  $s$ of  $S$. A pair $(\mathcal{F},\varphi)$ on $\mathcal{X}/S$ with $\mathcal{F}$ flat over $S$ is  of type $P$ for some polynomial $P$  due to the following lemma.

\begin{lemma}([\cite{Nir1}, Lemma 3.16])\label{global-hilbpoly}
Let $p: \mathcal{X}\to S$ be a family of projective Deligne-Mumford stacks   with a moduli scheme $\pi:\mathcal{X}\to X$ and a relative polarization  $(\mathcal{E}, \mathcal{O}_{X}(1))$. Assume that $S$ is connected and $\mathcal{F}$ is an $S$-flat sheaf on $\mathcal{X}$. Then there is a polynomial	$P$ such that for every geometric point $\mathrm{Spec}\,k\xrightarrow{s} S$ the modified Hilbert polynomial of the fiber $\chi(\mathcal{X}_{s},\mathcal{F}\otimes\mathcal{E}^\vee\otimes\pi^*\mathcal{O}_{X}(m)|_{\mathcal{X}_{s}})=P(m)$.
\end{lemma}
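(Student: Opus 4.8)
Since $\mathcal{F}$ is $S$-flat, for a geometric point $s:\mathrm{Spec}\,k\to S$ the fiber $\mathcal{X}_s$ is a projective Deligne--Mumford stack with moduli scheme $X_s$ and generating sheaf $\mathcal{E}_s:=\mathcal{E}|_{\mathcal{X}_s}$ (by [\cite{Lyj}, Remark 2.8]), and by Definition \ref{m-h-p} applied to $\mathcal{X}_s$ we have
\[
\chi\big(\mathcal{X}_s,\mathcal{F}_s\otimes\mathcal{E}_s^\vee\otimes\pi_s^*\mathcal{O}_{X_s}(m)\big)=\chi\big(X_s,F_{\mathcal{E}_s}(\mathcal{F}_s)(m)\big),
\]
so the modified Hilbert polynomial of $\mathcal{F}_s$ is the ordinary Hilbert polynomial, with respect to $\mathcal{O}_{X_s}(1)$, of the coherent sheaf $F_{\mathcal{E}_s}(\mathcal{F}_s)=(\pi_s)_*(\mathcal{F}_s\otimes\mathcal{E}_s^\vee)$ on the projective $k$-scheme $X_s$. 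The plan is to realize all these sheaves as the fibers of a single $S$-flat coherent sheaf on $X$ and then invoke the classical constancy of Hilbert polynomials in flat families.

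Concretely, set $G:=F_{\mathcal{E}}(\mathcal{F})=\pi_*(\mathcal{F}\otimes\mathcal{E}^\vee)$, a coherent sheaf on $X$. I would first establish two properties: (i) $G$ is flat over $S$; and (ii) the base-change map $G|_{X_s}\to F_{\mathcal{E}_s}(\mathcal{F}_s)$ is an isomorphism for every geometric point $s$. Both are local on $X$ in the \'etale topology, so one may assume $\mathcal{X}\cong[V/\Gamma]$ with $V$ an affine $S$-scheme and $\Gamma$ a finite group, and then $\pi_*(-)$ becomes ``pushforward along the finite map $q:V\to V/\Gamma=X$ followed by $\Gamma$-invariants''. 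Pushforward along the finite (hence affine) morphism $q$ does not affect flatness over $S$, and since $\mathrm{char}\,k=0$ the Reynolds operator $\frac{1}{|\Gamma|}\sum_{\gamma\in\Gamma}\gamma$ exhibits $\Gamma$-invariants as an $\mathcal{O}_S$-linear direct summand; hence $G$ is $S$-flat. For (ii) one uses, on the same local model, that $q_*$ commutes with arbitrary base change (affine base change) and that taking $\Gamma$-invariants commutes with base change in characteristic $0$; together with the fact that forming the coarse moduli space commutes with base change for such tame stacks, this gives $G|_{X_s}\cong F_{\mathcal{E}_s}(\mathcal{F}_s)$. Alternatively, both (i) and (ii) are part of the standard formalism of the functors $F_{\mathcal{E}},G_{\mathcal{E}}$ in [\cite{Nir1}] and may simply be quoted.

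With (i) and (ii) in hand, the proof concludes quickly: $\rho:X\to S$ is projective and $\mathcal{O}_X(1)$ is $\rho$-very ample, and $G$ is an $S$-flat coherent sheaf on $X$, so by the classical theorem that the Hilbert polynomial is locally constant in a flat family, the function $s\mapsto\big(m\mapsto\chi(X_s,G|_{X_s}(m))\big)$ is locally constant on $S$; since $S$ is connected it is a single polynomial $P\in\mathbb{Q}[m]$. Combining with the displayed identity and (ii), one gets $\chi(\mathcal{X}_s,\mathcal{F}\otimes\mathcal{E}^\vee\otimes\pi^*\mathcal{O}_X(m)|_{\mathcal{X}_s})=\chi(X_s,F_{\mathcal{E}_s}(\mathcal{F}_s)(m))=\chi(X_s,G|_{X_s}(m))=P(m)$ for every geometric point $s$, as desired. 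The only real content is the pair of assertions (i) and (ii), i.e.\ that $F_{\mathcal{E}}$ sends $S$-flat sheaves to $S$-flat sheaves and commutes with base change along $S$; everything else is the scheme-level theory applied on the coarse space. I expect the mild bookkeeping around the local quotient presentations and the compatibility of coarse spaces with base change to be the main (but routine) technical point.
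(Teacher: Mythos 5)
Your proof is correct and follows the same route as the source: the paper itself gives no proof of this lemma but quotes it verbatim from [\cite{Nir1}, Lemma 3.16], and your two key claims (i) and (ii) are precisely the facts the paper invokes elsewhere as [\cite{Nir1}, Corollary 1.3-(3)] (flatness of $F_{\mathcal{E}}(\mathcal{F})$ over $S$) and [\cite{Nir1}, Proposition 1.5] (compatibility of $F_{\mathcal{E}}$ with base change), after which the classical constancy of Hilbert polynomials on the projective coarse space $X/S$ finishes the argument exactly as you describe.
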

%With this result, we will simply write  the modified Hilbert polynomial of a $S$-flat  coherent sheaf $\mathcal{F}$ as .

For every geometric point  $s$ of  $S$, we define the Hilbert polynomial of a pair $(\mathcal{F}_{s},\varphi_{s})$ on $\mathcal{X}_{s}$ as
\ben
P_{(\mathcal{F}_{s},\varphi_{s})}=P_{\mathcal{E}_{s}}(\mathcal{F}_{s})+\epsilon(\varphi_{s})\delta
\een
and the reduced Hilbert polynomial of this pair by
\ben
p_{(\mathcal{F}_{s},\varphi_{s})}=p_{\mathcal{E}_{s}}(\mathcal{F}_{s})+\frac{\epsilon(\varphi_{s})\delta}{r(F_{\mathcal{E}_{s}}(\mathcal{F}_{s}))}
\een
where
\[
\epsilon(\varphi_{s})=
\left\{
\begin{aligned}
&1, \;\;\mbox{if} \;\; \varphi_{s}\neq0; \\
& 0, \;\;\mbox{otherwise}.
\end{aligned}
\right.\]

\begin{definition}\label{semi-sub}
	A pair $(\mathcal{F},\varphi)$ on $\mathcal{X}/S$ is $\delta$-(semi)stable if $\mathcal{F}$ is an $S$-flat coherent sheaf on $\mathcal{X}$ and for each geometric point  $s$ of  $S$, the pair $(\mathcal{F}_{s},\varphi_{s})$ is $\delta$-(semi)stable, that is, $\mathcal{F}_{s}$ is pure and $p_{(\mathcal{F}^\prime_{s},\varphi^\prime_{s})}(\leq) p_{(\mathcal{F}_{s},\varphi_{s})}$ for every proper subpair $(\mathcal{F}_{s}^\prime,\varphi_{s}^\prime)$ of $(\mathcal{F}_{s},\varphi_{s})$. 
\end{definition}
With the above definitions, we also have  the corresponding results in [\cite{Lyj}, Section 2.3] when restricted on the fiber over each geometric point  $s$ of  $S$.
The details are omitted here.

\begin{definition}\label{fam-pair}
	A flat family $(\mathcal{F},\varphi)$ of pairs on $\mathcal{X}/S$ parametrized by a $S$-scheme $T$ consists of a coherent sheaf $\mathcal{F}$ on $\mathcal{X}\times_{S} T$ which is flat over $T$ and a morphism $\varphi: \pi_{\mathcal{X}/S}^*\mathcal{F}_{0}\to\mathcal{F}$, where $\pi_{\mathcal{X}/S}:\mathcal{X}\times_{S} T\to \mathcal{X}$ is the natural projection. Two families $(\mathcal{F},\varphi)$ and $(\mathcal{G},\psi)$ are isomorphic if there is an isomorphism $\Phi:\mathcal{F}\to\mathcal{G}$ such that $\Phi\circ\varphi=\psi$.
\end{definition}

\subsection{Boundedness of the family of semistable pairs}
In this subsection, we will adopt the definition of boundedness of a set-theoretic family of sheaves in [\cite{Nir1}, Definition 4.10].
Since an $S$-flat coherent sheaf $\mathcal{H}$ on $\mathcal{X}$ is also a set-theoretic family of coherent sheaves on $\mathcal{X}$, which is obviously bounded. By Kleiman criterion for stacks in  [\cite{Nir1}, Theorem 4.12], there exists an integer $m$ such that 
$\mathcal{H}_{s}$ is $m$-regular  for every geometric point  $s$ of  $S$ (see  [\cite{Nir1}, Definition 4.1 and 4.2] for   $m$-regularity of $\mathcal{H}_{s}$ on $\mathcal{X}_{s}$). As in [\cite{HL3}, Definition 1.7.3], we define the regularity of an $S$-flat coherent sheaf $\mathcal{H}$   as
\ben
\mathrm{reg}_{\mathcal{E}}(\mathcal{H}):=\inf\{m | \mathcal{H}_{s} \mbox{ is $m$-regular for every geometric point  $s$ of  $S$}\}.
\een
An $S$-flat coherent sheaf $\mathcal{H}$ is said to be $m$-regular if $\mathcal{H}_{s}$ is $m$-regular for any geometric point  $s$ of  $S$.
There is a relative version of Grothendieck lemma in [\cite{Nir1}, Lemma 4.13 and Remark 4.14] as follows.

\begin{lemma}\label{gro}
	Let $p: \mathcal{X}\to S$ be a family of projective Deligne-Mumford stacks   with a moduli scheme $\pi:\mathcal{X}\to X$ and a relative polarization  $(\mathcal{E}, \mathcal{O}_{X}(1))$.  Let $P$ be a  polynomial of degree $d$ $(\leq \mbox{$\dim \mathcal{X}_{s}$ for any geometric point $s$ of  $S$})$ and $\bar{\rho}$ an integer. Assume $\mathcal{F}$ is an $S$-flat  coherent sheaf of dimension $d$ on $\mathcal{X}$ with $P_{\mathcal{E}_{s}}(\mathcal{F}_{s})=P$ for every geometric point  $s$ of  $S$ and $\mathrm{reg}_{\mathcal{E}}(\mathcal{F})\leq \bar{\rho}$. Then there exists a constant
	$C=C(P,\bar{\rho})$ such that  for every  pure $d$-dimensional quotient $\mathcal{F}^\prime_{s}$ $($a quotient of $\mathcal{F}_{s}$$)$ on the fibers of  $\mathcal{X}/S$, we have $\hat{\mu}_{\mathcal{E}_{s}}(\mathcal{F}_{s}^\prime)\geq C$ for every geometric point  $s$ of  $S$. Moreover, the family of pure $d$-dimensional  quotients $\mathcal{F}_{i,s}^\prime$  on the fibers of $\mathcal{X}/S$, $i\in I$ $($for some set of indices $I$$)$ with $\hat{\mu}_{\mathcal{E}_{s}}(\mathcal{F}_{i,s}^\prime)$ for every geometric point  $s$ of  $S$ bounded from above is bounded. 
	
	The similar statement as above is true  for every  saturated subsheaf
	$\mathcal{F}^\prime_{s}\subseteq\mathcal{F}_{s}$ on the fibers of  $\mathcal{X}/S$, that is, the slope $\hat{\mu}_{\mathcal{E}_{s}}(\mathcal{F}^\prime_{s})$  for every geometric point  $s$ of  $S$ is bounded from above, and the family of saturated  subsheaves $\mathcal{F}^\prime_{i,s}\subseteq\mathcal{F}_{s}$ on the fibers of $\mathcal{X}/S$, $i\in I$  with $\hat{\mu}_{\mathcal{E}_{s}}(\mathcal{F}_{i,s}^\prime)$ for every geometric point  $s$ of  $S$ bounded from below such that the  quotient $\mathcal{F}_{s}/\mathcal{F}^\prime_{i,s}$ on the fibers of $\mathcal{X}/S$ is pure  of dimension $d$, is bounded.
\end{lemma}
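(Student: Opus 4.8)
The plan is to transfer the statement from the stack $\mathcal{X}/S$ to its coarse moduli scheme $X/S$ by means of the functor $F_{\mathcal{E}}=\pi_{*}\mathcal{H}om_{\mathcal{O}_{\mathcal{X}}}(\mathcal{E},-)$, which is exact (because $\mathcal{E}$ is locally free and, in characteristic $0$, $\pi_{*}$ is exact) and faithful (because $\mathcal{E}$ is a generating sheaf), and then to apply the classical relative Grothendieck lemma on schemes, whose stacky form is recorded in [\cite{Nir1}, Lemma 4.13 and Remark 4.14]. The two facts that make this transfer effective are: first, by Definition \ref{m-h-p} one has $P_{\mathcal{E}_{s}}(\mathcal{F}_{s})=P(F_{\mathcal{E}_{s}}(\mathcal{F}_{s}))$, so that $\hat{\mu}_{\mathcal{E}_{s}}$ and the multiplicity $r(F_{\mathcal{E}_{s}}(\mathcal{F}_{s}))$ are read off the top two coefficients of the ordinary Hilbert polynomial of the image sheaf on $X_{s}$; second, $F_{\mathcal{E}}$ being exact sends a pure $d$-dimensional quotient (resp. a saturated subsheaf with pure $d$-dimensional quotient) of $\mathcal{F}_{s}$ to a pure $d$-dimensional quotient (resp. subsheaf) of $F_{\mathcal{E}_{s}}(\mathcal{F}_{s})$ on $X_{s}$, preserving support dimension and purity as in Nironi's dictionary between $\mathrm{Coh}(\mathcal{X}_{s})$ and $\mathrm{Coh}(X_{s})$; conversely, since the relative Quot stack on $\mathcal{X}$ is realized (after [\cite{Nir1}]) inside the relative Quot scheme on $X$ via $F_{\mathcal{E}}$, boundedness of a family on $X/S$ pulls back to boundedness of the original family on $\mathcal{X}/S$ through $G_{\mathcal{E}}$.

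Granting this, I would argue as follows. From $\mathrm{reg}_{\mathcal{E}}(\mathcal{F})\leq\bar{\rho}$ and $P_{\mathcal{E}_{s}}(\mathcal{F}_{s})=P$ one obtains, uniformly in $s$, a surjection $\mathcal{O}_{X_{s}}(-\bar{\rho})^{\oplus N}\twoheadrightarrow F_{\mathcal{E}_{s}}(\mathcal{F}_{s})$ with $N=P(\bar{\rho})$ depending only on $P$ and $\bar{\rho}$ (the standard consequence of $\bar{\rho}$-regularity: $F_{\mathcal{E}_{s}}(\mathcal{F}_{s})(\bar{\rho})$ is globally generated with $h^{0}=P(\bar{\rho})$ and vanishing higher cohomology). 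Consequently every pure $d$-dimensional quotient of $\mathcal{F}_{s}$ produces a pure $d$-dimensional quotient of the fixed sheaf $\mathcal{O}_{X_{s}}(-\bar{\rho})^{\oplus N}$. Applying Grothendieck's lemma ([\cite{HL3}, Lemma 1.7.9], or [\cite{Nir1}, Lemma 4.13 and Remark 4.14]) to this family over $S$ produces a constant $C=C(P,\bar{\rho})$ with $\hat{\mu}(\mathcal{F}^{\prime}_{s})\geq C$ for every such quotient, and shows that the subfamily of quotients whose slope is in addition bounded from above is bounded on $X/S$; translating back along $G_{\mathcal{E}}$ gives both assertions of the first part on $\mathcal{X}/S$.

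For the subsheaf statement, let $\mathcal{F}^{\prime}_{s}\subseteq\mathcal{F}_{s}$ be a saturated subsheaf (necessarily of dimension $d$, so that its slope is defined) whose quotient $\mathcal{F}^{\prime\prime}_{s}:=\mathcal{F}_{s}/\mathcal{F}^{\prime}_{s}$ is pure of dimension $d$. Additivity of the modified Hilbert polynomial gives $\alpha_{\mathcal{E},d}(\mathcal{F}^{\prime}_{s})+\alpha_{\mathcal{E},d}(\mathcal{F}^{\prime\prime}_{s})=\alpha_{\mathcal{E},d}(\mathcal{F}_{s})$ and $\alpha_{\mathcal{E},d-1}(\mathcal{F}^{\prime}_{s})+\alpha_{\mathcal{E},d-1}(\mathcal{F}^{\prime\prime}_{s})=\alpha_{\mathcal{E},d-1}(\mathcal{F}_{s})$; since $\alpha_{\mathcal{E},d}$ takes only finitely many values here, an upper bound on $\hat{\mu}_{\mathcal{E}_{s}}(\mathcal{F}^{\prime}_{s})$ is equivalent to a lower bound on $\hat{\mu}_{\mathcal{E}_{s}}(\mathcal{F}^{\prime\prime}_{s})$. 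Hence the quotient part of the lemma applies to $\mathcal{F}^{\prime\prime}_{s}$ and yields the slope bound for $\mathcal{F}^{\prime}_{s}$, and the boundedness of the family of such $\mathcal{F}^{\prime}_{i,s}$ follows by identifying a saturated subsheaf with its pure $d$-dimensional quotient: the kernels of a bounded family of quotients of $\mathcal{F}_{s}$ again form a bounded family, being cut out by the universal subsheaf over a finite-type relative Quot scheme.

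The main obstacle is uniformity over the base: one must guarantee that the single constant $C$ and the single bounding scheme are independent of the geometric point $s$. This is exactly what the two uniform hypotheses buy — the regularity bound $\mathrm{reg}_{\mathcal{E}}(\mathcal{F})\leq\bar{\rho}$ (making $N=P(\bar{\rho})$ and the presenting surjection uniform) and the fixed modified Hilbert polynomial $P$ — together with the availability of Kleiman's boundedness criterion and Grothendieck's lemma in their relative forms after [\cite{Nir1}, Theorem 4.12] and [\cite{Nir1}, Lemma 4.13]; since the statement is phrased fiberwise with a common constant and a common bounding scheme, no hypothesis on $S$ beyond noetherianness enters. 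The only other point needing care, which I would simply cite from [\cite{Nir1}], is that $F_{\mathcal{E}}$ genuinely respects purity and support dimension of the relevant quotients and subsheaves on each fiber; everything else is the classical argument pushed through the exact pair $(F_{\mathcal{E}},G_{\mathcal{E}})$.
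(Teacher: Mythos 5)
Your proposal is correct and follows essentially the same route as the paper's proof: both transfer the problem to the coarse moduli scheme via $F_{\mathcal{E}}$, invoke the classical relative Grothendieck lemma (using the embedding $X\hookrightarrow\mathbb{P}^N_S$ supplied by the relatively very ample $\mathcal{O}_X(1)$, per [\cite{HL3}, Lemma 1.7.9] or [\cite{Gro}, Lemma 2.5]) for the uniform slope bound, and return to the stack through $G_{\mathcal{E}}$ together with Nironi's Kleiman criterion for the boundedness assertion. The paper dispatches the subsheaf half with ``the second statement can be proved similarly,'' so your explicit reduction of it to the quotient case via additivity of the modified Hilbert polynomial is a correct elaboration of the same argument rather than a deviation.
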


\begin{proof}
We will deal with the first statement, the second statement can be proved similarly. Since $\mathcal{O}_{X}(1)$ is very ample relative to $S$, we have some embedding $i:X\to \mathbb{P}^N_{S}$. Then one can use the relative version of  the proof of the first part of [\cite{HL3}, Lemma 1.7.9], or alternatively Grothendieck lemma  [\cite{Gro}, Lemma 2.5] for  bounding  $\hat{\mu}$-slope from below.

For the second part of the first statement, following  the  argument in the proof of [\cite{Nir1}, Lemma 4.13], one can firstly prove that there is a coherent sheaf $\mathcal{G}$ on $X\times_{S}T$ ($T$ is an $S$-scheme of finite type) bounding the family $F_{\mathcal{E}_{s}}(\mathcal{F}_{i,s}^\prime)$, $i\in I$ on the fibers of  $X/S$ by using Grothendieck lemma  [\cite{Gro}, Lemma 2.5] again, and  pull back $\mathcal{G}$ and this family by the functor $G_{p_{\mathcal{X}}^*\mathcal{E}}$ where $p_{\mathcal{X}}:\mathcal{X}\times_{S}T\to \mathcal{X}$ to the stacky case such that the family $\mathcal{F}^\prime_{i}$ ($i\in I$) has appeared in the  quotients of $G_{p_{\mathcal{X}}^*\mathcal{E}}(\mathcal{G})$, and then use Kleiman criterion for stacks in  [\cite{Nir1}, Theorem 4.12-(4)] to assert the boundedness of  the family $\mathcal{F}_{i}^\prime$, $i\in I$.  
\end{proof}

%-----------------------------------------------------
Now we have the following relative version of [\cite{Lyj}, Proposition 3.3].
\begin{proposition}\label{bound1}
	Fix a modified Hilbert polynomial $P$ and some $\delta$. The set-theoretic family
	\ben
	\{\mathcal{F}\arrowvert (\mathcal{F},\varphi)\mbox{ is a nontrivial $\delta$-semistable pair on $\mathcal{X}/S$ of type $P$}\}
	\een
	of coherent sheaves on $\mathcal{X}$ is bounded.
\end{proposition}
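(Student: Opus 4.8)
The plan is to adapt the argument of [\cite{Lyj}, Proposition 3.3] to the relative setting, reducing the assertion to a single uniform bound on the slopes of subsheaves, which is then fed into the relative Grothendieck lemma (Lemma \ref{gro}) and the Kleiman criterion for stacks [\cite{Nir1}, Theorem 4.12]. First I would record, via Lemma \ref{global-hilbpoly}, that every member $\mathcal{F}$ of the family satisfies $P_{\mathcal{E}_{s}}(\mathcal{F}_{s})=P$ for all geometric points $s$, so the multiplicity $r_{0}:=r(F_{\mathcal{E}_{s}}(\mathcal{F}_{s}))=\alpha_{\mathcal{E},d}(P)$ and the reduced Hilbert polynomial $p_{0}:=p_{\mathcal{E}_{s}}(\mathcal{F}_{s})$ depend only on $P$. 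The heart of the matter will be to produce a constant $C=C(P,\delta,\mathcal{F}_{0})$, independent of the pair and of $s$, with $\hat{\mu}_{\mathcal{E}_{s}}(\mathcal{F}'_{s})\leq C$ for every nonzero subsheaf $\mathcal{F}'_{s}\subseteq\mathcal{F}_{s}$, i.e.\ a uniform bound on the maximal subsheaf slope $\hat{\mu}_{\max}(\mathcal{F}_{s})$. Granting this, the usual bound of the Castelnuovo--Mumford regularity in terms of $\hat{\mu}_{\max}$, the multiplicity and the dimension (the relative Le Potier--Simpson estimate, cf.\ [\cite{HL3}, Section 3.3]) shows that all $\mathcal{F}_{s}$ are $m$-regular for one fixed $m$; then each $\mathcal{F}_{s}$ is a quotient of the fixed sheaf $\pi^{*}\mathcal{O}_{X}(-m)^{\oplus P(m)}\otimes\mathcal{E}$ of the fixed slope $\alpha_{\mathcal{E},d-1}(P)/r_{0}$, and boundedness follows from the ``moreover'' clause of Lemma \ref{gro} together with [\cite{Nir1}, Theorem 4.12].

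To obtain the slope bound, fix a nontrivial $\delta$-semistable pair $(\mathcal{F},\varphi)$ of type $P$ and a geometric point $s$, and let $\mathcal{F}'_{s}\subseteq\mathcal{F}_{s}$ be the maximal $\hat{\mu}_{\mathcal{E}_{s}}$-destabilizing subsheaf, which is saturated and satisfies $\hat{\mu}_{\mathcal{E}_{s}}(\mathcal{F}'_{s})=\hat{\mu}_{\max}(\mathcal{F}_{s})$; we may assume $\mathcal{F}'_{s}\subsetneq\mathcal{F}_{s}$. The induced subpair $(\mathcal{F}'_{s},\varphi'_{s})$ has $\varphi'_{s}=\varphi_{s}$ if $\mathrm{im}\,\varphi_{s}\subseteq\mathcal{F}'_{s}$ and $\varphi'_{s}=0$ otherwise. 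In the first case $\delta$-semistability reads $p_{\mathcal{E}_{s}}(\mathcal{F}'_{s})+\delta/r(F_{\mathcal{E}_{s}}(\mathcal{F}'_{s}))\leq p_{0}+\delta/r_{0}$, and $r(F_{\mathcal{E}_{s}}(\mathcal{F}'_{s}))\leq r_{0}$ forces $p_{\mathcal{E}_{s}}(\mathcal{F}'_{s})\leq p_{0}$, hence $\hat{\mu}_{\mathcal{E}_{s}}(\mathcal{F}'_{s})\leq\hat{\mu}_{\mathcal{E}_{s}}(\mathcal{F}_{s})$. If $\mathrm{im}\,\varphi_{s}\not\subseteq\mathcal{F}'_{s}$ and $\deg\delta\leq d-1$, then $\delta$-semistability gives $p_{\mathcal{E}_{s}}(\mathcal{F}'_{s})\leq p_{0}+\delta/r_{0}$, and comparing coefficients of $m^{d-1}$ bounds $\hat{\mu}_{\mathcal{E}_{s}}(\mathcal{F}'_{s})$ above by a constant depending only on $P$ and $\delta$.

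The remaining case, $\mathrm{im}\,\varphi_{s}\not\subseteq\mathcal{F}'_{s}$ with $\deg\delta\geq d$, is the delicate one, since the inequality above carries no information for $m\gg 0$. Here I would first extract from stability the structural fact that $\mathrm{coker}\,\varphi_{s}=\mathcal{F}_{s}/\mathrm{im}\,\varphi_{s}$ has dimension $<d$: applying $\delta$-semistability to subpairs $\mathcal{F}''_{s}$ with $\mathrm{im}\,\varphi_{s}\subseteq\mathcal{F}''_{s}\subsetneq\mathcal{F}_{s}$ and comparing leading coefficients in $p_{\mathcal{E}_{s}}(\mathcal{F}''_{s})+\delta/r(F_{\mathcal{E}_{s}}(\mathcal{F}''_{s}))\leq p_{0}+\delta/r_{0}$ forces $r(F_{\mathcal{E}_{s}}(\mathcal{F}''_{s}))=r_{0}$, and taking $\mathcal{F}''_{s}$ to be the saturation of $\mathrm{im}\,\varphi_{s}$ then gives the claim. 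It follows that $\mathcal{G}_{s}:=\mathcal{F}'_{s}+\mathrm{im}\,\varphi_{s}$ has $\dim(\mathcal{F}_{s}/\mathcal{G}_{s})<d$, so $\mathcal{G}_{s}$ has multiplicity $r_{0}$ and saturation $\mathcal{F}_{s}$, whence $\hat{\mu}_{\mathcal{E}_{s}}(\mathcal{G}_{s})\leq\hat{\mu}_{\mathcal{E}_{s}}(\mathcal{F}_{s})$. On the other hand $Q_{s}:=\mathcal{G}_{s}/\mathcal{F}'_{s}\cong\mathrm{im}\,\varphi_{s}/(\mathrm{im}\,\varphi_{s}\cap\mathcal{F}'_{s})$ is a pure $d$-dimensional quotient of the fixed sheaf $\mathcal{F}_{0,s}$ of multiplicity $\leq r_{0}$, so by Grothendieck's lemma [\cite{Gro}, Lemma 2.5] applied to the $S$-flat sheaf $\mathcal{F}_{0}$ its slope $\hat{\mu}_{\mathcal{E}_{s}}(Q_{s})$ is bounded below by a constant. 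Combining these two bounds with the additivity of $\alpha_{\mathcal{E},d}$ and $\alpha_{\mathcal{E},d-1}$ along $0\to\mathcal{F}'_{s}\to\mathcal{G}_{s}\to Q_{s}\to 0$, and with the fact that $\alpha_{\mathcal{E},d}(\mathcal{F}'_{s})$ takes one of finitely many positive values, bounds $\hat{\mu}_{\mathcal{E}_{s}}(\mathcal{F}'_{s})$ from above.

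I expect this last case to be the main obstacle: the naive semistability inequality degenerates in the large-$\delta$ regime, so one must instead distill from stability that $\varphi_{s}$ almost generates $\mathcal{F}_{s}$ and then transport the boundedness through the fixed auxiliary sheaf $\mathcal{F}_{0}$. The remaining ingredients --- the relative Grothendieck lemma, the regularity estimate, and the Kleiman criterion for stacks --- are available over $S$ for families of projective Deligne--Mumford stacks (Lemma \ref{gro}, [\cite{HL3}, Section 3.3], [\cite{Nir1}, Theorem 4.12]), so only routine bookkeeping is needed to assemble them into the statement.
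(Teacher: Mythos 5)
Your proposal is correct, and its overall skeleton coincides with the paper's: reduce boundedness to a uniform slope bound over all geometric points $s$ of $S$, split into the regimes $\deg\delta<\deg P$ and $\deg\delta\geq\deg P$, and finish with the relative Kleiman-type boundedness criterion ([\cite{Nir1}, Theorem 4.27]). The difference lies in how the delicate large-$\delta$ regime is handled. The paper defers to the absolute case [\cite{Lyj}, Lemma 3.2] and bounds $\hat{\mu}_{\min}(F_{\mathcal{E}_{s}}(\mathcal{F}_{s}))$ from below, applying Le Potier's Lemma 2.12 to $\hat{\mu}_{\min}(\mathcal{O}_{Y_{s}})$ for $Y_{s}$ the scheme-theoretic support of $F_{\mathcal{E}_{s}}(\mathcal{F}_{s})$; the relative content of its proof is then concentrated in making these bounds uniform (the choice of $\bar{s}$ maximizing $\deg\mathcal{O}_{X_{s}}(1)$, the integer $\widetilde{m}$ from the Kleiman criterion, and the flatness of $F_{\mathcal{E}}(\mathcal{F})$ to control the supports $Y_{s}$). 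You instead bound $\hat{\mu}_{\max}$ from above by analyzing the maximal destabilizing subsheaf $\mathcal{F}'_{s}$ directly: you extract from semistability that $\mathrm{coker}\,\varphi_{s}$ has dimension $<d$ (the same structural input that underlies the paper's use of Le Potier's lemma), form $\mathcal{G}_{s}=\mathcal{F}'_{s}+\mathrm{im}\,\varphi_{s}$, and transport the bound through the pure quotient $Q_{s}$ of the fixed sheaf $\mathcal{F}_{0,s}$ via Grothendieck's lemma. These two routes are dual — for a family with fixed modified Hilbert polynomial, bounding $\hat{\mu}_{\min}$ below and bounding $\hat{\mu}_{\max}$ above are equivalent, as the paper itself notes — and your argument is the more self-contained of the two, with the uniformity over $S$ coming cleanly from Lemma \ref{gro} applied to the $S$-flat sheaf $\mathcal{F}_{0}$ rather than from the auxiliary choices $\bar{s}$ and $\widetilde{m}$. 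The only loose end is cosmetic: the ``moreover'' clause of Lemma \ref{gro} is stated for quotients of a $d$-dimensional ambient sheaf, so the final assembly is more directly licensed by [\cite{Nir1}, Theorem 4.27-(1)], exactly as the paper does; this is routine bookkeeping and not a gap.
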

\begin{proof}
First, we choose the geometric point $\bar{s}\in
 S$  such that $\deg(\mathcal{O}_{X_{\bar{s}}}(1)):=\max\{\deg(\mathcal{O}_{X_{s}}(1))|s\in S\}$ as in the proof of [\cite{Nir1}, Lemma 4.26].  By  Kleiman criterion for stacks in [\cite{Nir1}, Theorem 4.12] for the $S$-flat coherent sheaf $\mathcal{E}$  on $\mathcal{X}$  as in the beginning of this subsection, one can choose an integer $\widetilde{m}>0$  such that $\pi_{*}\mathcal{E}nd_{\mathcal{O}_{\mathcal{X}}}(\mathcal{E})(\widetilde{m})$ is generated by global sections, which is certainly true on each fiber of $X\to S$.
For the case when $\deg\delta<\deg P$, one can  apply the  argument in the proof of [\cite{Lyj}, Lemma 3.1] to get the upper bound (a function of $s$) for each geometric point  $s$ of  $S$. With the choices of $\bar{s}$ and $\widetilde{m}$,  one can
get a uniform upper bound for $\{\hat{\mu}_{\mathrm{max}}(F_{\mathcal{E}_{s}}(\mathcal{F}_{s}))| s\in S\}$ depending on the fixed $S$-flat sheaf $\mathcal{F}_{0}$ and $P$. When $\mathrm{deg}\,\delta\geq\mathrm{deg}\,P$, similarly, follow the  argument  in  
the proof of [\cite{Lyj}, Lemma 3.2] to obtain the lower bound for every geometric point  $s$ of  $S$, where we apply  [\cite{LeP}, Lemma 2.12] to $\hat{\mu}_{\mathrm{min}}(\mathcal{O}_{Y_{s}})$ with $Y_{s}$ being the scheme-theoretic  support of $F_{\mathcal{E}_{s}}(\mathcal{F}_{s})$. Using the flatness of $F_{\mathcal{E}}(\mathcal{F})$ over $S$ by [\cite{Nir1}, Corollary 1.3-(3)] or Lemma \ref{global-hilbpoly}, one can obtain a uniform lower bound for $\{\hat{\mu}_{\mathrm{min}}(F_{\mathcal{E}_{s}}(\mathcal{F}_{s}))| s\in S\}$ depending on $P$ and $X$. Since bounding $\hat{\mu}_{\mathrm{min}}$ from below is equivalent to bounding $\hat{\mu}_{\mathrm{max}}$ from above and  the uniform  bound is independent of choices of $\mathcal{F}$ in the set-theoretic family,  the proof is  completed by  [\cite{Nir1}, Theorem 4.27-(1)].
\end{proof}
With the above preparations, we have a relative version of [\cite{Lyj}, Lemma 3.5] which is useful for relating  GIT-(semi)stability with $\delta$-(semi)stability.
\begin{lemma}\label{bound2}
	Let $p: \mathcal{X}\to S$ be a family of projective Deligne-Mumford stacks   with a moduli scheme $\pi:\mathcal{X}\to X$ and a relative polarization  $(\mathcal{E}, \mathcal{O}_{X}(1))$.	
	Assume that $\mathrm{deg}\,\delta<\mathrm{deg}\,P$. Then there is an integer $m_{0}>0$, such that for any integer $m\geq m_{0}$ and any nontrivial pair $(\mathcal{F},\varphi)$ on $\mathcal{X}/S$ satisfying that $\mathcal{F}$ is a  pure $S$-flat coherent sheaf of dimension $d$ on  $\mathcal{X}$ with
	$P_{\mathcal{E}_{s}}(\mathcal{F}_{s})=P$ and $r=r(F_{\mathcal{E}_{s}}(\mathcal{F}_{s}))$,  $\forall\, s\in S$, the following properties are equivalent.\\
	$(i)$ The pair $(\mathcal{F},\varphi)$ on $\mathcal{X}/S$ is $\delta$-(semi)stable.\\
	$(ii)$ For any geometric point  $s$ of  $S$, $P(m)\leq h^0(X_{s}, F_{\mathcal{E}_{s}}(\mathcal{F}_{s})(m))$ and for any  subpair $(\mathcal{F}_{s}^\prime,\varphi_{s}^\prime)$ of $(\mathcal{F}_{s},\varphi_{s})$ with  $r(F_{\mathcal{E}_{s}}(\mathcal{F}^\prime_{s}))=r^\prime_{s}$ satisfying $0<r^\prime_{s}<r$,
	\ben
	h^0(X_{s}, F_{\mathcal{E}_{s}}(\mathcal{F}^\prime_{s})(m))+\epsilon(\varphi^\prime_{s})\delta(m) (\leq)\frac{r^\prime_{s}}{r}(P(m)+\epsilon(\varphi_{s})\delta(m)).
	\een
	$(iii)$ For any geometric point  $s$ of  $S$ and for any quotient pair $(\mathcal{G}_{s},\varphi^{\prime\prime}_{s})$ of $(\mathcal{F}_{s},\varphi_{s})$ with  $r(F_{\mathcal{E}_{s}}(\mathcal{G}_{s}))=r^{\prime\prime}_{s}$ satisfying $0<r^{\prime\prime}_{s}<r$,
	\ben
	\frac{r^{\prime\prime}_{s}}{r}(P(m)+\epsilon(\varphi_{s})\delta(m)) (\leq) h^0(X_{s}, F_{\mathcal{E}_{s}}(\mathcal{G}_{s})(m))+\epsilon(\varphi^{\prime\prime}_{s})\delta(m).
	\een
\end{lemma}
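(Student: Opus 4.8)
The plan is to derive the statement from its fiberwise version, [\cite{Lyj}, Lemma 3.5] (a stacky analogue of the classical lemmas in [\cite{HL3}, Chapter 4]); since each of $(i)$, $(ii)$, $(iii)$ asserts a property holding at every geometric point of $S$, it suffices to produce a threshold $m_{0}$ valid simultaneously for all fibers, and this uniformity is exactly what Proposition \ref{bound1}, the relative Grothendieck Lemma \ref{gro}, and Kleiman's criterion for stacks [\cite{Nir1}, Theorem 4.12] make available. The first step is to package everything into one bounded family. From Proposition \ref{bound1} and the explicit estimates in its proof (which use $\deg\delta<\deg P$), there is a constant $B_{0}$, depending only on $P$, $\mathcal{F}_{0}$ and $X$, with $\hat{\mu}_{\max}(F_{\mathcal{E}_{s}}(\mathcal{F}_{s}))\leq B_{0}$ for all $s\in S$ whenever $(\mathcal{F},\varphi)$ is a nontrivial $\delta$-semistable pair of type $P$. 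Fix a constant $B\geq B_{0}$, larger than the slope $\hat{\mu}_{\mathcal{E}_{s}}(\mathcal{F}_{s})$ (which is determined by $P$) and large enough for the numerical estimate in the last step below. Applying Lemma \ref{gro} to the $S$-flat family of modified Hilbert polynomial $P$, the saturated subsheaves $\mathcal{F}_{s}'\subseteq\mathcal{F}_{s}$ with $\hat{\mu}_{\mathcal{E}_{s}}(\mathcal{F}_{s}')\leq B$ and pure $d$-dimensional quotient, and likewise the pure $d$-dimensional quotient sheaves $\mathcal{G}_{s}$ of $\mathcal{F}_{s}$ with $\hat{\mu}_{\mathcal{E}_{s}}(\mathcal{G}_{s})\geq -B$, together form a bounded family $\mathcal{S}$ over $S$.

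Next, by [\cite{Nir1}, Theorem 4.12] there is an integer $m_{0}>0$ such that every member of $\mathcal{S}$ is $m$-regular for all $m\geq m_{0}$; for such $m$ and $\mathcal{H}_{s}\in\mathcal{S}$ this yields $H^{i}(X_{s},F_{\mathcal{E}_{s}}(\mathcal{H}_{s})(m))=0$ for $i>0$, global generation of $F_{\mathcal{E}_{s}}(\mathcal{H}_{s})(m)$, and $h^{0}(X_{s},F_{\mathcal{E}_{s}}(\mathcal{H}_{s})(m))=P_{\mathcal{E}_{s}}(\mathcal{H}_{s})(m)$. I also record that $F_{\mathcal{E}_{s}}=\pi_{*}\mathcal{H}om_{\mathcal{O}_{\mathcal{X}_{s}}}(\mathcal{E}_{s},-)$ is exact, since $\mathcal{E}_{s}$ is locally free and, in characteristic $0$, the coarse moduli morphism has no higher direct images on coherent sheaves; hence it takes a short exact sequence of pairs to a short exact sequence on $X_{s}$.

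With these preliminaries the equivalences are checked fiberwise following [\cite{Lyj}, Lemma 3.5]. Assume first that $\hat{\mu}_{\max}(F_{\mathcal{E}_{s}}(\mathcal{F}_{s}))\leq B$ for every $s$, so that $\mathcal{F}_{s}$ and all destabilizing sub- and quotient sheaves of the relevant slope lie in $\mathcal{S}$. Then $\delta$-(semi)stability of $(\mathcal{F}_{s},\varphi_{s})$ — the inequality $p_{(\mathcal{F}_{s}',\varphi_{s}')}\,(\leq)\,p_{(\mathcal{F}_{s},\varphi_{s})}$ of reduced Hilbert polynomials over all proper subpairs — may be tested on the subsheaves of $\mathcal{F}_{s}$ generated by $H^{0}(F_{\mathcal{E}_{s}}(\mathcal{F}_{s}')(m))$; by the regularity statement the asymptotic comparison then becomes, for $m\geq m_{0}$, exactly the numerical inequality in $(ii)$ read off at $m$, the clause $P(m)\leq h^{0}(X_{s},F_{\mathcal{E}_{s}}(\mathcal{F}_{s})(m))$ being the $m$-regularity of $\mathcal{F}_{s}$. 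The equivalence $(ii)\Leftrightarrow(iii)$ is the passage between a subpair $(\mathcal{F}_{s}',\varphi_{s}')$ and the quotient pair $(\mathcal{F}_{s}/\mathcal{F}_{s}',\varphi_{s}'')$: replacing $\mathcal{F}_{s}'$ by its saturation keeps $r_{s}'$ fixed, only enlarges the relevant $h^{0}$, and makes the quotient pure, after which the exact sequence of the previous paragraph gives $P(m)=h^{0}(F_{\mathcal{E}_{s}}(\mathcal{F}_{s}')(m))+h^{0}(F_{\mathcal{E}_{s}}(\mathcal{F}_{s}/\mathcal{F}_{s}')(m))$ and $\epsilon(\varphi_{s}')+\epsilon(\varphi_{s}'')=\epsilon(\varphi_{s})$, so one inequality is equivalent to the other. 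If instead $\hat{\mu}_{\max}(F_{\mathcal{E}_{\bar{s}}}(\mathcal{F}_{\bar{s}}))>B$ for some geometric point $\bar{s}$, take the maximal destabilizing subsheaf of $F_{\mathcal{E}_{\bar{s}}}(\mathcal{F}_{\bar{s}})$ (semistable, of slope $>B$ and multiplicity $<r$) and pull it back through the adjunction $\theta_{\mathcal{E}_{\bar{s}}}$ to a subpair $(\mathcal{F}_{\bar{s}}',\varphi_{\bar{s}}')$: since $B$ exceeds the slope of $\mathcal{F}_{\bar{s}}$ and $\deg\delta<\deg P$, the slope gap dominates the $\delta$-perturbation, whence $p_{(\mathcal{F}_{\bar{s}}',\varphi_{\bar{s}}')}>p_{(\mathcal{F}_{\bar{s}},\varphi_{\bar{s}})}$ and $(i)$ fails; and for $B$ large this high-slope semistable sheaf has vanishing higher cohomology after twisting by $m_{0}$, so $h^{0}$ of the twist equals its Hilbert polynomial, which is too large to satisfy $(ii)$, and dually $(iii)$. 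Thus all three statements are false and the equivalence holds vacuously.

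The real content lies in the first two steps: the delicate point is that $m_{0}$ must regularize $\mathcal{F}_{s}$ together with \emph{every} destabilizing sub- and quotient sheaf, uniformly in $s$, which is precisely what Lemma \ref{gro} and [\cite{Nir1}, Theorem 4.12] supply in the relative stacky setting. Everything else — the reduction to subsheaves generated by global sections, the dictionary between asymptotic polynomial inequalities and the single-value inequalities of $(ii)$ and $(iii)$, and the compatible choice of $B$ and $m_{0}$ that makes the bounded/unbounded dichotomy reconcile the ``arbitrary pure $\mathcal{F}$'' hypothesis with boundedness — follows [\cite{Lyj}, Lemma 3.5] and [\cite{HL3}] essentially verbatim on fibers.
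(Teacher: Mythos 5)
Your overall strategy is the same as the paper's: reduce to the fiberwise statement [\cite{Lyj}, Lemma 3.5] and make the threshold $m_{0}$ uniform over $S$ by combining Proposition \ref{bound1}, the relative Grothendieck Lemma \ref{gro}, and Kleiman's criterion [\cite{Nir1}, Theorem 4.12]. The bounded-family half of your argument (uniform $m$-regularity, exactness of $F_{\mathcal{E}_{s}}$, additivity of $h^{0}$ and of $\epsilon$ across a short exact sequence of pairs, and reading the polynomial inequalities off at the single value $m$) matches what the paper does. One correction there: the family that Lemma \ref{gro} actually bounds consists of saturated subsheaves with slope bounded from \emph{below} (equivalently, pure quotients with slope bounded from \emph{above}); your conditions $\hat{\mu}_{\mathcal{E}_{s}}(\mathcal{F}'_{s})\leq B$ and $\hat{\mu}_{\mathcal{E}_{s}}(\mathcal{G}_{s})\geq -B$ point in the wrong directions and, taken literally, do not cut out bounded families.

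The genuine gap is in the only step you argue differently from the cited source, namely the case $\hat{\mu}_{\mathrm{max}}(F_{\mathcal{E}_{\bar{s}}}(\mathcal{F}_{\bar{s}}))>B$. You assert that the maximal destabilizing subsheaf, being semistable of slope $>B$, ``has vanishing higher cohomology after twisting by $m_{0}$, so $h^{0}$ of the twist equals its Hilbert polynomial.'' This cannot be extracted from regularity: as $(\mathcal{F},\varphi)$ ranges over all pure pairs of type $P$, these maximal destabilizing subsheaves have unbounded slope and therefore do not form a bounded family, so no single $m_{0}$ regularizes them; large slope alone does not force $h^{i}=0$ for $i\geq 2$ when $d\geq 2$; and the lower-order coefficients of their Hilbert polynomials are uncontrolled, so the comparison with $\frac{r'_{s}}{r}(P(m)+\epsilon(\varphi_{s})\delta(m))$ at the fixed $m$ is unavailable. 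The tool that actually handles subsheaves and quotients falling outside the bounded family --- and the one piece of genuine content in the paper's proof beyond the reduction to [\cite{Lyj}, Lemma 3.5] --- is the Le Potier--Simpson estimate of [\cite{Lyj}, Lemma 3.4], the inequality displayed in the paper's proof, which bounds $h^{0}(X_{s},F_{\mathcal{E}_{s}}(\mathcal{F}'_{s})(m))/r'_{s}$ for \emph{arbitrary} subsheaves in terms of $\hat{\mu}_{\mathrm{max}}$ and $\hat{\mu}$, with the constant $C$ made uniform over $S$ via the point $\bar{s}$ maximizing $\deg\mathcal{O}_{X_{s}}(1)$ and the uniform integer $\widetilde{m}$ chosen in the proof of Proposition \ref{bound1}. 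You need to reinstate this estimate (with its uniform constants) in place of the regularity argument; without it the equivalences cannot be closed outside the bounded family.
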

\begin{proof}
Notice that it follows from [\cite{Lyj}, Lemma 3.4] that for any geometric point  $s$ of  $S$ and  any  subpair $(\mathcal{F}_{s}^\prime,\varphi_{s}^\prime)$ of $(\mathcal{F}_{s},\varphi_{s})$ with  $r(F_{\mathcal{E}_{s}}(\mathcal{F}^\prime_{s}))=r^\prime_{s}$ satisfying $0<r^\prime_{s}<r$, we have
\ben
	\frac{h^0(X_{s},F_{\mathcal{E}_{s}}(\mathcal{F}^\prime_{s})(m))}{r_{s}^\prime}&\leq&\frac{r_{s}^\prime-1}{r_{s}^\prime}\left[\binom{\hat{\mu}_{\mathrm{max}}(F_{\mathcal{E}_{s}}(\mathcal{F}^\prime_{s}))+m+C_{s}}{d}\right]_{+}+\frac{1}{r_{s}^\prime}\left[\binom{\hat{\mu}(F_{\mathcal{E}_{s}}(\mathcal{F}^\prime_{s}))+m+C_{s}}{d}\right]_{+}\\
	&\leq&\frac{r-1}{r}\left[\binom{\hat{\mu}_{\mathrm{max}}(F_{\mathcal{E}_{s}}(\mathcal{F}^\prime_{s}))+m+C}{d}\right]_{+}+\frac{1}{r}\left[\binom{\hat{\mu}(F_{\mathcal{E}_{s}}(\mathcal{F}^\prime_{s}))+m+C}{d}\right]_{+}
\een
	where  $C_{s}:=\widetilde{m}\deg(\mathcal{O}_{X_{s}}(1))+r_{s}^{\prime 2}+f(r_{s}^\prime)+\frac{d-1}{2}$ and $C:=\widetilde{m}\deg(\mathcal{O}_{X_{\bar{s}}}(1))+r^2+f(r)+\frac{d-1}{2}$. Here,  $[x]_{+}:=\max\{0,x\}$,  $f(r)=-1+\sum_{i=1}^r\frac{1}{i}$, and $\bar{s}$, $\widetilde{m}$ are chosen as in the proof of  Proposition \ref{bound1}. Now the proof follows from the similar argument in [\cite{Lyj}, Lemma 3.5] by applying Proposition \ref{bound1}, Grothendieck Lemma \ref{gro} which is used for two set-theoretic families restricted on the associated closed subschemes of $S$ (where the flatness of $\mathcal{F}$ is preserved by base change) induced respectively by bounding the slope of $F_{\mathcal{E}_{s}}(\mathcal{F}_{s}^\prime)$ and $F_{\mathcal{E}_{s}}(\mathcal{G}_{s})$ (e.g., as in Case $(a)$ and Case $(\tilde{b})$ in the proof of [\cite{Lyj}, Lemma 3.5]), and Kleiman criterion for stacks in  [\cite{Nir1}, Theorem 4.12].
\end{proof}

Similarly, the relative version of  [\cite{Lyj}, Lemma 3.6] is presented as follows.
\begin{lemma}\label{bound3}
Let $p: \mathcal{X}\to S$ be a family of projective Deligne-Mumford stacks   with a moduli scheme $\pi:\mathcal{X}\to X$ and a relative polarization  $(\mathcal{E}, \mathcal{O}_{X}(1))$.
	Assume that $\mathrm{deg}\,\delta\geq\mathrm{deg}\,P$. Then there is an integer $m_{0}>0$, such that for any integer $m\geq m_{0}$ and any nontrivial pair $(\mathcal{F},\varphi)$ on $\mathcal{X}/S$ satisfying that $\mathcal{F}$ is a  pure $S$-flat coherent sheaf of dimension $d$ on  $\mathcal{X}$ with
		$P_{\mathcal{E}_{s}}(\mathcal{F}_{s})=P$ and $r=r(F_{\mathcal{E}_{s}}(\mathcal{F}_{s}))$,  $\forall\, s\in S$, the following properties are equivalent.\\
	$(i)$ The pair $(\mathcal{F},\varphi)$ on $\mathcal{X}/S$ is $\delta$-stable.\\
	$(ii)$ For any geometric point  $s$ of  $S$,  $P(m)\leq h^0(X_{s}, F_{\mathcal{E}_{s}}(\mathcal{F}_{s})(m))$ and for any  subpair $(\mathcal{F}_{s}^\prime,\varphi^\prime_{s})$ of $(\mathcal{F}_{s},\varphi_{s})$ with  $r(F_{\mathcal{E}_{s}}(\mathcal{F}^\prime_{s}))=r^\prime_{s}$ satisfying $0<r^\prime_{s}<r$,
	\ben
	\frac{h^0(X_{s}, F_{\mathcal{E}_{s}}(\mathcal{F}^\prime_{s})(m))}{2r^\prime_{s}-\epsilon(\varphi^\prime_{s})} <\frac{h^0(X_{s}, F_{\mathcal{E}_{s}}(\mathcal{F}_{s})(m))}{2r-\epsilon(\varphi_{s})}.
	\een
	$(iii)$ For any geometric point  $s$ of  $S$ and for any quotient pair $(\mathcal{G},\varphi^{\prime\prime})$ with  $r(F_{\mathcal{E}_{s}}(\mathcal{G}_{s}))=r^{\prime\prime}_{s}$ satisfying $0<r^{\prime\prime}_{s}<r$,
	\ben
	\frac{P(m)}{2r-\epsilon(\varphi_{s})} <\frac{h^0(X_{s}, F_{\mathcal{E}_{s}}(\mathcal{G}_{s})(m))}{2r^{\prime\prime}_{s}-\epsilon(\varphi^{\prime\prime}_{s})}.
	\een
\end{lemma}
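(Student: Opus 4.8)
The plan is to transfer the statement to the coarse-moduli side via the functor $F_{\mathcal{E}_s}$ and then invoke the known absolute result over each geometric fiber, upgrading to the relative setting exactly as in the proof of Lemma \ref{bound2}. First I would recall that, by the numerical input collected in the proof of Lemma \ref{bound2} (via [\cite{Lyj}, Lemma 3.4]), for every geometric point $s$ of $S$ and every subpair $(\mathcal{F}_s',\varphi_s')$ with $0<r_s'<r$ one has the two-term upper bound on $h^0(X_s,F_{\mathcal{E}_s}(\mathcal{F}_s')(m))/r_s'$ in terms of $\hat{\mu}_{\max}$ and $\hat{\mu}$ of $F_{\mathcal{E}_s}(\mathcal{F}_s')$, with the uniform constant $C:=\widetilde{m}\deg(\mathcal{O}_{X_{\bar s}}(1))+r^2+f(r)+\frac{d-1}{2}$ chosen as in Proposition \ref{bound1}. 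Together with Proposition \ref{bound1} (boundedness of the $\mathcal{F}$'s, hence a uniform bound on $\hat{\mu}_{\max}(F_{\mathcal{E}_s}(\mathcal{F}_s))$) and Lemma \ref{gro} (Grothendieck boundedness for the families of destabilizing subsheaves and quotients), this gives a bounded family of pairs $(F_{\mathcal{E}_s}(\mathcal{F}_s'),\varphi_s')$ and $(F_{\mathcal{E}_s}(\mathcal{G}_s),\varphi_s'')$ appearing as, respectively, subpairs and quotient pairs, over all of $S$.

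Next I would apply Serre vanishing/Kleiman's criterion for stacks [\cite{Nir1}, Theorem 4.12] to the finitely many bounded set-theoretic families produced above: there is an integer $m_0>0$ such that for all $m\geq m_0$ and all geometric points $s$ of $S$, the higher cohomology of $F_{\mathcal{E}_s}(\mathcal{F}_s)(m)$, of every relevant subsheaf $F_{\mathcal{E}_s}(\mathcal{F}_s')(m)$, and of every relevant quotient $F_{\mathcal{E}_s}(\mathcal{G}_s)(m)$ vanishes, so that $h^0=\chi$ for all of these. In particular $h^0(X_s,F_{\mathcal{E}_s}(\mathcal{F}_s)(m))=P(m)$, which is the first inequality in $(ii)$ and $(iii)$. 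With this in place, the numerical content of $\delta$-stability — now in the regime $\deg\delta\geq\deg P$ — is precisely the statement that for every proper subpair $p_{(\mathcal{F}_s',\varphi_s')}<p_{(\mathcal{F}_s,\varphi_s)}$, which after clearing denominators and using $\chi=h^0$ becomes the displayed strict inequality in $(ii)$; this is exactly the computation carried out in [\cite{Lyj}, Lemma 3.6] on each fiber $\mathcal{X}_s$. The equivalence of $(ii)$ and $(iii)$ is the usual sub/quotient duality: a subpair $(\mathcal{F}_s',\varphi_s')$ destabilizes if and only if the quotient pair $(\mathcal{F}_s/\mathcal{F}_s',\varphi_s'')$ does, together with the identity $h^0(F_{\mathcal{E}_s}(\mathcal{F}_s')(m))+h^0(F_{\mathcal{E}_s}(\mathcal{F}_s/\mathcal{F}_s')(m))=P(m)$ valid in the range $m\geq m_0$ and the numerical relation $r'_s+r''_s=r$, $\epsilon(\varphi_s')+\epsilon(\varphi_s'')$ matching $\epsilon(\varphi_s)$ appropriately (with the understanding that only one of a destabilizing sub/quotient pair carries the section). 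Here one must be a little careful that it suffices to test saturated subsheaves, since $\hat{\mu}_{\mathcal{E}_s}$ only increases under saturation and purity of $\mathcal{F}_s/\mathcal{F}_s'$ is preserved — this is why Lemma \ref{gro} was invoked with the saturated-subsheaf hypothesis.

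The main obstacle, as in Lemma \ref{bound2}, is the uniformity of $m_0$ over the base $S$: a priori the vanishing bound and the constant $C$ could vary with $s$, so one must genuinely use the boundedness statements (Proposition \ref{bound1} and Lemma \ref{gro}) to collect the destabilizing sub- and quotient sheaves over all fibers into finitely many bounded families, and then apply Kleiman's criterion once. The technical point is that Lemma \ref{gro} must be applied to the two set-theoretic families after restricting to the appropriate closed subschemes of $S$ on which the relevant slope bounds hold and on which flatness of $\mathcal{F}$ is preserved under base change — precisely the mechanism indicated in the proof of Lemma \ref{bound2} (Case $(a)$ and Case $(\tilde b)$ of [\cite{Lyj}, Lemma 3.5]). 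Once uniformity is secured, the fiberwise equivalences from [\cite{Lyj}, Lemma 3.6] assemble immediately into the relative statement, since $\delta$-stability on $\mathcal{X}/S$ is defined fiberwise (Definition \ref{semi-sub}). I would therefore structure the write-up as: (1) cite the numerical bound and boundedness results to get finitely many bounded families; (2) choose $m_0$ by Kleiman's criterion so $h^0=\chi$ for all of them and all $s$; (3) quote [\cite{Lyj}, Lemma 3.6] fiberwise for the equivalence of $(i)$ with the fiberwise versions of $(ii)$ and $(iii)$; (4) conclude by the fiberwise definition of $\delta$-stability.
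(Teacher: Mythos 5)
Your proposal is correct and follows essentially the same route the paper takes: the paper gives no separate proof of Lemma \ref{bound3}, stating only that it is obtained ``similarly'' to Lemma \ref{bound2}, whose proof is exactly your strategy — the numerical bound from [Lyj, Lemma 3.4], boundedness via Proposition \ref{bound1} and Lemma \ref{gro} (applied to the two set-theoretic families of destabilizing subsheaves and quotients restricted to suitable closed subschemes of $S$), a uniform $m_0$ from Kleiman's criterion, and then the fiberwise equivalence from [Lyj, Lemma 3.6]. Your additional remarks on saturation and sub/quotient duality are consistent with how that fiberwise lemma is used.
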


\subsection{Construction of relative moduli spaces of semistable pairs}
We will give a moduli functor of $\delta$-(semi)stable pairs on $\mathcal{X}/S$, and then use GIT machinery to prove the existence of a relative moduli space for this functor. We make the convention that $\delta$-(semi)stable pairs  in the following are   nontrivial. Let $P$ be a given polynomial of degree $d$ such that $d\leq\dim\mathcal{X}_{s}$ for any geometric point  $s$ of  $S$. 
\begin{definition}\label{moduli-functor}
	Define a functor
	\ben
	\mathcal{M}^{ss}_{\mathcal{X}/S}(\mathcal{F}_{0},P,\delta): (\mathrm{Sch}/S)^\circ\to(\mathrm{Sets})
	\een
	as follows. If $T$ is an $S$-scheme of finite type, let $\mathcal{M}^{ss}_{\mathcal{X}/S}(\mathcal{F}_{0},P,\delta)(T)$ be the set of isomorphism classes of flat families of $\delta$-semistable pairs  $(\mathcal{F},\varphi)$ on $\mathcal{X}\times_{S}T$ with  
	modified Hilbert polynomial $P$ parametrized by  $T$, that is, such a flat family $(\mathcal{F},\varphi)$ satisfies that for each point $t\in T$, the pair $(\mathcal{F}_{t},\varphi|_{(\pi_{\mathcal{X}/S}^*\mathcal{F}_{0})_{t}})$ is a $\delta$-semistable pair on $\mathcal{X}\times_{S}\mathrm{Spec}(k(t))$ with  modified Hilbert polynomial $P_{\check{\pi}_{t}^*\mathcal{E}}(\mathcal{F}_{t})=P$ where $\pi_{\mathcal{X}/S}:\mathcal{X}\times_{S} T\to \mathcal{X}$ and  $\check{\pi}_{t}:\mathcal{X}\times_{S}\mathrm{Spec}(k(t))\to\mathcal{X}$ are the projections. And for every morphism of $S$-schemes $f: T^\prime\to T$, we obtain a map
	\ben
	\mathcal{M}^{ss}_{\mathcal{X}/S}(\mathcal{F}_{0},P,\delta)(f):\mathcal{M}^{ss}_{\mathcal{X}/S}(\mathcal{F}_{0},P,\delta)(T)\to\mathcal{M}^{ss}_{\mathcal{X}/S}(\mathcal{F}_{0},P,\delta)(T^\prime)
	\een
	via pulling back $\mathcal{F}$ and $\varphi$ by $\mathrm{id}_{\mathcal{X}}\times f: \mathcal{X}\times_{S} T^\prime\to \mathcal{X}\times_{S} T$. If we take families of $\delta$-stable pairs, we denote the corresponding subfunctor by $\mathcal{M}^s_{\mathcal{X}/S}(\mathcal{F}_{0},P,\delta)$. 
\end{definition}

As in [\cite{Lyj}, Section 4.1], there exists an integer $\widehat{m}>0$ such that for each integer $m\geq\widehat{m}$,
we have the following conditions:\\
$(i)$ $F_{\mathcal{E}}(\mathcal{F}_{0})$ is $m$-regular (by Kleiman criterion for stacks for the fixed $S$-flat coherent sheaf $\mathcal{F}_{0}$ on $\mathcal{X}$ as in Section 3.2)  on the fiber of $X\to S$ over any geometric point  $s$ of  $S$, and $\delta(m)>0$ unless $\delta=0$.\\
$(ii)$ For any $\delta$-semistable pair
$(\mathcal{F},\varphi)$ of type $P$, the $S$-flat sheaf $\mathcal{F}$ is $m$-regular (by Proposition \ref{bound1}).\\
$(iii)$  Equivalent properties in both Lemma \ref{bound2} and Lemma \ref{bound3} hold.

Take such an integer $m$ and  define \ben
&&\widetilde{\mathcal{Q}}:=\mathrm{Quot}_{\mathcal{X}/S}(V\otimes\mathcal{E}\otimes\pi^*\mathcal{O}_{X}(-m),P),\\
&&\mathbb{P}:=\mathbb{P}(\mathrm{Hom}(H^0(X,F_{\mathcal{E}}(\mathcal{F}_{0})(m)),V)),
\een
where $V:=k^{\oplus P(m)}$, and $H^0(X,F_{\mathcal{E}}(\mathcal{F}_{0})(m))$ is a trivial vector bundle over $S$  by $(i)$ and  Lemma \ref{global-hilbpoly}. 

We follow the similar argument in [\cite{Lyj}, Section 4.1] for our relative case, see also [\cite{HL3,OS03,Nir1}].
Denote by  $\mathcal{Z}\subset \mathbb{P}\times_{S}\widetilde{\mathcal{Q}}$ the closed subscheme consisting of all points $([a], [q])$ such that for any geometric point  $s$ of  $S$,  $q_{s}\circ (a_{s}\otimes\mathrm{id})$ factors through $\widetilde{\mathrm{ev}}_{s}$ in a commutative diagram as follows
\ben
\xymatrix{
	H^0(X_{s},F_{\mathcal{E}_{s}}(\mathcal{F}_{0,s})(m))\otimes\mathcal{E}_{s}\otimes\pi^*\mathcal{O}_{X_{s}}(-m)\ar[d]_{a_{s}\otimes\mathrm{id}} \ar[r]^-{\widetilde{\mathrm{ev}}_{s}} &   \mathcal{F}_{0,s}\ar[d]^{\varphi_{s}}\\
	V\otimes\mathcal{E}_{s}\otimes\pi^*\mathcal{O}_{X_{s}}(-m)\ar[r]_-{q_{s}} & \mathcal{F}_s
}
\een
where $a_{s}: H^0(X_{s},F_{\mathcal{E}_{s}}(\mathcal{F}_{0,s})(m))\to V$, $\mathrm{ev}_{s}:H^0(X_{s},F_{\mathcal{E}_{s}}(\mathcal{F}_{0,s})(m))\otimes\mathcal{O}_{X_{s}}(-m)\to F_{\mathcal{E}_{s}}(\mathcal{F}_{0,s})$, and $\widetilde{\mathrm{ev}}_{s}:=\theta_{\mathcal{E}_{s}}(\mathcal{F}_{0,s})\circ G_{\mathcal{E}_{s}}(\mathrm{ev}_{s})$. Actually, a point $([a],[q])\in\mathcal{Z}$ corresponds to a pair $(\mathcal{F},\varphi)$ induced by  $q_{s}\circ (a_{s}\otimes\mathrm{id})$ factoring through $\widetilde{\mathrm{ev}}_{s}$ for any geometric point $s$ of  $S$, that is, the composition $q\circ (a\otimes\mathrm{id})$ factors through $\widetilde{\mathrm{ev}}:H^0(X,F_{\mathcal{E}}(\mathcal{F}_{0})(m))\otimes\mathcal{E}\otimes\pi^*\mathcal{O}_{X}(-m)\to\mathcal{F}_{0}$ relative to $S$. Then $\mathcal{Z}$ is a projective $S$-scheme. Consider the subscheme in $\mathcal{Z}$ consisting of  all points $([a], [q])\in\mathcal{Z}$  such that $\mathcal{F}_{s}$ is pure and $q$ induces an isomorphism
$V\rightarrow H^0(X_{s},F_{\mathcal{E}_{s}}(\mathcal{F}_{s})(m))$ for any  $s$ of  $S$, and denote by $\mathcal{Z}^\prime$ the closure of  this open subset in $\mathcal{Z}$. Then the parameter space $\mathcal{Z}^\prime$ is also a projective $S$-scheme. Now we have a natural $\mathrm{GL}(V)$-action on $\mathbb{P}\times_{S}\widetilde{\mathcal{Q}}$, and $\mathcal{Z}^\prime$ is invariant under this action. This induces the  natural $\mathrm{SL}(V)$-linearizations for the following very ample line bundles relative to $S$
\ben
\mathcal{O}_{\mathcal{Z}^\prime}(n_{1},n_{2}):=p_{\mathbb{P}\times_{S}\widetilde{\mathcal{Q}},\mathbb{P}}^*\mathcal{O}_{\mathbb{P}}(n_{1})\otimes p_{\mathbb{P}\times_{S}\widetilde{\mathcal{Q}},\widetilde{\mathcal{Q}}}^*\mathcal{L}_{l}^{n_{2}}
\een
where $\mathcal{L}_{l}:=\det\left(p_{X\times_{S}\widetilde{\mathcal{Q}},\widetilde{\mathcal{Q}}*}\left(F_{p_{\mathcal{X}\times_{S}\widetilde{\mathcal{Q}},\mathcal{X}}^*\mathcal{E}}(\widetilde{\mathcal{F}})(l)\right)\right)$ is a class of very ample invertible sheaves on $\widetilde{\mathcal{Q}}$  for $l$ sufficiently large,  $\widetilde{\mathcal{F}}$ is the universal quotient sheaf of $\widetilde{\mathcal{Q}}$, and $n_{1}, n_{2}$ are any two positive integers.  Here and below, we use the convention that   $p_{\star,\bullet}$ denotes the natural projection from $\star$  to $\bullet$.

Next, as in [\cite{Lyj}, Section 4.2], we assume  two positive integers $n_{1}$ and  $n_{2}$ are chosen to satisfy
\[
\frac{n_{1}}{n_{2}}=
\left\{
\begin{aligned}
&\frac{P(l)\cdot\delta(m)-\delta(l)\cdot P(m)}{P(m)+\delta(m)}, \;\;\mbox{if} \;\; \mathrm{deg}\,\delta<\mathrm{deg}\,P; \\
& \frac{P(l)}{2r}, \;\;\mbox{if}\;\;\mathrm{deg}\,\delta\geq\mathrm{deg}\,P,
\end{aligned}
\right.\]
where $\frac{r}{(\deg P)!}$ is the coefficient of the term $m^{\mathrm{deg}\,P}$ in $P(m)$. 
Notice that $\mathcal{Z}^\prime$ is a projective $S$-scheme with a $\mathrm{SL}(V)$-action  and the group $\mathrm{SL}(V)$ preserves the map $\mathcal{Z}^\prime\to S$ with the trivial action on $S$. Due to [\cite{Ses}, Proposition 7] or [\cite{Simp}, Lemma 1.13],  GIT-(semi)stable points of the total space $\mathcal{Z}^\prime$ restricted on  any geometric point  $s$ of  $S$ are exactly those GIT-(semi)stable points of the fiber of $\mathcal{Z}^\prime\to S$ over $s$. Then one can obtain GIT-(semi)stable points of $\mathcal{Z}^\prime$ via the restriction on fibers  over all geometric points. Therefore,  it follows from  the similar argument in [\cite{Lyj}, Section 4.2]  that
\begin{theorem}\label{GIT-ss}
For $l$ sufficiently large,  a point $([a],[q])\in\mathcal{Z}^\prime$ is GIT-(semi)stable with respect to $\mathcal{O}_{\mathcal{Z}^\prime}(n_{1},n_{2})$ if and only if  the corresponding pair $(\mathcal{F},\varphi)$ is $\delta$-(semi)stable and the map  $V\to H^0(X_{s},F_{\mathcal{E}_{s}}(\mathcal{F}_{s})(m))$ induced by $q_{s}$ is an isomorphism for any geometric point  $s$ of  $S$.
\end{theorem}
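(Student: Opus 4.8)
The plan is to prove Theorem~\ref{GIT-ss} by a fiberwise reduction, deducing it from the absolute orbifold case carried out in [\cite{Lyj}, Section 4.2]. First I would fix $l$ large enough that, on top of the requirements used to choose $\widehat m$ at the start of Section~3.3, the sheaf $\mathcal{L}_l$ is very ample relative to $S$ and the numerical estimates of [\cite{Lyj}, Lemma~3.4] hold on every geometric fiber with the \emph{uniform} constant $C$ recorded in the proof of Lemma~\ref{bound2}; such an $l$ exists because the universal quotient $\widetilde{\mathcal F}$ lives over the finite-type $S$-scheme $\widetilde{\mathcal Q}$ and because Proposition~\ref{bound1} supplies a single $m$ together with uniform slope bounds (via the choice of $\bar s$ and $\widetilde m$) valid for all $s\in S$. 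Since $\mathrm{SL}(V)$ acts on the projective $S$-scheme $\mathcal{Z}^\prime$ preserving the structure map to $S$ and acting trivially on $S$, the statement [\cite{Ses}, Proposition~7] (equivalently [\cite{Simp}, Lemma~1.13]) already quoted above identifies, for each geometric point $\mathrm{Spec}\,k\xrightarrow{s}S$, the GIT-(semi)stable points of $\mathcal{Z}^\prime$ lying over $s$ with respect to $\mathcal{O}_{\mathcal{Z}^\prime}(n_1,n_2)$ with the GIT-(semi)stable points of the fiber $\mathcal{Z}^\prime_s$ with respect to the restricted linearization $\mathcal{O}_{\mathcal{Z}^\prime_s}(n_1,n_2)$. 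This reduces the claim to a statement about a single projective $k$-scheme equipped with an $\mathrm{SL}(V)$-action, for each $s$.

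Next I would run the Hilbert--Mumford analysis on $\mathcal{Z}^\prime_s$, which is exactly the parameter space of [\cite{Lyj}, Section~4.1] attached to the projective Deligne--Mumford stack $\mathcal{X}_s$ with polarization $(\mathcal{E}_s,\mathcal{O}_{X_s}(1))$, the fixed sheaf $\mathcal{F}_{0,s}$, the polynomial $P$, and the parameter $\delta$. Given a one-parameter subgroup $\lambda$ of $\mathrm{SL}(V)$, decompose $V=\bigoplus_\mu V_\mu$ into weight spaces, let $V_{\le\mu}$ be the induced increasing filtration, and let $\mathcal{F}'_{s,\mu}\subset\mathcal{F}_s$ be the subsheaf generated by $q_s(V_{\le\mu}\otimes\mathcal{E}_s\otimes\pi^*\mathcal{O}_{X_s}(-m))$, with the induced subpair structure $\varphi'_{s,\mu}$ of Definition~\ref{def-pair}. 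The Mumford weight $\mu([a_s],[q_s];\lambda)$ splits into a contribution from the $\mathbb{P}$-factor, governed by the $\epsilon(\varphi'_{s,\mu})$ and the dimensions $\dim V_{\le\mu}$, and a contribution from the $\widetilde{\mathcal Q}$-factor, which for $l\gg 0$ is (up to the uniform constants) a combination of the Hilbert values $h^0(X_s,F_{\mathcal{E}_s}(\mathcal{F}'_{s,\mu})(l))$; inserting the prescribed ratio $n_1/n_2$ converts the condition $\mu(\cdot\,;\lambda)\ge 0$ (resp. $>0$) for every $\lambda$ into precisely the system of inequalities in assertion $(ii)$ of Lemma~\ref{bound2} when $\deg\delta<\deg P$, or of Lemma~\ref{bound3} when $\deg\delta\ge\deg P$, together with the requirement that $q_s$ induce an isomorphism $V\xrightarrow{\sim}H^0(X_s,F_{\mathcal{E}_s}(\mathcal{F}_s)(m))$ (a GIT-(semi)stable point cannot lie in the locus where this map drops rank, and the boundary strata added when forming the closure $\mathcal{Z}^\prime$ are GIT-unstable). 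This is the stacky version, already carried out in [\cite{Lyj}, Section~4.2], of the computation in [\cite{HL3}, Chapter~4].

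By property $(iii)$ in the list preceding Theorem~\ref{GIT-ss}, for $m\ge\widehat m$ the equivalences of Lemmas~\ref{bound2} and~\ref{bound3} are available, so the inequalities just obtained are equivalent to $\delta$-(semi)stability of the pair $(\mathcal{F}_s,\varphi_s)$ on $\mathcal{X}_s$ in the sense of Definition~\ref{semi-sub}; conversely, starting from a $\delta$-(semi)stable pair with $V\cong H^0(X_s,F_{\mathcal{E}_s}(\mathcal{F}_s)(m))$ and reversing the same weight computation shows the corresponding point of $\mathcal{Z}^\prime_s$ is GIT-(semi)stable. Since $\delta$-(semi)stability of $(\mathcal{F},\varphi)$ on $\mathcal{X}/S$ and the isomorphism condition on $q$ are both defined pointwise over the geometric points of $S$ (Definitions~\ref{semi-sub} and~\ref{moduli-functor}), combining the fiberwise equivalence over all such $s$ with the reduction of the first paragraph yields the asserted description of the GIT-(semi)stable locus of $\mathcal{Z}^\prime$.

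The step I expect to be the main obstacle is the uniformity in $S$: one must ensure that a \emph{single} integer $m$ and a \emph{single} integer $l$ make conditions $(i)$--$(iii)$ and all of the weight estimates valid simultaneously on every geometric fiber. For $m$ this is exactly Proposition~\ref{bound1}, with the uniform slope bounds coming from the choice of $\bar s$ maximizing $\deg\mathcal{O}_{X_s}(1)$ and the common $\widetilde m$ used in its proof; for $l$ one needs the relative very ampleness of $\mathcal{L}_l$ together with the finite-type boundedness of the universal data over $\widetilde{\mathcal Q}$, plus the fact that the constant $C$ of [\cite{Lyj}, Lemma~3.4] was taken independent of $s$ in the proof of Lemma~\ref{bound2}. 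Once this uniformity is secured, the fiberwise argument of [\cite{Lyj}] applies verbatim to each $\mathcal{Z}^\prime_s$ and the theorem follows.
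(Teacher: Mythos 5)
Your proposal follows essentially the same route as the paper: the paper likewise reduces GIT-(semi)stability on the projective $S$-scheme $\mathcal{Z}^\prime$ to the fibers via [\cite{Ses}, Proposition 7] (equivalently [\cite{Simp}, Lemma 1.13]) and then invokes the Hilbert--Mumford computation of [\cite{Lyj}, Section 4.2] on each fiber, with Lemmas \ref{bound2} and \ref{bound3} supplying the translation to $\delta$-(semi)stability. Your additional attention to the uniformity of $m$ and $l$ over $S$ is a correct elaboration of what the paper leaves implicit in its choices of $\bar{s}$, $\widetilde{m}$ and the constant $C$.
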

\begin{remark}
For each geometric point  $s$ of  $S$,   the map $V\to H^0(X_{s},F_{\mathcal{E}_{s}}(\mathcal{F}_{s})(m))$ induced by $q_{s}$ in Theorem \ref{GIT-ss} is $H^0(\widetilde{q}_{s})$ where the map $\widetilde{q}_{s}$ is defined as the following composition
\ben V\otimes\mathcal{O}_{X_{s}}\xrightarrow{\varphi_{\mathcal{E}_{s}}(V\otimes\mathcal{O}_{X_{s}})}F_{\mathcal{E}_{s}}\circ G_{\mathcal{E}_{s}}(V\otimes\mathcal{O}_{X_{s}})\xrightarrow{F_{\mathcal{E}_{s}}(q_{s})(m)}F_{\mathcal{E}_{s}}(\mathcal{F}_{s})(m).
\een
\end{remark}		
	
By Theorem \ref{GIT-ss}, we define $\widetilde{\mathcal{R}}^{(s)s}\subseteq\mathcal{Z}^\prime\subset\mathcal{Z}\subset \mathbb{P}\times_{S}\widetilde{\mathcal{Q}}$ to be 
the subset consisting of GIT-(semi)stable points $([a],[q])$ corresponding to $\delta$-(semi)stable pairs $(\mathcal{F},\varphi)$ with an isomorphism  $V\to H^0(X_{s},F_{\mathcal{E}_{s}}(\mathcal{F}_{s})(m))$ induced by $q_{s}$  for any geometric point  $s$ of  $S$. Then $\widetilde{\mathcal{R}}^{ss}$ and $\widetilde{\mathcal{R}}^{s}$ are open $\mathrm{SL}(V)$-invariant subsets of $\mathcal{Z}^\prime$. Let $p_{\mathcal{X}\times_{S}\widetilde{\mathcal{Q}},\mathcal{X}}^*(V\otimes\mathcal{E}\otimes\pi^*\mathcal{O}_{X}(-m))\to\widetilde{\mathcal{F}}$ be the universal family parameterized by $\widetilde{\mathcal{Q}}$.  As in [\cite{Lyj}, Remark 4.1], one has  a flat family $(\check{\mathcal{F}}_{\mathcal{X}\times_{S}\mathcal{Z}},\check{\varphi}_{\mathcal{X}\times_{S}\mathcal{Z}})$ of pairs parametrized by $\mathcal{Z}$, where 
\ben
\check{\mathcal{F}}_{\mathcal{X}\times_{S}\mathcal{Z}}:=(\mathrm{id}_{\mathcal{X}}\times\widetilde{i})^*p_{\mathcal{X}\times_{S}\mathbb{{P}}\times_{S}\widetilde{\mathcal{Q}},\mathcal{X}\times_{S}\widetilde{\mathcal{Q}}}^*\widetilde{\mathcal{F}}\otimes(\mathrm{id}_{\mathcal{X}}\times\widetilde{i})^* p_{\mathcal{X}\times_{S}\mathbb{P}\times_{S}\widetilde{\mathcal{Q}},\mathbb{P}}^*\mathcal{O}_{\mathbb{P}}(1)
\een
and 
$
\check{\varphi}_{\mathcal{X}\times_{S}\mathcal{Z}}:p_{\mathcal{X}\times_{S}\mathcal{Z},\mathcal{X}}^*\mathcal{F}_{0}\to \check{\mathcal{F}}_{\mathcal{X}\times_{S}\mathcal{Z}}
$. Here, $\widetilde{i}:\mathcal{Z}\to\mathbb{P}\times_{S}\widetilde{\mathcal{Q}}$ be the inclusion. Due to the universal properties of $\mathbb{P}$ and $\widetilde{\mathcal{Q}}$, one has a universal family  $(\check{\mathcal{F}}_{\mathcal{X}\times_{S}\widetilde{\mathcal{R}}^{(s)s}},\check{\varphi}_{\mathcal{X}\times_{S}\widetilde{\mathcal{R}}^{(s)s}})$ of $\delta$-(semi)stable pairs with  
modified Hilbert polynomial $P$ parameterized by $\widetilde{\mathcal{R}}^{(s)s}$ where  the morphism $\check{\varphi}_{\mathcal{X}\times_{S}\widetilde{\mathcal{R}}^{(s)s}}$ is %\check{\varphi}_{\mathcal{X}\times_{S}\widetilde{\mathcal{R}}^{(s)s}}:
\ben
 p_{\mathcal{X}\times_{S}\widetilde{\mathcal{R}}^{(s)s},\mathcal{X}}^*\mathcal{F}_{0}\to \check{\mathcal{F}}_{\mathcal{X}\times_{S}\widetilde{\mathcal{R}}^{(s)s}}:=(\mathrm{id}_{\mathcal{X}}\times\widetilde{i}_{(s)s})^*p_{\mathcal{X}\times_{S}\mathbb{{P}}\times_{S}\widetilde{\mathcal{Q}},\mathcal{X}\times_{S}\widetilde{\mathcal{Q}}}^*\widetilde{\mathcal{F}}\otimes(\mathrm{id}_{\mathcal{X}}\times\widetilde{i}_{(s)s})^* p_{\mathcal{X}\times_{S}\mathbb{P}\times_{S}\widetilde{\mathcal{Q}},\mathbb{P}}^*\mathcal{O}_{\mathbb{P}}(1)
\een
and $\widetilde{i}_{(s)s}: \widetilde{\mathcal{R}}^{(s)s}\to\mathbb{P}\times_{S}\widetilde{\mathcal{Q}}$ is the inclusion. 

As in  [\cite{HL3}, Theorem 4.3.7],  we have
\begin{theorem}\label{corepresent}
Let $p: \mathcal{X}\to S$ be a family of projective Deligne-Mumford stacks   with a moduli scheme $\pi:\mathcal{X}\to X$ and a relative polarization  $(\mathcal{E}, \mathcal{O}_{X}(1))$.
Then there exists a projective $S$-scheme $\widetilde{M}^{ss}_{\mathcal{X}/S}(\mathcal{F}_{0},P,\delta)$ which universally corepresents the functor $\mathcal{M}^{ss}_{\mathcal{X}/S}(\mathcal{F}_{0},P,\delta)$. 
In particular, for any geometric point  $s$ of  $S$, one has $\widetilde{M}^{ss}_{\mathcal{X}/S}(\mathcal{F}_{0},P,\delta)_{s}\cong \widetilde{M}^{ss}_{\mathcal{X}_{s}/k}(\mathcal{F}_{0}|_{s},P,\delta)$. 
Moreover, there is  an open subscheme $\widetilde{M}^{s}_{\mathcal{X}/S}(\mathcal{F}_{0},P,\delta)\subset \widetilde{M}^{ss}_{\mathcal{X}/S}(\mathcal{F}_{0},P,\delta)$ that universally corepresents the subfunctor $\mathcal{M}^s_{\mathcal{X}/S}(\mathcal{F}_{0},P,\delta)$.
\end{theorem}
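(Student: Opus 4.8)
The plan is to follow the classical GIT construction of [\cite{HL3}, Theorem~4.3.7] in its relative form, feeding in the parameter space $\widetilde{\mathcal{R}}^{ss}\subset\mathcal{Z}'$ and the $\mathrm{SL}(V)$-linearized $S$-ample line bundle $\mathcal{O}_{\mathcal{Z}'}(n_{1},n_{2})$ already produced above. Since $\mathcal{Z}'$ is a projective $S$-scheme carrying an action of the reductive group $\mathrm{SL}(V)$ that fixes the structure morphism $\mathcal{Z}'\to S$, relative GIT yields a good quotient $\widetilde{M}^{ss}:=\widetilde{\mathcal{R}}^{ss}/\!/\mathrm{SL}(V)$, which is $\mathrm{Proj}$ over $S$ of the graded $\mathcal{O}_{S}$-algebra of $\mathrm{SL}(V)$-invariant sections of powers of $\mathcal{O}_{\mathcal{Z}'}(n_{1},n_{2})$ and hence projective over $S$, together with a geometric quotient $\widetilde{M}^{s}:=\widetilde{\mathcal{R}}^{s}/\mathrm{SL}(V)$ that is an open subscheme of $\widetilde{M}^{ss}$ because $\widetilde{\mathcal{R}}^{s}$ is open in $\widetilde{\mathcal{R}}^{ss}$ and good quotients are submersive. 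The first point I would record is that, by [\cite{Ses}, Proposition~7] (equivalently [\cite{Simp}, Lemma~1.13]) together with Theorem~\ref{GIT-ss}, the formation of the GIT (semi)stable loci and of these quotients commutes with arbitrary base change $S'\to S$, and in particular with the inclusion of a geometric point $\mathrm{Spec}\,k\xrightarrow{s}S$; this gives at once that the quotients are universal and that $\widetilde{M}^{ss}_{\mathcal{X}/S}(\mathcal{F}_{0},P,\delta)_{s}\cong\widetilde{M}^{ss}_{\mathcal{X}_{s}/k}(\mathcal{F}_{0}|_{s},P,\delta)$, which is the ``in particular'' clause of the statement.

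Next I would construct the natural transformation $\mathcal{M}^{ss}_{\mathcal{X}/S}(\mathcal{F}_{0},P,\delta)\to\underline{\widetilde{M}^{ss}}$. Given an $S$-scheme $T$ and a flat family $(\mathcal{F},\varphi)$ of $\delta$-semistable pairs of type $P$ over $T$, the conditions $(i)$--$(iii)$ imposed on $m$ (via Proposition~\ref{bound1}) ensure that $\mathcal{F}$ is $m$-regular on every fibre, so the pushforward of $F_{\mathcal{E}}(\mathcal{F})(m)$ to $T$ is locally free of rank $P(m)$. Trivializing it Zariski-locally on $T$ by $V$, the induced surjection $V\otimes\mathcal{E}\otimes\pi^{*}\mathcal{O}_{X}(-m)\twoheadrightarrow\mathcal{F}$ (an $m$-regular quotient, hence a $T$-point of $\widetilde{\mathcal{Q}}$) together with $\varphi$ and the fixed identification of $H^{0}(X,F_{\mathcal{E}}(\mathcal{F}_{0})(m))$ defines a morphism from the chart into $\mathcal{Z}'$, which by Theorem~\ref{GIT-ss} factors through $\widetilde{\mathcal{R}}^{ss}$ precisely because each fibre $(\mathcal{F}_{t},\varphi_{t})$ is $\delta$-semistable. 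Two local trivializations differ by the $\mathrm{GL}(V)$-action on $\mathcal{Z}'$, which factors through $\mathrm{PGL}(V)$; since the $\mathrm{PGL}(V)$- and $\mathrm{SL}(V)$-orbits coincide, the composite with $\widetilde{\mathcal{R}}^{ss}\to\widetilde{M}^{ss}$ is independent of the chosen trivialization, so the local morphisms to $\widetilde{M}^{ss}$ glue to a morphism $T\to\widetilde{M}^{ss}$, functorial in $T$.

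To see that $\widetilde{M}^{ss}$ corepresents the functor I would invoke the universal property of the good quotient: any natural transformation $\mathcal{M}^{ss}_{\mathcal{X}/S}(\mathcal{F}_{0},P,\delta)\to\underline{N}$ to a scheme $N$, evaluated on the universal family $(\check{\mathcal{F}}_{\mathcal{X}\times_{S}\widetilde{\mathcal{R}}^{ss}},\check{\varphi}_{\mathcal{X}\times_{S}\widetilde{\mathcal{R}}^{ss}})$ over $\widetilde{\mathcal{R}}^{ss}$, produces an $\mathrm{SL}(V)$-invariant morphism $\widetilde{\mathcal{R}}^{ss}\to N$, which factors uniquely through $\widetilde{M}^{ss}$; one then checks that the induced morphism $\widetilde{M}^{ss}\to N$ is compatible with the given transformation on all $T$-points, and universality of the corepresentation follows from universality of the quotient established in the first paragraph. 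The identical argument applied to $\widetilde{\mathcal{R}}^{s}$, the geometric quotient $\widetilde{M}^{s}$ and the subfunctor $\mathcal{M}^{s}_{\mathcal{X}/S}(\mathcal{F}_{0},P,\delta)$ gives the last assertion.

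I expect the main obstacle to be not a single step but the careful bookkeeping of the descent/gluing argument in the relative setting: one must verify that the morphisms built from local trivializations really do land fibrewise in the GIT-semistable locus (this is exactly where Theorem~\ref{GIT-ss} and the fibrewise $\delta$-semistability of $(\mathcal{F}_{t},\varphi_{t})$ enter), that the indeterminacy of the trivialization is precisely the group being quotiented by, and that every construction in sight is stable under base change along $S'\to S$ --- the latter being the role of [\cite{Ses}, Proposition~7] / [\cite{Simp}, Lemma~1.13], which is what upgrades ``corepresents'' to ``universally corepresents''. Once fibrewise GIT-semistability and base-change compatibility are in hand, corepresentability and its universality are formal consequences of the standard properties of good and geometric quotients of reductive group actions on projective schemes.
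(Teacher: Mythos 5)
Your proposal is correct and follows essentially the same route as the paper: relative GIT on $\mathcal{Z}'$ via [\cite{Ses}] to produce the universal good/geometric quotients, Theorem \ref{GIT-ss} to match GIT-(semi)stability with $\delta$-(semi)stability fibrewise, the universal property of the categorical quotient for corepresentability, and [\cite{Ses}, Proposition 7]/[\cite{Simp}, Lemma 1.13] for base change and the identification of fibres. The only cosmetic difference is that you glue Zariski-local trivializations of the pushforward bundle, whereas the paper packages the same descent bookkeeping via the frame bundle $Fb(\mathcal{F})=\mathrm{Isom}(V\otimes\mathcal{O}_{T},\mathcal{W}_{\mathcal{F}})$ as a $\mathrm{GL}(V)$-torsor and the quotient stack $[\widetilde{\mathcal{R}}^{ss}/\mathrm{GL}(V)]$.
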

\begin{proof}

Firstly, by  [\cite{Ses}, Theorem 4 and  Remarks 8, 9]  (see also  [\cite{HL3}, Theorem 4.2.10]), there is a projective $S$-scheme $\widetilde{M}^{ss}:=\widetilde{M}^{ss}_{\mathcal{X}/S}(\mathcal{F}_{0},P,\delta)$ and a morphism $\widetilde{\Theta}: \widetilde{\mathcal{R}}^{ss}\to \widetilde{M}^{ss}$ such that $\widetilde{\Theta}$ is a universal good quotient for the $\mathrm{SL}(V)$-action. And there is an open subscheme $\widetilde{M}^s:=\widetilde{M}^s_{\mathcal{X}/S}(\mathcal{F}_{0},P,\delta)\subseteq \widetilde{M}^{ss}$ such that $\widetilde{\mathcal{R}}^s=\widetilde{\Theta}^{-1}(\widetilde{M}^s)$ and $\widetilde{\Theta}:\widetilde{\mathcal{R}}^s\to \widetilde{M}^s$ is a universal geometric quotient. 

Secondly, combined with Theorem \ref{GIT-ss}, it follows from the arguments in [\cite{Lyj}, Lemmas 4.11,  4.12 and Remark 4.13] that for any geometric point  $s$ of  $S$,  two points $([a_{1}],[q_{1}])$ and $([a_{2}],[q_{2}])$  in $\widetilde{\mathcal{R}}^{ss}$ restricted on the fiber over $s$ intersect if and only if their corresponding $\delta$-semistable pairs $(\mathcal{F}_{1},\varphi_{1})|_{s}$ and $(\mathcal{F}_{2},\varphi_{2})|_{s}$   are $S$-equivalent (see [\cite{Lyj}, Definition 2.33]).

Next, we follow the similar argument in [\cite{HL3}, Lemma 4.3.1] or [\cite{BS}, Theorem 4.12] to prove the corepresentability for the moduli functor $\mathcal{M}^{(s)s}_{\mathcal{X}/S}(\mathcal{F}_{0},P,\delta)$.  Assume that $[(\mathcal{F},\varphi)]$ is an element of $\mathcal{M}^{(s)s}_{\mathcal{X}/S}(\mathcal{F}_{0},P,\delta)(T)$, i.e., there exists a flat family of  $\delta$-(semi)stable pairs $(\mathcal{F},\varphi)$ on $\mathcal{X}\times_{S}T$ with  modified Hilbert polynomial $P$ parametrized by  $T$. Set 
\ben
\mathcal{W}_{\mathcal{F}}:=p_{X\times_{S}T,T*}\left(F_{p_{\mathcal{X}\times_{S}T,\mathcal{X}}^*\mathcal{E}}(\mathcal{F})\otimes p_{X\times_{S}T,X}^*\mathcal{O}_{X}(m)\right).
\een
By [\cite{Nir1}, Proposition 1.5], we have $F_{p_{\mathcal{X}\times_{S}T,\mathcal{X}}^*\mathcal{E}}(\mathcal{F})_{t}\cong F_{\check{\pi}_{t}^*\mathcal{E}}(\mathcal{F}_{t})$, which is $m$-regular for each point $t\in T$ by the assumption of $m$. Then $h^0(F_{p_{\mathcal{X}\times_{S}T,\mathcal{X}}^*\mathcal{E}}(\mathcal{F})_{t}(m))=P(m)=\dim V$. By [\cite{Har}, Corollary 12.9], $\mathcal{W}_{\mathcal{F}}$ is locally free of rank $P(m)$.
Let $Fb(\mathcal{F}):=\mathrm{Isom}(V\otimes \mathcal{O}_{T},\mathcal{W}_{\mathcal{F}})$ be the frame bundle with natural projection $\overline{q}:Fb(\mathcal{F})\to T$ which is a $\mathrm{GL}(V)$-torsor over $T$. Denote  the universal trivialization by $\varpi:V\otimes\mathcal{O}_{Fb(\mathcal{F})}\xrightarrow{\cong}\overline{q}^*\mathcal{W}_{\mathcal{F}}$. One can pull back the surjection 
$p_{X\times_{S}T,T}^*\mathcal{W}_{\mathcal{F}}\otimes p_{X\times_{S}T,X}^*\mathcal{O}_{X}(-m)\to F_{p_{\mathcal{X}\times_{S}T,\mathcal{X}}^*\mathcal{E}}(\mathcal{F})$ by the map $\mathrm{id}_{X}\times\bar{q}:X\times_{S}Fb(\mathcal{F})\to X\times_{S}T$, and then apply the functor $G_{p_{\mathcal{X}\times_{S}Fb(\mathcal{F})}^*\mathcal{E}}$ and the surjective map $\theta_{p_{\mathcal{X}\times_{S}Fb(\mathcal{F})}^*\mathcal{E}}((\mathrm{id}_{\mathcal{X}}\times \overline{q})^*\mathcal{F})$ to obtain the quotient
\ben
p_{\mathcal{X}\times_{S} Fb(\mathcal{F}),\mathcal{X}}^*(V\otimes \mathcal{E}\otimes\pi^*\mathcal{O}_{X}(-m))\to(\mathrm{id}_{\mathcal{X}}\times \overline{q})^*\mathcal{F}.
\een
This implies that
one has a morphism $Fb(\mathcal{F})\to\widetilde{\mathcal{Q}}$ due to the universal property of $\widetilde{\mathcal{Q}}$. On the other hand,  from the morphism $\varphi: \pi_{\mathcal{X}/S}^*\mathcal{F}_{0}\to\mathcal{F}$ (notice that $\pi_{\mathcal{X}/S}=p_{\mathcal{X}\times_{S}T,\mathcal{X}}$), we have  
\ben
p_{X\times_{S}T,T*}(F_{p_{\mathcal{X}\times_{S}T,\mathcal{X}}^*\mathcal{E}}(\varphi)\otimes p_{X\times_{S}T,X}^*\mathcal{O}_{X}(m)):H^0(F_{\mathcal{E}}(\mathcal{F}_{0})(m))\otimes\mathcal{O}_{T}\to \mathcal{W}_{\mathcal{F}}
\een
where we use the isomorphism $p_{X\times_{S}T,X}^*F_{\mathcal{E}}(\mathcal{F}_{0})\cong F_{\pi_{\mathcal{X}/S}^*\mathcal{E}}(\pi_{\mathcal{X}/S}^*\mathcal{F}_{0})$, and then pull back the above morphism by $\overline{q}$ and compose with $\varpi^{-1}$, we obtain 
\ben
H^0(F_{\mathcal{E}}(\mathcal{F}_{0})(m))\otimes\mathcal{O}_{Fb(\mathcal{F})}\to V\otimes\mathcal{O}_{Fb(\mathcal{F})}.
\een
Then we also have a morphism $Fb(\mathcal{F})\to\mathbb{P}$ by the universal property of $\mathbb{P}$. By the definition of $\mathcal{Z}^\prime$ and Theorem \ref{GIT-ss}, we have the morphism $Fb(\mathcal{F})\to\mathbb{P}\times_{S}\widetilde{\mathcal{Q}}$  factoring through
a morphism $\phi: Fb(\mathcal{F})\to\widetilde{\mathcal{R}}^{(s)s}$ which is obviously $\mathrm{GL}(V)$-equivariant. Set $\widetilde{\mathfrak{R}}^{(s)s}:=[\widetilde{\mathcal{R}}^{(s)s}/\mathrm{GL}(V)]$. As in [\cite{BS}, Theorem 4.12] (see alternatively [\cite{HL3}, Lemma 4.3.1]), we have a natural transformation 
\ben
\Upsilon:\mathcal{M}^{(s)s}_{\mathcal{X}/S}(\mathcal{F}_{0},P,\delta)\to[\widetilde{\mathfrak{R}}^{(s)s}]
\een
where $[\widetilde{\mathfrak{R}}^{(s)s}]$ is the contravariant functor such that for any $S$-scheme $T$ of finite type, $[\widetilde{\mathfrak{R}}^{(s)s}](T)$ is the set of isomorphism classes of objects of $\widetilde{\mathfrak{R}}^{(s)s}(T)$. In fact, one can show that two functors $\mathcal{M}^{(s)s}_{\mathcal{X}/S}(\mathcal{F}_{0},P,\delta)$ and $[\widetilde{\mathfrak{R}}^{(s)s}]$ are isomorphic by following the similar argument in [\cite{BS}, Theorem 4.12].
 By [\cite{Alper}, Theorem 13.6], we have the morphism $[\widetilde{\mathcal{R}}^{(s)s}/\mathrm{SL}(V)]\to \widetilde{M}^{(s)s}:=\widetilde{\mathcal{R}}^{(s)s}\verb|//|\mathrm{SL}(V)$, which  is a good moduli space  on the fiber over each geometric point  $s$ of  $S$. And we have the morphism $\widetilde{\mathfrak{R}}^{(s)s}\to\widetilde{M}^{(s)s}$ by the discussion following  [\cite{BS}, Proposition 4.11] (see also the proof of [\cite{Simp}, Theorem 1.21-(1)]), which  factors through $\widetilde{\mathfrak{R}}^{(s)s}\to[\widetilde{\mathfrak{R}}^{(s)s}]$. Composing $\Upsilon$ with the map $[\widetilde{\mathfrak{R}}^{(s)s}]\to\widetilde{M}^{(s)s}$, we have the natural transformation $\mathcal{M}^{(s)s}_{\mathcal{X}/S}(\mathcal{F}_{0},P,\delta)\to\mathrm{Mor}(-,\widetilde{M}^{(s)s})$ by the Yoneda Lemma. Suppose there is a natural transformation $\mathcal{M}^{(s)s}_{\mathcal{X}/S}(\mathcal{F}_{0},P,\delta)\to \mathrm{Mor}(-,\overline{M})$, where $\overline{M}$ is a $S$-scheme. Since the universal family  $(\check{\mathcal{F}}_{\mathcal{X}\times_{S}\widetilde{\mathcal{R}}^{(s)s}},\check{\varphi}_{\mathcal{X}\times_{S}\widetilde{\mathcal{R}}^{(s)s}})$ is an element in $\mathcal{M}^{ss}_{\mathcal{X}/S}(\mathcal{F}_{0},P,\delta)(\widetilde{\mathcal{R}}^{(s)s})$, then we have a morphism $\widetilde{\mathcal{R}}^{(s)s}\to \overline{M}$ which is clearly $\mathrm{SL}(V)$-equivariant with the trivial $\mathrm{SL}(V)$-action on $\overline{M}$. Since $\widetilde{\mathcal{R}}^{(s)s}\to \widetilde{M}^{(s)s}$ is a categorical quotient, we have the following commutative diagram
\ben
\xymatrix{
	\mathcal{M}^{(s)s}_{\mathcal{X}/S}(\mathcal{F}_{0},P,\delta)\ar[dr] \ar[r] &   \mathrm{Mor}(-,\widetilde{M}^{(s)s})\ar[d]\\
	&\mathrm{Mor}(-,\overline{M})
}
\een
where the morphim $\widetilde{M}^{(s)s}\to \overline{M}$ is unique.
By [\cite{HL3}, Definition 2.2.1], this implies that $\widetilde{M}^{(s)s}$ corepresents the functor $\mathcal{M}^{(s)s}_{\mathcal{X}/S}(\mathcal{F}_{0},P,\delta)$.
Therefore, the proof follows from the above results.
\end{proof}
\begin{remark}
The moduli space $\widetilde{M}^{ss}_{\mathcal{X}_{s}/k}(\mathcal{F}_{0}|_{s},P,\delta)$ in Theorem \ref{corepresent} is exactly corresponding to the moduli space of $\delta$-semistable pairs in [\cite{Lyj}, Theorem 4.24].
\end{remark}	
Now we have
\begin{theorem}\label{fine-coarse}
The projective $S$-scheme $\widetilde{M}^{ss}_{\mathcal{X}/S}(\mathcal{F}_{0},P,\delta)$ is a moduli space for the moduli functor $\mathcal{M}^{ss}_{\mathcal{X}/S}(\mathcal{F}_{0},P,\delta)$, and the quasi-projective $S$-scheme $\widetilde{M}^{s}_{\mathcal{X}/S}(\mathcal{F}_{0},P,\delta)$ is a fine moduli space  for the moduli functor $\mathcal{M}^s_{\mathcal{X}/S}(\mathcal{F}_{0},P,\delta)$.
\end{theorem}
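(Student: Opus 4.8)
The plan is to upgrade the corepresentability established in Theorem \ref{corepresent} to the sharper statements asserted here, keeping the GIT picture in which $\widetilde{M}^{(s)s}$ is the good (resp.\ geometric) quotient of $\widetilde{\mathcal{R}}^{(s)s}$ by $\mathrm{SL}(V)$, with quotient map $\widetilde{\Theta}\colon\widetilde{\mathcal{R}}^{(s)s}\to\widetilde{M}^{(s)s}$. For the semistable assertion, Theorem \ref{corepresent} already shows that $\widetilde{M}^{ss}$ universally corepresents $\mathcal{M}^{ss}_{\mathcal{X}/S}(\mathcal{F}_0,P,\delta)$, so it remains only to identify its closed points with $S$-equivalence classes of $\delta$-semistable pairs of type $P$. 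First I would use that a good quotient puts the closed points of $\widetilde{M}^{ss}$ in bijection with the closed $\mathrm{SL}(V)$-orbits in $\widetilde{\mathcal{R}}^{ss}$; combining this with the orbit-intersection description recalled in the proof of Theorem \ref{corepresent} (two points of $\widetilde{\mathcal{R}}^{ss}$ have intersecting orbit closures precisely when the corresponding pairs are $S$-equivalent, cf.\ [\cite{Lyj}, Lemmas 4.11, 4.12 and Remark 4.13]) and with the fact that every $S$-equivalence class contains a polystable representative spanning the unique closed orbit inside it, one obtains the desired bijection. By [\cite{Ses}, Proposition 7] / [\cite{Simp}, Lemma 1.13] this can be checked fiberwise over $S$, where it is exactly the absolute statement [\cite{Lyj}, Theorem 4.24].

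For the stable assertion, the plan is to construct a universal family on $\widetilde{M}^s$. The center $\mathbb{G}_m\subset\mathrm{GL}(V)$ acts trivially on $\mathbb{P}\times_S\widetilde{\mathcal{Q}}$, hence on $\widetilde{\mathcal{R}}^s$, so the $\mathrm{GL}(V)$-action there factors through a free $\mathrm{PGL}(V)$-action, and $\widetilde{\Theta}\colon\widetilde{\mathcal{R}}^s\to\widetilde{M}^s$ is a universal geometric quotient; by standard slice-theorem arguments (see e.g.\ [\cite{HL3}, Section 4.3]) $\widetilde{\Theta}$ is then a principal $\mathrm{PGL}(V)$-bundle, locally trivial in the \'etale topology. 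Next I would note that the family $(\check{\mathcal{F}}_{\mathcal{X}\times_S\widetilde{\mathcal{R}}^s},\check{\varphi}_{\mathcal{X}\times_S\widetilde{\mathcal{R}}^s})$ is already $\mathrm{PGL}(V)$-linearized: in its definition the pulled-back universal quotient sheaf $\widetilde{\mathcal{F}}$ is twisted by $\mathcal{O}_{\mathbb{P}}(1)$, whose central weight cancels that of $\widetilde{\mathcal{F}}$, so $\mathbb{G}_m$ acts trivially on $\check{\mathcal{F}}_{\mathcal{X}\times_S\widetilde{\mathcal{R}}^s}$ while $\check{\varphi}$ becomes $\mathrm{PGL}(V)$-equivariant; this twist was inserted precisely to absorb the scalar ambiguity in the framing datum $[a]$. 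Descending along $\mathrm{id}_{\mathcal{X}}\times\widetilde{\Theta}$ then yields a flat family $(\mathbb{F},\varphi^{\mathrm{univ}})$ of $\delta$-stable pairs on $\mathcal{X}\times_S\widetilde{M}^s$ with $(\mathrm{id}_{\mathcal{X}}\times\widetilde{\Theta})^*(\mathbb{F},\varphi^{\mathrm{univ}})\cong(\check{\mathcal{F}}_{\mathcal{X}\times_S\widetilde{\mathcal{R}}^s},\check{\varphi})$.

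Finally I would verify that $(\mathbb{F},\varphi^{\mathrm{univ}})$ is universal. Given a flat family $(\mathcal{F},\varphi)$ of $\delta$-stable pairs over an $S$-scheme $T$, the proof of Theorem \ref{corepresent} produces a $\mathrm{GL}(V)$-equivariant morphism $\phi\colon Fb(\mathcal{F})\to\widetilde{\mathcal{R}}^s$ out of the frame bundle $\overline{q}\colon Fb(\mathcal{F})\to T$; since $\widetilde{\Theta}\circ\phi$ is $\mathrm{GL}(V)$-invariant it factors through $\overline{q}$, giving a classifying morphism $g\colon T\to\widetilde{M}^s$, unique because $\widetilde{\Theta}$ is a categorical quotient. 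Pulling $(\mathbb{F},\varphi^{\mathrm{univ}})$ back along $g$ and then along $\overline{q}$ recovers $(\mathrm{id}_{\mathcal{X}}\times\phi)^*(\check{\mathcal{F}}_{\mathcal{X}\times_S\widetilde{\mathcal{R}}^s},\check{\varphi})$, which by the construction of $\phi$ agrees with $(\mathrm{id}_{\mathcal{X}}\times\overline{q})^*(\mathcal{F},\varphi)$ up to a line bundle pulled back from $T$; the requirement that this identification intertwine $\check{\varphi}$ with $\varphi$, together with the nontriviality of $\varphi$, forces that line bundle to be trivial, so $g^*(\mathbb{F},\varphi^{\mathrm{univ}})\cong(\mathcal{F},\varphi)$ over $T$. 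This gives the functorial bijection $T\mapsto\mathrm{Mor}(T,\widetilde{M}^s)$, and $\widetilde{M}^s$ is quasi-projective as an open subscheme of the projective $S$-scheme $\widetilde{M}^{ss}$. The step I expect to be the main obstacle is exactly this descent-and-rigidification argument — confirming that the $\mathcal{O}_{\mathbb{P}}(1)$-twist genuinely produces a $\mathrm{PGL}(V)$-linearization over the base $S$, that the resulting family descends along $\widetilde{\Theta}$, and that the pair structure kills the leftover Picard ambiguity over $T$ — although, being local over $S$, it imports from the absolute treatment in [\cite{Lyj}, Theorem 4.24] via [\cite{Ses}, Proposition 7] / [\cite{Simp}, Lemma 1.13] as in the proof of Theorem \ref{corepresent}.
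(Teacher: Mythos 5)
Your proposal is correct and follows essentially the same route as the paper: the semistable statement is reduced to the universal corepresentability and $S$-equivalence/closed-orbit identification already established in (the proof of) Theorem \ref{corepresent}, checked fiberwise via [\cite{Ses}, Proposition 7], while fineness is obtained by showing the family on $\widetilde{\mathcal{R}}^s$ is $\mathrm{PGL}(V)$-linearized (the $\mathcal{O}_{\mathbb{P}}(1)$-twist absorbing the central weight), that $\widetilde{\mathcal{R}}^s\to\widetilde{M}^s$ is a principal $\mathrm{PGL}(V)$-bundle by triviality of stabilizers and the slice theorem, and then descending. The extra detail you supply on verifying the universal property via the frame bundle and rigidifying the residual line-bundle ambiguity using the nontriviality of $\varphi$ is consistent with, and fills in, what the paper delegates to [\cite{Lyj}, Lemmas 4.21, 4.22 and Remark 4.23].
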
	
\begin{proof}
It remains to show the  fineness of the moduli space $\widetilde{M}^{s}_{\mathcal{X}/S}(\mathcal{F}_{0},P,\delta)$. By the  argument in [\cite{Lyj}, Lemmas 4.21, 4.22 and Remark 4.23] for each geometric point of $S$,  one can show the universal family
$(\check{\mathcal{F}}_{\mathcal{X}\times_{S}\widetilde{\mathcal{R}}^{s}},\check{\varphi}_{\mathcal{X}\times_{S}\widetilde{\mathcal{R}}^{s}})$ of $\delta$-stable pairs on $\mathcal{X}/S$ is $\mathrm{PGL}(V)$-linearized. It follows from  [\cite{Lyj}, Lemma 4.21] that the stabilizer group of a closed point of $\widetilde{\mathcal{R}}^s$ in $\mathrm{PGL}(V)$ is trivial due to [\cite{Lyj}, Lemma 2.28] for every geometric point. By [\cite{HL3}, Theorem 4.2.12 and Corollary 4.2.13], the map $\widetilde{\mathcal{R}}^s\to \widetilde{M}^{s}_{\mathcal{X}/S}(\mathcal{F}_{0},P,\delta)$ is a principal $\mathrm{PGL}(V)$-bundle (or a $\mathrm{PGL}(V)$-torsor). Then $(\check{\mathcal{F}}_{\mathcal{X}\times_{S}\widetilde{\mathcal{R}}^{s}},\check{\varphi}_{\mathcal{X}\times_{S}\widetilde{\mathcal{R}}^{s}})$ can be descended to be a universal family parameterized by $\widetilde{M}^{s}_{\mathcal{X}/S}(\mathcal{F}_{0},P,\delta)$, which completes the proof.
\end{proof}

\section{Perfect relative obstruction theory and virtual fundamental class}
In this section, we will consider a special kind of relative moduli spaces constructed in Section 3, for which we construct a perfect relative obstruction theory and define the virtual fundamental class. Let $p: \mathcal{X}\to S$ be a family of projective Deligne-Mumford stacks of relative dimension three   with a moduli scheme $\pi:\mathcal{X}\to X$ and a relative polarization  $(\mathcal{E}, \mathcal{O}_{X}(1))$. We assume that $k=\mathbb{C}$, $\mathcal{F}_{0}=\mathcal{O}_{\mathcal{X}}$ and polynomials $\delta$, $P$   satisfy $\deg\,\delta\geq\deg \,P=1$.  
By Theorem \ref{fine-coarse}, we have a relative moduli space  $\widetilde{M}^{s}_{\mathcal{X}/S}(\mathcal{O}_{\mathcal{X}},P,\delta)$, which
is equal to $\widetilde{M}^{ss}_{\mathcal{X}/S}(\mathcal{O}_{\mathcal{X}},P,\delta)$ due to [\cite{Lyj}, Lemma 2.24] and hence also a projective $S$-scheme. By [\cite{Lyj}, Lemma 2.26 and Remark 2.27], we simply call $\widetilde{M}^{s}_{\mathcal{X}/S}(\mathcal{O}_{\mathcal{X}},P,\delta)$ the moduli space  of orbifold Pandharipande-Thomas stable pairs on $\mathcal{X}/S$.
% (it is called stable pairs of Le Potier in other literatures) 
Notice that $\widetilde{M}^{s}_{\mathcal{X}/S}(\mathcal{O}_{\mathcal{X}},P,\delta)$  is in the same chamber of the stability parameter $\delta$ whenever $\deg\,\delta\geq\deg\,P$, see the proof of [\cite{Lyj}, Theorem 4.26] and [\cite{Lyj}, Remark 4.27] for the fiber over any geometric point $s$ of $S$. To simplify the notation for later use, by fixing a choice of $\delta$ implicitly, we set $\mathrm{PT}_{\mathcal{X}/S}^{(P)}:=\widetilde{M}^{s}_{\mathcal{X}/S}(\mathcal{O}_{\mathcal{X}},P,\delta)$, where  we use the superscript $(P)$ to denote the modified Hilbert polynomial and distinguish from the superscript $P$ which will indicate Hilbert homomorphism defined in  Section 5.3. While for the absolute case, with the same assumption on $\delta, P$ as above, we set
$\mathrm{PT}_{\mathcal{X}/\mathbb{C}}^{(P)}:=M^{s}_{\mathcal{X}/\mathbb{C}}(\mathcal{O}_{\mathcal{X}},P,\delta)$  (see [\cite{Lyj}, Theorem 4.24] for this moduli space), which is the moduli space of orbifold PT stable pairs on $3$-dimensional projective Deligne-Mumford stack $\mathcal{X}$  and is a projective scheme over $\mathbb{C}$. When $S=\mathrm{Spec}\,\mathbb{C}$, we have $\mathrm{PT}_{\mathcal{X}/S}^{(P)}=\mathrm{PT}_{\mathcal{X}/\mathbb{C}}^{(P)}$.

Assume that all fibers of $p$ are smooth. As the similar argument in the proof of [\cite{Lyj}, Theorem 5.17] for the absolute case,  we shall give a perfect relative obstruction theory with fixed determinant for $\widetilde{\mathbf{M}}^s:=\mathrm{PT}_{\mathcal{X}/S}^{(P)}$ over $S$.   Since $\widetilde{\mathbf{M}}^s$ is a fine moduli space  by Theorem \ref{fine-coarse},  there is a universal  complex concentrated in degree 0 and 1 as follows
\ben
\mathbb{I}^\bullet=\{\mathcal{O}_{\mathcal{X}\times_{S} \widetilde{\mathbf{M}}^s}\to\mathbb{F}\}\in\mathrm{D}^b(\mathcal{X}\times_{S} \widetilde{\mathbf{M}}^s)
\een
where $\mathbb{F}$ is flat over $\widetilde{\mathbf{M}}^s$. 
%Next we give briefly the similar description before [\cite{Lyj}, Theorem 5.17]  for the relative case as in [\cite{Zhou1}, Section 6] as follows.
Let $\tilde{\pi}_{\widetilde{\mathbf{M}}^s}:\mathcal{X}\times_{S} \widetilde{\mathbf{M}}^s\to \widetilde{\mathbf{M}}^s$ and $\tilde{\pi}_{\mathcal{X}}:\mathcal{X}\times_{S} \widetilde{\mathbf{M}}^s\to\mathcal{X}$ be the projections. 
Since $\widetilde{\mathbf{M}}^s$ is a projective $S$-scheme, as in [\cite{PT1}, Section 2.3] or [\cite{HT09}, Section 4.2],
we have $R\mathcal{H}om(\mathbb{I}^{\bullet},\mathbb{I}^{\bullet})\cong\mathcal{O}_{\mathcal{X}\times_{S} \widetilde{\mathbf{M}}^s}\oplus R\mathcal{H}om(\mathbb{I}^{\bullet},\mathbb{I}^{\bullet})_{0}$,  where  $R\mathcal{H}om(\mathbb{I}^{\bullet},\mathbb{I}^{\bullet})_{0}$ denotes its traceless part (see [\cite{Tho}, Section 3] for the trace map).
Let $\mathbb{L}_{\mathcal{X}\times_{S} \widetilde{\mathbf{M}}^s/S}$ be the truncated cotangent complex, i.e., the truncation $\tau^{\geq-1}L^\bullet_{\mathcal{X}\times_{S} \widetilde{\mathbf{M}}^s/S}$ of Illusie's cotangent complex $L^\bullet_{\mathcal{X}\times_{S} \widetilde{\mathbf{M}}^s/S}$. Then we have the truncated Atiyah class of $\mathbb{I}^{\bullet}$
\ben
\mathrm{At}(\mathbb{I}^{\bullet})\in\mathrm{Ext}^1(\mathbb{I}^{\bullet},\mathbb{I}^{\bullet}\otimes\mathbb{L}_{\mathcal{X}\times_{S} \widetilde{\mathbf{M}}^s/S})
\een
By composing the map $\mathbb{I}^{\bullet}\to\mathbb{I}^{\bullet}\otimes\mathbb{L}_{\mathcal{X}\times_{S} \widetilde{\mathbf{M}}^s/S}[1]$ with the projection $\mathbb{L}_{\mathcal{X}\times_{S} \widetilde{\mathbf{M}}^s/S}\to \tilde{\pi}_{\widetilde{\mathbf{M}}^s}^*\mathbb{L}_{\widetilde{\mathbf{M}}^s/S}$ and then restricting to  $R\mathcal{H}om(\mathbb{I}^{\bullet},\mathbb{I}^{\bullet})_{0}$, we have a class in
\ben
\mathrm{Ext}^1(R\mathcal{H}om(\mathbb{I}^{\bullet},\mathbb{I}^{\bullet})_{0},\tilde{\pi}_{\widetilde{\mathbf{M}}^s}^*\mathbb{L}_{\widetilde{\mathbf{M}}^s/S}).
\een 
By tensoring the map $R\mathcal{H}om(\mathbb{I}^{\bullet},\mathbb{I}^{\bullet})_{0}\to\tilde{\pi}_{\widetilde{\mathbf{M}}^s}^*\mathbb{L}_{\widetilde{\mathbf{M}}^s/S}[1]$ with $\tilde{\pi}_{\mathcal{X}}^*\omega_{\mathcal{X}/S}$, and using Serre duality in [\cite{Nir1}, Theorem 2.22 and Corollary 2.10], we have the following map
\ben
\widetilde{\Phi}: \widetilde{\mathbf{E}}^\bullet:=R\tilde{\pi}_{\widetilde{\mathbf{M}}^s*}(R\mathcal{H}om(\mathbb{I}^{\bullet},\mathbb{I}^{\bullet})_{0}\otimes\tilde{\pi}_{\mathcal{X}}^*\omega_{\mathcal{X}/S})[2]\to\mathbb{L}_{\widetilde{\mathbf{M}}^s/S}.
\een
where $\omega_{\mathcal{X}/S}$ is the relative dualizing sheaf of $\mathcal{X}\to S$.

\begin{theorem}\label{perf-ob1}
	In the sense of [\cite{BF}], the map $\widetilde{\Phi}$ is a perfect relative obstruction theory for $\widetilde{\mathbf{M}}^s$ over $S$. And there exists a virtual fundamental class $[\widetilde{\mathbf{M}}^s]^{\mathrm{vir}}\in A_{\mathrm{vdim}+\dim S}(\widetilde{\mathbf{M}}^s)$ of virtual dimension $\mathrm{vdim}=\mathrm{rk}(\widetilde{\mathbf{E}}^\bullet)$.
\end{theorem}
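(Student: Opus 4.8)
The plan is to follow closely the strategy used in the absolute case [\cite{Lyj}, Theorem 5.17], adapting each step to the relative setting over $S$. The main point is that the construction of the obstruction theory is entirely formal once we know that the Atiyah class machinery of Illusie applies and that the relevant direct images are perfect complexes. First I would verify that $\widetilde{\Phi}$ is indeed a \emph{relative} obstruction theory in the sense of [\cite{BF}]: that is, that $h^0(\widetilde{\mathbf{E}}^\bullet)\to h^0(\mathbb{L}_{\widetilde{\mathbf{M}}^s/S})$ is an isomorphism and $h^{-1}(\widetilde{\mathbf{E}}^\bullet)\to h^{-1}(\mathbb{L}_{\widetilde{\mathbf{M}}^s/S})$ is surjective. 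This is the standard deformation-obstruction argument: a square-zero extension $T\hookrightarrow\bar T$ of $S$-schemes gives an element of $\mathrm{Ext}^1$ (resp.\ $\mathrm{Ext}^2$) of $\mathbb{L}_{\widetilde{\mathbf{M}}^s/S}$ against the ideal, and via the truncated Atiyah class $\mathrm{At}(\mathbb{I}^\bullet)$ one identifies the deformations and obstructions of $\mathbb{I}^\bullet$ (with fixed trivial determinant, so that only the traceless part $R\mathcal{H}om(\mathbb{I}^\bullet,\mathbb{I}^\bullet)_0$ enters) with $\mathrm{Ext}^0$ and $\mathrm{Ext}^1$ of $R\mathcal{H}om(\mathbb{I}^\bullet,\mathbb{I}^\bullet)_0$ on the fibers. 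Since $\widetilde{\mathbf{M}}^s$ is a fine moduli space, deformations of a PT stable pair and of its associated two-term complex $\mathbb{I}^\bullet$ agree, which is the content of the identification proven fiberwise in [\cite{Lyj}].

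Next I would check perfectness: $\widetilde{\mathbf{E}}^\bullet$ must be a perfect complex of amplitude $[-1,0]$. Because each fiber $\mathcal{X}_s$ has relative dimension three, Serre duality [\cite{Nir1}, Theorem 2.22 and Corollary 2.10] applied with the relative dualizing sheaf $\omega_{\mathcal{X}/S}$ converts $R\tilde\pi_{\widetilde{\mathbf{M}}^s*}\bigl(R\mathcal{H}om(\mathbb{I}^\bullet,\mathbb{I}^\bullet)_0\otimes\tilde\pi_{\mathcal{X}}^*\omega_{\mathcal{X}/S}\bigr)[2]$ into (the dual of) $R\tilde\pi_{\widetilde{\mathbf{M}}^s*}R\mathcal{H}om(\mathbb{I}^\bullet,\mathbb{I}^\bullet)_0[1]$, and the vanishing $\mathrm{Ext}^{<0}$ and $\mathrm{Ext}^{>3}$ on each fiber (using that $R\mathcal{H}om(\mathbb{I}^\bullet,\mathbb{I}^\bullet)_0$ is concentrated in the expected range and that the pair is stable so $\mathrm{Hom}(\mathbb{I}^\bullet,\mathbb{I}^\bullet)_0=0$ and $\mathrm{Ext}^3(\mathbb{I}^\bullet,\mathbb{I}^\bullet)_0=0$ by duality) forces the amplitude to be $[-1,0]$ after the shift. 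The complex is perfect because $\tilde\pi_{\widetilde{\mathbf{M}}^s}$ is proper (projective) and $\mathbb{I}^\bullet$ is a bounded complex flat over $\widetilde{\mathbf{M}}^s$ in each degree. Once the obstruction theory is established and shown perfect, the existence of $[\widetilde{\mathbf{M}}^s]^{\mathrm{vir}}\in A_{\mathrm{vdim}+\dim S}(\widetilde{\mathbf{M}}^s)$ with $\mathrm{vdim}=\mathrm{rk}(\widetilde{\mathbf{E}}^\bullet)$ follows immediately from the general construction of [\cite{BF}] (the virtual class lives in the Chow group of degree $\mathrm{rk}(\widetilde{\mathbf{E}}^\bullet)$ relative to the base, hence $\mathrm{vdim}+\dim S$ in absolute terms since $S$ is smooth — or, more precisely, one works in the relative Chow theory over $S$ and the shift by $\dim S$ is bookkeeping).

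The step I expect to be the main obstacle is the precise fiberwise deformation-obstruction comparison: one must show that the restriction of $\widetilde{\mathbf{E}}^\bullet$ to a fiber over $s\in S$ recovers the absolute perfect obstruction theory of $\mathrm{PT}_{\mathcal{X}_s/\mathbb{C}}^{(P)}$ constructed in [\cite{Lyj}], and that the Atiyah-class map is compatible with base change. This requires base-change statements for $R\mathcal{H}om$ and $R\tilde\pi_{\widetilde{\mathbf{M}}^s*}$ on Deligne--Mumford stacks, and care that the ``fixed determinant'' condition (which makes the trace split off, using that $\det\mathbb{I}^\bullet=\mathcal{O}$ is rigid) is preserved relatively; here one invokes that $\mathbb{I}^\bullet$ has the form $\{\mathcal{O}\to\mathbb{F}\}$ so its determinant is canonically trivial over all of $\mathcal{X}\times_S\widetilde{\mathbf{M}}^s$. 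Apart from these technical compatibilities, which are handled exactly as in the absolute case but with $\omega_{\mathcal{X}}$ replaced by $\omega_{\mathcal{X}/S}$ and $\mathbb{L}_{\widetilde{\mathbf{M}}^s}$ by $\mathbb{L}_{\widetilde{\mathbf{M}}^s/S}$, the proof is a direct transcription of [\cite{Lyj}, Theorem 5.17].
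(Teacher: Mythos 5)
Your proposal is correct and follows essentially the same route as the paper: the paper's proof simply invokes the relative version of [\cite{BF}, Theorem 4.5] and states that the argument of [\cite{Lyj}, Theorem 5.17] extends to the relative setting, with the virtual class then coming from [\cite{BF}] and [\cite{LT}]. The details you supply (the Atiyah-class identification of deformations and obstructions with the traceless $\mathrm{Ext}$ groups, the amplitude computation via relative Serre duality and the vanishing of $\mathrm{Hom}(\mathbb{I}^\bullet,\mathbb{I}^\bullet)_0$ and $\mathrm{Ext}^3(\mathbb{I}^\bullet,\mathbb{I}^\bullet)_0$ on fibers, and the base-change compatibilities) are exactly the content of that extension.
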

\begin{proof}
Combined with the relative version of [\cite{BF}, Theorem 4.5], the argument in the proof of [\cite{Lyj}, Theorem 5.17] can be extended  to the relative case to obtain the first statement. The existence of the virtual fundamental class follows from Behrend-Fantechi [\cite{BF}] and Li-Tian [\cite{LT}].
\end{proof}

By [\cite{BF}, Proposition 7.2], we have the following property of pullback for a perfect relative obstruction theory as  in [\cite{Zhou1}, Proposition 6.5].
\begin{proposition}
Suppose that $S$ is smooth of constant dimension. Then for each closed point $\bar{s}\in S$, the pull-back $i_{\bar{s}}^*\widetilde{\mathbf{E}}^\bullet$ is a perfect obstruction theory for the fiber  $\widetilde{\mathbf{M}}^s_{\bar{s}}$ and $[\widetilde{\mathbf{M}}^s_{\bar{s}}]^{\mathrm{vir}}=i_{\bar{s}}^![\widetilde{\mathbf{M}}^s]^{\mathrm{vir}}$. 
\end{proposition}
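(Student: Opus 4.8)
The final statement to prove is that when $S$ is smooth of constant dimension, the pull-back $i_{\bar{s}}^*\widetilde{\mathbf{E}}^\bullet$ along a closed point $i_{\bar{s}}\colon\{\bar{s}\}\hookrightarrow S$ gives a perfect obstruction theory for the fiber $\widetilde{\mathbf{M}}^s_{\bar{s}}$, and that the virtual classes are compatible via refined Gysin pullback: $[\widetilde{\mathbf{M}}^s_{\bar{s}}]^{\mathrm{vir}} = i_{\bar{s}}^![\widetilde{\mathbf{M}}^s]^{\mathrm{vir}}$.

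The plan is to invoke the general compatibility result for relative obstruction theories, namely \cite{BF}, Proposition 7.2, which says exactly this: given a perfect relative obstruction theory $\widetilde{\mathbf{E}}^\bullet \to \mathbb{L}_{\widetilde{\mathbf{M}}^s/S}$ for $\widetilde{\mathbf{M}}^s$ over a smooth base $S$, and a regular closed immersion $i_{\bar{s}}\colon \{\bar{s}\}\hookrightarrow S$ (automatic since $S$ is smooth, hence $\{\bar{s}\}\hookrightarrow S$ is a regular embedding of codimension $\dim S$), the pulled-back complex is a perfect obstruction theory on the fiber and the virtual classes match under $i_{\bar{s}}^!$. So the first step is to verify the hypotheses of \cite{BF}, Proposition 7.2 are met: $\widetilde{\mathbf{M}}^s \to S$ is of finite type with $S$ smooth and quasi-projective, $\widetilde{\mathbf{E}}^\bullet$ is a perfect relative obstruction theory (this is precisely Theorem \ref{perf-ob1}), and $i_{\bar{s}}$ is a regular immersion. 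The second step is to identify the pullback $i_{\bar{s}}^*\widetilde{\mathbf{M}}^s$ with the fiber $\widetilde{\mathbf{M}}^s_{\bar{s}}$, which is immediate from the Cartesian square defining $\widetilde{\mathbf{M}}^s_{\bar{s}}$, and to note that $i_{\bar{s}}^*\mathbb{L}_{\widetilde{\mathbf{M}}^s/S} \cong \mathbb{L}_{\widetilde{\mathbf{M}}^s_{\bar{s}}}$ by base change of cotangent complexes along the Tor-independent square (the square is Tor-independent because $i_{\bar{s}}$ is a regular immersion and one checks the fiber has the expected dimension behaviour, or more simply because one of the maps is flat — here $\widetilde{\mathbf{M}}^s \to S$ need not be flat, so one uses the regular-immersion version of the compatibility as in \cite{BF}). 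Then $i_{\bar{s}}^*\widetilde{\mathbf{E}}^\bullet \to i_{\bar{s}}^*\mathbb{L}_{\widetilde{\mathbf{M}}^s/S} \cong \mathbb{L}_{\widetilde{\mathbf{M}}^s_{\bar{s}}}$ is the desired obstruction theory, and perfectness is preserved under derived pullback.

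For the virtual class equality, the argument is the standard deformation-to-the-normal-cone / intrinsic normal cone comparison: one has the intrinsic normal cone $\mathfrak{C}_{\widetilde{\mathbf{M}}^s/S}$ sitting inside $\mathfrak{E}^\bullet := h^1/h^0((\widetilde{\mathbf{E}}^\bullet)^\vee)$, and $[\widetilde{\mathbf{M}}^s]^{\mathrm{vir}}$ is obtained by intersecting with the zero section; pulling back along $i_{\bar{s}}$ via the refined Gysin map $i_{\bar{s}}^!$ commutes with this intersection, and the pulled-back cone is the intrinsic normal cone of the fiber because $S$ is smooth (so there is no obstruction to deforming, and $\mathfrak{C}_{\widetilde{\mathbf{M}}^s_{\bar{s}}} = i_{\bar{s}}^*\mathfrak{C}_{\widetilde{\mathbf{M}}^s/S}$ as cycles, again using that $i_{\bar{s}}$ is regular). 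This is exactly the content of \cite{BF}, Proposition 7.2, so the cleanest exposition is simply to cite it after checking the hypotheses, perhaps mirroring verbatim the parallel statement \cite{Zhou1}, Proposition 6.5 in the orbifold DT setting, whose proof is identical line for line.

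I expect the only genuine point requiring care — and hence the main (minor) obstacle — is the Tor-independence / cotangent-complex base-change issue: one must make sure that $i_{\bar{s}}^*\mathbb{L}_{\widetilde{\mathbf{M}}^s/S}$ really does compute $\mathbb{L}_{\widetilde{\mathbf{M}}^s_{\bar{s}}}$ (at least after truncation, in the range relevant to the obstruction theory), which needs the square to be Tor-independent. Since $S$ is smooth and $\bar{s}$ is a closed point, $i_{\bar{s}}$ is a regular immersion, and one invokes the standard fact that derived pullback of the relative cotangent complex along a morphism Tor-independent with $\widetilde{\mathbf{M}}^s \to S$ agrees with the cotangent complex of the base-changed family; alternatively, one restricts attention to the truncated cotangent complex and uses that the relevant cohomology sheaves behave well. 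Once this is in place, everything else is a direct appeal to Behrend--Fantechi. Therefore the proof is essentially: \emph{The first statement follows from the relative version of \cite{BF}, Proposition 7.2, once we observe that $S$ smooth makes $i_{\bar{s}}$ a regular closed immersion, so that the square defining $\widetilde{\mathbf{M}}^s_{\bar{s}}$ is Tor-independent and $i_{\bar{s}}^*\mathbb{L}_{\widetilde{\mathbf{M}}^s/S}\cong\mathbb{L}_{\widetilde{\mathbf{M}}^s_{\bar{s}}}$; perfectness is stable under derived pullback. The equality of virtual classes is then the conclusion of \cite{BF}, Proposition 7.2, exactly as in \cite{Zhou1}, Proposition 6.5.}
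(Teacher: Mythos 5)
Your proposal is correct and matches the paper's approach exactly: the paper gives no separate argument but simply derives the proposition from [\cite{BF}, Proposition 7.2] applied to the perfect relative obstruction theory of Theorem \ref{perf-ob1}, citing the parallel statement [\cite{Zhou1}, Proposition 6.5] in the orbifold DT setting. Your additional remarks on the regular immersion and cotangent-complex base change are a sensible elaboration of the same citation, not a different route.
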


\section{ Moduli  of stable orbifold Pandharipande-Thomas pairs}

In Section 4, we have defined an orbifold Pandharipande-Thomas stable pair $(\mathcal{F},\varphi)$ on $\mathcal{X}/S$, where $[(\mathcal{F},\varphi)]$ is one point in $\mathrm{PT}_{\mathcal{X}/S}^{(P)}$. As we will  introduce another notion of stability in Definition \ref{stable} which is different from the stability of a pair $(\mathcal{F},\varphi)$  in Definition \ref{semi-sub}, we make the convention that for a projective Deligne-Mumford stack $\mathcal{Z}$ over $\mathbb{C}$, we call a pair $(\mathcal{H},\psi)$ on $\mathcal{Z}$ or $\psi:\mathcal{O}_{\mathcal{Z}}\to\mathcal{H}$ an orbifold PT pair (it is called a coherent system in [\cite{LW}]) if $\mathcal{H}$ is a pure sheaf of dimension one and $\mathrm{coker}\,\psi$ has finite $0$-dimensional support. Here, we do not impose the condition of  fixed modified Hilbert polynomial on an orbifold PT pair. And we call a representative $(\mathcal{F},\varphi)$ of an object in $\mathrm{PT}_{\mathcal{X}/S}^{(P)}(T)$ an $T$-flat family of orbifold PT pairs on $\mathcal{X}\times_{S}T$ (with fixed modified Hilbert polynomial $P$) as in Definition \ref{fam-PT}.

In the following, we will assume that $(\mathcal{Y},\mathcal{D})$ is a smooth pair and $\mathcal{Y}$
is also a projective Deligne-Mumford stack with a coarse moduli scheme $Y$ and  a  polarization $(\mathcal{E}_{\mathcal{Y}},\mathcal{O}_{Y}(1))$, and $\pi:\mathcal{X}\to\mathbb{A}^1$ is a simple degeneration and also a family of projective Deligne-Mumford stacks with a coarse moduli scheme $X$ and a relative polarization $(\mathcal{E}_{\mathcal{X}},\mathcal{O}_{X}(1))$. Then $\mathcal{X}_{0}[k]$ and $\mathcal{Y}[k]$ are projective Deligne-Mumford stacks by the projective assumption, the projectivity of the bubble component $\Delta$  in [\cite{Zhou1}, Proposition 4.13] and the uniqueness of a coarse moduli space (as a categorical quotient) via gluing (see [\cite{KM}, Lemma 3.2] or [\cite{Con}, Section 2]) for any $k\geq0$. Due to  properties  $(ii)$ in both Proposition \ref{property1} and Proposition \ref{property2},  one can actually show that two standard families $\pi_{k}:\mathcal{X}(k)\to\mathbb{A}^{k+1}$  and $\pi_{k}: \mathcal{Y}(k)\to\mathbb{A}^k$ are  families of projective Deligne-Mumford stacks for any $k\geq0$ as follows. Using Proposition \ref{property1} and Proposition \ref{property2} again,  $\mathcal{X}(k)$ and $\mathcal{Y}(k)$ are  separated Deligne-Mumford stacks of finite type,  and hence  have  their coarse moduli spaces respectively. Furthermore, $\mathcal{X}(k)$ and $\mathcal{Y}(k)$  have projective coarse moduli schemes over $\mathbb{A}^{k+1}$ and $\mathbb{A}^k$ respectively by their groupoid construction and [\cite{Rydh}, Theorem 5.3 and Proposition 4.7].  Using [\cite{OS03}, Section 5]  or [\cite{Nir1}, Theorem 2.16-(2)], $\mathcal{X}(k)$ (resp. $\mathcal{Y}(k)$) possesses a generating sheaf $p^*\mathcal{E}_{\mathcal{X}}$ (resp. $p^*\mathcal{E}_{\mathcal{Y}}$) via the natural map $p:\mathcal{X}(k)\to\mathcal{X}$ (resp. $p:\mathcal{Y}(k)\to\mathcal{Y}$). This completes the proof of projectivity of the above two families. Next, we also assume that $\dim\mathcal{X}_{t}=\dim\mathcal{Y}=3$ for any $t\in\mathbb{A}^1$.

In this section, we will introduce the notion of stable orbifold PT pairs, and define moduli stacks of stable orbifold PT pairs on stacks of expanded degenerations and pairs with some fixed topological data which are  the desired good degenerations, and then prove some properties of these moduli stacks.

\subsection{Stable orbifold PT pairs}
We follow [\cite{LW,Zhou1}] to define the stability condition for an orbifold PT pair.
We consider the following orbifold PT pair on $\mathcal{X}_{0}[k]$
\ben
\varphi:\mathcal{O}_{\mathcal{X}_{0}[k]}\to \mathcal{F}
\een
 (resp. $\varphi:\mathcal{O}_{\mathcal{Y}[k]}\to \mathcal{F}$ on $\mathcal{Y}[k]$) where  $\mathcal{F}$ is pure of dimension one  and $\varphi$ has finite 0-dimensional cokernel.
Given  two orbifold PT pairs $\varphi_{1}:\mathcal{O}_{\mathcal{X}_{0}[k]}\to \mathcal{F}_{1}$  and $\varphi_{2}:\mathcal{O}_{\mathcal{X}_{0}[k]}\to \mathcal{F}_{2}$, we define an equivalence between them  by a couple $(\sigma,\psi)$ such that the following diagram is commutative:
\ben
\xymatrixcolsep{4pc}\xymatrix{
	\mathcal{O}_{\mathcal{X}_{0}[k]}\ar[d]_{\mathrm{Id}} \ar[r]^{\varphi_{1}} &  \mathcal{F}_{1} \ar[d]^{\psi}  \\
	\mathcal{O}_{\mathcal{X}_{0}[k]}\cong\sigma^*\mathcal{O}_{\mathcal{X}_{0}[k]}\ar[r]^-{\sigma^*\varphi_2}&\sigma^{*}\mathcal{F}_{2} &
}
\een
where $\sigma:\mathcal{X}_{0}[k]\to \mathcal{X}_{0}[k]$ is an isomorphism induced by the $(\mathbb{C}^{*})^k$-action on $\mathcal{X}_{0}[k]$ (see Proposition \ref{property1} for the action) and $\psi:\mathcal{F}_{1}\to\sigma^{*}\mathcal{F}_{2}$ is an isomorphism. One can similarly  define an equivalence  between $\varphi_{1}:\mathcal{O}_{\mathcal{Y}[k]}\to \mathcal{F}_{1}$ and $\varphi_{2}:\mathcal{O}_{\mathcal{Y}[k]}\to \mathcal{F}_{2}$. 
Let $\mathrm{Aut}(\varphi)$ be the group of autoequivalence of a fixed orbifold PT pair $\varphi:\mathcal{O}_{\mathcal{X}_{0}[k]}\to \mathcal{F}$ (resp. $\varphi:\mathcal{O}_{\mathcal{Y}[k]}\to \mathcal{F}$). It is easy to show as in [\cite{LW}, Section 4.1] or [\cite{Zhou1}, Section 3.4] that $\mathrm{Aut}(\varphi)$
is a subgroup of $(\mathbb{C}^{*})^k$.

As in [\cite{LW}, Definition 4.8], we have
\begin{definition}\label{stable}
	An orbifold PT pair $\varphi:\mathcal{O}_{\mathcal{X}_{0}[k]}\to \mathcal{F}$ (resp. $\varphi:\mathcal{O}_{\mathcal{Y}[k]}\to \mathcal{F}$) is called admissible if both $\mathrm{coker}\,\varphi$ and $\mathcal{F}$ are admissible. We call  $\varphi$  stable if it is admissible and $\mathrm{Aut}(\varphi)$ is finite.
\end{definition}
\begin{remark}\label{adm-cri}
For an orbifold PT pair $\varphi:\mathcal{O}_{\mathcal{X}_{0}[k]}\to \mathcal{F}$ (resp. $\varphi:\mathcal{O}_{\mathcal{Y}[k]}\to \mathcal{F}$), it is shown in [\cite{Zhou1}, Proposition 3.23 and Example 3.24] that  $\mathrm{coker}\,\varphi$ is admissible if and only if any point in  $\mathrm{Supp}(\mathrm{coker}\,\varphi)$ is away from the divisor $\mathcal{D}_{i}$, $0\leq i\leq k$,
and the pure 1-dimensional sheaf $\mathcal{F}$ is admissible if and  only if no irreducible components of $\mathrm{Supp}(\mathcal{F})$ lie entirely in any divisor $\mathcal{D}_{i}$, $0\leq i\leq k$. 
\end{remark}	
We make the convention that any orbifold PT pair on the fiber of $\mathcal{X}(k)\to\mathbb{A}^{k+1}$  over $t=(t_{0},\cdots,t_{k})\in\mathbb{A}^{k+1}$ with $t_{0}\cdots t_{k}\ne 0$  which is isomorphic to the smooth fiber $\mathcal{X}_{c}$ ($c\ne0$) in the original family $\pi:\mathcal{X}\to\mathbb{A}^1$ by Proposition \ref{property1},
is admissible and stable. 
For the families of expanded degenerations and pairs, we have the following definition.
\begin{definition}\label{fam-PT}
Let $\mathtt{X}_{S}\to S$ (resp. $\mathtt{Y}_{S}\to S$) be a family of expanded degenerations (resp. pairs). Suppose $\varphi: \mathcal{O}_{\mathtt{X}_{S}}\to\mathcal{F}$ (resp. $ \mathcal{O}_{\mathtt{Y}_{S}}\to\mathcal{F}$) is  an $S$-flat family of  orbifold PT pairs on $\mathtt{X}_{S}$ (resp. $\mathtt{Y}_{S}$), that is, $\mathcal{F}$ is an $S$-flat family of pure 1-dimensional sheaves and $\mathrm{coker}\,\varphi$ has relative dimension at most zero. 
The family $\varphi$ is called an $S$-flat family of stable orbifold PT pairs  if for every point $s\in S$, the orbifold PT pair $\varphi_{s}: \mathcal{O}_{\mathtt{X}_{S,s}}\to\mathcal{F}_{s}$ (resp. $\mathcal{O}_{\mathtt{Y}_{S,s}}\to\mathcal{F}_{s}$) is stable on the fiber $\mathtt{X}_{S,s}$ (resp. $\mathtt{Y}_{S,s}$).
\end{definition}

By  the discussions at the begining of Section 4 and this section, we have the moduli spaces $\mathrm{PT}_{\mathcal{X}(k)/\mathbb{A}^{k+1}}^{(P)}$ and $\mathrm{PT}_{\mathcal{Y}(k)/\mathbb{A}^{k}}^{(P)}$, which are projective schemes over $\mathbb{A}^{k+1}$ and $\mathbb{A}^k$ respectively. We begin with the degeneration case.
Consider the moduli space of orbifold PT pairs on $\mathcal{X}(k)/\mathbb{A}^{k+1}$, denoted by $\mathrm{PT}_{\mathcal{X}(k)/\mathbb{A}^{k+1}}$, which is a disjoint union of 
$\mathrm{PT}_{\mathcal{X}(k)/\mathbb{A}^{k+1}}^{(P)}$ (it is possible to be empty) where  $P$ is taken over all degree one polynomials. Then $\mathrm{PT}_{\mathcal{X}(k)/\mathbb{A}^{k+1}}$ is  a scheme locally of finite type  over $\mathbb{A}^{k+1}$, and each  $\mathrm{PT}_{\mathcal{X}(k)/\mathbb{A}^{k+1}}^{(P)}\subset\mathrm{PT}_{\mathcal{X}(k)/\mathbb{A}^{k+1}}$ is a projective $\mathbb{A}^{k+1}$-scheme.  Similarly, one can define $\mathrm{PT}_{\mathcal{Z}/\mathbb{C}}$ as a disjoint union of $\mathrm{PT}_{\mathcal{Z}/\mathbb{C}}^{(P)}$ (see Section 4) over all polynomials $P$ of degree one for a $3$-dimensional projective Deligne-Mumford stack $\mathcal{Z}$  over $\mathbb{C}$. And $\mathrm{PT}_{\mathcal{Z}/\mathbb{C}}$ is locally of finite type over $\mathbb{C}$ with each $\mathrm{PT}_{\mathcal{Z}/\mathbb{C}}^{(P)}\subset\mathrm{PT}_{\mathcal{Z}/\mathbb{C}}$ being a projective $\mathbb{C}$-scheme. We have a $(\mathbb{C}^*)^k$-action on $\mathrm{PT}_{\mathcal{X}(k)/\mathbb{A}^{k+1}}$ induced from the $(\mathbb{C}^*)^k$-action on $\mathcal{X}(k)$ and $\mathbb{A}^{k+1}$. As in [\cite{Zhou1}, Section 3.4], given an object $(\xi,\varphi)\in\mathrm{PT}_{\mathcal{X}(k)/\mathbb{A}^{k+1}}(S)$ where $\xi:S\to \mathbb{A}^{k+1}$ is  a  $\mathbb{A}^1$-map  and $\varphi$ is an $S$-flat family of orbifold $\mathrm{PT}$ pairs on $\mathtt{X}_{S}$, one can define the stabilizer as the following fiber product:
\ben
\xymatrixcolsep{4pc}\xymatrix{
	\mathrm{Aut}_{(\mathbb{C}^*)^k,S}(\xi,\varphi)\ar[d] \ar[r] &  S \ar[d]^{\Delta\circ(\xi,\varphi)}  \\
	(\mathbb{C}^*)^k\times\mathrm{PT}_{\mathcal{X}(k)/\mathbb{A}^{k+1}}\ar[r]& \mathrm{PT}_{\mathcal{X}(k)/\mathbb{A}^{k+1}}\times\mathrm{PT}_{\mathcal{X}(k)/\mathbb{A}^{k+1}} &
}
\een
Then $\mathrm{Aut}_{(\mathbb{C}^*)^k,S}(\xi,\varphi)\hookrightarrow(\mathbb{C}^*)^k\times S$ is a subgroup scheme over $S$, and hence  quasi-compact and separated over $S$. Similarly, one can also define $\mathrm{PT}_{\mathcal{Y}(k)/\mathbb{A}^{k}}$, which is locally of finite type over  $\mathbb{A}^k$ with each  $\mathrm{PT}_{\mathcal{Y}(k)/\mathbb{A}^{k}}^{(P)}\subset\mathrm{PT}_{\mathcal{Y}(k)/\mathbb{A}^{k}}$ being a projective $\mathbb{A}^k$-scheme, and the stabilizer of an object in $\mathrm{PT}_{\mathcal{Y}(k)/\mathbb{A}^{k}}(S)$ is a quasi-compact and separated subgroup scheme  of $(\mathbb{C}^*)^k\times S$ over $S$.
Combined with Proposition \ref{adm-open}, as in [\cite{Zhou1}, Proposition 3.20], we have
\begin{proposition}\label{open}
Let $\mathtt{X}_{S}\to S$ $($resp. $\mathtt{Y}_{S}\to S$$)$ be a family of expanded degenerations (resp. pairs), and let $\varphi: \mathcal{O}_{\mathtt{X}_{S}}\to\mathcal{F}$ $($resp. $ \mathcal{O}_{\mathtt{Y}_{S}}\to\mathcal{F}$$)$ be an $S$-flat family of orbifold PT pairs on $\mathtt{X}_{S}$ (resp. $\mathtt{Y}_{S}$). Then
$\{s\in S\,|\,\varphi_{s}:\mathcal{O}_{\mathtt{X}_{S,s}}\to\mathcal{F}_{s} \mbox{ is stable}\}$
$($resp. $\{s\in S\,|\,\varphi_{s}:\mathcal{O}_{\mathtt{Y}_{S,s}}\to\mathcal{F}_{s} \mbox{ is stable}\}$$)$
is  open in S.
\end{proposition}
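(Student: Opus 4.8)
The plan is to express the stability of the fibre $\varphi_s$ as the conjunction of three conditions, each cutting out an open subset of $S$, and then to intersect. By Definition \ref{stable}, $\varphi_s:\mathcal{O}_{\mathtt{X}_{S,s}}\to\mathcal{F}_s$ is stable if and only if \emph{(a)} $\mathcal{F}_s$ is admissible, \emph{(b)} $\mathrm{coker}\,\varphi_s$ is admissible, and \emph{(c)} $\mathrm{Aut}(\varphi_s)$ is finite; so it suffices to show that each of the corresponding loci $S_{\mathrm{a}},S_{\mathrm{b}},S_{\mathrm{c}}\subseteq S$ is open, the locus in the statement being $S_{\mathrm{a}}\cap S_{\mathrm{b}}\cap S_{\mathrm{c}}$. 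I will carry this out for a family of expanded degenerations $\mathtt{X}_S\to S$; the case of $\mathtt{Y}_S\to S$ is entirely parallel, with Proposition \ref{property2} replacing Proposition \ref{property1} and Remark \ref{adm-cri} applied to the divisors of $\mathcal{Y}[k]$. Since $\mathcal{F}$ is by hypothesis coherent on $\mathtt{X}_S$ and flat over $S$, Proposition \ref{adm-open} applies verbatim and shows that $S_{\mathrm{a}}$ is open.

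For \emph{(b)} the point is that $\mathrm{coker}\,\varphi$ need not be flat over $S$, so Proposition \ref{adm-open} cannot be invoked directly; instead I would argue by properness. As $\mathcal{O}_{\mathtt{X}_S}$ and $\mathcal{F}$ are $S$-flat, the formation of $\mathrm{coker}\,\varphi$ commutes with base change, whence $(\mathrm{coker}\,\varphi)_s=\mathrm{coker}(\varphi_s)$; and since the support of a coherent sheaf commutes with base change as well, $\mathrm{Supp}(\mathrm{coker}\,\varphi)\cap\mathtt{X}_{S,s}=\mathrm{Supp}(\mathrm{coker}\,\varphi_s)$ inside the fibre. By the projectivity assumptions recorded at the beginning of this section, $\mathcal{X}(k)\to\mathbb{A}^{k+1}$ is proper (Proposition \ref{property1}(i)), hence so is $\mathtt{X}_S\to S$; therefore the closed substack $\mathrm{Supp}(\mathrm{coker}\,\varphi)$ is proper over $S$, and as $\mathrm{coker}\,\varphi$ has relative dimension at most zero it is quasi-finite, hence finite, over $S$. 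Let $\mathcal{D}^{\mathrm{rel}}\subset\mathtt{X}_S$ be the pullback along $\xi$ of the union of the relative divisors $\mathcal{D}_0(k),\dots,\mathcal{D}_k(k)$ of $\mathcal{X}(k)$ (cf. Proposition \ref{decom}), so that $\mathcal{D}^{\mathrm{rel}}\cap\mathtt{X}_{S,s}$ is the union of the divisors $\mathcal{D}_0,\dots,\mathcal{D}_l$ of $\mathtt{X}_{S,s}\cong\mathcal{X}_{0}[l]$. Then $Z:=\mathrm{Supp}(\mathrm{coker}\,\varphi)\cap\mathcal{D}^{\mathrm{rel}}$ is finite over $S$, so its image in $S$ is closed; by Remark \ref{adm-cri}, $\mathrm{coker}\,\varphi_s$ is admissible exactly when $\mathrm{Supp}(\mathrm{coker}\,\varphi_s)$ meets no $\mathcal{D}_i$, that is, exactly when $s$ lies outside the image of $Z$. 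Hence $S_{\mathrm{b}}$ is the complement of a closed set, so it is open.

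For \emph{(c)} I would use the relative stabiliser group scheme $\mathcal{A}:=\mathrm{Aut}_{(\mathbb{C}^*)^k,S}(\xi,\varphi)$ described above, with structure morphism $f:\mathcal{A}\to S$ and identity section $e:S\to\mathcal{A}$. Because $\mathrm{PT}_{\mathcal{X}(k)/\mathbb{A}^{k+1}}$ is separated, $\mathcal{A}$ is a closed subgroup scheme of $(\mathbb{C}^*)^k\times S$, hence affine and of finite type over $S$; and since this moduli scheme is fine, the fibre $\mathcal{A}_s$ is exactly $\mathrm{Aut}(\varphi_s)$, a subgroup scheme of $(\mathbb{C}^*)^k$ over $\kappa(s)$. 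Such a group scheme is finite if and only if it is zero-dimensional, and by homogeneity this holds if and only if it is zero-dimensional at its identity point $e(s)$. By the upper semicontinuity of local fibre dimension for morphisms of finite type, $T:=\{a\in\mathcal{A}\mid\dim_a\mathcal{A}_{f(a)}\geq 1\}$ is closed in $\mathcal{A}$; consequently $\{s\in S\mid\mathrm{Aut}(\varphi_s)\text{ is not finite}\}=e^{-1}(T)$ is closed, so $S_{\mathrm{c}}$ is open, and $S_{\mathrm{a}}\cap S_{\mathrm{b}}\cap S_{\mathrm{c}}$ is open as required. The step I expect to require the most care is \emph{(c)}: one must match the fibres of the relative stabiliser with the genuine automorphism groups $\mathrm{Aut}(\varphi_s)$, and then reduce finiteness of $\mathrm{Aut}(\varphi_s)$ to the semicontinuity statement by pulling back along the identity section — precisely the strategy of [\cite{Zhou1}, Proposition 3.20].
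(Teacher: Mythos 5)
Your proof is correct and follows the same route the paper takes, namely combining Proposition \ref{adm-open} for the $S$-flat sheaf $\mathcal{F}$ with the stabilizer-semicontinuity argument of [\cite{Zhou1}, Proposition 3.20]; the paper gives no details beyond these two citations. Your explicit treatment of the possibly non-flat cokernel --- passing to $\mathrm{Supp}(\mathrm{coker}\,\varphi)\cap\mathcal{D}^{\mathrm{rel}}$, which is closed and maps to a closed subset of $S$ by properness, and invoking Remark \ref{adm-cri} --- correctly supplies the one step where the PT case genuinely differs from the Quot-scheme case covered by the citation.
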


\subsection{Moduli of stable orbifold PT pairs}

We study stable orbifold PT pairs on stacks of expanded degenerations $\mathfrak{C}$ and pairs $\mathfrak{A}$ together with their universal families $\mathfrak{X}$ and $\mathfrak{Y}$ respectively as in  [\cite{Zhou1}, Section 4.1]. We begin with the following Cartesian diagram for the discrete symmetries
\ben
\begin{tikzpicture}
\node (a) at (0,0) {$R_{\mathrm{discrete},\mathcal{X}(k)}$};
\node (b) at (3,0) {$\mathcal{X}(k)$};
\node (c) at (0,-1.5) {$R_{\mathrm{discrete},\mathbb{A}^{k+1}}$};
\node (d) at (3,-1.5) {$\mathbb{A}^{k+1}$};
\path[->,font=\scriptsize,>=angle 90]
([yshift= 2pt]a.east) edge  ([yshift= 2pt]b.west)
([yshift= -2pt]a.east) edge  ([yshift= -2pt]b.west)
(a) edge (c)
(b) edge (d)
([yshift= 2pt]c.east) edge  ([yshift= 2pt]d.west)
([yshift= -2pt]c.east) edge  ([yshift= -2pt]d.west);
\end{tikzpicture}
\een
which is equivalent to the obvious relation $R_{\mathrm{discrete},\mathcal{X}(k)}=\pi_{k}^*R_{\mathrm{discrete},\mathbb{A}^{k+1}}$ by Proposition \ref{property1} where $\pi_{k}:\mathcal{X}(k)\to\mathbb{A}^{k+1}$.
This diagram induces an $\acute{e}$tale equivalence relation
\ben
\begin{tikzpicture}
\node (a) at (0,0) {$R_{\mathrm{discrete},\mathrm{PT}_{\mathcal{X}(k)/\mathbb{A}^{k+1}}}:=\mathrm{PT}_{R_{\mathrm{discrete},\mathcal{X}(k)}/R_{\mathrm{discrete},\mathbb{A}^{k+1}}}$};
\node (b) at (6,0) {$\mathrm{PT}_{\mathcal{X}(k)/\mathbb{A}^{k+1}}$};
\path[->,font=\scriptsize,>=angle 90]
([yshift= 2pt]a.east) edge  ([yshift= 2pt]b.west)
([yshift= -2pt]a.east) edge  ([yshift= -2pt]b.west);
\end{tikzpicture}
\een
which factors through $S_{k+1}\times\mathrm{PT}_{\mathcal{X}(k)/\mathbb{A}^{k+1}}$ since one can view the discrete relation on $\mathrm{PT}_{\mathcal{X}(k)/\mathbb{A}^{k+1}}$ as a subrelation of $S_{k+1}$-action. Here, the $S_{k+1}$-action on $\mathrm{PT}_{\mathcal{X}(k)/\mathbb{A}^{k+1}}$ is induced from $S_{k+1}$-action on  $\mathbb{A}^{k+1}$ and $\mathcal{X}(k)$  simultaneously by permutation of coordinates. Combined with the $(\mathbb{C}^*)^k$-action
\ben
\begin{tikzpicture}
\node (a) at (0,0) {$(\mathbb{C}^*)^k\times\mathrm{PT}_{\mathcal{X}(k)/\mathbb{A}^{k+1}}$};
\node (b) at (4,0) {$\mathrm{PT}_{\mathcal{X}(k)/\mathbb{A}^{k+1}}$};
\path[->,font=\scriptsize,>=angle 90]
([yshift= 2pt]a.east) edge  ([yshift= 2pt]b.west)
([yshift= -2pt]a.east) edge  ([yshift= -2pt]b.west);
\end{tikzpicture}
\een
we have the following smooth equivalence relation on $\mathrm{PT}_{\mathcal{X}(k)/\mathbb{A}^{k+1}}$:
\ben
\begin{tikzpicture}
\node (a) at (0,0) {$R_{\sim,\mathrm{PT}_{\mathcal{X}(k)/\mathbb{A}^{k+1}}}:=(\mathbb{C}^*)^k\times R_{\mathrm{discrete},\mathrm{PT}_{\mathcal{X}(k)/\mathbb{A}^{k+1}}}$};
\node (b) at (6.7,0) {$(\mathbb{C}^*)^{k+1}\rtimes S_{k+1}\times\mathrm{PT}_{\mathcal{X}(k)/\mathbb{A}^{k+1}}$};
\node (c) at (10.8,0) {$\mathrm{PT}_{\mathcal{X}(k)/\mathbb{A}^{k+1}}\;.$};
\path[right hook->] (a) edge (b);
\path[->,font=\scriptsize,>=angle 90]
([yshift= 2pt]b.east) edge  ([yshift= 2pt]c.west)
([yshift= -2pt]b.east) edge  ([yshift= -2pt]c.west);
\end{tikzpicture}
\een
where $(\mathbb{C}^*)^k$-action on $\mathrm{PT}_{\mathcal{X}(k)/\mathbb{A}^{k+1}}$ is viewed as some $(\mathbb{C}^*)^{k+1}$-action via the natural map $(\mathbb{C}^*)^k\hookrightarrow(\mathbb{C}^*)^{k+1}$ as in Section 2.2.
Also, we have the following Cartesian diagram with $|I|=l+1\leq k+1$:
\ben
	\xymatrixcolsep{4pc}\xymatrix{
		\mathcal{X}(l)\ar@{^{(}->}[r] \ar[d]^{\pi_{l}}  &    \mathcal{X}(k)|_{U_{I}}  \ar[d]\ar@{^{(}->}[r] & \mathcal{X}(k) \ar[d]^{\pi_{k}} \\
	\mathbb{A}^{l+1} \ar@{^{(}->}[r]^{\tau_{I}}& U_{I} \ar@{^{(}->}[r]^{\widehat{\tau}_{I}}& \mathbb{A}^{k+1}
	}
\een
which induces the closed embedding $\mathrm{PT}_{\mathcal{X}(l)/\mathbb{A}^{l+1}}\hookrightarrow \mathrm{PT}_{\mathcal{X}(k)/\mathbb{A}^{k+1}}$ by taking  objects $(\xi,\varphi)\in \mathrm{PT}_{\mathcal{X}(k)/\mathbb{A}^{k+1}}(S)$ with $\mathbb{A}^1$-map $\xi:S\to \mathbb{A}^{k+1}$ factoring through $\tau_{I}:\mathbb{A}^{l+1}\hookrightarrow\mathbb{A}^{k+1}$  and the open embedding
\ben
(\mathbb{C}^*)^{k-l}\times\mathrm{PT}_{\mathcal{X}(l)/\mathbb{A}^{l+1}}\cong \mathrm{PT}_{\mathcal{X}(k)|_{U_{I}}/U_{I}} \hookrightarrow\mathrm{PT}_{\mathcal{X}(k)/\mathbb{A}^{k+1}}.
\een
Then we have the following diagram
\ben
\begin{tikzpicture}
\node (a) at (0,0) {$R_{\sim,\mathrm{PT}_{\mathcal{X}(l)/\mathbb{A}^{l+1}}}$};
\node (b) at (4,0) {$(\mathbb{C}^*)^{k-l}\times R_{\sim,\mathrm{PT}_{\mathcal{X}(l)/\mathbb{A}^{l+1}}}$};
\node (c) at (8,0) {$R_{\sim,\mathrm{PT}_{\mathcal{X}(k)/\mathbb{A}^{k+1}}}$};
\node (d) at (0,-1.5) {$\mathrm{PT}_{\mathcal{X}(l)/\mathbb{A}^{l+1}}$};
\node (e) at (4,-1.5) {$(\mathbb{C}^*)^{k-l}\times\mathrm{PT}_{\mathcal{X}(l)/\mathbb{A}^{l+1}}$};
\node (f) at (8,-1.5) {$\mathrm{PT}_{\mathcal{X}(k)/\mathbb{A}^{k+1}}$};
\path[->,font=\scriptsize,>=angle 90]
([xshift= 2pt]a.south) edge  ([xshift= 2pt]d.north)
([xshift= -2pt]a.south) edge  ([xshift= -2pt]d.north)
([xshift= 2pt]b.south) edge  ([xshift= 2pt]e.north)
([xshift= -2pt]b.south) edge  ([xshift= -2pt]e.north)
([xshift= 2pt]c.south) edge  ([xshift= 2pt]f.north)
([xshift= -2pt]c.south) edge  ([xshift= -2pt]f.north);
\path[right hook->]
(a) edge (b)
(b) edge (c)
(d) edge (e)
(e) edge (f);
\end{tikzpicture}
\een 
By the Yoneda Lemma, one can also take the projective $\mathbb{A}^{k+1}$-scheme  $\mathrm{PT}_{\mathcal{X}(k)/\mathbb{A}^{k+1}}^{(P)}$ as its functor of points.
Let  $\mathrm{PT}_{\mathcal{X}(k)/\mathbb{A}^{k+1}}^{\mathrm{st},(P)}\subset \mathrm{PT}_{\mathcal{X}(k)/\mathbb{A}^{k+1}}^{(P)}$ be the subfunctor parameterizing stable orbifold PT pairs. Then it is an open subfunctor  by Proposition \ref{open} and represented by a quasi-projective $\mathbb{A}^{k+1}$-scheme. One can define  $\mathrm{PT}_{\mathcal{X}(k)/\mathbb{A}^{k+1}}^{\mathrm{st}}$ as a disjoint union of quasi-projective $\mathbb{A}^{k+1}$-schemes $\mathrm{PT}_{\mathcal{X}(k)/\mathbb{A}^{k+1}}^{\mathrm{st},(P)}$ for all possible degree one polyomial $P$, and then it is locally of finite type over $\mathbb{A}^{k+1}$ and invariant under the equivalence relation $R_{\sim,\mathrm{PT}_{\mathcal{X}(k)/\mathbb{A}^{k+1}}}$. We define the moduli stack of stable orbifold PT pairs by
\ben
\mathfrak{PT}_{\mathfrak{X}/\mathfrak{C}}:=\lim\limits_{\longrightarrow}\left[\mathrm{PT}_{\mathcal{X}(k)/\mathbb{A}^{k+1}}^{\mathrm{st}}\bigg/R_{\sim,\mathrm{PT}_{\mathcal{X}(k)/\mathbb{A}^{k+1}}}\right].
\een
Similarly, one can define the following moduli stack in the relative case 
\ben
\mathfrak{PT}_{\mathfrak{Y}/\mathfrak{A}}:=\lim\limits_{\longrightarrow}\left[\mathrm{PT}_{\mathcal{Y}(k)/\mathbb{A}^{k}}^{\mathrm{st}}\bigg/R_{\sim,\mathrm{PT}_{\mathcal{Y}(k)/\mathbb{A}^{k}}}\right].
\een

The stack $\mathfrak{PT}_{\mathfrak{X}/\mathfrak{C}}$  has the following categorical interpretation as in [\cite{Zhou1}, Section 4.1]. For any $\mathbb{A}^1$-scheme $S$, we define $\mathfrak{PT}_{\mathfrak{X}/\mathfrak{C}}(S)$ to be the set of objects $(\overline{\xi},\overline{\varphi})$, where each object $(\overline{\xi},\overline{\varphi})$ is represented by some object $(\xi,\varphi)\in [\mathrm{PT}_{\mathcal{X}(k)/\mathbb{A}^{k+1}}^{\mathrm{st}}/R_{\sim,\mathrm{PT}_{\mathcal{X}(k)/\mathbb{A}^{k+1}}}](S)$ for some $k$ such that by passing to a surjective $\acute{e}$tale  covering, one has the $\mathbb{A}^1$-map $\xi: S_{\xi}=\coprod S_{i}\to\mathbb{A}^{k+1}$ and an $S_{\xi}$-flat family of stable orbifold PT pairs $\varphi:\mathcal{O}_{\mathtt{X}_{S_{\xi}}}\to\mathcal{F}$ on the family of expanded degenerations $\mathtt{X}_{S_{\xi}}$. If $(\xi^\prime,\varphi^\prime)$ is another representative  of $(\overline{\xi},\overline{\varphi})$ where $\xi^\prime: S_{\xi^\prime}\to\mathbb{A}^{k^\prime+1}$, by restricting to the refinement $S_{\xi\xi^\prime}:=S_{\xi}\times_{S}S_{\xi^\prime}$, we have two induced families $\mathtt{X}_{S_{\xi\xi^\prime}}:=\xi^*\mathcal{X}(k)|_{S_{\xi\xi^\prime}}$ and $\mathtt{X}^\prime_{S_{\xi\xi^\prime}}:=\xi^{\prime*}\mathcal{X}(k^\prime)|_{S_{\xi\xi^\prime}}$. By embedding $\mathcal{X}(k)$ and $\mathcal{X}(k^\prime)$ into a common  ambient space $\mathcal{X}(k^{\prime\prime})$ for some $k^{\prime\prime}\geq k,k^\prime$ and pulling back the smooth equivalence relation $R_{\sim,\mathcal{X}(k^{\prime\prime})}$,  we have some isomorphism $\sigma: \mathtt{X}\xrightarrow{\cong}\mathtt{X}^\prime$ which implies that $\mathtt{X}_{S_{\xi\xi^\prime}}$ and $\mathtt{X}^\prime_{S_{\xi\xi^\prime}}$ are isomorphic. Then we have an isomorphism $\varphi\cong\sigma^*\varphi^\prime$
when passing to $S_{\xi\xi^\prime}$ due to the compatibility between different representatives. Given a morphism of $\mathbb{A}^1$-scheme $f: T\to S$, the map $\mathfrak{PT}_{\mathfrak{X}/\mathfrak{C}}(f): \mathfrak{PT}_{\mathfrak{X}/\mathfrak{C}}(S)\to \mathfrak{PT}_{\mathfrak{X}/\mathfrak{C}}(T)$ is defined by pull back as follows.
Choosing a representative $(\xi,\varphi_{S_{\xi}})$ for an object $(\overline{\xi},\overline{\varphi})\in\mathfrak{PT}_{\mathfrak{X}/\mathfrak{C}}(S)$ as above, the 1-arrow maps $(\xi,\varphi_{S_{\xi}})$ to $(\eta,\varphi_{T_{\xi}})$ which represents an object in $\mathfrak{PT}_{\mathfrak{X}/\mathfrak{C}}(T)$ where $f_{\xi}:T_{\xi}:=T\times_{S}S_{\xi}\to S_{\xi}$, $\eta:=\xi\circ f_{\xi}$ and the $T_{\xi}$-flat family of stable orbifold PT pairs $\varphi_{T_{\xi}}$ on $\mathtt{X}_{T_{\xi}}=f_{\xi}^*\mathtt{X}_{S_{\xi}}$ is the pull back of the $S_{\xi}$-flat family $\varphi_{S_{\xi}}:\mathcal{O}_{\mathtt{X}_{S_{\xi}}}\to\mathcal{F}$.
One can interpret the stack $\mathfrak{PT}_{\mathfrak{Y}/\mathfrak{A}}$ in the similar way. 

By using the similar argument in the proof of [\cite{Zhou1}, Theorem 4.1], together with the fact that  $\mathrm{PT}_{\mathcal{X}(k)/\mathbb{A}^{k+1}}$ and  $\mathrm{PT}^{\mathrm{st}}_{\mathcal{X}(k)/\mathbb{A}^{k+1}}$  (resp. $\mathrm{PT}_{\mathcal{Y}(k)/\mathbb{A}^{k}}$ and $\mathrm{PT}^{\mathrm{st}}_{\mathcal{Y}(k)/\mathbb{A}^{k}}$) are locally of finite type in the degeneration case (resp. the relative case), we have
\begin{theorem}\label{stack1}
$(i)$ The stack	$\mathfrak{PT}_{\mathfrak{X}/\mathfrak{C}}$ is a Deligne-Mumford stack, locally of finite type over $\mathbb{A}^1$;

$(ii)$ The stack $\mathfrak{PT}_{\mathfrak{Y}/\mathfrak{A}}$ is a Deligne-Mumford stack, locally of finite type over $\mathbb{C}$.
\end{theorem}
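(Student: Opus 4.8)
The plan is to build the stacks $\mathfrak{PT}_{\mathfrak{X}/\mathfrak{C}}$ and $\mathfrak{PT}_{\mathfrak{Y}/\mathfrak{A}}$ as the $2$-colimit of the system of quotient stacks $[\mathrm{PT}_{\mathcal{X}(k)/\mathbb{A}^{k+1}}^{\mathrm{st}}/R_{\sim,\mathrm{PT}_{\mathcal{X}(k)/\mathbb{A}^{k+1}}}]$, and to verify the Deligne--Mumford property stratum by stratum, exactly paralleling the argument for the Hilbert/DT stack in [\cite{Zhou1}, Theorem 4.1]. First I would check that each $[\mathrm{PT}_{\mathcal{X}(k)/\mathbb{A}^{k+1}}^{\mathrm{st}}/R_{\sim}]$ is itself an algebraic stack: the group acting is the smooth group scheme $(\mathbb{C}^*)^{k+1}\rtimes S_{k+1}$ (cf.\ Section 2.2 and the displayed equivalence relation), it acts on the quasi-projective $\mathbb{A}^{k+1}$-scheme $\mathrm{PT}_{\mathcal{X}(k)/\mathbb{A}^{k+1}}^{\mathrm{st}}$, so the quotient is an Artin stack of finite type over $\mathbb{A}^1$ on each connected component (recall $\mathrm{PT}_{\mathcal{X}(k)/\mathbb{A}^{k+1}}^{\mathrm{st}}$ is locally of finite type, being a disjoint union of quasi-projective pieces indexed by degree-one polynomials $P$). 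The transition $1$-morphisms $[\mathrm{PT}_{\mathcal{X}(l)/\mathbb{A}^{l+1}}^{\mathrm{st}}/R_{\sim}]\to[\mathrm{PT}_{\mathcal{X}(k)/\mathbb{A}^{k+1}}^{\mathrm{st}}/R_{\sim}]$ arising from the embeddings $\tau_I$ are open immersions, by the same Cartesian-diagram/open-embedding analysis displayed just before the definition of $\mathfrak{PT}_{\mathfrak{X}/\mathfrak{C}}$; hence the colimit is a well-defined algebraic stack locally of finite type over $\mathbb{A}^1$, and the colimit is in fact attained locally at finite $k$ since any $S$-point factors through some $\mathcal{X}(k)$.

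Next I would establish the Deligne--Mumford property, i.e.\ that the diagonal is unramified, equivalently that every object has finite (and reduced) automorphism group. An object of $\mathfrak{PT}_{\mathfrak{X}/\mathfrak{C}}$ over an algebraically closed field is a stable orbifold PT pair $\varphi:\mathcal{O}_{\mathcal{X}_0[k]}\to\mathcal{F}$, and the relevant automorphisms split into two parts: automorphisms of the expanded degeneration $\mathcal{X}_0[k]$ fixing $\varphi$, which by the discussion after Definition \ref{stable} form the subgroup $\mathrm{Aut}(\varphi)\subset(\mathbb{C}^*)^k$, finite precisely by the stability hypothesis (Definition \ref{stable}); and automorphisms of the sheaf $\mathcal{F}$ commuting with $\varphi$ over a fixed $\mathcal{X}_0[k]$. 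For the latter, since $\varphi$ has $0$-dimensional cokernel its image generates $\mathcal{F}$ generically, so any automorphism $\psi$ of $(\mathcal{F},\varphi)$ with $\psi\circ\varphi=\lambda\cdot\varphi$ is determined by the scalar $\lambda$, and purity of $\mathcal{F}$ forces $\psi$ to be the homothety $\lambda\cdot\mathrm{id}$ — this is the standard rigidity of PT pairs used in [\cite{LW,Zhou1}]. Combining, the automorphism group is an extension of a finite group by $\mathbb{G}_m$ modulo the $\mathbb{G}_m$ of homotheties, hence finite and reduced in characteristic $0$; this gives the DM property. That $\mathfrak{PT}_{\mathfrak{X}/\mathfrak{C}}$ is locally of finite type over $\mathbb{A}^1$ (resp.\ $\mathfrak{PT}_{\mathfrak{Y}/\mathfrak{A}}$ over $\mathbb{C}$) then follows from the corresponding property of each $\mathrm{PT}_{\mathcal{X}(k)/\mathbb{A}^{k+1}}^{\mathrm{st}}$ together with the openness of the transition maps.

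The case of $\mathfrak{PT}_{\mathfrak{Y}/\mathfrak{A}}$ is entirely analogous, replacing $\mathcal{X}(k)\to\mathbb{A}^{k+1}$ by $\mathcal{Y}(k)\to\mathbb{A}^k$, the symmetry group by $(\mathbb{C}^*)^k\rtimes S_k$ (from the relation $R_{\sim,\mathrm{PT}_{\mathcal{Y}(k)/\mathbb{A}^k}}$), and using Proposition \ref{property2} in place of Proposition \ref{property1}; one must additionally keep track of the distinguished divisor $\mathcal{D}(k)=\mathcal{D}[k]$, but since stable orbifold PT pairs on $\mathcal{Y}[k]$ are by definition admissible along \emph{all} divisors including $\mathcal{D}_k$ (Remark \ref{adm-cri}), the same rigidity argument applies verbatim. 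The main obstacle I anticipate is not the DM verification — which is essentially formal given stability — but rather the bookkeeping needed to show the colimit is genuinely an algebraic stack: one must check that the transition $1$-morphisms are compatible up to coherent $2$-isomorphism (this is where the "common ambient space" argument embedding $\mathcal{X}(k),\mathcal{X}(k')\hookrightarrow\mathcal{X}(k'')$ from the categorical interpretation is used), and that the open-immersion property of the transitions holds uniformly, so that the colimit has an atlas and a representable diagonal. This is precisely the content imported from [\cite{Zhou1}, Theorem 4.1], and I would invoke that argument rather than reproduce it.
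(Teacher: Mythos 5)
Your proposal is correct and follows essentially the same route as the paper, which simply invokes the argument of [\cite{Zhou1}, Theorem 4.1] together with the fact that $\mathrm{PT}_{\mathcal{X}(k)/\mathbb{A}^{k+1}}^{\mathrm{st}}$ and $\mathrm{PT}_{\mathcal{Y}(k)/\mathbb{A}^{k}}^{\mathrm{st}}$ are locally of finite type; your write-up just makes explicit the two ingredients that citation packages (the colimit of quotient stacks by the smooth equivalence relations, and finiteness of $\mathrm{Aut}(\varphi)\subset(\mathbb{C}^*)^k$ from the stability condition for the Deligne--Mumford property). The only cosmetic inaccuracy is that in the paper's notion of equivalence of orbifold PT pairs the map on $\mathcal{O}_{\mathcal{X}_0[k]}$ is the identity rather than $\lambda\cdot\mathrm{id}$, so rigidity gives $\psi=\mathrm{id}$ directly; this does not affect your conclusion.
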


\subsection{Moduli of stable orbifold PT pairs with fixed Hilbert homomorphism}
We recall the definition of  Hilbert homomorphism  in [\cite{Zhou1}, Section 4.3], then consider moduli stacks defined in the previous subsection with this fixed topological data. 

For a simple degeneration $\pi:\mathcal{X}\to\mathbb{A}^1$, by assumption it is also a family of projective Deligne-Mumford stacks with a moduli scheme $X$ and a relative polarization $(\mathcal{E}_{\mathcal{X}},\mathcal{O}_{X}(1))$. For $0\neq c\in\mathbb{A}^1$, we define the Hilbert  homomorphism (see Definition \ref{h-h}) with respect to $\mathcal{E}_{\mathcal{X}_{c}}$ by a group homomorphism $P_{\mathcal{G}_{c}}^{\mathcal{E}_{\mathcal{X}_{c}}}: K^0(\mathcal{X}_{c})\to\mathbb{Z}$: 
\ben
[V]\mapsto\chi(\mathcal{X}_{c}, V\otimes_{\mathcal{O}_{\mathcal{X}_{c}}}\mathcal{G}_{c}\otimes_{\mathcal{O}_{\mathcal{X}_{c}}}\mathcal{E}_{\mathcal{X}_{c}}^\vee)
\een
with $V$ being a vector bundle on $\mathcal{X}_{c}$ and extended to $K^0(\mathcal{X}_{c})$ additively, where $\mathcal{G}_{c}$ is a coherent sheaf on $\mathcal{X}_{c}$ and $\mathcal{E}_{\mathcal{X}_{c}}$ is a generating sheaf for $\mathcal{X}_{c}$ (see [\cite{Lyj}, Remark 2.8]). In the case for the central fiber $\mathcal{X}_{0}$, 
we define a group homomorphism $P_{\mathcal{G}_{0}}^{\mathcal{E}_{\mathcal{X}_{0}}}:K^0(\mathcal{X}_{0})\to\mathbb{Z}$ (it is different from Definition \ref{h-h}) as
\ben
[V]\mapsto\chi(\mathcal{X}_{0}[l], p^*V\otimes_{\mathcal{O}_{\mathcal{X}_{0}[l]}}\mathcal{G}_{0}\otimes_{\mathcal{O}_{\mathcal{X}_{0}[l]}}p^*\mathcal{E}_{\mathcal{X}_{0}}^\vee)
\een
with a vector bundle $V$  on $\mathcal{X}_{0}$ and extended to $K^0(\mathcal{X}_{0})$ additively, where  $\mathcal{G}_{0}$ is a coherent sheaf on $\mathcal{X}_{0}[l]$ and $p:\mathcal{X}_{0}[l]\to\mathcal{X}_{0}$ is the contraction map. Here, $\mathcal{E}_{\mathcal{X}_{0}}$ is a generating sheaf for $\mathcal{X}_{0}$, and hence $p^*\mathcal{E}_{\mathcal{X}_{0}}$ is also a generating sheaf for $\mathcal{X}_{0}[l]$ (see [\cite{Lyj}, Theorem 2.5]). We still call $P_{\mathcal{G}_{0}}^{\mathcal{E}_{\mathcal{X}_{0}}}$ 
the Hilbert homomorphism with respect to $\mathcal{E}_{\mathcal{X}_{0}}$.
Now we can define the Hilbert homomorphism $K^0(\mathcal{X})\to\mathbb{Z}$ with respect to $\mathcal{E}_{\mathcal{X}}$ by composing the restriction $K^0(\mathcal{X})\to K^0(\mathcal{X}_{c})$ ($c\neq0$) and $K^0(\mathcal{X})\to K^0(\mathcal{X}_{0})$ with  Hilbert homomorphisms $P_{\mathcal{G}_{c}}^{\mathcal{E}_{\mathcal{X}_{c}}}$ and $P_{\mathcal{G}_{0}}^{\mathcal{E}_{\mathcal{X}_{0}}}$ respectively.

It is clear that $P_{\mathcal{G}_{c}}^{\mathcal{E}_{\mathcal{X}_{c}}}(H_{c}^{\otimes m})=P_{\mathcal{E}_{\mathcal{X}_{c}}}(\mathcal{G}_{c})(m)$ is the modified Hilbert polynomial of $\mathcal{G}_{c}$ on the smooth fiber $\mathcal{X}_{c}$ ($c\neq0$) with the moduli scheme $\pi_{c}:\mathcal{X}_{c}\to X_{c}$ where $H_{c}:=\pi_{c}^*\mathcal{O}_{X_{c}}(1)$. But $P_{\mathcal{G}_{0}}^{\mathcal{E}_{\mathcal{X}_{0}}}(H_{0}^{\otimes m})$ may not be a modified Hilbert polynomial of $\mathcal{G}_{0}$ on $\mathcal{X}_{0}[l]$ since $p^*H_{0}$ is possibly not the pull back of some ample line bundle on the moduli scheme of $\mathcal{X}_{0}[l]$ where $H_{0}=\pi_{0}^*\mathcal{O}_{X_{0}}(1)$ with $\pi_{0}:\mathcal{X}_{0}\to X_{0}$.
With the Hilbert homomorphism $K^0(\mathcal{X})\to\mathbb{Z}$ defined above, we simply call $P_{\mathcal{G}_{t}}^{\mathcal{E}_{\mathcal{X}_{t}}}(H_{t}^{\otimes m})$ $($$t\in\mathbb{A}^1$$)$ the generalized Hilbert polynomial.

Let $P: K^0(\mathcal{X})\to\mathbb{Z}$ be a fixed Hilbert homomorphism with respect to $\mathcal{E}_{\mathcal{X}}$. Define 
$\mathrm{PT}_{\mathcal{X}(k)/\mathbb{A}^{k+1}}^{P}\subset \mathrm{PT}_{\mathcal{X}(k)/\mathbb{A}^{k+1}}$ to be the subfunctor parameterizing orbifold PT pairs with fixed Hilbert homomorphism $P$. Then $\mathrm{PT}_{\mathcal{X}(k)/\mathbb{A}^{k+1}}^{P}$ is an open and closed subfunctor, and is represented by (or is viewed by the Yoneda Lemma as) an  open and closed subscheme of $\mathrm{PT}_{\mathcal{X}(k)/\mathbb{A}^{k+1}}$. Similarly, one has the subfunctor $\mathrm{PT}_{\mathcal{X}(k)/\mathbb{A}^{k+1}}^{\mathrm{st},P}\subset \mathrm{PT}_{\mathcal{X}(k)/\mathbb{A}^{k+1}}^{\mathrm{st}}$, and is (represented by) an open and closed subscheme of $\mathrm{PT}_{\mathcal{X}(k)/\mathbb{A}^{k+1}}^{\mathrm{st}}$.

For a family of expanded degenerations $\pi:\mathtt{X}_{S}\to S$ over $(\xi, S)$ and an $S$-flat coherent sheaf $\mathcal{F}$ on $\mathtt{X}_{S}$, one has the Hilbert homomorphism $P_{\mathcal{F}_{s}}^{\mathcal{E}_{\mathcal{X}_{\mathbf{m}(\xi(s))}}}: K^0(\mathcal{X}_{\mathbf{m}(\xi(s))})\to\mathbb{Z}$
on the fiber over any point $s\in S$ where $\xi: S\to \mathbb{A}^{k+1}$  is a $\mathbb{A}^1$-map  and  $\mathbf{m}:\mathbb{A}^{k+1}\to\mathbb{A}^1$ is the multiplication map. When $\mathbf{m}(\xi(s))=0$, $\mathcal{F}_{s}$ is defined on the fiber over $s$ which is isomorphic to $\mathcal{X}_{0}[l]$, where $(l+1)$ is equal to the number of zeros in the coordinate of $\xi(s)$ by Proposition \ref{property1}. Since $S$ is connected, the similar argument in the proof of [\cite{OS03}, Lemma 4.3] shows that there is a group homomorphism $P:K^0(\mathcal{X})\to\mathbb{Z}$ such that for any $s\in S$ we have $P_{\mathcal{F}_{s}}^{\mathcal{E}_{\mathcal{X}_{\mathbf{m}(\xi(s))}}}=P$.
With the categorical interpretation of the stack $\mathfrak{PT}_{\mathfrak{X}/\mathfrak{C}}$, for a fixed group homomorphism $P:K^{0}(\mathcal{X})\to\mathbb{Z}$, we define $\mathfrak{PT}_{\mathfrak{X}/\mathfrak{C}}^{P}$ to be the subfunctor of $\mathfrak{PT}_{\mathfrak{X}/\mathfrak{C}}$ parameterizing stable orbifold PT pairs with fixed Hilbert homomorphism $P$. Then $\mathfrak{PT}_{\mathfrak{X}/\mathfrak{C}}^{P}$ is an open and closed subfunctor of $\mathfrak{PT}_{\mathfrak{X}/\mathfrak{C}}$ (it is  empty unless the associated generalized Hilbert polynomial is of degree one). By 
Theorem \ref{stack1},  $\mathfrak{PT}_{\mathfrak{X}/\mathfrak{C}}^{P}$ is  a Deligne-Mumford stack locally of finite type over $\mathbb{A}^1$.

Similarly, in the relative case,  the  Hilbert homomorphism with respect to $\mathcal{E}_{\mathcal{Y}}$ defined in [\cite{Zhou1}, Section 4.3] as a group homomorphism $P_{\mathcal{G}}^{\mathcal{E}_{\mathcal{Y}}}: K^0(\mathcal{Y})\to\mathbb{Z}$:
\ben
[V]\mapsto\chi(\mathcal{Y}[l],p^*V\otimes_{\mathcal{O}_{\mathcal{Y}[l]}}\mathcal{G}\otimes_{\mathcal{O}_{\mathcal{Y}[l]}} p^*\mathcal{E}_{\mathcal{Y}}^\vee)
\een
where $\mathcal{G}$ is a coherent sheaf on $\mathcal{Y}[l]$ and $p: \mathcal{Y}[l]\to\mathcal{Y}$ is the contraction map. Again, we call $P_{\mathcal{G}}^{\mathcal{E}_{\mathcal{Y}}}(H^{\otimes m})$ the generalized Hilbert polynomial  where $H:=\pi^*\mathcal{O}_{Y}(1)$.
One can similarly define the subfunctor $\mathfrak{PT}_{\mathfrak{Y}/\mathfrak{A}}^{P}$ of $\mathfrak{PT}_{\mathfrak{Y}/\mathfrak{A}}$, which is  also a Deligne-Mumford stack locally of finite type over $\mathbb{C}$.
\begin{remark}\label{absolute}
If $\mathcal{Z}$ is a $3$-dimensional smooth projective Deligne-Mumford stack over $\mathbb{C}$, we have defined the moduli space $\mathrm{PT}_{\mathcal{Z}/\mathbb{C}}$ in Section 5.1. Similarly, one can define $\mathrm{PT}_{\mathcal{Z}/\mathbb{C}}^P\subset\mathrm{PT}_{\mathcal{Z}/\mathbb{C}}$ as a subfunctor parameterizing orbifold PT pairs with fixed Hilbert homomorphism $P$ (see Definition \ref{h-h}) as above. If the associated generalized Hilbert polynomial with repect to the Hilbert homomorphism $P$ in $\mathrm{PT}_{\mathcal{Z}/\mathbb{C}}^P$ is exactly the modified Hilbert polynomial (denoted also by) $P$ in the superscript of $\mathrm{PT}_{\mathcal{Z}/\mathbb{C}}^{(P)}$, then by Definitions \ref{m-h-p} and \ref{h-h}, $\mathrm{PT}_{\mathcal{Z}/\mathbb{C}}^P$ is equal to $\mathrm{PT}_{\mathcal{Z}/\mathbb{C}}^{(P)}$, which is a projective scheme over $\mathbb{C}$.
\end{remark}

\subsection{Properties of the moduli of stable orbifold PT pairs}
In this subsection, we will investigate the further properties of moduli stacks $\mathfrak{PT}_{\mathfrak{X}/\mathfrak{C}}^{P}$ and  $\mathfrak{PT}_{\mathfrak{Y}/\mathfrak{A}}^{P}$ including the boundedness, separatedness, and properness. These three properties has been proved for  moduli stacks of stable quotient and of stable PT pairs (or stable coherent systems) in [\cite{LW}, Section 5] for the variety case, where the former has been generalized to the case of Deligne-Mumford stacks in [\cite{Zhou1}, Section 5]. We will show that the latter can be also generalized to the stacky case by combining their arguments.

\subsubsection{Boundedness}
Due to the categorical interpretation of the stack $\mathfrak{PT}_{\mathfrak{X}/\mathfrak{C}}^P$  (resp. $\mathfrak{PT}_{\mathfrak{Y}/\mathfrak{A}}^{P}$) as in Section 5.2, the representatives of its objects we will consider are  $S_{\xi}$-flat families of stable orbifold PT pairs $\varphi: \mathcal{O}_{\mathtt{X}_{S_{\xi}}}\to\mathcal{F}$ on $\mathtt{X}_{S_{\xi}}$ (resp. $\varphi: \mathcal{O}_{\mathtt{Y}_{S_{\xi}}}\to\mathcal{F}$ on $\mathtt{Y}_{S_{\xi}}$) with fixed Hilbert homomorphism $P: K^0(\mathcal{X})\to\mathbb{Z}$ (resp.  $P: K^0(\mathcal{Y})\to\mathbb{Z}$). Since $\mathcal{F}$ is an $S_{\xi}$-flat family of pure 1-dimensional sheaves on $\mathtt{X}_{S_{\xi}}$ (resp.  $\mathtt{Y}_{S_{\xi}}$),  
the associated generalized Hilbert polynomial should be a polynomial of degree one, i.e., $P(H^{\otimes m})=am+b$  with $a,b\in\mathbb{Z}$ and $a>0$ where $H=\pi^*\mathcal{O}_{X}(1)$ (resp. $H=\pi^*\mathcal{O}_{Y}(1)$). The boundedness here means that two stacks $\mathfrak{PT}_{\mathfrak{X}/\mathfrak{C}}^{P}$ and $\mathfrak{PT}_{\mathfrak{Y}/\mathfrak{A}}^{P}$ are of finite type. By [\cite{DM}, p. 100] and Theorem \ref{stack1}, we only need to show that these two stacks are quasi-compact.

\begin{proposition}\label{boundedness}
	The stack	$\mathfrak{PT}_{\mathfrak{X}/\mathfrak{C}}^{P}$ $($resp. $\mathfrak{PT}_{\mathfrak{Y}/\mathfrak{A}}^{P}$$)$ is of finite type over $\mathbb{A}^1$ $($resp. over $\mathbb{C}$$)$.
\end{proposition}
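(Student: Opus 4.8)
The plan is to reduce the statement to a boundedness statement for the underlying sheaves $\mathcal{F}$ appearing in stable orbifold PT pairs, and then to invoke the boundedness results already available for sheaves with fixed topological data together with the numerical criterion for admissibility from Section 2.3. Since by Theorem \ref{stack1} the stack $\mathfrak{PT}_{\mathfrak{X}/\mathfrak{C}}^{P}$ (resp. $\mathfrak{PT}_{\mathfrak{Y}/\mathfrak{A}}^{P}$) is already known to be a Deligne-Mumford stack locally of finite type over $\mathbb{A}^1$ (resp. $\mathbb{C}$), and finite type is equivalent to quasi-compactness for such stacks, it suffices to bound the length $k$ of the expanded degenerations (resp. pairs) that can occur, and then, for each fixed $k$, to bound the family of pairs on $\mathcal{X}(k)/\mathbb{A}^{k+1}$ (resp. $\mathcal{Y}(k)/\mathbb{A}^{k}$). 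Concretely, I would show that there is an integer $N$, depending only on $P$ and the polarization data, such that every stable orbifold PT pair with Hilbert homomorphism $P$ is represented by an object of $\mathrm{PT}_{\mathcal{X}(k)/\mathbb{A}^{k+1}}^{\mathrm{st},P}$ for some $k\le N$; once this is done, the finitely many quasi-projective $\mathbb{A}^{k+1}$-schemes $\mathrm{PT}_{\mathcal{X}(k)/\mathbb{A}^{k+1}}^{\mathrm{st},P}$ for $k\le N$ surject onto the stack, giving quasi-compactness.

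The first step is to bound, for a stable orbifold PT pair $\varphi:\mathcal{O}_{\mathcal{X}_0[k]}\to\mathcal{F}$ with the fixed generalized Hilbert polynomial $P(H^{\otimes m})=am+b$, the total degree of $\mathcal{F}$ across all the bubble components $\Delta_1,\dots,\Delta_k$. Here I would use admissibility: by Definition \ref{stable} and Remark \ref{adm-cri}, $\mathcal{F}$ is admissible, so no irreducible component of $\mathrm{Supp}(\mathcal{F})$ lies entirely in any $\mathcal{D}_i$, and $\mathrm{coker}\,\varphi$ is supported away from all $\mathcal{D}_i$. Because $\mathcal{F}$ is pure of dimension one and its restriction to each divisor $\mathcal{D}_i$ is a fixed nonzero finite-length quotient (controlled by the splitting of $P$ along $\mathcal{D}$, i.e.\ by $P_0 = i^*P$), each nontrivial bubble $\Delta_j$ must carry a curve meeting $\mathcal{D}_{j-1}$ and $\mathcal{D}_j$ with positive degree against the relatively ample $H$, contributing at least a fixed positive amount to the leading coefficient $a$ of $P$. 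Since $a$ is fixed, this forces $k$ to be bounded — this is exactly the mechanism used in [\cite{LW}, Section 5] and in [\cite{Zhou1}, Section 5] for the analogous statements, and I would transcribe that argument into the stacky setting, replacing Hilbert polynomials by modified Hilbert polynomials $P_{\mathcal{E}_{\mathcal{X}}}$ and using that $H|_{\mathcal{D}_i}$ is ample so $P^{\mathcal{E}_{\mathcal{X}_0}}_{\bullet}(H^{\otimes v})$ really is a Hilbert polynomial, as noted after the definition of the error terms.

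The second step, for fixed $k\le N$, is to show that the set-theoretic family of sheaves $\mathcal{F}$ on $\mathcal{X}(k)/\mathbb{A}^{k+1}$ (resp. $\mathcal{Y}(k)/\mathbb{A}^{k}$) underlying these pairs is bounded. Each $\mathcal{F}$ is a pure $1$-dimensional sheaf with modified Hilbert polynomial equal to the fixed $P(H^{\otimes m}) = am+b$ on each fiber, and the pair is stable — in particular, the cokernel of $\varphi$ has bounded length $b - \chi$-type invariant determined by $P$, so $\mathcal{F}$ is sandwiched between $\mathcal{O}$ and a sheaf of fixed Hilbert polynomial. I would invoke the boundedness already established for such families: either directly via Proposition \ref{bound1} applied to the relevant $\delta$-semistable pairs (noting that $\mathrm{PT}_{\mathcal{X}(k)/\mathbb{A}^{k+1}}^{(P)}$ is a projective $\mathbb{A}^{k+1}$-scheme by the discussion at the start of Section 5), or more directly by observing that $\mathrm{PT}_{\mathcal{X}(k)/\mathbb{A}^{k+1}}^{\mathrm{st},P}$ is a quasi-projective $\mathbb{A}^{k+1}$-scheme (hence of finite type), as recorded in Section 5.2. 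Combining the finitely many such quasi-projective schemes for $k=0,\dots,N$ with the surjection onto the stack yields that $\mathfrak{PT}_{\mathfrak{X}/\mathfrak{C}}^{P}$ is quasi-compact, and the same argument with $\mathcal{Y}(k)/\mathbb{A}^{k}$ and the distinguished divisor $\mathcal{D}_k$ handled via $\mathrm{Err}_k$ gives quasi-compactness of $\mathfrak{PT}_{\mathfrak{Y}/\mathfrak{A}}^{P}$.

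The main obstacle I expect is the first step: making the bound on the length $k$ genuinely uniform and transporting the variety-case degeneration argument of [\cite{LW}] to the orbifold setting. The subtlety is that, on a bubble component $\Delta_j = \mathbb{P}_{\mathcal{D}}(\mathcal{O}_{\mathcal{D}}\oplus\mathcal{N})$, one must argue that a pure $1$-dimensional admissible $\mathcal{F}$ with a section whose cokernel avoids the $\mathcal{D}_i$ cannot be "trivial" on $\Delta_j$ in a way that is invisible to $P$; this is where stability (finiteness of $\mathrm{Aut}(\varphi)$, which rules out bubbles on which $\mathcal{F}$ is pulled back from $\mathcal{D}$ with the section also pulled back, as such a bubble would admit the full $\mathbb{C}^*$ scaling) enters essentially. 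I would make this precise by the same case analysis as in [\cite{LW}, Lemma 5.x] and [\cite{Zhou1}, Section 5.2], checking that every step — the structure of $\mathbb{P}^1$-bundles over $\mathcal{D}$, the behaviour of $H$ and $\mathcal{E}_{\mathcal{X}}$ under the contraction $p:\mathcal{X}_0[k]\to\mathcal{X}_0$, and the computation of modified Hilbert polynomials of the pieces — goes through verbatim for Deligne-Mumford stacks using the results of Section 2.
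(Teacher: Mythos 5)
Your overall skeleton --- first bound the length $k$ of the expanded degenerations that can occur, then establish quasi-compactness of $\mathrm{PT}_{\mathcal{X}(k)/\mathbb{A}^{k+1}}^{\mathrm{st},P}$ for each fixed $k$ --- is exactly the structure of the paper's proof, which delegates the first step to [\cite{Zhou1}, Proposition 5.5] and the second to the pushforward property [\cite{Zhou1}, Corollary 5.3]. However, both of your steps have gaps in the execution. For the second step, you claim that $\mathrm{PT}_{\mathcal{X}(k)/\mathbb{A}^{k+1}}^{\mathrm{st},P}$ is quasi-projective ``as recorded in Section 5.2,'' but what Section 5.2 records is quasi-projectivity of $\mathrm{PT}_{\mathcal{X}(k)/\mathbb{A}^{k+1}}^{\mathrm{st},(P)}$ for a fixed \emph{modified Hilbert polynomial}; with a fixed \emph{Hilbert homomorphism} $P$ (no parentheses) the space is only exhibited as an open and closed subscheme of the locally-of-finite-type scheme $\mathrm{PT}_{\mathcal{X}(k)/\mathbb{A}^{k+1}}^{\mathrm{st}}$, which does not give quasi-compactness. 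The Hilbert homomorphism only pairs $\mathcal{F}$ against classes pulled back from $\mathcal{X}_{0}$ along the contraction $p:\mathcal{X}_{0}[l]\to\mathcal{X}_{0}$, so it is blind to the degree of $\mathrm{Supp}(\mathcal{F})$ along the $\mathbb{P}^1$-fibers of the bubbles; showing that nevertheless only finitely many actual modified Hilbert polynomials on $\mathcal{X}(k)$ occur is precisely the content of the pushforward/admissibility argument the paper cites. Your fallback via Proposition \ref{bound1} does not repair this, since that proposition also presupposes a fixed modified Hilbert polynomial.

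For the first step, the mechanism you propose --- that each nontrivial bubble $\Delta_{j}$ must carry a curve meeting $\mathcal{D}_{j-1}$ and $\mathcal{D}_{j}$ with positive degree against $H$, hence contributing a fixed positive amount to the leading coefficient $a$ --- is false as stated. On a bubble $\Delta_{j}=\mathbb{P}_{\mathcal{D}}(\mathcal{O}_{\mathcal{D}}\oplus\mathcal{N})$ the class $H$ is pulled back from $\mathcal{D}$ (it comes from $p^{*}\pi^{*}\mathcal{O}_{X_{0}}(1)$), so the $\mathbb{P}^1$-fibers, which do meet both distinguished divisors, have $H$-degree zero; a bubble whose support consists only of fiber classes is invisible to $a$. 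These bubbles must instead be controlled by stability together with the boundedness of the length of $\mathrm{coker}\,\varphi$ (equivalently the Euler-characteristic data in $P$): a fiber-only bubble on which nothing breaks the $\mathbb{C}^{*}$-scaling is unstable, and the number of bubbles where the symmetry is broken by the cokernel or by the attaching data is bounded by the fixed numerical invariants. You acknowledge this subtlety in your closing paragraph, but the argument as written does not establish the uniform bound on $k$; it only reduces it to ``the same case analysis as in [\cite{LW}],'' which is where the actual work lies.
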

\begin{proof}
We follow the argument for boundedness of Quot-stacks in [\cite{Zhou1}, Section 5.1].
The stack $\mathfrak{PT}_{\mathfrak{X}/\mathfrak{C}}^{P}$ has an $\acute{e}$tale covering $\coprod_{k\geq0}[\mathrm{PT}_{\mathcal{X}(k)/\mathbb{A}^{k+1}}^{\mathrm{st},P}\big/R_{\sim,\mathrm{PT}_{\mathcal{X}(k)/\mathbb{A}^{k+1}}}]$, where $\mathrm{PT}_{\mathcal{X}(k)/\mathbb{A}^{k+1}}^{\mathrm{st},P}$	is moduli space of stable orbifold PT pairs with fixed Hilbert homomorphism $P$. For each $k\geq0$, quasi-compactness of $\mathrm{PT}_{\mathcal{X}(k)/\mathbb{A}^{k+1}}^{\mathrm{st},P}$ can be shown by the pushforward property in [\cite{Zhou1}, Corollary 5.3] and quasi-compactness of  $\mathrm{PT}_{\mathcal{X}/\mathbb{A}^{1}}^{(P^\prime)}$ for some modified Hilbert polynomial $P^\prime$ associated with Hilbert homomorphism $P$. The similar argument also holds  for  $\mathfrak{PT}_{\mathfrak{Y}/\mathfrak{A}}^{P}$. Then the proof is completed  since [\cite{Zhou1}, Proposition 5.5] also holds for stable orbifold PT pairs in both  the degeneration and relative cases.
\end{proof}	
\subsubsection{Separatedness}
We apply the valuative criterion in  [\cite{LMB}, Proposition 7.8] to prove the separatedness of Deligne-Mumford stacks 	$\mathfrak{PT}_{\mathfrak{X}/\mathfrak{C}}^{P}$ and  $\mathfrak{PT}_{\mathfrak{Y}/\mathfrak{A}}^{P}$. As in [\cite{Zhou1}, Section 5.2], let $S=\mathrm{Spec}\,R$ and let $\eta=\mathrm{Spec}\,K$ and $\eta_{0}=\mathrm{Spec}\,k$ be the generic point and closed point respectively, where $R$ can be chosen to be a complete discrete valuation ring ($u\in R$ is a uniformizer) with the fractional field $K$ and  an algebraically closed residue field $k$ due to Proposition \ref{boundedness}. And one can simply assume that $S_{\xi}=S$ instead of using the $\acute{e}$tale covering $S_{\xi}:=\coprod S_{i}\to S$ by [\cite{Zhou1}, Remark 5.10]. 
Now, let $(\overline{\xi}, \overline{\varphi})$ and $(\overline{\xi^\prime}, \overline{\varphi^\prime})$ be two objects in $\mathfrak{PT}_{\mathfrak{X}/\mathfrak{C}}^{P}(S)$ $($or $\mathfrak{PT}_{\mathfrak{Y}/\mathfrak{A}}^{P}(S)$$)$  such that the restrictions of their representatives $(\xi, \varphi)$ and $(\xi^\prime, \varphi^\prime)$  to $\eta$ are isomorphic. It remains to show that this isomorphism over $\eta$ can be extended to $S$, i.e.,  $(\overline{\xi}, \overline{\varphi})$ and $(\overline{\xi^\prime}, \overline{\varphi^\prime})$ are isomorphic.
\begin{proposition}\label{separatedness}
	The stack	$\mathfrak{PT}_{\mathfrak{X}/\mathfrak{C}}^{P}$ $($resp. $\mathfrak{PT}_{\mathfrak{Y}/\mathfrak{A}}^{P}$$)$ is separated over $\mathbb{A}^1$ $($resp. over $\mathbb{C}$$)$.
\end{proposition}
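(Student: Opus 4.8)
The plan is to verify the valuative criterion for separatedness as stated in [\cite{LMB}, Proposition 7.8], following the template of [\cite{Zhou1}, Section 5.2] (which handles the separatedness of good degenerations of Quot stacks) and [\cite{LW}, Section 5], adapted to the PT setting. Set $S=\mathrm{Spec}\,R$ with $R$ a complete DVR with uniformizer $u$, fraction field $K$, and algebraically closed residue field $k$; this reduction is legitimate by Proposition \ref{boundedness}, and by [\cite{Zhou1}, Remark 5.10] we may assume $S_\xi=S$. Given two objects $(\overline\xi,\overline\varphi)$ and $(\overline{\xi'},\overline{\varphi'})$ of $\mathfrak{PT}^P_{\mathfrak{X}/\mathfrak{C}}(S)$ (resp. $\mathfrak{PT}^P_{\mathfrak{Y}/\mathfrak{A}}(S)$) whose restrictions to $\eta=\mathrm{Spec}\,K$ are isomorphic, I must produce an isomorphism over all of $S$. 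The first step is to pass to a common ambient expanded family: by embedding $\mathcal{X}(k)$ and $\mathcal{X}(k')$ into $\mathcal{X}(k'')$ for $k''\geq k,k'$ and pulling back the smooth equivalence relation $R_{\sim,\mathcal{X}(k'')}$, I may assume both representatives $(\xi,\varphi)$ and $(\xi',\varphi')$ live over maps to the same $\mathbb{A}^{k''+1}$ (resp. $\mathbb{A}^{k''}$), so that the ambient families $\mathtt{X}_S$ and $\mathtt{X}'_S$ differ by an element of the equivalence relation.

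The heart of the argument is to show that the $K$-isomorphism forces $\xi$ and $\xi'$ to agree up to the equivalence relation on the base, and then that the sheaves agree over $S$. Step two: by $(\mathbb{C}^*)^{k''}$- and $S_{k''+1}$-equivariance (Propositions \ref{property1}, \ref{decom}), one normalizes $\xi$ so that the non-vanishing coordinates go to $1$; the key point is that the admissibility of $\mathcal F_\eta$ (no component of $\mathrm{Supp}(\mathcal F_\eta)$ lies in a divisor $\mathcal D_i$, see Remark \ref{adm-cri}) together with the purity of $\mathcal F$ over $S$ pins down which coordinates of $\xi$ must vanish at the special point, namely those recording the actual number of bubble components needed for $\mathcal F_{\eta_0}$. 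Step three: once the two base maps $\xi,\xi'$ are identified (up to $R_{\sim}$), the two families $\mathcal F$ and $\mathcal F'$ are both pure $1$-dimensional sheaves on the same family $\mathtt{X}_S$, flat over $R$, agreeing generically, with PT sections $\varphi,\varphi'$ agreeing generically; one then argues exactly as in the classical separatedness proof for PT pairs — the generic isomorphism $\psi_\eta\colon\mathcal F_\eta\xrightarrow{\sim}\mathcal F'_\eta$ compatible with $\varphi_\eta,\varphi'_\eta$ extends to a map $\mathcal F\to\mathcal F'$ (and symmetrically $\mathcal F'\to\mathcal F$) after clearing powers of $u$, using that both are pure and $R$-flat, hence torsion-free over $R$; purity of the special fibers then upgrades these to mutually inverse isomorphisms. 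The stability hypothesis (finiteness of $\mathrm{Aut}(\varphi)$) is what rules out the remaining ambiguity by a one-parameter subgroup of $(\mathbb{C}^*)^{k''}$, exactly as in [\cite{LW}, Section 5].

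I expect the main obstacle to be step two: controlling the base map $\xi$ near the special point. In the variety case [\cite{LW}] and the Quot-stack case [\cite{Zhou1}] this is handled by a careful analysis of how the number of bubbles is determined by the sheaf, using the normality/admissibility criterion and the structure of expanded degenerations from Proposition \ref{decom}; the stacky PT case requires re-running this with the generating sheaf $p^*\mathcal E_{\mathcal X}$ and the Hilbert homomorphism $P$ in place of Hilbert polynomials, and checking that cokernel-admissibility (support of $\mathrm{coker}\,\varphi$ disjoint from all $\mathcal D_i$) is preserved and gives the needed rigidity. The relative case $\mathfrak{PT}^P_{\mathfrak{Y}/\mathfrak{A}}$ is entirely analogous, with the extra distinguished divisor $\mathcal D_k=\mathcal D[k]$ handled by the normality criterion of [\cite{Zhou1}, Proposition 3.5] as recalled at the end of Section 2.3; no new phenomena arise there. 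Everything else — the extension-of-morphisms argument in step three — is the standard PT separatedness computation and I would not grind through it in detail.
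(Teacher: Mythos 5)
Your proposal follows essentially the same route as the paper's proof, which simply runs the argument of [\cite{Zhou1}, Proposition 5.9] for Quot stacks with quotient sheaves replaced by $S$-flat families of orbifold PT pairs: valuative criterion over a complete DVR, identification of the two base maps (hence of the two families of expanded degenerations/pairs) via admissibility, stability and the blowup/contraction construction of [\cite{Zhou1}, Proposition 2.11], and conclusion from the separatedness of the fixed-$k$ moduli spaces $\mathrm{PT}_{\mathcal{X}(k)/\mathbb{A}^{k+1}}$ and $\mathrm{PT}_{\mathcal{Y}(k)/\mathbb{A}^{k}}$, which your step three re-proves by hand via the standard torsion-free/purity extension argument. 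There is no substantive difference in approach.
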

\begin{proof}
We follow the similar argument in the proof of [\cite{Zhou1}, Proposition 5.9]
by replacing  quotient sheaves with  $S$-flat families of orbifold PT pairs (notice that pulling back by the contraction map here preserves the purity of one-dimensional sheaf).   The proof is completed by using separatedness of moduli spaces of orbifold PT pairs $\mathrm{PT}_{\mathcal{X}(k)/\mathbb{A}^{k+1}}$   and $\mathrm{PT}_{\mathcal{Y}(k)/\mathbb{A}^{k}}$ together with applying the blowup construction in [\cite{Zhou1}, Proposition 2.11] to both the degeneration and relative cases.
\end{proof}

\subsubsection{Properness}
We apply the definition of properness in [\cite{LMB}, Definition 7.11] for Deligne-Mumford stacks. We begin with the degeneration case.
With the notation $S$, $\eta$, $\eta_{0}$ in Section 5.4.2, for any object $(\overline{\xi}_{\eta}, \overline{\varphi}_{\eta})\in\mathfrak{PT}_{\mathfrak{X}/\mathfrak{C}}^{P}(\eta)$, which is represented by $\xi_{\eta}:\eta\to\mathbb{A}^{k+1}$ and a stable orbifold PT pair $\varphi_{\eta}: \mathcal{O}_{\mathtt{X}_{\eta}}\to\mathcal{F}_{\eta}$ on $\mathtt{X}_{\eta}:=\mathtt{X}_{S,\eta}$, it remains to show that after a finite base change of $S$, one can extend $(\overline{\xi}_{\eta}, \overline{\varphi}_{\eta})$ to $\eta_{0}$ by the valuative criterion of properness (see also [\cite{LMB}, Theorem 7.3] for the valuative criterion for proving a separated Deligne-Mumford stack of finite type to be universally closed).

To prove the properness of $\mathfrak{PT}_{\mathfrak{X}/\mathfrak{C}}^{P}$ (resp. $\mathfrak{PT}_{\mathfrak{Y}/\mathfrak{A}}^{P}$), we adopt the strategy described in [\cite{LW}, Section 5.4] for the variety case and  carry it out with the technique and argument in [\cite{Zhou1}, Section 5.4] which are  generalizing  the  1-dimensional case of those developed in [\cite{LW}, Section 5.1 and 5.2] to the stacky case. More precisely, in the degeneration case, in order to obtain $\widetilde{\varphi}:\mathcal{O}_{\mathtt{X}_{\widetilde{S}}}\to\widetilde{F}$ such that $\widetilde{\varphi}\in\mathfrak{PT}_{\mathfrak{X}/\mathfrak{C}}^{P}(\widetilde{S})$ extends $\varphi_{\eta}$, the strategy is taking several possible modifications of an initial (trivial) extension of $\varphi_{\eta}$ to obtain first the admissibility of  $\mathrm{coker}\,\widetilde{\varphi}$ and then of $\widetilde{\mathcal{F}}$ accompanied with the finiteness of $\mathrm{Aut}(\widetilde{\varphi})$     along the fiber over the closed point of $\widetilde{S}$ via a totally finite base change $\widetilde{S}\to S$.

For the degeneration case, we need the following lemmas.
\begin{lemma}\label{admissble1}
	Let  the family $\Delta\times S\to S$ be a component of the family of expanded degenerations $\mathtt{X}_{S}$ over $(S, \xi)$ where  $\xi: S\to\mathbb{A}^2$ maps $S$ constantly to $0\in\mathbb{A}^2$. Suppose $\varphi$ is an $S$-flat family of  orbifold PT pairs on $\Delta\times S$ such that $\varphi|_{\eta}$ is stable but $\mathrm{coker}(\varphi|_{\eta_{0}})$ is not admissible. Then there exists another $S$-flat family of orbifold PT pairs $\widetilde{\varphi}$ such that $\widetilde{\varphi}|_{\eta}$ is related to $\varphi|_{\eta}$ through the $\mathbb{C}^*$-action on $\mathcal{X}(1)\to\mathbb{A}^2$ and $\mathrm{coker}(\widetilde{\varphi}|_{\eta_{0}})$ is normal to $\mathcal{D}_{\pm}\subset\Delta$.
\end{lemma}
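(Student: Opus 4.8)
The plan is to use the fibrewise $\mathbb{C}^*$-action on the bubble $\Delta$ to \emph{re-centre} the given family over the generic point and then to extract a new flat limit over the closed point whose cokernel avoids both gluing divisors. First I would fix the standard notation $S=\mathrm{Spec}\,R$ with $R$ a complete discrete valuation ring, uniformizer $u$, fraction field $K$ and algebraically closed residue field $k$, so that $\eta=\mathrm{Spec}\,K$ and $\eta_{0}=\mathrm{Spec}\,k$; after a harmless finite base change one may assume the support of $\mathrm{coker}\,\varphi$ is in good position. By Proposition \ref{property1} the fibre of $\mathcal{X}(1)\to\mathbb{A}^2$ over $0$ carries a $\mathbb{C}^*$-action which acts fibrewise on $\Delta$ and trivially on $\mathcal{D}_{\pm}$; \'etale-locally over $\mathcal{D}$ it is the standard scaling of the fibre coordinate, with $\mathcal{D}_{-}$ the zero section and $\mathcal{D}_{+}$ the section at infinity. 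For $\lambda\in K^*$ I write $\sigma_{\lambda}$ for the induced automorphism of $\Delta\times\eta$; then $\sigma_{\lambda}^*(\varphi|_{\eta})$ is again a stable orbifold PT pair on $\Delta\times\eta$ lying in the same $\mathbb{C}^*$-orbit as $\varphi|_{\eta}$, so any family produced from it automatically satisfies the first assertion of the lemma.

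Next, for each integer $n$ I would set $\lambda=u^{n}$ and extend $\sigma_{u^{n}}^*(\varphi|_{\eta})$ over $S$. Since $\Delta$ is a $\mathbb{P}^1$-bundle over the projective Deligne--Mumford stack $\mathcal{D}$, it is a $3$-dimensional projective Deligne--Mumford stack, so by Section 4 its moduli space of orbifold PT stable pairs with the modified Hilbert polynomial of $\varphi$ is a projective $\mathbb{C}$-scheme, and the $\mathbb{C}^*$-action preserves this topological type. The valuative criterion of properness then yields a unique $S$-flat extension $\widetilde{\varphi}^{(n)}\colon\mathcal{O}_{\Delta\times S}\to\widetilde{\mathcal{F}}^{(n)}$ of $\sigma_{u^{n}}^*(\varphi|_{\eta})$, and this is automatically an $S$-flat family of orbifold PT pairs on $\Delta\times S$ because purity in dimension one and zero-dimensionality of the cokernel are imposed on the whole moduli space.

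The remaining task is to choose $n$ so that $\mathrm{coker}(\widetilde{\varphi}^{(n)}|_{\eta_{0}})$ is normal to both $\mathcal{D}_{-}$ and $\mathcal{D}_{+}$; by [\cite{Zhou1}, Corollary 3.11] and finiteness of the support this is equivalent to asking that no point of $\mathrm{Supp}(\mathrm{coker}\,\widetilde{\varphi}^{(n)}|_{\eta_{0}})$ lie on $\mathcal{D}_{\pm}$. Working in the local model near $\mathcal{D}_{\pm}$, the failure of normality is measured by the $u$-adic orders of the fibre coordinates of the components of $\mathrm{Supp}(\mathrm{coker}\,\varphi|_{\eta})$ that drift into $\mathcal{D}_{-}$ (respectively into $\mathcal{D}_{+}$), and the twist by $u^{n}$ shifts all of these orders uniformly by $n$; for $n$ large the cokernel is driven toward $\mathcal{D}_{-}$, for $n$ very negative toward $\mathcal{D}_{+}$. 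Following [\cite{LW}, Sections 5.1 and 5.2] and their stacky versions in [\cite{Zhou1}, Section 5.4], one picks the value of $n$ that balances the two ends, and after at most finitely many further finite base changes --- at each of which a suitable integer defect strictly decreases --- one reaches a $\widetilde{\varphi}:=\widetilde{\varphi}^{(n)}$ whose limiting cokernel is supported in the interior of $\Delta$, i.e.\ is normal to $\mathcal{D}_{\pm}$, as required.

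The hard part will be exactly this last step: making precise the behaviour of the flat limit of the $\mathbb{C}^*$-twisted family as the parameter $u^{n}$ degenerates, and verifying that the balancing twist really does force the limiting cokernel to be normal to \emph{both} $\mathcal{D}_{-}$ and $\mathcal{D}_{+}$ and that the procedure terminates. This is the one-dimensional, Deligne--Mumford-stack analogue of the semistable-reduction argument of Li--Wu, and it is precisely here that the purity of $\mathcal{F}$ and the generic admissibility of $\mathrm{coker}(\varphi|_{\eta})$ are used.
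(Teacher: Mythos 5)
Your overall strategy --- twist $\varphi|_{\eta}$ by the fibrewise $\mathbb{C}^*$-action, re-extend over $S$ using properness of $\mathrm{PT}_{\Delta/\mathbb{C}}$, and control the special fibre by tracking where the support of the generic cokernel specializes --- is indeed the strategy of the paper (which in turn defers the final step to the Quot-scheme version in [\cite{Zhou1}, Lemma 5.19]). But you have skipped the step that the paper's proof actually spends its effort on: showing that $\mathrm{coker}\,\varphi$ is \emph{flat over $S$} and of relative dimension zero, and then presenting it \'etale-locally as a quotient $A^{\oplus l}\to M\to 0$ of a free module. Flatness of the cokernel (proved in the paper via the injectivity of $(\mathrm{Im}\,\varphi)|_{s}\to\mathcal{F}|_{s}$ on the local chart and [\cite{Mat}, (20.E)]) is exactly what identifies $\mathrm{coker}(\widetilde{\varphi}|_{\eta_{0}})=(\mathrm{coker}\,\widetilde{\varphi})|_{\eta_{0}}$ with the flat limit of $\mathrm{coker}(\widetilde{\varphi}|_{\eta})$; without it, the cokernel of the limiting pair can be strictly larger than the limit of the generic cokernel, and your valuation-theoretic analysis of ``where the support drifts'' says nothing about $\mathrm{coker}(\widetilde{\varphi}|_{\eta_{0}})$. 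The presentation as a quotient of $A^{\oplus l}$ is likewise not cosmetic: it is what lets the paper import [\cite{Zhou1}, Lemma 5.19] (a statement about flat families of quotients) rather than reprove the limit analysis for pairs.

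Two further points. First, the lemma produces $\widetilde{\varphi}$ over the \emph{same} $S$; your ``finitely many further finite base changes'' changes the statement --- the base-change variant is exactly what the paper relegates to the Remark following the lemma (via [\cite{Li1}, Lemma 3.9]), not the lemma itself. Second, your termination mechanism is not convincing as stated: twisting by $u^{n}$ shifts the $u$-adic valuations of \emph{all} support points by the same integer $n$, so no single choice of $n$ can simultaneously normalize points drifting into $\mathcal{D}_{-}$ and $\mathcal{D}_{+}$ at different rates, and a finite base change rescales all valuations by the ramification index rather than making any ``integer defect strictly decrease.'' You should either carry out the flatness reduction and then invoke [\cite{Zhou1}, Lemma 5.19] as the paper does, or make precise (and correct) the combinatorics of the balancing step, which as written would fail whenever the cokernel is supported at points with distinct drift valuations.
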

\begin{proof}
Since $\mathrm{coker}(\varphi|_{\eta_{0}})$ is not admissible, $\mathrm{coker}(\varphi|_{\eta_{0}})$ is not normal to $\mathcal{D}_{-}$, or $\mathcal{D}_{+}$, or both of them. We will deal with the case where $\mathrm{coker}(\varphi|_{\eta_{0}})$ is not normal to $\mathcal{D}_{-}$, the remaining cases are the same. As normality has the local property in the $\acute{e}$tale topology, we consider an affine $\acute{e}$tale neighborhood of  a point in $\mathcal{D}_{-}\times \eta_{0}$. As in the proof of  [\cite{Zhou1}, Lemma 5.16], the local model of $\Delta\times S$ can be chosen to be $U=\mathrm{Spec}\,A:=\mathrm{Spec}\,R[y,\mathbf{z}]$ where $(\mathcal{D}_{-}\times\eta_{0})|_{U}$ is defined by the ideal $(y,u)$, the vector $\mathbf{z}$ represents the coordinates in $\mathcal{D}$ (containing $\mathcal{D}_{-}$) and $y=0$ is the local equation defining $\mathcal{D}$. Suppose the coherent sheaf $\mathrm{coker}\,\varphi$ on $U$ is represented by $A$-module $M$. Since $\varphi:\mathcal{O}_{\Delta\times S}\to\mathcal{F}$ is an $S$-flat family of orbifold PT pairs and the map $(\mathrm{Im}\,\varphi)|_{s}\to\mathcal{F}|_{s}$ restricted on $U$ is injective for each $s\in S$, then the coherent sheaf $\mathrm{coker}\,\varphi$ on $U$  is flat over $S$ by [\cite{Mat}, (20.E)].
That is, the $A$-module $M$ is flat over $R$. 
%Then the sheaves $\mathrm{coker}(\varphi|_{\eta_{0}})$ and $\mathrm{coker}(\varphi|_{\eta})$ on $U$ are represented by $A_{0}$-module $M_{0}:=M|_{\eta_{0}}$ and $A_{\eta}$-module $M_{\eta}:=M|_{\eta}$ respectively where $A_{0}:=A|_{\eta_{0}}$ and $A_{\eta}:=A|_{\eta}$.
Since $\dim\mathrm{coker}\,\varphi=0$, 
by [\cite{Eis}, Theorem 2.14 and Corollary 2.17] or [\cite{Mat}, (12.B)], the $A$-module $M$ is of finite length. 
%By [\cite{Eis}, Theorem 2.13] and [\cite{Serre}, Chapter I.-4], both $M_{0}$ and $M_{\eta}$ have finite composition series and  hence are finitely generated. 
Then there exists an integer $l>0$ such that we have the surjection $A^{\oplus l}\to M\to 0$. Denote by $K$ the kernel of this surjection, and we have the following short exact sequence
\ben
0\to K\to A^{\oplus l}\to M\to 0.
\een
Now, the proof is completed by applying the similar argument in [\cite{Zhou1}, Lemma 5.19] where we obtain the normality of  $\mathrm{coker}\,\widetilde{\varphi}$ (viewed as some quotient sheaf due to the above short exact sequence) to $\mathcal{D}_{-}\times \eta_{0}$  via using the properness of moduli space $\mathrm{PT}_{\Delta/\mathbb{C}}$ (see Section 5.1 for the definition) to  extend $\widetilde{\varphi}|_{\eta}$ (it is still an orbifold PT pair) over $\eta_{0}$ and taking the flat limit of $\mathrm{coker}(\widetilde{\varphi}|_{\eta_{0}})$, where $\widetilde{\varphi}|_{\eta}$ is related to $\varphi|_{\eta}$ through some $\mathbb{C}^*$-action on $\Delta$ (hence on $\mathcal{X}(1)\to\mathbb{A}^2$). 
\end{proof}

\begin{remark}
With the hypothesis in Lemma \ref{admissble1}, one may  follow the similar argument in the proof of [\cite{Li1}, Lemma 3.9] to find a new family $\widetilde{\varphi}$ (with some $\mathbb{C}^*$-action on $\varphi$) such that $\mathrm{Supp}(\mathrm{coker}(\widetilde{\varphi}|_{\widetilde{\eta}_{0}}))$  is away from the divisors $\mathcal{D}_{\pm}$ (it is equivalent to the admissibility of $\mathrm{coker}(\widetilde{\varphi}|_{\widetilde{\eta}_{0}})$ by Remark \ref{adm-cri}) possibly after a finite base change $\widetilde{S}\to S$ where $\widetilde{\eta}_{0}$ is the closed point of $\widetilde{S}$.
\end{remark}

%With the hypothesis in Lemma \ref{admissble1}, if $\varphi$ is an $S$-flat family of  orbifold PT pairs on $\Delta\times S$ such that $\varphi|_{\eta}$ is stable and  $\mathrm{coker}(\varphi|_{\eta_{0}})$ is admissible, then $\varphi|_{\eta_{0}}$ is not $\mathbb{C}^*$-equivariant. Otherwise this implies  $\mathrm{coker}(\varphi|_{\eta_{0}})$ is zero, which contradicts with $\varphi|_{\eta_{0}}$ being an orbifold PT pair. Actually, we have
	
\begin{lemma}\label{admissible3}
	Let  the family $\Delta\times S\to S$ be a component of the family of expanded degenerations $\mathtt{X}_{S}$ over $(S, \xi)$ where  $\xi: S\to\mathbb{A}^2$ maps $S$ constantly to $0\in\mathbb{A}^2$. Suppose $\varphi$ is an $S$-flat family of  orbifold PT pairs on $\Delta\times S$ such that $\varphi|_{\eta}$ is stable. Then there exists an $S$-flat family of orbifold PT pairs $\widetilde{\varphi}$ such that $\widetilde{\varphi}|_{\eta}$ is related to $\varphi|_{\eta}$ through the $\mathbb{C}^*$-action on $\mathcal{X}(1)\to\mathbb{A}^2$ and $\mathrm{Aut}(\widetilde{\varphi}|_{\eta_{0}})$ is finite.
\end{lemma}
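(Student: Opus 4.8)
The plan is to mimic the $\mathbb{C}^*$-normalization step of [\cite{LW}, Section 5.2] in its stacky form [\cite{Zhou1}, Section 5.4], modifying $\varphi$ only through the $\mathbb{C}^*$-action on the bubble $\Delta$ and, if necessary, a totally finite base change $\widetilde{S}\to S$. First I would isolate the structural fact I will exploit: for an orbifold PT pair $\psi:\mathcal{O}_{\Delta}\to\mathcal{G}$ the group $\mathrm{Aut}(\psi)$ is a closed subgroup of $\mathbb{C}^*$ (by the discussion preceding Definition \ref{stable}), hence either finite or all of $\mathbb{C}^*$, the latter occurring exactly when $[\psi]$ lies in the $\mathbb{C}^*$-fixed locus of the ambient projective component of $\mathrm{PT}_{\Delta/\mathbb{C}}$ (recall from Section 5.1 that these components are projective $\mathbb{C}$-schemes). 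Thus ``$\mathrm{Aut}(\psi)$ infinite'' cuts out a closed subscheme $F\subset\mathrm{PT}_{\Delta/\mathbb{C}}$, and the lemma is equivalent to producing an $S$-flat family $\widetilde{\varphi}$ of orbifold PT pairs on $\Delta\times S$, restricting over $\eta$ to a $\mathbb{C}^*$-translate of $\varphi|_{\eta}$, whose classifying point at $\eta_{0}$ lies off $F$.

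Since $\varphi|_{\eta}$ is stable its classifying point avoids $F$; if $[\varphi|_{\eta_{0}}]\notin F$ already, I take $\widetilde{\varphi}=\varphi$. Otherwise I feed $\varphi|_{\eta}$ through the $\mathbb{C}^*$-action: for $n\in\mathbb{Z}$ let $\varphi_{n}|_{\eta}$ be the result of acting on $\varphi|_{\eta}$ by $u^{-n}\in\mathbb{G}_{m}(K)$ via the $\mathbb{C}^*$-action on $\Delta$, which by Proposition \ref{property1} is the restriction of the $\mathbb{C}^*$-action on $\mathcal{X}(1)\to\mathbb{A}^{2}$. Each $\varphi_{n}|_{\eta}$ is again an orbifold PT pair over $\eta$, so by properness of the relevant component of $\mathrm{PT}_{\Delta/\mathbb{C}}$ it extends uniquely, after a finite base change, to an $S$-flat family $\widetilde{\varphi}_{n}$ with $\widetilde{\mathcal{F}}_{n}$ pure of dimension one and $\mathrm{coker}\,\widetilde{\varphi}_{n}$ of relative dimension $\leq 0$ (these being the conditions recorded by $\mathrm{PT}_{\Delta/\mathbb{C}}$, hence preserved in the flat limit). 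The core claim is that for a suitable $n$ the limit $\widetilde{\varphi}_{n}|_{\eta_{0}}$ avoids $F$. This I would prove following [\cite{LW}, Section 5.2] (compare [\cite{Li1}, Lemma 3.9]): the assignment $n\mapsto[\widetilde{\varphi}_{n}|_{\eta_{0}}]$ is non-constant, its two extremes as $n\to\pm\infty$ being the two distinct $\mathbb{C}^*$-limit points attached to the non-fixed point $[\varphi|_{\eta}]$, and it induces a non-constant morphism from a finite cover of $\mathbb{A}^1$ into the projective component, which therefore meets the proper closed subscheme $F$ in only finitely many parameters. Concretely, membership in $F$ is detected by the $\mathbb{C}^*$-weights of the pieces $(\widetilde{\mathcal{F}}_{n}^{\,t.f.}|_{\Delta})_{\mathcal{I}^{\pm}}$ concentrated along $\mathcal{D}_{\pm}$ — precisely the quantities governed by the error polynomials and the numerical criterion of Section 2.3 (and by the description in Remark \ref{adm-cri}) — and translating $n$ shifts these weights, so that for all but finitely many $n$ no weight is ``centered'' and the limit has finite automorphism group. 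Choosing such an $n$ and setting $\widetilde{\varphi}:=\widetilde{\varphi}_{n}$ finishes the argument: by construction $\widetilde{\varphi}|_{\eta}$ is a $\mathbb{C}^*$-translate of $\varphi|_{\eta}$, and $\widetilde{\varphi}$ is a genuine $S$-flat family of orbifold PT pairs.

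The hard part is the core claim of the previous paragraph: controlling the flat limit under the $\mathbb{C}^*$-twist and forcing it out of the fixed locus $F$. In contrast with the normality statement of Lemma \ref{admissble1}, the relevant condition here is global on $\Delta$, so one cannot reduce to an affine chart and present a sheaf as a quotient; instead one must carry through the global $\mathbb{C}^*$-limit argument of Li--Wu while tracking the $\mathbb{C}^*$-weights at \emph{both} boundary divisors $\mathcal{D}_{\pm}$ simultaneously. In the stacky setting this additionally relies on knowing that $\mathrm{PT}_{\Delta/\mathbb{C}}$ and its $\mathbb{C}^*$-fixed substack are well behaved — projective components and closedness of $F$ — which is exactly what Section 5.1 together with the numerical criterion of Section 2.3 supply.
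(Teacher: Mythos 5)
Your reduction of the lemma to the statement that some $\mathbb{C}^*$-translate of $\varphi|_{\eta}$ specializes outside the fixed locus $F\subset\mathrm{PT}_{\Delta/\mathbb{C}}$ is fine, and so is the observation that the autoequivalence group is a closed subgroup of $\mathbb{C}^*$, hence infinite exactly on the closed fixed locus. The gap is in the justification of your core claim. The assignment $n\mapsto[\widetilde{\varphi}_{n}|_{\eta_{0}}]$ is a map from $\mathbb{Z}$, not a morphism from a cover of $\mathbb{A}^1$, and its behavior is the opposite of what you assert: for $|n|\gg0$ the specialization of $u^{-n}\cdot\varphi|_{\eta}$ \emph{is} one of the two $\mathbb{C}^*$-limit points attached to $[\varphi|_{\eta}]$, and those lie in $F$. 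So the map lands in $F$ for all but finitely many $n$ (typically all but one), and ``a non-constant map meets a proper closed subscheme in only finitely many parameters'' establishes nothing; what must be proved is that it leaves $F$ for at least one $n$. Moreover this can genuinely fail over the given $S$: for $\mathbb{P}^1$ with the weight-two action $t\cdot[x:y]=[x:t^{2}y]$ and the $K$-point $[1:u]$, every integer translate specializes to a fixed point, and one needs the ramified base change $u=v^{2}$ to escape. The lemma's conclusion is an $S$-flat family over the \emph{same} $S$, so a base change is not available here; indeed the paper's own remark following Lemma \ref{admissble1} points out that the Li-style translation argument of [\cite{Li1}, Lemma 3.9] --- which is essentially what you are running --- only works ``possibly after a finite base change.'' Closing the claim requires the actual $\mathbb{C}^*$-weight analysis of the limiting pair, not a general-position argument on the moduli space.

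For comparison, the paper sidesteps all of this by a two-step reduction. It first invokes Lemma \ref{admissble1} to replace $\varphi$ by $\widehat{\varphi}$ with $\mathrm{coker}(\widehat{\varphi}|_{\eta_{0}})$ normal to $\mathcal{D}_{\pm}$. If that cokernel has nonempty support, the support is a finite set of points off $\mathcal{D}_{\pm}$ (Remark \ref{adm-cri}); since the $\mathbb{C}^*$-fixed points of $\Delta$ all lie on $\mathcal{D}_{\pm}$, the pair cannot be $\mathbb{C}^*$-equivariant and $\mathrm{Aut}(\widehat{\varphi}|_{\eta_{0}})$ is automatically finite. If instead the cokernel vanishes on the closed fiber, a flatness argument forces it to vanish generically as well, so $\widehat{\varphi}$ is an $S$-flat quotient and the quotient-sheaf result [\cite{Zhou1}, Lemma 5.20] --- where the genuine weight bookkeeping lives, and which does produce a translate over the same $S$ --- finishes the proof. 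You should either follow that reduction or supply the weight analysis directly; as written, the sketch does not close the essential step.
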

\begin{proof}
By Lemma \ref{admissble1},	 there exists an $S$-flat family of orbifold PT pairs $\widehat{\varphi}$ such that $\widehat{\varphi}|_{\eta}$ is related to $\varphi|_{\eta}$ through the $\mathbb{C}^*$-action on $\mathcal{X}(1)\to\mathbb{A}^2$ and $\mathrm{coker}(\widehat{\varphi}|_{\eta_{0}})$ is normal to $\mathcal{D}_{\pm}$.
If $\mathrm{Supp}(\mathrm{coker}(\widehat{\varphi}|_{\eta_{0}}))\cap(\Delta\times\eta_{0})\neq\emptyset$, then $\mathrm{Aut}(\widehat{\varphi}|_{\eta_{0}})$ is finite and we are done by setting $\widetilde{\varphi}:=\widehat{\varphi}$.
Otherwise, $\mathrm{Supp}(\mathrm{coker}(\widehat{\varphi}|_{\eta_{0}}))\cap(\Delta\times\eta_{0})=\emptyset$,  then $\mathrm{coker}(\widehat{\varphi}|_{\eta_{0}})=0$. Assume that $\mathrm{Aut}(\widehat{\varphi}|_{\eta_{0}})$ is not finite, then $\widehat{\varphi}|_{\eta_{0}}$ is $\mathbb{C}^*$-equivariant.   Combined with the fact that $\acute{e}$tale locally $\mathrm{coker}\,\widehat{\varphi}$ is flat over $S$ as shown in Lemma \ref{admissble1} and $\mathrm{coker}(\widehat{\varphi}|_{\eta})$ is admissible due to $\mathbb{C}^*$-action, $\mathrm{coker}(\widehat{\varphi}|_{\eta})$ is also zero $\acute{e}$tale locally by using Lemma \ref{global-hilbpoly}.  
Then $\widehat{\varphi}$ is also an $S$-flat quotient sheaf on $\Delta\times S$. Now, the proof is completed by the similar argument in the proof of [\cite{Zhou1}, Lemma 5.20]  where we use the properness of moduli space $\mathrm{PT}_{\Delta/\mathbb{C}}$ (or $\mathrm{PT}_{\Delta/\mathbb{C}}^P$).
\end{proof}

\begin{lemma}\label{admissible4}
Let $\varphi:\mathcal{O}_{\mathtt{X}_{S}}\to\mathcal{F}$ be an $S$-flat family of orbifold PT pairs on $\mathtt{X}_{S}=\xi^*\mathcal{X}(k)$ such that $\mathtt{X}_{S}|_{\eta}$ is smooth over $\eta$
where $\xi:S\to\mathbb{A}^{k+1}$. Assume $\mathrm{coker}(\varphi|_{\eta_{0}})$ is admissible. Then there are a finite base change $\widetilde{S}\to S$ and $(\widetilde{\xi}, \widetilde{\varphi})$, where $\widetilde{\xi}: \widetilde{S}\to\mathbb{A}^{\widetilde{k}+1}$ and  $\widetilde{\varphi}$ is an $\widetilde{S}$-flat family of orbifold PT pairs   on $\mathtt{X}_{\widetilde{S}}=\widetilde{\xi}^*\mathcal{X}(\widetilde{k})$, such that
$(\widetilde{\xi},\widetilde{\varphi})\cong(\xi, \varphi)\times_{\eta}\widetilde{\eta}$, the $\widetilde{S}$-flat family $\widetilde{\varphi}$ is admissible and $[\mathrm{Aut}(\widetilde{\varphi}|_{\widetilde{\eta}_{0}}): \mathrm{Aut}(\varphi|_{\eta_{0}})]$ is finite, where $\widetilde{\eta}$ and $\widetilde{\eta_{0}}$ are the generic and closed points of $\widetilde{S}$.
\end{lemma}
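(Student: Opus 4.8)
The plan is to reduce the global statement on $\mathtt{X}_S$ to a sequence of the local, one-component statements already established in Lemmas \ref{admissble1} and \ref{admissible3}, proceeding divisor by divisor. First I would fix the family $\varphi:\mathcal{O}_{\mathtt{X}_S}\to\mathcal{F}$ with $\mathtt{X}_S = \xi^*\mathcal{X}(k)$ and $\mathtt{X}_{S}|_\eta$ smooth, so that the generic fiber is isomorphic to a smooth fiber $\mathcal{X}_c$ in the original family, and hence $\varphi|_\eta$ is automatically stable and admissible by our convention. The central fiber $\mathtt{X}_{S,\eta_0}$ is an expanded degeneration $\mathcal{X}_0[l]$ for some $l\le k$, with components $\mathcal{Y}_-=\Delta_0,\Delta_1,\dots,\Delta_l,\mathcal{Y}_+=\Delta_{l+1}$ and separating divisors $\mathcal{D}_0,\dots,\mathcal{D}_l$. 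Since $\mathrm{coker}(\varphi|_{\eta_0})$ is assumed admissible, the only failure that can occur is the non-admissibility of $\mathcal{F}|_{\eta_0}$, i.e.\ by Remark \ref{adm-cri} some irreducible component of $\mathrm{Supp}(\mathcal{F}|_{\eta_0})$ lies entirely in one of the $\mathcal{D}_i$, together with possibly infinite $\mathrm{Aut}(\varphi|_{\eta_0})$.

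The key steps, in order: (1) \emph{Localize to each bubble component.} For each $i$ with $1\le i\le l$, the family of expanded degenerations restricted near $\Delta_i$ is, after an \'etale base change and reindexing, exactly of the shape treated in Lemmas \ref{admissble1} and \ref{admissible3}: a component $\Delta\times S\to S$ sitting inside $\mathtt{X}_S$ over a map $\xi_i:S\to\mathbb{A}^2$ sending $S$ constantly to $0$. I would apply Lemma \ref{admissible3} to the restriction $\varphi|_{\Delta_i\times S}$ to produce, after a finite base change, a new family whose restriction over $\eta$ differs from the old one only by a $\mathbb{C}^*$-action on $\mathcal{X}(1)\to\mathbb{A}^2$ (hence by an element of the equivalence relation $R_{\sim}$, so representing the same object of $\mathfrak{PT}^P_{\mathfrak{X}/\mathfrak{C}}$), and which is now admissible \emph{and} has $\mathrm{Aut}$ of the restricted pair finite along $\Delta_i\times\eta_0$. (2) \emph{Glue and iterate.} The $\mathbb{C}^*$-modifications on different bubble components $\Delta_i$ are independent (they correspond to different $\mathbb{A}^2$-factors in $\mathbb{A}^{\tilde k+1}$ after the blowup/expansion construction of Propositions \ref{property1} and \ref{decom}), so I would perform them successively, taking one finite base change per step and keeping track that a modification on $\Delta_i$ does not disturb the admissibility already achieved on $\Delta_j$ for $j<i$; here one uses that the flat limits taken via properness of $\mathrm{PT}_{\Delta/\mathbb{C}}$ only affect the sheaf near $\mathcal{D}_{i-1},\mathcal{D}_i$. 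After running through $i=1,\dots,l$ we obtain $(\widetilde\xi,\widetilde\varphi)$ over a finite base change $\widetilde S\to S$ with $\widetilde\varphi$ admissible on all of $\mathtt{X}_{\widetilde S}$. (3) \emph{Check the automorphism bound.} Since $\mathrm{Aut}(\widetilde\varphi|_{\widetilde\eta_0})\subseteq(\mathbb{C}^*)^{\tilde k}$ and the $i$-th factor acts on $\Delta_i$ fiberwise (Proposition \ref{property1}(iii)), having made the pair rigid on each bubble component kills all but a finite subgroup; comparing with $\mathrm{Aut}(\varphi|_{\eta_0})$, which was already finite on the rigid components, gives that the index $[\mathrm{Aut}(\widetilde\varphi|_{\widetilde\eta_0}):\mathrm{Aut}(\varphi|_{\eta_0})]$ is finite. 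Finally, the isomorphism $(\widetilde\xi,\widetilde\varphi)\cong(\xi,\varphi)\times_\eta\widetilde\eta$ over the generic point holds because every modification was through the $R_{\sim}$-equivalence, which is trivial away from the central fiber.

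The main obstacle I anticipate is step (2): ensuring that the successive $\mathbb{C}^*$-rescalings on the various bubbles are genuinely compatible and do not reintroduce non-admissibility on components already corrected. Concretely, after modifying near $\Delta_i$ one must verify that the new family is still $\widetilde S$-flat with pure one-dimensional fibers and that $\mathrm{coker}$ of the new pair remains of relative dimension $\le 0$ with support away from \emph{all} the $\mathcal{D}_j$ (not just $\mathcal{D}_{i-1},\mathcal{D}_i$); this requires invoking the independence of the $\mathbb{A}^2$-factors in the expansion $\mathcal{X}(\widetilde k)$ coming from Proposition \ref{decom} and the local flatness statement (\'etale-local flatness of $\mathrm{coker}\,\varphi$ over $S$) proved inside Lemma \ref{admissble1}. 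Bookkeeping the total number of expansions $\widetilde k$ and the combined finite base change is routine once this independence is pinned down, so I would present the argument as: reduce to one bubble via Lemma \ref{admissible3}, then induct on the number of bubbles, citing Proposition \ref{decom} for the compatibility of the modifications and Remark \ref{adm-cri} for translating admissibility into the support condition.
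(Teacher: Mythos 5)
There is a genuine gap. You correctly identify at the outset that, since $\mathrm{coker}(\varphi|_{\eta_{0}})$ is assumed admissible, the only possible failure is that some irreducible (one-dimensional) component of $\mathrm{Supp}(\mathcal{F}|_{\eta_{0}})$ lies entirely inside a gluing divisor $\mathcal{D}_{i}$ — but then you propose to repair this by applying Lemmas \ref{admissble1} and \ref{admissible3} bubble by bubble. Those lemmas address a different defect (a non-admissible zero-dimensional cokernel, resp.\ an infinite autoequivalence group) and their mechanism is a $\mathbb{C}^{*}$-rescaling of the bubble. That mechanism cannot fix the defect at hand: the $\mathbb{C}^{*}$-factor acting on $\Delta_{i}$ acts fiberwise on the $\mathbb{P}^{1}$-bundle and fixes the two sections $\mathcal{D}_{i-1},\mathcal{D}_{i}$, so a curve component of $\mathrm{Supp}(\mathcal{F}|_{\eta_{0}})$ contained in $\mathcal{D}_{i}$ stays in $\mathcal{D}_{i}$ under any such rescaling. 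Moreover, neither lemma concludes admissibility of the sheaf $\mathcal{F}|_{\eta_{0}}$; you are attributing to them a conclusion they do not have. A telling symptom is that Lemmas \ref{admissble1} and \ref{admissible3} require no base change, whereas Lemma \ref{admissible4} explicitly produces a \emph{finite base change} $\widetilde{S}\to S$ and an enlarged expansion $\mathcal{X}(\widetilde{k})$ with $\widetilde{k}>k$; your argument never explains where either comes from.

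The missing idea is the one the paper actually uses: because $\mathrm{coker}(\varphi|_{\eta_{0}})$ is admissible, its support is away from all singular divisors, so \'etale-locally near each $\mathcal{D}_{i}$ the pair $\varphi$ \emph{is} a quotient, and one can run the Quot-scheme admissibility argument of [Zhou1, Section 5.4, Step 2] (originating in [LW, Section 5.1]). Concretely: at a divisor $\mathcal{D}_{l}$ where $\mathcal{F}|_{\eta_{0}}$ fails to be normal, one inserts a new bubble by passing to $\mathcal{X}(k+1)\to\mathbb{A}^{k+2}$ after a finite (ramified) base change $\widetilde{S}\to S$, extends the modified family over the new central fiber using properness of $\mathrm{PT}_{\mathcal{X}(k+1)/\mathbb{A}^{k+2}}^{P}$, and then invokes the numerical criterion of Section 2.3: the total error $\mathrm{Err}(\mathcal{F}|_{\eta_{0}})\geq 0$ strictly decreases under this modification, so the process terminates with $\mathrm{Err}=0$, i.e.\ with an admissible $\widetilde{\mathcal{F}}|_{\widetilde{\eta}_{0}}$. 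The termination argument via $\mathrm{Err}$, the bubble insertion, and the properness of the enlarged PT moduli space are the substance of the lemma, and none of them appear in your proposal; without them the induction "on the number of bubbles" in your step (2) has nothing to induct on, since the bubbles you rescale are not where the problem lives.
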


\begin{proof}
Without loss of generality, we assume that $\mathtt{X}_{S}|_{\eta_{0}}=\mathcal{X}_{0}[k]$. The smoothness of  $\mathtt{X}_{S}|_{\eta}$ over $\eta$   implies that  $\varphi|_{\eta}$ is admissible and stable by the convention before Definiton \ref{fam-PT}.
If $\mathcal{F}|_{\eta_{0}}$ is admissible, then we are done. Otherwise, we suppose that $\mathcal{F}|_{\eta_{0}}$ is not normal to some $\mathcal{D}_{l}$ ($0\leq l\leq k$). Since $\mathrm{coker}(\varphi|_{\eta_{0}})$ is admissible, then $\mathrm{Supp}(\mathrm{coker}(\varphi|_{\eta_{0}}))$ is away from all the singular divisors of $\mathcal{X}_{0}[k]$. Hence, $\acute{e}$tale locally near any singular divisor of $\mathcal{X}_{0}[k]$, $\varphi$ is also a quotient. Now,   the proof is completed by following the similar argument in  [\cite{Zhou1}, Section 5.4, Step 2]  for the degeneration case to reduce all $\mathrm{Err}_{l}(\mathcal{F}|_{\eta_{0}})$ which are not zero and hence $\mathrm{Err}(\mathcal{F}|_{\eta_{0}})$ to zero by modifying the family $\varphi$ where we use the properness of $\mathrm{PT}_{\mathcal{X}(k+1)/\mathbb{A}^{k+2}}^P$ instead of the Quot-space for obtaining the extension (see also the original argument in [\cite{LW}, Section 5.1]). Here, the admissibility of the cokernel on the corresponding closed fiber is preserved after the finite base change $\widetilde{S}\to S$.
\end{proof}

For the properness of $\mathfrak{PT}_{\mathfrak{X}/\mathfrak{C}}^{P}$, we still need the following result in the relative case.
\begin{lemma}\label{admissible5}
	Let $\varphi:\mathcal{O}_{\mathtt{Y}_{S}}\to\mathcal{F}$ be an $S$-flat family of orbifold PT pairs on $\mathtt{Y}_{S}=\xi^*\mathcal{Y}(k)$ such that $\varphi|_{\eta}$ is admissible and $\mathrm{coker}(\varphi|_{\eta_{0}})$ is admissible where $\xi:S\to\mathbb{A}^k$. Then there exists $(\widetilde{\xi}, \widetilde{\varphi})$ where $\widetilde{\xi}:S\to\mathbb{A}^{\widetilde{k}}$ and $\widetilde{\varphi}:\mathcal{O}_{\widetilde{\mathtt{Y}}_{S}}\to\widetilde{\mathcal{F}}$ is an $S$-flat family of orbifold PT pairs on $\widetilde{\mathtt{Y}}_{S}=\widetilde{\xi}^*\mathcal{Y}(\widetilde{k})$, such that $(\widetilde{\xi}, \widetilde{\varphi})|_{\eta}\cong(\xi, \varphi)|_{\eta}$, and    sheaves $\mathrm{coker}\,(\widetilde{\varphi}|_{\eta_{0}})$ and  $\widetilde{\mathcal{F}}|_{\eta_{0}}$ are normal to the distinguished divisor $\widetilde{\mathcal{D}}_{S}\subset\widetilde{\mathtt{Y}}_{S}$.
\end{lemma}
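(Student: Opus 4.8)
The plan is to carry out the relative-case strategy of \cite{LW}, Sections 5.1 and 5.2, with the stacky techniques of \cite{Zhou1}, Section 5.4, exactly as Lemma \ref{admissible4} does for the degeneration case. Without loss of generality assume $\mathtt{Y}_S|_{\eta_0}\cong\mathcal{Y}[k]$ with distinguished divisor $\mathcal{D}_k=\mathcal{D}[k]$, and write $\widetilde{\mathcal{D}}_S$ for the distinguished divisor of the ambient family at each stage of the construction. Since $\mathrm{coker}(\varphi|_{\eta_0})$ is admissible, Remark \ref{adm-cri} shows that $\mathrm{Supp}(\mathrm{coker}(\varphi|_{\eta_0}))$ is disjoint from $\mathcal{D}_k$, so \'etale-locally near $\mathcal{D}_k$ the map $\varphi$ is surjective; hence there $\mathcal{F}$ is an $S$-flat quotient sheaf, the flatness of $\mathrm{coker}\,\varphi$ and of $\mathcal{F}$ over $S$ being seen as in the proof of Lemma \ref{admissble1}. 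If $\mathcal{F}|_{\eta_0}$ is already normal to $\mathcal{D}_k$, equivalently $\mathrm{Err}_k(\mathcal{F}|_{\eta_0})=0$ by \cite{Zhou1}, Proposition 3.5, then we take $(\widetilde{\xi},\widetilde{\varphi})=(\xi,\varphi)$ and are done, since $\mathrm{coker}(\varphi|_{\eta_0})$ is admissible and hence normal to $\mathcal{D}_k$ as well.

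If $\mathrm{Err}_k(\mathcal{F}|_{\eta_0})\neq 0$, the plan is to bubble off one new component along the distinguished divisor: pass from $\mathcal{Y}(k)$ to $\mathcal{Y}(k+1)=\mathrm{Bl}_{\mathcal{D}(k)\times 0}(\mathcal{Y}(k)\times\mathbb{A}^1)$ (the second construction of standard families of expanded pairs), set $\widetilde{k}=k+1$, use the $\mathbb{C}^*$-action to move $\varphi|_{\eta}$ to a family over a fiber of $\mathcal{Y}(k+1)\to\mathbb{A}^{k+1}$ that is isomorphic to the original one over $\eta$ (compatibly with $R_{\sim,\mathrm{PT}_{\mathcal{Y}(k+1)/\mathbb{A}^{k+1}}}$), and extend it across $\eta_0$ using the properness of $\mathrm{PT}^{P}_{\mathcal{Y}(k+1)/\mathbb{A}^{k+1}}$ in place of the Quot-space used in \cite{LW}; because $\varphi$ is a quotient near the distinguished divisor, the extension is obtained by taking a flat limit of quotients and yields a new $S$-flat family $\widetilde{\varphi}:\mathcal{O}_{\widetilde{\mathtt{Y}}_S}\to\widetilde{\mathcal{F}}$ on $\widetilde{\mathtt{Y}}_S=\widetilde{\xi}^*\mathcal{Y}(\widetilde{k})$. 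As in \cite{Zhou1}, Section 5.4, and \cite{LW}, Sections 5.1 and 5.2, the torsion-type contribution $(\widetilde{\mathcal{F}}^{t.f.}|_{\Delta_{k+1}})_{\mathcal{I}_{k+1}^{+}}$ controlling $\mathrm{Err}_{k+1}$ has strictly smaller length than $(\mathcal{F}^{t.f.}|_{\Delta_{k}})_{\mathcal{I}_{k}^{+}}$ controlling $\mathrm{Err}_k$, so iterating the bubble-and-take-flat-limit step finitely many times produces $(\widetilde{\xi},\widetilde{\varphi})$ with $\widetilde{\mathcal{F}}|_{\eta_0}$ normal to $\widetilde{\mathcal{D}}_S$. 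Each step being \'etale-locally away from $\mathrm{Supp}(\mathrm{coker}\,\varphi)$ and trivial over $\eta$, one gets for free that $\mathrm{coker}(\widetilde{\varphi}|_{\eta_0})$ stays admissible (in particular normal to $\widetilde{\mathcal{D}}_S$) and that $(\widetilde{\xi},\widetilde{\varphi})|_{\eta}\cong(\xi,\varphi)|_{\eta}$; and, unlike Lemma \ref{admissible4}, no base change of $S$ is needed here because extending a quotient across the distinguished divisor does not require resolving a node.

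The \emph{main obstacle} will be the termination of the iteration: one must exhibit a well-founded invariant attached to $\mathcal{F}|_{\eta_0}$ near the distinguished divisor --- the length of the torsion-type subsheaf on the last bubble together with its modified Hilbert polynomial --- that strictly decreases under each bubbling step, and check that the $\mathrm{PT}$-scheme extension actually realizes this decrease; this is the heart of \cite{LW}, Section 5.1. Two points require care in the Deligne-Mumford setting: first, the bubble $\Delta=\mathbb{P}_{\mathcal{D}}(\mathcal{O}_{\mathcal{D}}\oplus\mathcal{N})$ is a $\mathbb{P}^1$-bundle over the stack $\mathcal{D}$, so one invokes the projectivity of $\mathcal{Y}(k+1)$ and its generating sheaf $p^*\mathcal{E}_{\mathcal{Y}}$ recorded at the beginning of Section 5, together with the $m$-regularity, flatness and Serre-duality statements for stacks from \cite{Nir1} and \cite{OS03}, to make sense of $\mathrm{PT}^{P}_{\mathcal{Y}(k+1)/\mathbb{A}^{k+1}}$ and of the flat limit; second, purity of the limiting one-dimensional sheaf under pullback by the contraction map must be tracked, as in the proof of Proposition \ref{separatedness}. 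With these in hand the argument of \cite{Zhou1}, Section 5.4, adapts.
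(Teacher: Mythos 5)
Your proposal is correct and follows essentially the same route as the paper: observe that admissibility of $\mathrm{coker}(\varphi|_{\eta_{0}})$ forces its support away from the distinguished divisor so that $\varphi$ is a quotient \'etale-locally there, then run the successive-blowup modification of [\cite{Zhou1}, Lemma 5.16] (with properness of the relevant $\mathrm{PT}$-moduli space supplying the flat-limit extensions, and the error polynomial $\mathrm{Err}_{k}$ providing the strictly decreasing invariant that terminates the iteration), noting that the modification preserves admissibility of the cokernel and purity of the one-dimensional sheaf and requires no base change of $S$. The only difference is one of exposition: you spell out the bubbling/termination mechanism that the paper delegates to its references.
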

\begin{proof}
Assume $\mathtt{Y}_{S}|_{\eta_{0}}=\mathcal{Y}[k]$ with its distinguished divisor $\mathcal{D}[k]$. It follows from the hypothesis that $\varphi$ is also a quotient on an affine $\acute{e}$tale neighborhood of $\mathcal{D}[k]\times\eta_{0}$. The proof follows exactly from the similar argument in [\cite{Zhou1}, Lemma 5.16]  where we use the properness of $\mathrm{PT}_{\mathcal{Y}(k+1)/\mathbb{A}^{k+1}}$ for extensions since the modified family induced by the successive blowups preserves the admissibility of the cokernel and the purity of one-dimensional sheaves.	
\end{proof}
\begin{remark}\label{finite-aut}
In the proof of Lemma \ref{admissible5}, suppose $\mathcal{F}|_{\eta_{0}}$ is not normal to $\mathcal{D}[k]$ at the begining, then the modified family induced by some blowups will not  produce any new infinitely autoequivalences.
\end{remark}
Similarly  we have the following results for the relative case.
\begin{lemma}\label{admissible6}
	Let  the family $\Delta\times S\to S$ be a component of the family of expanded degenerations $\mathtt{Y}_{S}$ over $(S, \xi)$ where  $\xi: S\to\mathbb{A}^1$ maps $S$ constantly to $0\in\mathbb{A}^1$. Suppose $\varphi$ is an $S$-flat family of  orbifold PT pairs on $\Delta\times S$ such that $\varphi|_{\eta}$ is stable but $\mathrm{coker}(\varphi|_{\eta_{0}})$ is not admissible. Then there exists another $S$-flat family of orbifold PT pairs $\widetilde{\varphi}$ such that $\widetilde{\varphi}|_{\eta}$ is related to $\varphi|_{\eta}$ through the $\mathbb{C}^*$-action on $\mathcal{Y}(1)\to\mathbb{A}^1$ and $\mathrm{coker}(\widetilde{\varphi}|_{\eta_{0}})$ is normal to $\mathcal{D}_{\pm}$.
\end{lemma}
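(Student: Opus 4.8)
The plan is to follow the proof of Lemma \ref{admissble1} essentially verbatim, the only structural change being that the bubble component $\Delta$ now appears inside the expanded \emph{pair} family $\mathcal{Y}(1)\to\mathbb{A}^1$ rather than the expanded degeneration family $\mathcal{X}(1)\to\mathbb{A}^2$. Since in either setting $\Delta=\mathbb{P}_{\mathcal{D}}(\mathcal{O}_{\mathcal{D}}\oplus\mathcal{N})$ has the same local structure near its two distinguished sections $\mathcal{D}_{\pm}$, and the relevant $\mathbb{C}^*$-action on $\Delta$ (fiberwise scaling, trivial on $\mathcal{D}$) is the same, the argument transports with no essential modification; one simply records the $\mathbb{C}^*$-action as living on $\mathcal{Y}(1)\to\mathbb{A}^1$ in the conclusion.

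First I would reduce to a single distinguished divisor: since $\mathrm{coker}(\varphi|_{\eta_{0}})$ is not admissible, Remark \ref{adm-cri} (equivalently [\cite{Zhou1}, Proposition 3.23]) shows it fails to be normal to $\mathcal{D}_{-}$, or to $\mathcal{D}_{+}$, or to both; I treat the $\mathcal{D}_{-}$ case, the other being identical by symmetry and then iterated. As normality is a local property in the \'etale topology ([\cite{Zhou1}, Lemma 3.2]), I pass to an affine \'etale neighborhood $U=\mathrm{Spec}\,A=\mathrm{Spec}\,R[y,\mathbf{z}]$ of a point of $\mathcal{D}_{-}\times\eta_{0}$, chosen as in the proof of [\cite{Zhou1}, Lemma 5.16], so that $y=0$ is the local equation of $\mathcal{D}$ in $\Delta$, the vector $\mathbf{z}$ gives coordinates on $\mathcal{D}$, and $(\mathcal{D}_{-}\times\eta_{0})|_{U}$ is cut out by the ideal $(y,u)$ with $u$ a uniformizer of $R$. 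Because $\varphi:\mathcal{O}_{\Delta\times S}\to\mathcal{F}$ is an $S$-flat family of orbifold PT pairs, the map $(\mathrm{Im}\,\varphi)|_{s}\to\mathcal{F}|_{s}$ restricted to $U$ is injective for every $s\in S$, so $\mathrm{coker}\,\varphi|_{U}$ is flat over $S$ by [\cite{Mat}, (20.E)]; writing $M$ for the $A$-module representing it, this says $M$ is $R$-flat. Since $\dim\mathrm{coker}\,\varphi=0$, $M$ has finite length by [\cite{Eis}, Theorem 2.14 and Corollary 2.17] (or [\cite{Mat}, (12.B)]), hence there is a surjection $A^{\oplus l}\to M\to 0$ fitting into a short exact sequence $0\to K\to A^{\oplus l}\to M\to 0$, which globalizes to present $\mathrm{coker}\,\varphi$ as a flat family of quotients of a fixed sheaf on $\Delta\times S$.

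With this presentation in hand I would run the extension-and-flat-limit argument of [\cite{Zhou1}, Lemma 5.19]: using the properness of the moduli space $\mathrm{PT}_{\Delta/\mathbb{C}}$ (Section 5.1), one extends $\widetilde{\varphi}|_{\eta}$ — which differs from $\varphi|_{\eta}$ by a suitable $\mathbb{C}^*$-action on $\Delta$, and therefore on $\mathcal{Y}(1)\to\mathbb{A}^1$ — across $\eta_{0}$, and takes the flat limit of $\mathrm{coker}(\widetilde{\varphi}|_{\eta_{0}})$; the resulting limit cokernel is then normal to $\mathcal{D}_{-}\times\eta_{0}$, and repeating for $\mathcal{D}_{+}$ if needed produces the desired $\widetilde{\varphi}$ with $\mathrm{coker}(\widetilde{\varphi}|_{\eta_{0}})$ normal to both $\mathcal{D}_{\pm}$. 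The main point requiring care is to verify that the ingredients of [\cite{Zhou1}, Lemma 5.19], stated there for quotient sheaves, survive the passage to PT pairs on $\Delta\times S$ — namely that each modification preserves the PT-pair condition ($\mathcal{F}$ pure of dimension one with $\mathrm{coker}$ of dimension $\leq 0$) and that one-dimensional purity is not lost in the flat limit. This follows as in the proof of Proposition \ref{separatedness} and of Lemma \ref{admissble1}, since pulling back by the contraction map and along the relevant blow-ups preserves purity of the one-dimensional sheaf.
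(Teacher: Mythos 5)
Your proposal is correct and matches the paper's intent exactly: the paper gives no separate proof of this lemma, stating only that it follows ``similarly'' from the degeneration-case Lemma \ref{admissble1}, and your argument is precisely that transcription, with the only genuine adjustment being that the $\mathbb{C}^*$-action now lives on $\mathcal{Y}(1)\to\mathbb{A}^1$ rather than $\mathcal{X}(1)\to\mathbb{A}^2$. Your closing remark about checking that purity and the PT-pair condition survive the flat-limit step is the right point of care and is consistent with how the paper handles the analogous issue in Proposition \ref{separatedness}.
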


%\begin{lemma}\label{admissible7}
%	Let  the family $\Delta\times S\to S$ be a component of the family of expanded degenerations $\mathtt{Y}_{S}$ over $(S, \xi)$ where  $\xi: S\to\mathbb{A}^1$ maps $S$ constantly to $0\in\mathbb{A}^1$. Suppose $\varphi$ is a $S$-flat family of  orbifold PT pairs on $\Delta\times S$ such that $\varphi|_{\eta}$ is stable and  $\mathrm{coker}(\varphi|_{\eta_{0}})$ is admissible but $\varphi|_{\eta_{0}}$ is $\mathbb{C}^*$-equivariant. Then there exists another $S$-flat family of orbifold PT pairs $\widetilde{\varphi}$ such that $\widetilde{\varphi}|_{\eta}$ is related to $\varphi|_{\eta}$ through the $\mathbb{C}^*$-action on $\mathcal{Y}(1)\to\mathbb{A}^1$ and $\widetilde{\mathcal{F}}|_{\eta_{0}}$ is normal to $\mathcal{D}_{\pm}$.
%\end{lemma}

\begin{lemma}\label{admissible8}
	Let  the family $\Delta\times S\to S$ be a component of the family of expanded degenerations $\mathtt{Y}_{S}$ over $(S, \xi)$ where  $\xi: S\to\mathbb{A}^1$ maps $S$ constantly to $0\in\mathbb{A}^1$. Suppose $\varphi$ is a $S$-flat family of  orbifold PT pairs on $\Delta\times S$ such that $\varphi|_{\eta}$ is stable. Then there exists another $S$-flat family of orbifold PT pairs $\widetilde{\varphi}$ such that $\widetilde{\varphi}|_{\eta}$ is related to $\varphi|_{\eta}$ through the $\mathbb{C}^*$-action on $\mathcal{Y}(1)\to\mathbb{A}^1$ and $\mathrm{Aut}(\widetilde{\varphi}|_{\eta_{0}})$ is finite.
\end{lemma}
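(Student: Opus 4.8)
The plan is to transcribe the proof of Lemma \ref{admissible3} to the relative setting, with Lemma \ref{admissible6} in the role of Lemma \ref{admissble1}. First I would apply Lemma \ref{admissible6}: since $\varphi|_{\eta}$ is stable, it produces an $S$-flat family $\widehat{\varphi}$ of orbifold PT pairs on $\Delta\times S$ with $\widehat{\varphi}|_{\eta}$ related to $\varphi|_{\eta}$ through the $\mathbb{C}^*$-action on $\mathcal{Y}(1)\to\mathbb{A}^1$ and $\mathrm{coker}(\widehat{\varphi}|_{\eta_{0}})$ normal to both $\mathcal{D}_{-}$ and $\mathcal{D}_{+}$; equivalently, by Remark \ref{adm-cri}, $\mathrm{Supp}(\mathrm{coker}(\widehat{\varphi}|_{\eta_{0}}))$ is disjoint from $\mathcal{D}_{\pm}$. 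It then remains only to arrange, after a possible further modification, that the autoequivalence group of the central fibre is finite.

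Next I would split into two cases according to whether $\mathrm{Supp}(\mathrm{coker}(\widehat{\varphi}|_{\eta_{0}}))$ is nonempty. If it is, then it is a nonempty zero-dimensional subscheme of $\Delta\setminus(\mathcal{D}_{-}\cup\mathcal{D}_{+})$; since the $\mathbb{C}^*$-action acts freely there, no nontrivial autoequivalence can fix this support, so $\mathrm{Aut}(\widehat{\varphi}|_{\eta_{0}})$ is finite and we set $\widetilde{\varphi}:=\widehat{\varphi}$. If it is empty, then the zero-dimensional sheaf $\mathrm{coker}(\widehat{\varphi}|_{\eta_{0}})$ vanishes, so $\widehat{\varphi}|_{\eta_{0}}$ is a surjection. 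Assuming $\mathrm{Aut}(\widehat{\varphi}|_{\eta_{0}})$ is infinite, $\widehat{\varphi}|_{\eta_{0}}$ is $\mathbb{C}^*$-equivariant; combining this with the flatness of $\mathrm{coker}\,\widehat{\varphi}$ over $S$ $\acute{e}$tale locally (as in the proof of Lemma \ref{admissible6}), the admissibility of $\mathrm{coker}(\widehat{\varphi}|_{\eta})$ coming from the $\mathbb{C}^*$-action, and Lemma \ref{global-hilbpoly}, I would conclude that $\mathrm{coker}(\widehat{\varphi}|_{\eta})$ also vanishes $\acute{e}$tale locally, so $\widehat{\varphi}$ is in fact an $S$-flat quotient sheaf on $\Delta\times S$. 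One then finishes as in the proof of [\cite{Zhou1}, Lemma 5.20], using the properness of $\mathrm{PT}_{\Delta/\mathbb{C}}$ (or of $\mathrm{PT}_{\Delta/\mathbb{C}}^{P}$): a final $\mathbb{C}^*$-modification of $\widehat{\varphi}$ followed by taking the flat limit of the cokernel over $\eta_{0}$ yields $\widetilde{\varphi}$ whose central fibre has a point of $\mathrm{coker}$ lying in the locus where $\mathbb{C}^*$ acts freely, hence $\mathrm{Aut}(\widetilde{\varphi}|_{\eta_{0}})$ is finite.

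The part I expect to need the most care is the bookkeeping in the second case: checking that the $\mathbb{C}^*$-modification used to produce $\widetilde{\varphi}$ does not destroy the normality of $\mathrm{coker}(\widetilde{\varphi}|_{\eta_{0}})$ to $\mathcal{D}_{\pm}$ already secured by Lemma \ref{admissible6}, that purity of the underlying one-dimensional sheaf is preserved under the blow-up/contraction and flat-limit operations (pulling back along the contraction $\Delta\to\mathcal{D}$ keeps purity, as in the proof of Proposition \ref{separatedness}), and that the whole family lands in $\mathrm{PT}_{\mathcal{Y}(k)/\mathbb{A}^{k}}^{\mathrm{st}}$. These are the same issues handled in [\cite{Zhou1}, Lemma 5.20] and [\cite{LW}, Section 5.2], and no essentially new input beyond Lemma \ref{admissible6} should be needed to carry them over to the stacky relative case.
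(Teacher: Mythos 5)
Your proposal is correct and follows exactly the route the paper intends: Lemma \ref{admissible8} is stated without its own proof as the relative-case mirror of Lemma \ref{admissible3}, and your argument reproduces that proof step for step, substituting Lemma \ref{admissible6} for Lemma \ref{admissble1}, the $\mathbb{C}^*$-action on $\mathcal{Y}(1)\to\mathbb{A}^1$ for the one on $\mathcal{X}(1)\to\mathbb{A}^2$, and the same two-case split on whether $\mathrm{Supp}(\mathrm{coker}(\widehat{\varphi}|_{\eta_{0}}))$ is empty, finishing via [\cite{Zhou1}, Lemma 5.20] and the properness of $\mathrm{PT}_{\Delta/\mathbb{C}}$. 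The only addition is your explicit justification (freeness of the $\mathbb{C}^*$-action off $\mathcal{D}_{\pm}$) for finiteness in the nonempty-support case, which the paper leaves implicit.
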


\begin{lemma}\label{admissible9}
	Let $\varphi:\mathcal{O}_{\mathtt{Y}_{S}}\to\mathcal{F}$ be an $S$-flat family of orbifold PT pairs on $\mathtt{Y}_{S}=\xi^*\mathcal{Y}(k)$ such that $\mathtt{Y}_{S}|_{\eta}$ is smooth over $\eta$ and $\varphi|_{\eta}$ is admissible
	where $\xi:S\to\mathbb{A}^k$. Assume that $\mathrm{coker}(\varphi|_{\eta_{0}})$ is admissible.  Then there are a finite base change $\widetilde{S}\to S$ and $(\widetilde{\xi}, \widetilde{\varphi})$, where $\widetilde{\xi}: \widetilde{S}\to\mathbb{A}^{\widetilde{k}}$ and $\widetilde{\varphi}$ is an $\widetilde{S}$-flat family of orbifold PT pairs on $\mathtt{Y}_{\widetilde{S}}=\widetilde{\xi}^*\mathcal{Y}(\widetilde{k})$, such that
	$(\widetilde{\xi},\widetilde{\varphi})\cong(\xi, \varphi)\times_{\eta}\widetilde{\eta}$, the $\widetilde{S}$-flat family $\widetilde{\varphi}$ is admissible and $[\mathrm{Aut}(\widetilde{\varphi}|_{\widetilde{\eta}_{0}}): \mathrm{Aut}(\varphi|_{\eta_{0}})]$ is finite, where $\widetilde{\eta}$ and $\widetilde{\eta_{0}}$ are the generic and closed points of $\widetilde{S}$.
\end{lemma}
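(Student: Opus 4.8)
The plan is to run, in the relative setting, the argument used for Lemma \ref{admissible4}, i.e. the error-reduction procedure of \cite{Zhou1}, Section 5.4, Step 2 (which generalizes \cite{LW}, Section 5.1). First I would reduce to the case $\mathtt{Y}_{S}|_{\eta_{0}}=\mathcal{Y}[k]$, with interior (nodal) singular divisors $\mathcal{D}_{0},\cdots,\mathcal{D}_{k-1}$ and distinguished divisor $\mathcal{D}_{k}=\mathcal{D}[k]$. Since $\mathtt{Y}_{S}|_{\eta}$ is smooth over $\eta$, the convention preceding Definition \ref{fam-PT} shows $\varphi|_{\eta}$ is admissible and stable. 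If $\mathcal{F}|_{\eta_{0}}$ is already admissible we are done; otherwise $\mathcal{F}|_{\eta_{0}}$ fails to be normal to some $\mathcal{D}_{l}$, equivalently $\mathrm{Err}_{l}(\mathcal{F}|_{\eta_{0}})\neq 0$ for some $0\leq l\leq k$ by the numerical criterion recalled in Section 2.3. The point of the hypothesis that $\mathrm{coker}(\varphi|_{\eta_{0}})$ is admissible is that, by Remark \ref{adm-cri}, the support of $\mathrm{coker}(\varphi|_{\eta_{0}})$ is disjoint from every $\mathcal{D}_{i}$; hence on an affine \'etale neighbourhood of any point of a singular divisor of $\mathcal{Y}[k]$ the morphism $\varphi$ is surjective, i.e. $\varphi$ is locally a quotient there, which is exactly the local shape in which the error-reduction machinery is phrased.

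Next I would induct on the total error $\mathrm{Err}(\mathcal{F}|_{\eta_{0}})=\sum_{l}\mathrm{Err}_{l}(\mathcal{F}|_{\eta_{0}})\in\mathbb{Z}_{\geq 0}$. Given an index $l$ with $\mathrm{Err}_{l}\neq 0$, I would perform the standard bubbling modification of the family along $\mathcal{D}_{l}$ — the successive-blowup construction of $\mathcal{Y}(k+1)$ from $\mathcal{Y}(k)$ of Section 2.2, accompanied by a totally finite base change $\widetilde{S}\to S$ as in Lemma \ref{admissible5} — then extend the generic PT pair across the closed point using the properness of $\mathrm{PT}_{\mathcal{Y}(k+1)/\mathbb{A}^{k+1}}$ (respectively $\mathrm{PT}_{\mathcal{Y}(k+1)/\mathbb{A}^{k+1}}^{P}$ when the Hilbert homomorphism is to be preserved) in place of the Quot-scheme used in \cite{LW}, and take the flat limit of the cokernel. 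Because the modification is by blowups it preserves purity of the one-dimensional sheaf $\widetilde{\mathcal{F}}$ and, by the argument of Lemma \ref{admissible5} together with the disjointness above, the admissibility of the cokernel on the closed fibre; for the distinguished divisor $\mathcal{D}_{k}$, whose error term has the special form $\mathrm{Err}_{k}$, one uses precisely the blowup of Lemma \ref{admissible5}. As in \cite{Zhou1}, Section 5.4, each such step strictly decreases $\mathrm{Err}$, so after finitely many modifications we reach $\mathrm{Err}(\widetilde{\mathcal{F}}|_{\widetilde{\eta}_{0}})=0$, i.e. $\widetilde{\varphi}$ is admissible; the isomorphism $(\widetilde{\xi},\widetilde{\varphi})\cong(\xi,\varphi)\times_{\eta}\widetilde{\eta}$ holds since every modification is supported over $\eta_{0}$. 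Finally, to bound $[\mathrm{Aut}(\widetilde{\varphi}|_{\widetilde{\eta}_{0}}):\mathrm{Aut}(\varphi|_{\eta_{0}})]$ I would invoke Remark \ref{finite-aut}: the blowup modifications do not produce new infinite families of autoequivalences, so the automorphism group on the closed fibre is enlarged only by the finitely many stabilised factors among the inserted bubble $(\mathbb{C}^{*})^{\widetilde{k}-k}$, and the index is finite.

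The main obstacle I anticipate is the bookkeeping in the inductive step, transcribed from the quotient-sheaf case: verifying that each blowup modification strictly lowers the total error while simultaneously preserving that $\widetilde{\mathcal{F}}$ is pure of dimension one, preserving admissibility of $\mathrm{coker}(\widetilde{\varphi})$ on the closed fibre, and keeping the Hilbert homomorphism $P$ fixed so that the properness of $\mathrm{PT}_{\mathcal{Y}(k+1)/\mathbb{A}^{k+1}}^{P}$ applies — and, relatedly, treating the distinguished divisor via the blowup of Lemma \ref{admissible5} rather than an interior bubbling, while still controlling the index of automorphism groups. None of this requires a new idea beyond the relative analogues of \cite{Zhou1}, Section 5.4 and \cite{LW}, Section 5.1; the work is in carrying those verifications through.
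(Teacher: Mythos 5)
Your proposal follows essentially the same route the paper takes: the paper proves the degeneration-case analogue (Lemma \ref{admissible4}) by exactly this argument — use admissibility of the cokernel to see that $\varphi$ is \'etale-locally a quotient near the singular divisors, then run the error-reduction of [\cite{Zhou1}, Section 5.4, Step 2] with the properness of the PT moduli space replacing the Quot-scheme — and states Lemma \ref{admissible9} as the "similar" relative version, with the distinguished divisor handled via Lemma \ref{admissible5} and the automorphism index controlled as in Remark \ref{finite-aut}. Your write-up is a faithful (indeed somewhat more detailed) transcription of that argument to the relative setting.
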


Now,  we have
\begin{proposition}\label{properness}
	The stack	$\mathfrak{PT}_{\mathfrak{X}/\mathfrak{C}}^{P}$  $($resp. $\mathfrak{PT}_{\mathfrak{Y}/\mathfrak{A}}^{P}$$)$ is proper over $\mathbb{A}^1$  $($resp. over $\mathbb{C}$$)$.
\end{proposition}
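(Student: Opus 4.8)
The plan is to verify the valuative criterion of properness for the Deligne-Mumford stacks $\mathfrak{PT}_{\mathfrak{X}/\mathfrak{C}}^{P}$ and $\mathfrak{PT}_{\mathfrak{Y}/\mathfrak{A}}^{P}$, using the setup of Section 5.4.2: $S=\mathrm{Spec}\,R$ with $R$ a complete DVR, generic point $\eta=\mathrm{Spec}\,K$ and closed point $\eta_{0}=\mathrm{Spec}\,k$. By Propositions \ref{boundedness} and \ref{separatedness} these stacks are already separated and of finite type over their bases, so by [\cite{LMB}, Definition 7.11 and Theorem 7.3] it suffices to show: given an object $(\overline{\xi}_{\eta},\overline{\varphi}_{\eta})\in\mathfrak{PT}_{\mathfrak{X}/\mathfrak{C}}^{P}(\eta)$, there is a finite base change $\widetilde{S}\to S$ after which the object extends over the closed point. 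First I would fix a representative $\xi_{\eta}:\eta\to\mathbb{A}^{k+1}$ together with a stable orbifold PT pair $\varphi_{\eta}:\mathcal{O}_{\mathtt{X}_{\eta}}\to\mathcal{F}_{\eta}$ on $\mathtt{X}_{\eta}$. After possibly reindexing we may assume $\mathbf{m}(\xi_{\eta})\neq 0$, so $\mathtt{X}_{\eta}$ is a smooth fiber isomorphic to $\mathcal{X}_{c}$; then one takes an initial (trivial) extension $\xi:S\to\mathbb{A}^{k+1}$ of $\xi_{\eta}$ and, using the projectivity of $\mathrm{PT}_{\mathcal{X}(k)/\mathbb{A}^{k+1}}^{(P')}$ for the modified Hilbert polynomial $P'$ attached to $P$ (Theorem \ref{fine-coarse}, Remark \ref{absolute}), extends $\varphi_{\eta}$ to an $S$-flat family of orbifold PT pairs $\varphi:\mathcal{O}_{\mathtt{X}_{S}}\to\mathcal{F}$ — possibly after a finite base change to ensure the limit lands in a smooth-fiber locus of the appropriate expanded degeneration.

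The core of the argument is then a sequence of modifications of $\varphi$ carried out exactly as in [\cite{LW}, Section 5.4] and [\cite{Zhou1}, Section 5.4], each step using the lemmas proved above. The order is: (1) apply Lemma \ref{admissble1} componentwise to the bubble components $\Delta$ of the closed fiber to make $\mathrm{coker}(\varphi|_{\eta_{0}})$ normal to the divisors $\mathcal{D}_{\pm}$; (2) apply Lemma \ref{admissible3} to guarantee that, along each such component, $\mathrm{Aut}$ of the restricted pair is finite — geometrically, that the support of the cokernel meets each bubble, or the bubble carries a nontrivial part of $\mathcal{F}$, so that no infinite autoequivalence survives; iterating steps (1)--(2) over all bubbles and using the discrete and $(\mathbb{C}^*)^k$-symmetries of Proposition \ref{property1}, one arranges $\mathrm{coker}(\widetilde{\varphi}|_{\eta_{0}})$ admissible with $\mathrm{Supp}(\mathrm{coker})$ away from all singular divisors; (3) apply Lemma \ref{admissible4} to reduce all nonzero errors $\mathrm{Err}_{l}(\mathcal{F}|_{\eta_{0}})$, hence $\mathrm{Err}(\mathcal{F}|_{\eta_{0}})$, to zero — i.e.\ make $\widetilde{\mathcal{F}}|_{\eta_{0}}$ admissible — again via the properness of $\mathrm{PT}_{\mathcal{X}(k+1)/\mathbb{A}^{k+2}}^{P}$, with the admissibility of the cokernel preserved under the finite base change. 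The outcome is an $\widetilde{S}$-flat family $\widetilde{\varphi}$ with $\widetilde{\varphi}|_{\widetilde{\eta}_{0}}$ admissible and $\mathrm{Aut}(\widetilde{\varphi}|_{\widetilde{\eta}_{0}})$ finite, i.e.\ a stable orbifold PT pair, so $\widetilde{\varphi}\in\mathfrak{PT}_{\mathfrak{X}/\mathfrak{C}}^{P}(\widetilde{S})$ extends $\varphi_{\eta}$; that this extension is unique up to the equivalence in $\mathfrak{PT}_{\mathfrak{X}/\mathfrak{C}}^{P}$ is exactly separatedness (Proposition \ref{separatedness}). For $\mathfrak{PT}_{\mathfrak{Y}/\mathfrak{A}}^{P}$ the argument is identical, replacing Lemmas \ref{admissble1}, \ref{admissible3}, \ref{admissible4} by their relative counterparts Lemmas \ref{admissible6}, \ref{admissible8}, \ref{admissible9}, together with Lemma \ref{admissible5} and Remark \ref{finite-aut} to handle the distinguished divisor $\mathcal{D}[k]$ and to ensure the blowup modifications there introduce no new infinite autoequivalences.

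The step I expect to be the main obstacle is bookkeeping the interaction between the three modification procedures: making $\mathrm{coker}\,\widetilde{\varphi}$ admissible (Lemmas \ref{admissble1}, \ref{admissible6}), securing finiteness of $\mathrm{Aut}$ (Lemmas \ref{admissible3}, \ref{admissible8}), and then making $\widetilde{\mathcal{F}}$ admissible (Lemmas \ref{admissible4}, \ref{admissible9}, \ref{admissible5}) — one must check that each later modification does not destroy the properties arranged by the earlier ones, in particular that reducing the errors of $\mathcal{F}|_{\eta_{0}}$ keeps $\mathrm{Supp}(\mathrm{coker})$ off the singular divisors (this is where the hypothesis ``$\varphi$ is also a quotient étale-locally near the singular divisors'' in Lemma \ref{admissible4} is used) and that all base changes involved are finite, so that the process terminates. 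This termination/finiteness is precisely what is controlled in [\cite{LW}, Section 5.1--5.4] and [\cite{Zhou1}, Section 5.4]; the new content here is merely that each individual step has been re-established for one-dimensional sheaves on Deligne-Mumford stacks in the lemmas above, so the conclusion follows by assembling them in the stated order.
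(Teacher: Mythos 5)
Your overall strategy (valuative criterion plus iterated modification using the preceding lemmas) is the right one, but the proposal has a genuine gap at the very first reduction, and this propagates. You claim that ``after possibly reindexing we may assume $\mathbf{m}(\xi_{\eta})\neq 0$, so $\mathtt{X}_{\eta}$ is a smooth fiber isomorphic to $\mathcal{X}_{c}$''. Permuting coordinates cannot change whether $\mathbf{m}(\xi_{\eta})=t_{0}\cdots t_{k}$ vanishes in $K$, and the valuative criterion over $\mathbb{A}^1$ must also be checked for discrete valuation rings whose generic point maps to $0\in\mathbb{A}^1$; for those the generic fiber is an expanded degeneration $\mathcal{X}_{0}[l]$, not a smooth $\mathcal{X}_{c}$. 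The paper makes the opposite reduction: using the embedding of $\mathbb{A}^{k+1}$ into a larger $\mathbb{A}^{\widetilde{k}+1}$ and the equivalence relation $R_{\sim,\mathbb{A}^{\widetilde{k}+1}}$, it arranges $\xi_{\eta}$ to be the constant map to the origin, so that the trivial extension gives $\mathtt{X}_{S}\cong\mathcal{X}_{0}[k]\times S$ with its decomposition into $\mathcal{Y}_{-}\times S$, the $\Delta_{i}\times S$, and $\mathcal{Y}_{+}\times S$. This matters because Lemmas \ref{admissble1} and \ref{admissible3}, which you invoke in your steps (1)--(2), have as hypothesis that $\Delta\times S\to S$ is a component of $\mathtt{X}_{S}$ over a map $\xi$ constant to $0$; they are not applicable to a family whose generic fiber is smooth, so your reduction and your steps (1)--(2) are mutually inconsistent. (Lemma \ref{admissible4}, whose hypothesis does require $\mathtt{X}_{S}|_{\eta}$ smooth, enters the paper's argument only through its proof technique, not as the main engine.)

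The second missing ingredient is the gluing. The paper does not modify one global family: after the initial extension supplied by properness of $\mathrm{PT}_{\mathcal{X}_{0}[k]/\mathbb{C}}$, it extends the pair separately on each component --- on $(\mathcal{Y}_{\pm},\mathcal{D}_{\pm})$ and on each $(\Delta_{i},\mathcal{D}_{i-1}\cup\mathcal{D}_{i})$, each treated as a relative problem via Lemma \ref{admissible5}, Remark \ref{finite-aut} and Lemma \ref{admissible9}, possibly replacing each component by an expanded pair $\mathcal{Y}_{\pm}(\widetilde{k}_{\pm})$ after a finite base change --- and then takes the fiber product $\widetilde{S}$ of all these base changes and glues $\widetilde{\varphi}_{-},\widetilde{\varphi}_{1},\dots,\widetilde{\varphi}_{k},\widetilde{\varphi}_{+}$ along the common divisors, using admissibility (which pins down the restrictions to the divisors as points of the Hilbert stack) and the separatedness and uniqueness of flat limits as in [\cite{LW}, Section 5.2]. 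Without this componentwise-then-glue architecture you never actually produce a single $\widetilde{S}$-flat family of stable orbifold PT pairs on a new expanded degeneration; the interaction problem you flag at the end is resolved precisely by this decomposition, not by a single global iteration of the three modification procedures.
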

\begin{proof}
As in the discussion before   [\cite{Zhou1}, Section 5.4, Step 1], since one can embed $\mathbb{A}^{k+1}$ into a large base $\mathbb{A}^{\widetilde{k}+1}$ ($\widetilde{k}>k$) and use the smooth equivalence relation $R_{\sim,\mathbb{A}^{\widetilde{k}+1}}$ for choosing another representative of an object in 	$\mathfrak{PT}_{\mathfrak{X}/\mathfrak{C}}^{P}(\eta)$, we only need to consider $\xi_{\eta}$ in the following form 
\ben
\xi_{\eta}: \eta\to\mathbb{A}^{k+1},\;\;\;\mathbb{C}[t_{0},\cdots,t_{k}]\to K,\;\;\;(t_{0},\cdots,t_{k})\to(0,\cdots,0).
\een	
Then we have a trivial extension $\xi: S\to\mathbb{A}^{k+1}$ mapping $S$ constantly to $0\in\mathbb{A}^{k+1}$ which implies that $\mathtt{X}_{S}\cong \mathcal{X}_{0}[k]\times S=(\mathcal{Y}_{-}\times S)\cup_{\mathcal{D}\times S}(\Delta_{1}\times S)\cup_{\mathcal{D}\times S}\cdots\cup_{\mathcal{D}\times S}(\Delta_{k}\times S)\cup_{\mathcal{D}\times S}(\mathcal{Y}_{+}\times S)$.
By the properness of moduli space $\mathrm{PT}_{\mathcal{X}_{0}[k]/\mathbb{C}}$, the stable orbifold PT pair $\varphi_{\eta}$ extends to an $S$-flat family of orbifold PT pairs $\varphi$ on $\mathtt{X}_{S}$. If $\varphi|_{\eta_{0}}$ is stable, we are done. Otherwise, by using Lemma \ref{admissble1} and Lemma \ref{admissible3} (if $\varphi$ restricted on $\Delta_{i}\times  S$ is an $S$-flat family of orbifold PT pairs which is not a quotient) or using [\cite{Zhou1}, Lemma 5.20] (if $\varphi$ restricted on $\Delta_{i}\times  S$ is a quotient) for each $\Delta_{i}\times S$ together with applying the smooth equivalence relation $R_{\sim,\mathrm{PT}_{\mathcal{X}(k)/\mathbb{A}^{k+1}}}$, one can assume that for $1\leq i\leq k$,  the restriction of $\mathrm{coker}(\varphi|_{\eta_{0}})$ on  every smooth pair $(\Delta_{i}, \mathcal{D}_{i-1}\cup\mathcal{D}_{i})$  is admissible or zero and $\mathrm{Aut}(\varphi|_{\eta_{0}})$ is finite.

Next, if $\mathrm{coker}(\varphi|_{\eta_{0}})$ is also admissible on 
two smooth pairs $(\mathcal{Y}_{-}, \mathcal{D}_{-})$ or $(\mathcal{Y}_{+}, \mathcal{D}_{+})$, by Lemma \ref{admissible5}, Remark \ref{finite-aut}
and Lemma \ref{admissible9}, after a finite base change $\widetilde{S}_{\pm}\to S$, we can find an $\widetilde{S}_{\pm}$-flat family of stable orbifold PT pairs $\widetilde{\varphi}_{\pm}$
on $(\mathtt{Y}_{\pm})_{\widetilde{S}_{\pm}}=\widetilde{\xi}_{\pm}^*\mathcal{Y}_{\pm}(\widetilde{k}_{\pm})$ extending $\varphi|_{\mathcal{Y}_{\pm}\times\eta}\times_{\eta}\widetilde{\eta}_{\pm}$ where $\widetilde{\xi}_{\pm}: \widetilde{S}_{\pm}\to\mathbb{A}^{\widetilde{k}_{\pm}}$
and $\widetilde{\eta}_{\pm}$ is a generic point of $\widetilde{S}_{\pm}$. Otherwise,  we suppose that $\mathrm{coker}(\varphi|_{\eta_{0}})$ is not admissible on  $(\mathcal{Y}_{-}, \mathcal{D}_{-})$ and $(\mathcal{Y}_{+}, \mathcal{D}_{+})$. Viewing $\mathrm{coker}\,\varphi$ as a quotient sheaf on the local model of $\mathcal{Y}_{-}\times S$ as in Lemma \ref{admissble1}, one can obtain a modified  $S$-flat family $\varphi^\prime$ on $(\mathtt{Y}_{-})_{S}^\prime=\xi^{\prime*}\mathcal{Y}_{-}(k^\prime)$
as in [\cite{Zhou1}, Lemma 5.16] such that $(\xi^\prime,\varphi^\prime)|_{\eta}\cong(\xi,\varphi)|_{\mathcal{Y}_{-}\times\eta}$ and  $\mathrm{coker}(\varphi^\prime|_{\eta_{0}})$ is normal to the distinguished divisor $\mathcal{D}[k^\prime]\times\eta_{0}$ where $\xi^\prime: S\to\mathbb{A}^{k^\prime}$. As in Remark \ref{finite-aut}, the autoequivalence group is also finite.  With this result, furtherly applying the similar argument in the proof of [\cite{Zhou1}, Section 5.4, Step 2] (as in the proof of Lemma \ref{admissible4}) to deal with the admissibility of $\mathrm{coker}\,\varphi^\prime$ (reducing $\mathrm{Err}(\mathrm{coker}\,\varphi^\prime)$ to  zero), one can find a finite base change $\widehat{S}\to S$ and $(\widehat{\xi},\widehat{\varphi})$ such that $(\widehat{\xi},\widehat{\varphi})|_{\widehat{\eta}}\cong(\xi^\prime,\varphi^\prime)|_{\eta}\times_{\eta}\widehat{\eta}$, the sheaf $\mathrm{coker}(\widehat{\varphi}|_{\widehat{\eta}_{0}})$ is admissible and $[\mathrm{Aut}(\widehat{\varphi}|_{\widehat{\eta}_{0}}):\mathrm{Aut}(\varphi^\prime|_{\eta_{0}})]$ is finite where $\widehat{\xi}:\widehat{S}\to \mathbb{A}^{\widehat{k}}$,  the modified family $\widehat{\varphi}$ is an $\widehat{S}$-flat family of  orbifold PT pairs on $(\mathtt{Y}_{-})_{\widehat{S}}$, and $\widehat{\eta}$ and $\widehat{\eta}_{0}$ are the generic and closed points of $\widehat{S}$. Now, by Lemma \ref{admissible5}, Remark \ref{finite-aut}
and Lemma \ref{admissible9}, after a finite base change $\widetilde{S}_{-}\to\widehat{S}$ (or finally $\widetilde{S}_{-}\to S$), there exists an  $\widetilde{S}_{-}$-flat family of stable orbifold PT pairs $\widetilde{\varphi}_{-}$ on $(\mathtt{Y}_{-})_{\widetilde{S}_{-}}=\widetilde{\xi}_{-}^*\mathcal{Y}_{-}(\widetilde{k}_{-})$ extending $\widehat{\varphi}|_{\widehat{\eta}}\times_{\widehat{\eta}}\widetilde{\eta}_{-}\cong\varphi^\prime|_{\eta}\times_{\eta}\widetilde{\eta}_{-}\cong\varphi|_{\mathcal{Y}_{-}\times\eta}\times_{\eta}\widetilde{\eta}_{-}$ where $\widetilde{\xi}_{-}: \widetilde{S}_{-}\to\mathbb{A}^{\widetilde{k}_{-}}$ and $\widetilde{\eta}_{-}$ is a generic point of $\widetilde{S}_{-}$. Similarly, after a finite base change $\widetilde{S}_{+}\to S$, we have 
an  $\widetilde{S}_{+}$-flat family of stable orbifold PT pairs $\widetilde{\varphi}_{+}$
on $(\mathtt{Y}_{+})_{\widetilde{S}_{+}}=\widetilde{\xi}_{+}^*\mathcal{Y}_{+}(\widetilde{k}_{+})$ extending $\varphi|_{\mathcal{Y}_{+}\times\eta}\times_{\eta}\widetilde{\eta}_{+}$ where $\widetilde{\xi}_{+}: \widetilde{S}_{+}\to\mathbb{A}^{\widetilde{k}_{+}}$
and $\widetilde{\eta}_{+}$ is a generic point of $\widetilde{S}_{+}$.

 For each remaining smooth pair $(\Delta_{i}, \mathcal{D}_{i-1}\cup\mathcal{D}_{i})$  ($1\leq i\leq k$), again by Lemma \ref{admissible5}, Remark \ref{finite-aut} and Lemma \ref{admissible9}, after a finite base change $\widetilde{S}_{i}\to S$, there exists an $\widetilde{S}_{i}$-flat family of stable orbifold PT pairs $\widetilde{\varphi}_{i}$ on $(\mathtt{Y}_{i})_{\widetilde{S}_{i}}$ extending $\varphi|_{\Delta_{i}\times\eta}\times_{\eta}\widetilde{\eta}_{i}$ where $(\mathtt{Y}_{i})_{\widetilde{S}_{i}}$ is the family of expanded pairs corresponding to $(\Delta_{i}, \mathcal{D}_{i-1}\cup\mathcal{D}_{i})$ and $\widetilde{\eta}_{i}$ is a generic point of $\widetilde{S}_{i}$.

Denote by $\widetilde{S}$ the fiber product of $\widetilde{S}_{\pm}$ and $\widetilde{S}_{i}$ ($1\leq i\leq k$). For a common finite base change $\widetilde{S}\to S$, one can glue $\widetilde{\varphi}_{-}$, $\widetilde{\varphi}_{1},\cdots,\widetilde{\varphi}_{k}$, $\widetilde{\varphi}_{+}$ along their common divisors by the admissibility and separatedness of moduli spaces of stable pairs and moduli spaces of stable quotients for the unique flat limit as in [\cite{LW}, Section 5.2]. Then the obtained family $\widetilde{\varphi}$ is an  $\widetilde{S}$-flat family of stable orbifold PT pairs extending $\varphi|_{\eta}\times_{\eta}\widetilde{\eta}$, i.e., $\widetilde{\varphi}\in\mathfrak{PT}_{\mathfrak{X}/\mathfrak{C}}^{P}(\widetilde{S})$. This completes the proof of  properness of the stack	$\mathfrak{PT}_{\mathfrak{X}/\mathfrak{C}}^{P}$.

The properness of the stack $\mathfrak{PT}_{\mathfrak{Y}/\mathfrak{A}}^{P}$ can be proved in a similar way with Remark \ref{finite-aut} and Lemmas \ref{admissible5}, \ref{admissible6}, \ref{admissible8} and \ref{admissible9}.
\end{proof}

\section{Degeneration  formula}
In this section, we present degeneration formulas for the orbifold Pandharipande-Thomas theory in both cycle and numerical versions following [\cite{LW,MPT,Zhou1}]. 
\subsection{Decomposition of central fibers} 
We will first recall some results on the decomposition of central fibers in [\cite{Zhou1}, Section 7.1] which is a analogue of those in [\cite{LW}, Section 2.4, 2.5 and 5.7], and then give a description of decomposition of the central fiber  of  $\mathfrak{PT}_{\mathfrak{X}/\mathfrak{C}}^{P}$ over $\mathbb{A}^1$.

Let $\pi:\mathcal{X}\to\mathbb{A}^1$ be a locally simple degeneration. The (continuous) weight assignment on (a family of) expanded degenerations is defined as follows.
\begin{definition}(see [\cite{LW}, Definitions 2.14 and  2.15] and [\cite{Zhou1}, Definition 7.2])\label{weight-assignment}
Let $\Lambda:=\mathrm{Hom}(K^0(\mathcal{X}),\mathbb{Z})$. A weight assignment on $\mathcal{X}_{0}[k]$ is a function
\ben
\omega:  \{\Delta_{0},\cdots,\Delta_{k+1},\mathcal{D}_{0},\cdots,\mathcal{D}_{k}\}\to\Lambda
\een
such that $\omega(\Delta_{i})\neq0$ for every $0\leq i\leq k+1$ where $\Delta_{0}=\mathcal{Y}_{-}$ and $\Delta_{k+1}=\mathcal{Y}_{+}$. For any $0\leq a\leq b\leq k+1$, we define the weight of the subchain $\Delta_{[a,b]}:=\Delta_{a}\cup\cdots\cup\Delta_{b}$ to be 
\ben
\omega(\Delta_{[a,b]}):=\sum_{i=a}^{b}\omega(\Delta_{i})-\sum_{i=a}^{b-1}\omega(\mathcal{D}_{i}).
\een
The total weight of $\omega$ is $\omega(\mathcal{X}_{0}[k])$.
For any family of expanded degenerations $\pi:\mathtt{X}_{S}\to S$, a continuous weight assignment on $\mathcal{X}(k)$ is an assignment of a weight function on each fiber that is continuous over $S$. More precisely, if for any curve $C\subset S$ with $s_{0},s\in C$, the general fiber $\mathtt{X}_{S,s}\cong\mathcal{X}_{0}[m]$ specializes to the fiber $\mathtt{X}_{S,s_{0}}\cong\mathcal{X}_{0}[n]$ $(m\leq n)$
where $\Delta_{i}\subset\mathtt{X}_{S,s}$ specializes to $\Delta_{[a_{i},b_{i}]}\subset\mathtt{X}_{S,s_{0}}$ with $0\leq a_{i}\leq b_{i}\leq n+1$, then we have $\omega_{s}(\Delta_{i})=\omega_{s_{0}}(\Delta_{[a_{i},b_{i}]})$ for $0\leq i\leq m+1$.
\end{definition}

We recall some stacks with  weight assignments  in [\cite{Zhou1}, Section 7.1] (see also [\cite{LW}, Section 2.4 and 2.5]) as follows.
Assume $H_{i}:=\{(t_{0},\cdots,t_{k})\in\mathbb{A}^{k+1}|t_{i}=0\}$ is the hyperplane in $\mathbb{A}^{k+1}$, we define 
\ben
&&\mathfrak{X}_{0}^\dag:=\lim\limits_{\longrightarrow}\bigg[\coprod_{i=0}^k\mathcal{X}(k)|_{H_{i}}\bigg/R_{\sim,\coprod_{i=0}^{k} \mathcal{X}(k)|_{H_{i}}}\bigg],\\
&&\mathfrak{C}_{0}^\dag:=\lim\limits_{\longrightarrow}\bigg[\coprod_{i=0}^kH_{i}\bigg/R_{\sim,\coprod_{i=0}^k H_{i}}\bigg],
\een
where $R_{\sim,\coprod_{i=0}^k \mathcal{X}(k)|_{H_{i}}}$ and $R_{\sim,\coprod_{i=0}^k H_{i}}$ are the equivalence relations induced by restricting relations $R_{\sim,\mathcal{X}(k)}$ and $R_{\sim,\mathbb{A}^{k+1}}$ to $\coprod_{i=0}^k \mathcal{X}(k)|_{H_{i}}$ and $\coprod_{i=0}^k H_{i}$ respectively, and the inductive systems are compatible with the restrictions. Fix a $P\in\mathrm{Hom}(K^0(\mathcal{X}),\mathbb{Z})$, assume $\mathcal{X}(k)^P$  is the disjoint union of all copies of  $\mathcal{X}(k)$ which are indexed by all possible continuous weight assignments with the same total weight $P$. Let $(\mathbb{A}^{k+1})^P$ be defined similarly as  $\mathcal{X}(k)^P$ with the corresponding weight assignments induced from $\pi_{k}:\mathcal{X}(k)\to\mathbb{A}^{k+1}$. Denote by $R_{\sim,\mathcal{X}(k)}^P$ the equivalence relation generated by relations $R_{\sim,\mathcal{X}(k)}$ on every copy of $\mathcal{X}(k)$ and relations induced by identifying different copies of $\mathcal{X}(k)$ respecting weight assignments. One can define $R_{\sim,\mathbb{A}^{k+1}}^P$ similarly. Define the following stacks
\ben
\mathfrak{X}^P:=\lim\limits_{\longrightarrow}\bigg[\mathcal{X}(k)^P\bigg/R_{\sim,\mathcal{X}(k)}^P\bigg],\,\,\,\,\,\,\mathfrak{C}^P:=\lim\limits_{\longrightarrow}\bigg[(\mathbb{A}^{k+1})^P\bigg/R_{\sim,\mathbb{A}^{k+1}}^P\bigg].
\een
One can  define $\mathfrak{X}_{0}^{\dagger,P}$ and $\mathfrak{C}_{0}^{\dagger,P}$ in the same manner  with respect to $\mathfrak{X}_{0}^{\dagger}$ and $\mathfrak{C}_{0}^{\dagger}$.
Define 
\ben
\Lambda_{P}^{\mathrm{spl}}:=\{\varrho=(\varrho_{-},\varrho_{+},\varrho_{0})|\varrho_{\pm},\varrho_{0}\in\Lambda, \varrho_{-}+\varrho_{+}-\varrho_{0}=P\}.
\een
For $\varrho\in\Lambda_{P}^\mathrm{spl}$, define $\mathfrak{X}_{0}^{\dagger,\varrho}\subset\mathfrak{X}_{0}^{\dagger,P}$ to be the open and closed substacks parameterizing those families with Hilbert homomorphisms $\varrho_{-}$, $\varrho_{0}$, $\varrho_{+}$ as weight assignments on the fiber decomposition $\mathtt{Y}_{S,-}\cup_{\mathcal{D}_{S}}\mathtt{Y}_{S,+}$ from Proposition \ref{decom}. One can similarly define $\mathfrak{C}_{0}^{\dagger,\varrho}\subset\mathfrak{C}_{0}^{\dagger,P}$.
 Similarly, for a given smooth pair $(\mathcal{Y},\mathcal{D})$, one can define the (continuous) weight assignment on (a family of) expanded pairs as in Definition \ref{weight-assignment} with $\Lambda$ chosen to be $\mathrm{Hom}(K^0(\mathcal{Y}),\mathbb{Z})$. For a fixed $P\in\mathrm{Hom}(K^0(\mathcal{Y}),\mathbb{Z})$,  one can define $\mathfrak{Y}^P$ and $\mathfrak{A}^P$  as in the degeneration case.
Suppose $P, P^\prime\in\mathrm{Hom}(K^0(\mathcal{Y}),\mathbb{Z})$, define $\mathfrak{Y}^{P,P^\prime}\subset\mathfrak{Y}^P$ to be the substacks parameterizing those families with the weight assignment $P$ on each fiber and the associated $P^\prime$ on its  distinguished divisor. The substack $\mathfrak{A}^{P,P^\prime}\subset\mathfrak{A}^P$ can be defined similarly.
Due to the central fiber decomposition in Proposition \ref{decom}, we have 
\begin{proposition}(see [\cite{LW}, Proposition 2.20] and [\cite{Zhou1}, Proposition 7.4])\label{split-iso} 
	For each $\varrho\in\Lambda_{P}^\mathrm{spl}$, we have isomorphisms in  the following commutative diagram
\ben
\xymatrixcolsep{3pc}\xymatrix{
(\mathfrak{Y}_{-}^{\varrho_{-},\varrho_{0}}\times\mathfrak{A}^{\varrho_{+},\varrho_{0}})\cup_{\mathfrak{A}^{\varrho_{-},\varrho_{0}}\times\mathcal{D}\times\mathfrak{A}^{\varrho_{+},\varrho_{0}}}(\mathfrak{A}^{\varrho_{-},\varrho_{0}}\times(\mathfrak{Y}_{+}^0)^{\varrho_{+},\varrho_{0}})\ar[d] \ar[r]^-{\cong} &  \mathfrak{X}_{0}^{\dagger,\varrho} \ar[d] \\
	\mathfrak{A}^{\varrho_{-},\varrho_{0}}\times\mathfrak{A}^{\varrho_{+},\varrho_{0}}\ar[r]^-{\cong} & \mathfrak{C}_{0}^{\dagger,\varrho} &
}
\een

\end{proposition}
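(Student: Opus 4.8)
The plan is to deduce the proposition from the central fiber decomposition of the standard families in Proposition \ref{decom}, transported through the groupoid presentations and inductive limits that define the stacks in question; this is the orbifold analogue of [\cite{LW}, Proposition 2.20] and follows the argument of [\cite{Zhou1}, Proposition 7.4]. First I would work with the standard families at a fixed level $k$. For each index $0\le i\le k$, the locally simple degeneration $\pi_{k,i}=\mathrm{pr}_i\circ\pi_k:\mathcal{X}(k)\to\mathbb{A}^1$ has central fiber $\mathcal{X}(k)|_{H_i}=\mathcal{X}(k)_-^i\cup_{\mathcal{D}_i(k)}\mathcal{X}(k)_+^i$, and by Proposition \ref{decom} one has $\mathcal{X}(k)_-^i\cong\mathcal{Y}_-(i)\times\mathbb{A}^{k-i}$, $\mathcal{X}(k)_+^i\cong\mathbb{A}^i\times\mathcal{Y}_+(k-i)^\circ$ and $\mathcal{D}_i(k)\cong\mathbb{A}^i\times\mathcal{D}\times\mathbb{A}^{k-i}$, with the restrictions of $\pi_k$ realizing the standard families of expanded pairs $\mathcal{Y}_-(i)\to\mathbb{A}^i$ and $\mathcal{Y}_+(k-i)^\circ\to\mathbb{A}^{k-i}$, and the gluing along $\mathcal{D}_i(k)$ performed via the stated isomorphisms $\mathcal{D}_-(i)\times\mathbb{A}^{k-i}\cong\mathbb{A}^i\times\mathcal{D}\times\mathbb{A}^{k-i}\cong\mathbb{A}^i\times\mathcal{D}_+(k-i)^\circ$. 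Taking the disjoint union over $0\le i\le k$ and then over all $k$, this yields a morphism, before quotienting, from the pushout of $\mathcal{Y}_-(i)\times\mathbb{A}^{k-i}$ and $\mathbb{A}^i\times\mathcal{Y}_+(k-i)^\circ$ along $\mathbb{A}^i\times\mathcal{D}\times\mathbb{A}^{k-i}$ to $\coprod_{i=0}^k\mathcal{X}(k)|_{H_i}$, compatibly with the corresponding morphism of bases to $\coprod_{i=0}^k H_i$.

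Next I would descend this to the stack level. The equivalence relation $R_{\sim,\coprod_{i=0}^k\mathcal{X}(k)|_{H_i}}$, obtained by restricting $R_{\sim,\mathcal{X}(k)}$ (generated by the $(\mathbb{C}^*)^k$-action, the discrete $S_{k+1}$-symmetries $\widehat{\tau}_{I,I^\prime,\mathcal{X}}$, and the $\tau_I$-identifications), corresponds under the decomposition to the product of the analogous relations $R_{\sim,\mathcal{Y}_-(i)}$ and $R_{\sim,\mathcal{Y}_+(k-i)^\circ}$ on the two factors together with the gluing identification along $\mathcal{D}$; this is the content of the last line of Proposition \ref{decom}, that all the statements there are compatible with the $(\mathbb{C}^*)^k$-actions, combined with the disjointness $\mathcal{D}_i(k)\cap\mathcal{D}_j(k)=\emptyset$ for $i\ne j$, which keeps the pushout structure intact. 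Since the inductive limits on both sides are formed along the embeddings $\tau_I$, which by Propositions \ref{property1}, \ref{property2} and \ref{decom} are themselves compatible with the decomposition, the morphism induces an isomorphism of inductive limits, exhibiting $\mathfrak{X}_0^\dagger$ as the pushout of $\mathfrak{Y}_-\times\mathfrak{A}$ and $\mathfrak{A}\times\mathfrak{Y}_+^0$ along $\mathfrak{A}\times\mathcal{D}\times\mathfrak{A}$ over a decomposition of $\mathfrak{C}_0^\dagger$.

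Finally I would bring in the weight assignments to pass to $\mathfrak{X}_0^{\dagger,\varrho}$. Fixing a total weight $P\in\Lambda$, a continuous weight assignment on a fiber of $\mathfrak{X}_0^\dagger$ restricts, along the decomposition $\mathtt{Y}_{S,-}\cup_{\mathcal{D}_S}\mathtt{Y}_{S,+}$ of Proposition \ref{decom}, to weight assignments of relative total weights $\varrho_-$ and $\varrho_+$ on the two sides and a weight $\varrho_0$ on the shared divisor $\mathcal{D}$; the additivity of the Hilbert homomorphism under a gluing along $\mathcal{D}$, which subtracts the $\mathcal{D}$-contribution exactly once, forces $\varrho_-+\varrho_+-\varrho_0=P$, i.e. $\varrho\in\Lambda_P^{\mathrm{spl}}$. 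Restricting the isomorphism of the previous paragraph to the open and closed substack cut out by this fixed $\varrho$ — which on the pair side matches $\mathfrak{Y}_-^{\varrho_-,\varrho_0}$, $\mathfrak{A}^{\varrho_+,\varrho_0}$, $(\mathfrak{Y}_+^0)^{\varrho_+,\varrho_0}$ and $\mathfrak{A}^{\varrho_-,\varrho_0}$ by definition of those substacks — yields the asserted commutative square of isomorphisms. I expect the main obstacle to be the verification in the second paragraph: checking that the smooth equivalence relation on the central fiber, with its combination of torus action, symmetric-group action and weight-respecting identifications, matches the product relation under Proposition \ref{decom} precisely, including full compatibility along the gluing locus $\mathcal{D}$; the remainder is a faithful transcription of [\cite{LW}, Proposition 2.20] and [\cite{Zhou1}, Proposition 7.4] to the orbifold PT setting.
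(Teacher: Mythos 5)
Your proposal is correct and follows the same route as the argument this paper is invoking: the paper states Proposition \ref{split-iso} by citation to [\cite{LW}, Proposition 2.20] and [\cite{Zhou1}, Proposition 7.4] without reproving it, and the cited proof is exactly your three steps — apply the central fiber decomposition of Proposition \ref{decom} at each finite level $(k,i)$, check that the restricted smooth equivalence relation factors as the product of the relations on the two expanded-pair sides compatibly with the gluing along $\mathcal{D}$ and with the $\tau_I$-embeddings so the isomorphism passes to the inductive limit, and then cut out the open and closed substack determined by the splitting $\varrho_-+\varrho_+-\varrho_0=P$ of the weight assignment. You also correctly identify the matching of equivalence relations on the glued central fibers as the only point requiring real verification.
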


It is shown as in [\cite{Li2}, Section 3.1] that
\begin{proposition}(see [\cite{LW}, Proposition 2.19] and [\cite{Zhou1}, Proposition 7.5])\label{linebd-split} There are  canonical line bundles on $\mathfrak{C}^P$ with sections $(L_{\varrho}, s_{\varrho})$ indexed by $\varrho\in\Lambda_{P}^{\mathrm{spl}}$ such that\\
$(i)$ we have
\ben
\bigotimes_{\varrho\in\Lambda_{P}^\mathrm{spl}}L_{\varrho}\cong\mathcal{O}_{\mathfrak{C}^P},\;\;\;\;\prod_{\varrho\in\Lambda_{P}^\mathrm{spl}}s_{\varrho}=\pi^*t,
\een
where $\pi: \mathfrak{C}^P\to\mathbb{A}^1=\mathrm{Spec}\,\mathbb{C}[t]$ is the canonical map;\\
$(ii)$ $\mathfrak{C}^{\dagger,\varrho}_{0}$ is the closed substack in $\mathfrak{C}^P$ defined by $(s_{\varrho}=0)$.
\end{proposition}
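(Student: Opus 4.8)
The plan is to follow the construction of Jun Li [\cite{Li2}, Section 3.1] and Li--Wu [\cite{LW}, Proposition 2.19], carried out on the $\acute{e}$tale charts of $\mathfrak{C}^P$ and then checked to be compatible with all the gluing data defining the inductive limit. Recall that $\mathfrak{C}^P$ is exhausted by the Artin stacks $[(\mathbb{A}^{k+1})^P/R^P_{\sim,\mathbb{A}^{k+1}}]$, each of which is $\acute{e}$tale-locally a disjoint union of copies of $\mathbb{A}^{k+1}$ indexed by the continuous weight assignments of total weight $P$, and that under $\pi:\mathfrak{C}^P\to\mathbb{A}^1=\mathrm{Spec}\,\mathbb{C}[t]$ the section $\pi^*t$ restricts on such a chart $\mathbb{A}^{k+1}$, with coordinates $(t_0,\dots,t_k)$, to $\mathbf{m}^*t=t_0t_1\cdots t_k$. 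So it suffices to produce, on each such chart and compatibly, a factorization of $\prod_i t_i$ into factors labelled by $\varrho\in\Lambda_{P}^{\mathrm{spl}}$ together with the corresponding trivial line bundles, and then to identify their zero loci with the charts of $\mathfrak{C}_{0}^{\dagger,\varrho}$.

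On a chart $\mathbb{A}^{k+1}$ carrying a fixed continuous weight assignment, let $w_0,\dots,w_{k+1}\in\Lambda$ be the weights of the components $\Delta_0=\mathcal{Y}_-,\Delta_1,\dots,\Delta_k,\Delta_{k+1}=\mathcal{Y}_+$ of the central fiber $\mathcal{X}_0[k]$ and $d_0,\dots,d_k\in\Lambda$ the weights of the divisors $\mathcal{D}_0,\dots,\mathcal{D}_k$. The coordinate $t_i$ is the smoothing parameter of the node $\mathcal{D}_i$, and its vanishing locus $H_i$ is the locus over which the fiber acquires that node, with central fiber decomposition separating the subchain $\Delta_{[0,i]}$ from $\Delta_{[i+1,k+1]}$ along $\mathcal{D}_i$ as in Proposition \ref{decom}. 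By continuity of the weight assignment and Definition \ref{weight-assignment}, the associated splitting datum $\varrho^{(i)}:=(\omega(\Delta_{[0,i]}),\,\omega(\Delta_{[i+1,k+1]}),\,d_i)$ lies in $\Lambda_{P}^{\mathrm{spl}}$ and is locally constant. For $\varrho\in\Lambda_{P}^{\mathrm{spl}}$ put $S_\varrho:=\{\,0\le i\le k\mid\varrho^{(i)}=\varrho\,\}$ and set, on this chart, $L_\varrho:=\mathcal{O}$ and $s_\varrho:=\prod_{i\in S_\varrho}t_i$ with the empty product equal to $1$. Then the partition $\{0,\dots,k\}=\coprod_\varrho S_\varrho$ gives $\prod_\varrho s_\varrho=\prod_{i=0}^k t_i=\pi^*t$ and $\bigotimes_\varrho L_\varrho=\mathcal{O}$ on the chart, which is $(i)$.

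For the gluing one checks that the chart-wise data is preserved by every relation generating $R^P_{\sim,\mathbb{A}^{k+1}}$ and by the transition maps of the inductive system: the $(\mathbb{C}^*)^k$-action scales each $t_i$ by a character and fixes the weight assignment, so each $t_i$ is a semi-invariant section of a natural linearized line bundle, the subsets $S_\varrho$ are unchanged, and the $L_\varrho$ descend as linearized line bundles with sections $s_\varrho$ (the characters multiplying to the trivial one, in agreement with $\bigotimes_\varrho L_\varrho\cong\mathcal{O}$); the discrete symmetries $\widehat{\tau}_{I,I'}$ permute the coordinates and the weight assignment simultaneously, hence relabel $S_\varrho$ and carry $s_\varrho$ to $s_\varrho$; the identifications of copies of $\mathbb{A}^{k+1}$ respecting weight assignments visibly preserve everything; and the embeddings $\tau_I:\mathbb{A}^{l+1}\hookrightarrow\mathbb{A}^{k+1}$ set $t_j=1$ for $j\notin I$, so those factors drop out of every $s_\varrho$ and the data restricts to the corresponding data on $\mathbb{A}^{l+1}$ with the restricted weight assignment. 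Hence $(L_\varrho,s_\varrho)$ is well defined on $\mathfrak{C}^P$ and still satisfies $(i)$. Finally, on a chart $V(s_\varrho)=\bigcup_{i\in S_\varrho}H_i$ is exactly the locus over which some node of the fiber carries splitting datum $\varrho$, with reduced structure since $\prod_{i\in S_\varrho}t_i$ is squarefree in $\mathbb{C}[t_0,\dots,t_k]$; comparing this with the construction of $\mathfrak{C}_{0}^{\dagger,\varrho}$ in Section 6.1 — the open and closed substack of $\mathfrak{C}_{0}^{\dagger,P}$, the weight-refined version of $\mathfrak{C}_{0}^{\dagger}=\varinjlim[\coprod_i H_i/R_{\sim}]$, cut out by fixing $(\varrho_-,\varrho_+,\varrho_0)$ on the fiber decomposition — yields $(ii)$.

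The only genuine work is the verification in the third paragraph that the chart-wise line bundles and sections are compatible with the (rather elaborate) gluing data of $\mathfrak{C}^P$ and with the transition maps of the inductive limit; this is also the step for which the weight refinement is indispensable, since over $\mathfrak{C}$ itself the splitting datum $\varrho^{(i)}$ of a node is not a locally constant invariant and the partition $\{0,\dots,k\}=\coprod_\varrho S_\varrho$ has no global meaning. Since all of this is formally identical to the variety case, the proposition follows from [\cite{Li2}, Section 3.1] and [\cite{LW}, Proposition 2.19] (see also [\cite{Zhou1}, Proposition 7.5]).
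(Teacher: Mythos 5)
The paper gives no proof of this proposition: it is stated with the preamble ``It is shown as in [\cite{Li2}, Section 3.1] that'' and cited directly from [\cite{LW}, Proposition 2.19] and [\cite{Zhou1}, Proposition 7.5], so your writeup is an expansion of exactly the construction the paper points to, and it is correct — the chartwise factorization $s_{\varrho}=\prod_{i\in S_{\varrho}}t_{i}$ via the splitting data $\varrho^{(i)}$ of the nodes, the descent through the $(\mathbb{C}^*)^k$-linearization, discrete symmetries and transition maps, and the identification of $V(s_{\varrho})$ with $\mathfrak{C}_{0}^{\dagger,\varrho}$ are all the standard argument. The only point stated a bit too quickly is that the identifications of different weight-assignment copies preserve $s_{\varrho}$ only up to the units $t_{i}$ at nodes that are smoothed on the overlap, which is a second source (besides the linearization) of the nontriviality of $L_{\varrho}$; your framework accommodates this, but ``visibly preserve everything'' undersells it.
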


Now let $\pi:\mathcal{X}\to\mathbb{A}^1$ be also a family of projective Deligne-Mumford stacks of relative dimension three
%with a coarse moduli scheme $X$ and a relative polarization $(\mathcal{E}_{\mathcal{X}},\mathcal{O}_{X}(1))$. Suppose 
and assume that the generalized Hilbert polynomial with respect to Hilbert homomorphism $P\in \mathrm{Hom}(K^0(\mathcal{X}),\mathbb{Z})$ is of degree one.
The stack $\mathfrak{PT}_{\mathfrak{X}/\mathfrak{C}}^P$ is defined in Section 5.3 parameterizing stable orbifold PT pairs with fixed Hilbert homomorphism $P$. As in [\cite{MPT}, Section 3.8], for any $\mathbb{A}^1$-scheme $S$, we impose the finite resolution assumption on $\mathfrak{PT}_{\mathfrak{X}/\mathfrak{C}}^P$, i.e., for any object $(\overline{\xi},\overline{\varphi})$ in 
$\mathfrak{PT}_{\mathfrak{X}/\mathfrak{C}}^P(S)$ represented by $(\xi,\varphi)$ with the $\mathbb{A}^1$-map $\xi:S_{\xi}=\coprod S_{i}\to\mathbb{A}^{k+1}$, the associated $S_{\xi}$-flat family of stable orbifold PT pairs $\varphi:\mathcal{O}_{\mathtt{X}_{S_{\xi}}}\to\mathcal{F}$ on $\mathtt{X}_{S_{\xi}}=\xi^*\mathcal{X}(k)$
satisfies that $\mathcal{F}$ has finite locally  free resolution on each fiber where $S_{\xi}:=\coprod S_{i}\to S$ is an $\acute{e}$tale covering of $S$.  Let $(\mathcal{Y},\mathcal{D})$ be a smooth pair with a $3$-dimensional projective Deligne-Mumford stack $\mathcal{Y}$, we also impose this finite resolution  assumption on $\mathfrak{PT}_{\mathfrak{Y}/\mathfrak{A}}^{P}$.  These assumptions are needed such that  orbifold Chern character of some universal sheaves  in Section 6.2 is  well-defined. As shown in Section 5.4, the stacks $\mathfrak{PT}_{\mathfrak{X}/\mathfrak{C}}^P$ and $\mathfrak{PT}_{\mathfrak{Y}/\mathfrak{A}}^{P}$ are proper Deligne-Mumford stacks of finite type. Due to [\cite{Zhou1}, Corollary 3.9] for admissible sheaves, we next take Hilbert homomorphisms defined in Section 5.3 as weight assignments. Then we have the following morphism
\ben
\pi_{P}: \mathfrak{PT}_{\mathfrak{X}/\mathfrak{C}}^P\to\mathfrak{C}^P
\een
which is the natural map to the  base with the associated weight assignments from Hilbert homomorphisms for stable orbifold PT pairs. For $\varrho\in\Lambda_{P}^{\mathrm{spl}}$, define
\ben
\mathfrak{PT}_{\mathfrak{X}_{0}^\dagger/\mathfrak{C}_{0}^\dagger}^\varrho:=\mathfrak{PT}_{\mathfrak{X}/\mathfrak{C}}^P\times_{\mathfrak{C}^P}\mathfrak{C}_{0}^{\dag,\varrho}
\een
which is the open and closed substack of the proper Deligne-Mumford stack $\mathfrak{PT}_{\mathfrak{X}_{0}^\dagger/\mathfrak{C}_{0}^\dagger}^P:=\mathfrak{PT}_{\mathfrak{X}/\mathfrak{C}}^P\times_{\mathfrak{C}^P}\mathfrak{C}_{0}^{\dag,P}$ and  parameterizes those families of stable orbifold PT pairs with Hilbert homomorphisms $\varrho_{-}$, $\varrho_{0}$, $\varrho_{+}$ as weight assignments on the fiber decomposition. Similarly, suppose $P, P^\prime\in\mathrm{Hom}(K^0(\mathcal{Y}),\mathbb{Z})$ with $P^\prime\preceq P$  (their associated generalized Hilbert polynomials satisfying the relation $\preceq$ defined at the end of [\cite{Zhou1}, Section 5.3]), define $\mathfrak{PT}_{\mathfrak{Y}/\mathfrak{A}}^{P,P^\prime}\subset\mathfrak{PT}_{\mathfrak{Y}/\mathfrak{A}}^{P}$ to be the substack parameterizing those families of stable orbifold PT pairs with Hilbert homomorphism $P$ on each fiber and $P^\prime$ on its corresponding distinguished divisor. Then $\mathfrak{PT}_{\mathfrak{Y}/\mathfrak{A}}^{P,P^\prime}$ is again a proper Deligne-Mumford stack of finite type. For $\varrho\in\Lambda_{P}^{\mathrm{spl}}$,
both $\mathfrak{PT}_{\mathfrak{Y}_{-}/\mathfrak{A}}^{\varrho_{-},\varrho_{0}}$ and $\mathfrak{PT}_{\mathfrak{Y}_{+}/\mathfrak{A}}^{\varrho_{+},\varrho_{0}}$ have 
 evaluation morphisms to Hilbert stack $\mathrm{Hilb}_{\mathcal{D}}^{\varrho_{0}}$ by admissibility.
Then we have the following diagram
\be\label{diagram}
\xymatrixcolsep{3pc}\xymatrix{
	\mathfrak{PT}_{\mathfrak{Y}_{-}/\mathfrak{A}}^{\varrho_{-},\varrho_{0}}\times_{\mathrm{Hilb}_{\mathcal{D}}^{\varrho_{0}}}\mathfrak{PT}_{\mathfrak{Y}_{+}/\mathfrak{A}}^{\varrho_{+},\varrho_{0}}\ar[d] \ar[r]^-{\Phi_{\varrho}} &  \mathfrak{PT}_{\mathfrak{X}_{0}^\dagger/\mathfrak{C}_{0}^\dagger}^\varrho \ar[d] \\
	\mathfrak{A}^{\varrho_{-},\varrho_{0}}\times\mathfrak{A}^{\varrho_{+},\varrho_{0}}\ar[r]^-{\cong} & \mathfrak{C}_{0}^{\dagger,\varrho} &
}
\ee

By Propositions \ref{split-iso} and \ref{linebd-split}, gluing by admissibility as in [\cite{Zhou1}, Propositions 7.6 and 7.7], we have

\begin{proposition}([\cite{LW}, Theorem 5.28])\label{split-data}
Let $(L_{\varrho}, s_{\varrho})$ be the line bundle and section on $\mathfrak{C}^P$ indexed by $\varrho\in\Lambda_{P}^\mathrm{spl}$. Then \\
$(i)$ we have $\otimes_{\varrho\in\Lambda_{P}^{\mathrm{spl}}}\,\pi_{P}^*L_{\varrho}\cong\mathcal{O}_{\mathfrak{PT}^P_{\mathfrak{X}/\mathfrak{C}}}$ and $\prod_{\varrho\in\Lambda_{P}^{\mathrm{spl}}}\,\pi_{P}^*s_{\varrho}=\pi_{P}^*\pi^*t$, where $\pi: \mathfrak{C}^P\to\mathbb{A}^1=\mathrm{Spec}\,\mathbb{C}[t]$ is the canonical map;\\
$(ii)$ the stack $\mathfrak{PT}_{\mathfrak{X}_{0}^\dag/\mathfrak{C}_{0}^\dag}^{\varrho}$ is the closed substack in $\mathfrak{PT}^P_{\mathfrak{X}/\mathfrak{C}}$ defined by $(\pi_{P}^*s_{\varrho}=0)$;\\
$(iii)$ the morphism $\Phi_{\varrho}$ is an isomorphism.
\end{proposition}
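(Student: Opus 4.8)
The plan is to obtain parts $(i)$ and $(ii)$ by pulling back the corresponding assertions of Proposition \ref{linebd-split} along the natural morphism $\pi_{P}\colon\mathfrak{PT}_{\mathfrak{X}/\mathfrak{C}}^{P}\to\mathfrak{C}^{P}$, and to prove part $(iii)$ by the gluing-by-admissibility argument of [\cite{LW}, Theorem 5.28], adapted to the stacky setting exactly as in [\cite{Zhou1}, Propositions 7.6 and 7.7]. For $(i)$, Proposition \ref{linebd-split}$(i)$ gives $\bigotimes_{\varrho\in\Lambda_{P}^{\mathrm{spl}}}L_{\varrho}\cong\mathcal{O}_{\mathfrak{C}^{P}}$ and $\prod_{\varrho\in\Lambda_{P}^{\mathrm{spl}}}s_{\varrho}=\pi^{*}t$ on $\mathfrak{C}^{P}$, where $\pi\colon\mathfrak{C}^{P}\to\mathbb{A}^{1}$ is the canonical map; applying the pullback functor $\pi_{P}^{*}$, which respects tensor products, sends $\mathcal{O}_{\mathfrak{C}^{P}}$ to $\mathcal{O}_{\mathfrak{PT}_{\mathfrak{X}/\mathfrak{C}}^{P}}$, and is compatible with the formation of sections, immediately yields $\bigotimes_{\varrho}\pi_{P}^{*}L_{\varrho}\cong\mathcal{O}_{\mathfrak{PT}_{\mathfrak{X}/\mathfrak{C}}^{P}}$ and $\prod_{\varrho}\pi_{P}^{*}s_{\varrho}=\pi_{P}^{*}\pi^{*}t$. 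For $(ii)$, by Proposition \ref{linebd-split}$(ii)$ the substack $\mathfrak{C}_{0}^{\dagger,\varrho}\subset\mathfrak{C}^{P}$ is the zero locus of $s_{\varrho}$, and since the vanishing locus of a section of a line bundle is stable under base change, the fiber product $\mathfrak{PT}_{\mathfrak{X}_{0}^{\dagger}/\mathfrak{C}_{0}^{\dagger}}^{\varrho}=\mathfrak{PT}_{\mathfrak{X}/\mathfrak{C}}^{P}\times_{\mathfrak{C}^{P}}\mathfrak{C}_{0}^{\dag,\varrho}$ is precisely the closed substack of $\mathfrak{PT}_{\mathfrak{X}/\mathfrak{C}}^{P}$ cut out by $(\pi_{P}^{*}s_{\varrho}=0)$.

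The substantive part is $(iii)$. Recall that $\Phi_{\varrho}$ is defined on $S$-points by gluing: given an $S$-flat family of stable orbifold PT pairs $\varphi_{-}\colon\mathcal{O}\to\mathcal{F}_{-}$ on a family of expanded pairs $\mathtt{Y}_{-,S}$ of $(\mathcal{Y}_{-},\mathcal{D})$ with Hilbert homomorphisms $(\varrho_{-},\varrho_{0})$, a similar family $\varphi_{+}\colon\mathcal{O}\to\mathcal{F}_{+}$ on $\mathtt{Y}_{+,S}$ with $(\varrho_{+},\varrho_{0})$, and an identification of their images under the evaluation morphisms to $\mathrm{Hilb}_{\mathcal{D}}^{\varrho_{0}}$, Proposition \ref{decom} identifies $\mathtt{Y}_{-,S}\cup_{\mathcal{D}_{S}}\mathtt{Y}_{+,S}$ with the corresponding family of expanded degenerations over $\mathfrak{C}_{0}^{\dagger,\varrho}$, and the matching restrictions to $\mathcal{D}$ allow one to glue $\mathcal{F}_{\pm}$ into a sheaf $\mathcal{F}$ and $\varphi_{\pm}$ into $\varphi\colon\mathcal{O}\to\mathcal{F}$. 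Admissibility enters twice, via Remark \ref{adm-cri} and the numerical criterion of Section 2.3: since no component of $\mathrm{Supp}\,\mathcal{F}_{\pm}$ lies in $\mathcal{D}$ the glued $\mathcal{F}$ stays pure of dimension one, and since $\mathrm{Supp}(\mathrm{coker}\,\varphi_{\pm})$ avoids $\mathcal{D}$ the cokernel of $\varphi$ remains zero-dimensional; stability and admissibility of $\varphi$ are then checked fiberwise, and additivity of Hilbert homomorphisms gives total weight $\varrho_{-}+\varrho_{+}-\varrho_{0}=P$, so $\varphi\in\mathfrak{PT}_{\mathfrak{X}_{0}^{\dagger}/\mathfrak{C}_{0}^{\dagger}}^{\varrho}(S)$.

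To show $\Phi_{\varrho}$ is an isomorphism I would construct the inverse by restriction: given $\varphi\in\mathfrak{PT}_{\mathfrak{X}_{0}^{\dagger}/\mathfrak{C}_{0}^{\dagger}}^{\varrho}(S)$ on a family whose fibers decompose as $\mathtt{Y}_{-,S}\cup_{\mathcal{D}_{S}}\mathtt{Y}_{+,S}$, admissibility of $\mathcal{F}$ means $\mathrm{Tor}_{1}(\mathcal{F},\mathcal{O}_{\mathcal{D}_{S}})=0$, so the restrictions $\mathcal{F}_{\pm}$ are $S$-flat with common restriction to $\mathcal{D}$, purity and the zero-dimensional cokernel are preserved, the weight assignments $\varrho_{\pm},\varrho_{0}$ are read off, and one lands in $\mathfrak{PT}_{\mathfrak{Y}_{-}/\mathfrak{A}}^{\varrho_{-},\varrho_{0}}\times_{\mathrm{Hilb}_{\mathcal{D}}^{\varrho_{0}}}\mathfrak{PT}_{\mathfrak{Y}_{+}/\mathfrak{A}}^{\varrho_{+},\varrho_{0}}$. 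Functoriality in $S$ and the fact that these two constructions are mutually inverse give the isomorphism; equivalently, one deduces $(iii)$ directly from the isomorphism of Proposition \ref{split-iso} by passing to moduli of stable orbifold PT pairs on both sides and matching the fiber-product description of the left-hand stack with the construction of $\mathfrak{PT}_{\mathfrak{Y}_{\pm}/\mathfrak{A}}$ in Sections 5.2 and 5.3.

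I expect the main obstacle to be verifying that gluing and restriction preserve all the relevant conditions \emph{uniformly in families} over $S$ --- $S$-flatness, purity of the one-dimensional sheaf, finiteness of the zero-dimensional cokernel, admissibility, and finiteness of the autoequivalence group --- together with the bookkeeping that the splitting $\varrho_{-}+\varrho_{+}-\varrho_{0}=P$ and its compatibility with weight assignments under specialization match exactly on the two sides. This is precisely where the normality/admissibility numerical criterion of Section 2.3 and [\cite{Zhou1}, Corollary 3.9] do the real work, and where one invokes the separatedness and properness established in Section 5.4 to guarantee that the glued flat limit is unique.
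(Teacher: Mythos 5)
Your proposal follows essentially the same route as the paper: parts $(i)$ and $(ii)$ by pulling back Proposition \ref{linebd-split} along $\pi_{P}$, and part $(iii)$ by the gluing-by-admissibility argument adapted from [\cite{Zhou1}, Propositions 7.6 and 7.7] and [\cite{LW}, Theorem 5.28], with the inverse given by restriction to the two components. The paper delegates these steps to the cited references, while you spell out the gluing and restriction details; the content is the same and the argument is correct.
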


\subsection{Absolute and relative orbifold Pandharipande-Thomas theory}
 We will give the definition of  the absolute and relative orbifold Pandharipande-Thomas invariants following the similar definition in Donaldson-Thomas case [\cite{Zhou1}, Sections 6 and 8] (see also [\cite{MNOP2,LW}]) together with the similar definition of descendents in 
 [\cite{PT2}]. We briefly recall   the absolute case in [\cite{Lyj}, Section 5.3]. Let $\mathcal{Z}$ be a $3$-dimensional smooth projective Deligne-Mumford stack over $\mathbb{C}$ with a moduli scheme $\pi:\mathcal{Z}\to Z$ and a polarization $(\mathcal{E}_{\mathcal{Z}},\mathcal{O}_{Z}(1))$. Let $K(\mathcal{Z}):=K^0(\mathrm{Coh}(\mathcal{Z}))_{\mathbb{Q}}=K^0(D^b(\mathcal{Z}))_{\mathbb{Q}}$ be the Grothendieck group of $\mathcal{Z}$ over $\mathbb{Q}$. 
Define the Euler pairing $\chi: K(\mathcal{Z})\times K(\mathcal{Z})\to\mathbb{Z}$  by
 \ben
 \chi(\mathcal{F} ,\mathcal{G})=\sum_{i}(-1)^i\dim\mathrm{Hom}(\mathcal{F},\mathcal{G}[i]),
 \een
 where $\mathcal{F},\mathcal{G}\in K(\mathcal{Z})$.
As in [\cite{Zhou1}, Section 8.1], the pairing $\chi$ is  nondegenerate by the projectivity of $\mathcal{Z}$ and hence one can view $P\in\mathrm{Hom}(K(\mathcal{Z}),\mathbb{Q})$ as an element in $K(\mathcal{Z})$. Let $F_{\bullet}K(\mathcal{Z})$ be the natural topological filtration of $K(\mathcal{Z})$. Let $P\in F_{1}K(\mathcal{Z})$.
By Remark \ref{absolute} and [\cite{Lyj}, Theorem 5.21] (where $\beta$  is corresponding to $P$), there exists a virtual fundamental class $[\mathrm{PT}_{\mathcal{Z}/\mathbb{C}}^P]^{\mathrm{vir}}\in A_{*}(\mathrm{PT}_{\mathcal{Z}/\mathbb{C}}^P)$.
 
We briefly recollect some definitions in [\cite{Tse}, Section 2 and Appendix A] as follows. Let $I\mathcal{Z}$ be the inertia stack and $\pi_{0}:I\mathcal{Z}\to\mathcal{Z}$ be the natural projection. We have a decomposition of $I\mathcal{Z}$ as a disjoint union $I\mathcal{Z}=\coprod_{i\in\mathcal{I}}\mathcal{Z}_{i}$ for some index set $\mathcal{I}$, and a canonical involution $\iota: I\mathcal{Z}\to I\mathcal{Z}$. Given a vector bundle $W$ on $I\mathcal{Z}$, there exists a decomposition $W=\bigoplus_{\vartheta}W^{(\vartheta)}$ where $W^{(\vartheta)}$ is an eigenbundle with the eigenvalue $\vartheta$. And we have a map $\hat{\rho}: K(I\mathcal{Z})\to K(I\mathcal{Z})_{\mathbb{C}}$ defined by $\hat{\rho}:=\sum_{\vartheta}\vartheta W^{(\vartheta)}$. Define $\widetilde{\mathrm{ch}}: K(\mathcal{Z})\to A^*(I\mathcal{Z})_{\mathbb{C}}$ by $\widetilde{\mathrm{ch}}(V):=\mathrm{ch}(\hat{\rho}(\pi_{0}^*V))$ where $\mathrm{ch}$ is the usual Chern character. By [\cite{AGV}, Section 7.3], we have the orbifold (Chen-Ruan) cohomology of $\mathcal{Z}$ as $A^*_{\mathrm{orb}}(\mathcal{Z}):=\bigoplus_{i} A^{*-\mathrm{age}_{i}}(\mathcal{Z}_{i})$ where $\mathrm{age}_{i}$ is the degree shift number. The orbifold Chern character $\widetilde{\mathrm{ch}}^{\mathrm{orb}}: K(\mathcal{Z})\to A^*_{\mathrm{orb}}(\mathcal{Z})$ is defined by $\widetilde{\mathrm{ch}}_{k}^{\mathrm{orb}}|_{\mathcal{Z}_{i}}:=\widetilde{\mathrm{ch}}_{k-\mathrm{age}_{i}}|_{\mathcal{Z}_{i}}$. 
Since $\mathrm{PT}_{\mathcal{Z}/\mathbb{C}}^P$ is a fine moduli space (see [\cite{Lyj}, Theorem 5.20]), there is a universal complex $\hat{\mathbb{I}}^\bullet:=\{\mathcal{O}_{\mathcal{Z}\times \mathrm{PT}_{\mathcal{Z}/\mathbb{C}}^P}\to\mathbb{F}\}$ where $\mathbb{F}$ is the universal sheaf on $\mathcal{Z}\times \mathrm{PT}_{\mathcal{Z}/\mathbb{C}}^P$. Since $\mathbb{F}$ has a locally free resolution of finite length by [\cite{Lyj}, Lemma 5.9], then $\widetilde{\mathrm{ch}}^{\mathrm{orb}}(\mathbb{F})\in A^*(I\mathcal{Z}\times\mathrm{PT}_{\mathcal{Z}/\mathbb{C}}^P)$.
For any $\gamma\in A^l_{\mathrm{orb}}(\mathcal{Z})$, define the operator $\widetilde{\mathrm{ch}}_{k+2}^{\mathrm{orb}}(\gamma): A_{*}(\mathrm{PT}_{\mathcal{Z}/\mathbb{C}}^P)\to A_{*-k+1-l}(\mathrm{PT}_{\mathcal{Z}/\mathbb{C}}^P)$ to be
\ben
\widetilde{\mathrm{ch}}_{k+2}^{\mathrm{orb}}(\gamma)(\xi):=\pi_{2*}\left(\widetilde{\mathrm{ch}}^{\mathrm{orb}}_{k+2}(\mathbb{F})\cdot \iota^*\pi_{1}^*\gamma\cap\pi_{2}^*\xi\right)
\een
where the map $\pi_{i}$ is the natural projections from $I\mathcal{Z}\times\mathrm{PT}_{\mathcal{Z}/\mathbb{C}}^P$ to the $i$-th factor for $i=1,2$.
Now we have the following definition of  absolute orbifold Pandharipande-Thomas invariants.
\begin{definition}([\cite{Lyj}, Definition 5.23])\label{absolute-def}
 Given $\gamma_{i}\in A_{\mathrm{orb}}^*(\mathcal{Z})$, $1\leq i\leq r$, define the Pandharipande-Thomas invariants with descendents as
 \ben
 \bigg\langle\prod_{i=1}^r\tau_{k_{i}}(\gamma_{i})\bigg\rangle_{\mathcal{Z}}^{P}&:=&\int_{[\mathrm{PT}_{\mathcal{Z}/\mathbb{C}}^P]^{\mathrm{vir}}}\prod_{i=1}^r\widetilde{\mathrm{ch}}_{k_{i}+2}^{\mathrm{orb}}(\gamma_{i})
 =\deg\left[\prod_{i=1}^r\widetilde{\mathrm{ch}}_{k_{i}+2}^{\mathrm{orb}}(\gamma_{i})\cdot [\mathrm{PT}_{\mathcal{Z}/\mathbb{C}}^P]^{\mathrm{vir}}\right]_{0}.
 \een
\end{definition}

Next, we will consider the perfect relative obstruction theories and the virtual fundamental classes  for moduli stacks $\mathfrak{PT}^P_{\mathfrak{X}/\mathfrak{C}}$, $\mathfrak{PT}_{\mathfrak{X}_{0}^\dagger/\mathfrak{C}_{0}^\dagger}^\varrho$, and 
$\mathfrak{PT}_{\mathfrak{Y}_{\pm}/\mathfrak{A}}^{\varrho_{\pm},\varrho_{0}}$  which are needed for defining the relative orbifold Pandharipande-Thomas invariants and deriving degeneration formulas in both cycle and numerical versions later where  $\varrho=(\varrho_{-},\varrho_{+},\varrho_{0})\in\Lambda_{P}^{\mathrm{spl}}$.

Let $\pi_{\mathfrak{PT}^P_{\mathfrak{X}/\mathfrak{C}}}: \mathfrak{X}^P\times_{\mathfrak{C}^P}\mathfrak{PT}^P_{\mathfrak{X}/\mathfrak{C}}\to\mathfrak{PT}^P_{\mathfrak{X}/\mathfrak{C}}$ and $\pi_{\mathfrak{X}^P}:\mathfrak{X}^P\times_{\mathfrak{C}^P}\mathfrak{PT}^P_{\mathfrak{X}/\mathfrak{C}}\to\mathfrak{X}^P$ be the natural projections. As in [\cite{MPT}, Section 3.8], due to the finite autoequivalence groups by the definition of $\mathfrak{PT}^P_{\mathfrak{X}/\mathfrak{C}}$, we have a universal complex $\bar{\mathbb{I}}^\bullet$ over $\mathfrak{X}^P\times_{\mathfrak{C}^P}\mathfrak{PT}^P_{\mathfrak{X}/\mathfrak{C}}$.  Now we have the following truncated Atiyah class
\ben
\mathrm{At}(\bar{\mathbb{I}}^\bullet)\in\mathrm{Ext}^1(\bar{\mathbb{I}}^\bullet,\bar{\mathbb{I}}^\bullet\otimes\mathbb{L}_{\mathfrak{X}^P\times_{\mathfrak{C}^P}\mathfrak{PT}^P_{\mathfrak{X}/\mathfrak{C}}/\mathfrak{C}^P})
\een 
where $\mathbb{L}_{\mathfrak{X}^P\times_{\mathfrak{C}^P}\mathfrak{PT}^P_{\mathfrak{X}/\mathfrak{C}}/\mathfrak{C}^P}$ is the truncated  complex $\tau^{\geq-1}L^\bullet_{\mathfrak{X}^P\times_{\mathfrak{C}^P}\mathfrak{PT}^P_{\mathfrak{X}/\mathfrak{C}}/\mathfrak{C}^P}$ with $L^\bullet_{\mathfrak{X}^P\times_{\mathfrak{C}^P}\mathfrak{PT}^P_{\mathfrak{X}/\mathfrak{C}}/\mathfrak{C}^P}$ being the cotangent complex of the map $\mathfrak{X}^P\times_{\mathfrak{C}^P}\mathfrak{PT}^P_{\mathfrak{X}/\mathfrak{C}}\to\mathfrak{C}^P$  (is of Deligne-Mumford type by Definition \ref{degen-stacks}). One can refer to [\cite{LMB}, Chapitre 17] and [\cite{Ols1}, Section 8] for the definition and properties of the cotangent complex. Using the standard process as in Section 4, we have the following map
\ben
\Phi: \mathbf{E}^\bullet:=R\pi_{\mathfrak{PT}^P_{\mathfrak{X}/\mathfrak{C}}*}(R\mathcal{H}om(\bar{\mathbb{I}}^\bullet,\bar{\mathbb{I}}^\bullet)_{0}\otimes\pi_{\mathfrak{X}^P}^*\omega_{\mathfrak{X}^P/\mathfrak{C}^P})[2]\to\mathbb{L}_{\mathfrak{PT}^P_{\mathfrak{X}/\mathfrak{C}}/\mathfrak{C}^P}, 
\een
where we use Serre duality for simple normal crossing families as in [\cite{Zhou1}, Section 7.2].
\begin{lemma} \label{perfect}
	Suppose $\mathcal{X}\to\mathbb{A}^1$ is a simple degeneration of relative dimension three and is a family of projective Deligne-Mumford stacks. Let $\varphi:\mathcal{O}_{\mathcal{X}_{0}[k]}\to\mathcal{F}$ be an admissible orbifold PT pair and $\bar{\mathbf{I}}^{\bullet}:=\{\mathcal{O}_{\mathcal{X}_{0}[k]}\xrightarrow{\varphi}\mathcal{F}\}$ be a complex concentrated in degree 0 and 1 for some $k\geq0$. Then we have
	\ben
	\mathrm{Ext}^{i}(\bar{\mathbf{I}}^\bullet,\bar{\mathbf{I}}^\bullet)_{0}=0\;\;\;\mbox{if $i\neq1,2.$}
	\een
\end{lemma}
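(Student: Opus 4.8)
The plan is to reduce this vanishing statement to a local computation on the one-dimensional sheaf $\mathcal{F}$ together with a global argument for the pair $\bar{\mathbf{I}}^\bullet$, following closely the method used by Pandharipande--Thomas [\cite{PT1}] in the smooth projective threefold case and adapted to simple normal crossing families as in [\cite{Zhou1}, Section 7.2]. First I would split the problem using the triangle $\mathcal{F}[-1]\to\bar{\mathbf{I}}^\bullet\to\mathcal{O}_{\mathcal{X}_0[k]}\to\mathcal{F}$, which gives a long exact sequence relating $\mathrm{Ext}^i(\bar{\mathbf{I}}^\bullet,\bar{\mathbf{I}}^\bullet)$ to $\mathrm{Ext}^i(\mathcal{O},\mathcal{O})$, $\mathrm{Ext}^i(\mathcal{F},\mathcal{O})$, $\mathrm{Ext}^i(\mathcal{O},\mathcal{F})$ and $\mathrm{Ext}^i(\mathcal{F},\mathcal{F})$. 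The first contributes only in degree $0$ (and, a priori, in higher degrees where it is killed by the traceless condition); the term $\mathrm{Ext}^i(\mathcal{O},\mathcal{F}) = H^i(\mathcal{X}_0[k],\mathcal{F})$ vanishes for $i\geq 2$ since $\mathcal{F}$ has one-dimensional support. The main work is therefore to control $\mathrm{Ext}^i(\mathcal{F},\mathcal{F})$ and $\mathrm{Ext}^i(\mathcal{F},\mathcal{O})$ for $i\geq 3$ and to show $\mathrm{Ext}^0(\bar{\mathbf{I}}^\bullet,\bar{\mathbf{I}}^\bullet)_0=0$.

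\textbf{Key steps.} Step one: handle the degree-$0$ part. A traceless endomorphism of $\bar{\mathbf{I}}^\bullet$, restricted to each irreducible component of $\mathcal{X}_0[k]$ on which $\mathcal{F}$ lives, must be a scalar because $\bar{\mathbf{I}}^\bullet$ is (the complex of) a PT pair whose cokernel is supported in dimension zero and whose underlying sheaf is pure; admissibility (Remark \ref{adm-cri}) ensures no component of $\mathrm{Supp}(\mathcal{F})$ is contained in a divisor $\mathcal{D}_i$, so the standard PT argument that $\mathrm{Hom}(\bar{\mathbf{I}}^\bullet,\bar{\mathbf{I}}^\bullet)=\mathbb{C}$ (hence its traceless part vanishes) goes through unchanged. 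Step two: for the higher Ext groups, work locally. Since $\mathcal{X}_0[k]$ is a simple normal crossing variety of dimension three, étale locally it looks like a smooth threefold or a node $\times$ smooth surface, and $\mathcal{F}$ is a pure one-dimensional sheaf admissible with respect to the divisors; by the local-to-global spectral sequence it suffices to bound the local $\mathcal{E}xt$ sheaves. Here the hypothesis that $\mathcal{F}$ is \emph{admissible} — normal to every $\mathcal{D}_i$ — is exactly what lets one reduce the depth/projective-dimension computation to the smooth strata, where $\mathcal{E}xt^j_{\mathcal{O}}(\mathcal{F},\mathcal{F})$ and $\mathcal{E}xt^j_{\mathcal{O}}(\mathcal{F},\mathcal{O})$ vanish for $j>2$ (resp. $j>3$) because a pure $1$-dimensional sheaf on a smooth $3$-fold has projective dimension $\leq 2$. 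One then checks that the potential degree-$3$ contribution from $\mathrm{Ext}^3(\mathcal{F},\mathcal{F})$ and from $H^1$ of an $\mathcal{E}xt^2$ sheaf either does not arise or is cancelled in the long exact sequence against $\mathrm{Ext}^i(\mathcal{F},\mathcal{O})$; alternatively, invoke Serre duality for the snc family ([\cite{Zhou1}, Section 7.2], [\cite{Nir1}, Theorem 2.22]) to identify $\mathrm{Ext}^i(\bar{\mathbf{I}}^\bullet,\bar{\mathbf{I}}^\bullet)_0$ with $\mathrm{Ext}^{3-i}(\bar{\mathbf{I}}^\bullet,\bar{\mathbf{I}}^\bullet\otimes\omega)_0^\vee$, which together with the vanishing for $i\leq 0$ forces vanishing for $i\geq 3$.

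\textbf{Main obstacle.} I expect the hard part to be the behaviour along the singular divisors $\mathcal{D}_i$: on a node $\times$ surface the structure sheaf has infinite projective dimension, so the naive "pure $1$-dimensional sheaves have $\mathrm{pd}\leq 2$" statement fails without the admissibility hypothesis. The crux is thus to show that admissibility — the $\mathrm{Tor}_1$ vanishing against each $\mathcal{O}_{\mathcal{D}_i}$ — implies that $\mathcal{F}$, while not locally of finite projective dimension on $\mathcal{X}_0[k]$ itself, is "transverse enough" to the singular locus that the relevant $\mathcal{E}xt$-sheaves computed against $\mathcal{O}$ and against $\mathcal{F}$ still vanish in the range needed; this is where one uses [\cite{Zhou1}, Corollaries 3.9, 3.11] and the normality criterion to localize to the smooth strata and glue. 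Once this local transversality input is in place, the long exact sequence bookkeeping and the traceless splitting are routine, and the statement follows.
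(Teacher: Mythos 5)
Your overall strategy diverges from the paper's at the decisive point, and the step you yourself flag as the ``main obstacle'' is exactly where the argument is missing rather than merely deferred. You propose to split the \emph{complex} via the triangle $\mathcal{F}[-1]\to\bar{\mathbf{I}}^\bullet\to\mathcal{O}_{\mathcal{X}_0[k]}$ and then control $\mathrm{Ext}^i(\mathcal{F},\mathcal{F})$ and $\mathrm{Ext}^i(\mathcal{F},\mathcal{O})$ by local projective-dimension bounds; but on the normal crossing locus these bounds genuinely fail (as you note), and the assertion that admissibility makes $\mathcal{F}$ ``transverse enough'' for the relevant $\mathcal{E}xt$-sheaves to vanish is precisely the content that needs a proof, not a reduction. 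The paper instead splits the \emph{space}: it tensors the normalization sequence $0\to\mathcal{O}_{\mathcal{X}_0}\to\mathcal{O}_{\mathcal{Y}_-}\oplus\mathcal{O}_{\mathcal{Y}_+}\to\mathcal{O}_{\mathcal{D}}\to0$ with $\bar{\mathbf{I}}^\bullet$ — admissibility (the $\mathrm{Tor}_1$ vanishing) is used exactly once, to make the resulting triple $0\to\bar{\mathbf{I}}^\bullet\to\bar{\mathbf{I}}^\bullet_-\oplus\bar{\mathbf{I}}^\bullet_+\to\bar{\mathbf{I}}^\bullet_{\mathcal{D}}\to0$ exact — and then reduces everything to the already-established absolute-case vanishing $\mathrm{Hom}(\bar{\mathbf{I}}^\bullet_\pm,\bar{\mathbf{I}}^\bullet_\pm)_0=\mathrm{Ext}^3(\bar{\mathbf{I}}^\bullet_\pm,\bar{\mathbf{I}}^\bullet_\pm)_0=0$ on the smooth components ([\cite{Lyj}, Lemma 5.16]) together with the observation that $\bar{\mathbf{I}}^\bullet_{\mathcal{D}}$ is quasi-isomorphic to an ideal sheaf of points on the smooth surface $\mathcal{D}$, so $\mathrm{Ext}^{-1}(\bar{\mathbf{I}}^\bullet_{\mathcal{D}},\bar{\mathbf{I}}^\bullet_{\mathcal{D}})=0$ and $\mathrm{Ext}^2(\bar{\mathbf{I}}^\bullet_{\mathcal{D}},\bar{\mathbf{I}}^\bullet_{\mathcal{D}})_0=0$ by Serre duality in dimension two; the case $k\geq1$ then follows by induction over the components. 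If you pursue your local route to the end you will in effect rederive this normalization sequence (normality to $\mathcal{D}$ is equivalent to the local fibre-product description of $\mathcal{F}$ over the two branches), so the paper's decomposition is not just cleaner but is the actual mechanism behind your ``localize and glue.''

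A second, independent problem is your Serre-duality shortcut for $i\geq3$. Duality on the three-dimensional Gorenstein stack pairs $\mathrm{Ext}^i(\bar{\mathbf{I}}^\bullet,\bar{\mathbf{I}}^\bullet)_0$ with $\mathrm{Ext}^{3-i}(\bar{\mathbf{I}}^\bullet,\bar{\mathbf{I}}^\bullet\otimes\omega)_0^\vee$, with a nontrivial twist by the dualizing sheaf $\omega$. So the vanishing of the \emph{untwisted} group $\mathrm{Hom}(\bar{\mathbf{I}}^\bullet,\bar{\mathbf{I}}^\bullet)_0$ does not force $\mathrm{Ext}^3(\bar{\mathbf{I}}^\bullet,\bar{\mathbf{I}}^\bullet)_0=0$; one must separately prove $\mathrm{Hom}(\bar{\mathbf{I}}^\bullet,\bar{\mathbf{I}}^\bullet\otimes\omega)_0=0$ (this is what [\cite{Lyj}, Lemmas 5.15--5.16] do in the absolute case, by a section argument on the support). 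Your degree-$0$ step is fine, and the negative-degree and $i>3$ vanishing are routine, but as written the proposal has no valid argument for $i=3$ and no completed argument across the singular divisors.
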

\begin{proof}
	We will deal with the case for $k=0$, for the other cases one can follow the same argument by induction. Now we have the following short exact sequence on $\mathcal{X}_{0}=\mathcal{Y}_{-}\cup_{\mathcal{D}}\mathcal{Y}_{+}$:
	\be\label{SES1}
	0\to\mathcal{O}_{\mathcal{X}_{0}}\to\mathcal{O}_{\mathcal{Y}_{-}}\oplus\mathcal{O}_{\mathcal{Y}_{+}}\to\mathcal{O}_{\mathcal{D}}\to0.
	\ee
	Since $\varphi:\mathcal{O}_{\mathcal{X}_{0}}\to\mathcal{F}$ is an admissible orbifold PT pair, tensoring \eqref{SES1} with
	$\bar{\mathbf{I}}^\bullet=\{\mathcal{O}_{\mathcal{X}_{0}}\to\mathcal{F}\}$ we have the following exact triple of complexes
	\be\label{SES-complexes}
	0\to\bar{\mathbf{I}}^\bullet\to\bar{\mathbf{I}}^\bullet_{-}\oplus\bar{\mathbf{I}}^\bullet_{+}\to\bar{\mathbf{I}}^\bullet_{\mathcal{D}}\to0
	\ee
	where $\bar{\mathbf{I}}^\bullet_{\pm}$ and $\bar{\mathbf{I}}^\bullet_{\mathcal{D}}$ denote the restriction of $\bar{\mathbf{I}}^\bullet$ on $\mathcal{Y}_{\pm}$ and $\mathcal{D}$ respectively. By using [\cite{Zhou1}, Corollary 3.9 and Proposition 3.23] for the  admissibility of sheaves and using [\cite{Nir1}, Remark 3.3-(2)] (see also [\cite{Lieb}, Remark 2.2.6.15]) for the purity of sheaves, $\varphi_{\pm}:\mathcal{O}_{\mathcal{Y}_{\pm}}\to\mathcal{F}|_{\mathcal{Y}_{\pm}}$ are also admissible orbifold PT pairs and $\varphi_{\mathcal{D}}:\mathcal{O}_{\mathcal{D}}\to\mathcal{F}|_{\mathcal{D}}$ is a quotient. By applying $R\mathrm{Hom}^\bullet(\bar{\mathbf{I}}^\bullet,\cdot)$ to \eqref{SES-complexes}, we have 
	\be\label{LES1}
	\cdots\to\mathrm{Ext}^{-1}(\bar{\mathbf{I}}_{\mathcal{D}}^\bullet,\bar{\mathbf{I}}^\bullet_{\mathcal{D}})\to\mathrm{Hom}(\bar{\mathbf{I}}^\bullet,\bar{\mathbf{I}}^\bullet)\to\mathrm{Hom}(\bar{\mathbf{I}}^\bullet_{-},\bar{\mathbf{I}}^\bullet_{-})\oplus\mathrm{Hom}(\bar{\mathbf{I}}^\bullet_{+},\bar{\mathbf{I}}^\bullet_{+})\to\mathrm{Hom}(\bar{\mathbf{I}}_{\mathcal{D}}^\bullet,\bar{\mathbf{I}}^\bullet_{\mathcal{D}})
	\ee
	and 
	\be\label{LES2}
	\cdots\to\mathrm{Ext}^{2}(\bar{\mathbf{I}}_{\mathcal{D}}^\bullet,\bar{\mathbf{I}}^\bullet_{\mathcal{D}})\to\mathrm{Ext}^3(\bar{\mathbf{I}}^\bullet,\bar{\mathbf{I}}^\bullet)\to\mathrm{Ext}^3(\bar{\mathbf{I}}^\bullet_{-},\bar{\mathbf{I}}^\bullet_{-})\oplus\mathrm{Ext}^3(\bar{\mathbf{I}}^\bullet_{+},\bar{\mathbf{I}}^\bullet_{+})\to\mathrm{Ext}^3(\bar{\mathbf{I}}_{\mathcal{D}}^\bullet,\bar{\mathbf{I}}^\bullet_{\mathcal{D}}).
	\ee
	Since $\mathcal{F}$ is a pure 1-dimensional admissible sheaf and $\mathrm{coker}\,\varphi$ is normal to $\mathcal{D}$, the complex $\bar{\mathbf{I}}^\bullet_{\mathcal{D}}$ is quasi-isomorphic to an ideal sheaf of some points in $\mathcal{D}$. Then we have $\mathrm{Ext}^{-1}(\bar{\mathbf{I}}_{\mathcal{D}}^\bullet,\bar{\mathbf{I}}^\bullet_{\mathcal{D}})=0$ and $\mathrm{Ext}^{2}(\bar{\mathbf{I}}_{\mathcal{D}}^\bullet,\bar{\mathbf{I}}^\bullet_{\mathcal{D}})_{0}=0$, where the vanishing of the latter is proved by using the similar argument in [\cite{Lyj}, Lemmas 5.15 and 5.16] to show $\mathrm{Hom}(\bar{\mathbf{I}}_{\mathcal{D}}^\bullet,\bar{\mathbf{I}}^\bullet_{\mathcal{D}})\cong\mathcal{O}_{\mathcal{D}}$ and then tensoring with $\omega_{\mathcal{D}}$ to apply Serre duality in [\cite{Lyj}, Lemma 5.11] since $\mathcal{D}$ is a smooth projective Deligne-Mumford stack of dimension 2 by assumption. By [\cite{Lyj}, Lemma 5.16], we have $\mathrm{Hom}(\bar{\mathbf{I}}^\bullet_{\pm},\bar{\mathbf{I}}^\bullet_{\pm})_{0}=\mathrm{Ext}^3(\bar{\mathbf{I}}^\bullet_{\pm},\bar{\mathbf{I}}^\bullet_{\pm})_{0}=0$ which implies $\mathrm{Hom}(\bar{\mathbf{I}}^\bullet,\bar{\mathbf{I}}^\bullet)_{0}=\mathrm{Ext}^3(\bar{\mathbf{I}}^\bullet,\bar{\mathbf{I}}^\bullet)_{0}=0$ by taking the traceless part of \eqref{LES1} and \eqref{LES2} respectively. Applying the similar argument for the cases in higher degrees and lower degrees, the proof is completed. 
\end{proof} 
%\begin{remark}
	%Another approach to prove Lemma \ref{perfect} is tensoring \eqref{SES1} with $R\mathcal{H}om(\bar{\mathbf{I}}^\bullet, \bar{\mathbf{I}}^\bullet)_{0}$  and taking cohmomology as in [\cite{MPT}, Section 3.9] with the assumption that $\mathcal{F}$ has a finite locally free resolution.
%\end{remark}

With the universal approach to deformation-obstruction theory of complexes developed in [\cite{HT09}], as in [\cite{MPT}, Section 3.8], one can follow the similar argument in [\cite{HT09}, Theorem 4.1 and Lemma 4.2] together with using Lemma \ref{perfect}  to obtain

\begin{theorem}
In the sense of [\cite{BF}], the map $\Phi: \mathbf{E}^\bullet\to\mathbb{L}_{\mathfrak{PT}^P_{\mathfrak{X}/\mathfrak{C}}/\mathfrak{C}^P}$ is a perfect relative obstruction theory for 
$\mathfrak{PT}^P_{\mathfrak{X}/\mathfrak{C}}$ over $\mathfrak{C}^P$.
\end{theorem}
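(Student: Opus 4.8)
The plan is to follow the strategy of \cite{HT09}, Theorem 4.1 and Lemma 4.2, adapting it to the relative situation over the Artin stack $\mathfrak{C}^P$, exactly as was done for the Donaldson--Thomas theory in \cite{Zhou1}, Section 7.2. First I would recall that the map $\Phi$ has already been constructed above via the truncated Atiyah class $\mathrm{At}(\bar{\mathbb{I}}^\bullet)$ of the universal complex $\bar{\mathbb{I}}^\bullet$ on $\mathfrak{X}^P\times_{\mathfrak{C}^P}\mathfrak{PT}^P_{\mathfrak{X}/\mathfrak{C}}$, composed with the projection to $\pi_{\mathfrak{PT}^P_{\mathfrak{X}/\mathfrak{C}}}^*\mathbb{L}_{\mathfrak{PT}^P_{\mathfrak{X}/\mathfrak{C}}/\mathfrak{C}^P}$ and Serre duality for the simple normal crossing family $\mathfrak{X}^P/\mathfrak{C}^P$. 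So the content is to verify two things: (a) that $\mathbf{E}^\bullet$ is perfect of amplitude $[-1,0]$, and (b) that $\Phi$ genuinely is an obstruction theory in the sense of \cite{BF}, i.e. $h^0(\Phi)$ is an isomorphism and $h^{-1}(\Phi)$ is surjective.

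For (a), the key input is Lemma \ref{perfect}: for any admissible orbifold PT pair $\varphi:\mathcal{O}_{\mathcal{X}_0[k]}\to\mathcal{F}$ on a fiber, the traceless groups $\mathrm{Ext}^i(\bar{\mathbf{I}}^\bullet,\bar{\mathbf{I}}^\bullet)_0$ vanish for $i\neq 1,2$. Combined with Serre duality on the fibers (the relative dualizing sheaf $\omega_{\mathfrak{X}^P/\mathfrak{C}^P}$ being exactly what appears in the definition of $\mathbf{E}^\bullet$), this forces the complex $R\pi_{\mathfrak{PT}^P_{\mathfrak{X}/\mathfrak{C}}*}(R\mathcal{H}om(\bar{\mathbb{I}}^\bullet,\bar{\mathbb{I}}^\bullet)_0\otimes\pi_{\mathfrak{X}^P}^*\omega_{\mathfrak{X}^P/\mathfrak{C}^P})[2]$ to have cohomology sheaves concentrated in degrees $-1$ and $0$ after base change to any point; one then invokes cohomology-and-base-change together with the properness and finite-type hypotheses (established in Section 5.4) to conclude $\mathbf{E}^\bullet$ is a perfect two-term complex. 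For (b), I would verify the deformation-theoretic criterion: over a square-zero extension, obstructions to deforming the complex $\bar{\mathbb{I}}^\bullet$ (with fixed determinant, so only the traceless part matters) live in $\mathrm{Ext}^2(\bar{\mathbb{I}}^\bullet,\bar{\mathbb{I}}^\bullet\otimes\text{(ideal)})_0$ and the automorphisms/deformations are governed by $\mathrm{Ext}^{1,0}$; this is precisely the universal deformation-obstruction package of \cite{HT09}, which carries over verbatim once one knows $\bar{\mathbb{I}}^\bullet$ is the universal object and that the relative cotangent complex $\mathbb{L}_{\mathfrak{X}^P\times_{\mathfrak{C}^P}\mathfrak{PT}^P/\mathfrak{C}^P}$ behaves well (its truncation being of Deligne--Mumford type by Definition \ref{degen-stacks}).

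The main obstacle I anticipate is not the local Ext computation — that is handled by Lemma \ref{perfect} — but rather the compatibility of Serre duality with the degenerate fibers $\mathcal{X}_0[k]$, which are reducible (simple normal crossing) rather than smooth. One needs the version of relative Serre duality for the family $\mathfrak{X}^P\to\mathfrak{C}^P$ whose fibers are these expanded degenerations; this is the point where I would lean on the simple-normal-crossing Serre duality used in \cite{Zhou1}, Section 7.2, checking that the dualizing complex of $\mathcal{X}_0[k]$ is indeed an invertible sheaf $\omega_{\mathfrak{X}^P/\mathfrak{C}^P}$ placed in the right degree (relative dimension three), and that the traceless part of $R\mathcal{H}om(\bar{\mathbb{I}}^\bullet,\bar{\mathbb{I}}^\bullet)$ is self-dual up to this twist. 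A secondary technical point is the passage to the inductive-limit stacks $\mathfrak{X}^P$, $\mathfrak{C}^P$: one should check that the obstruction theory is constructed compatibly on each finite level $[\mathcal{X}(k)^P/R^P_{\sim}]$ and glues, which follows from the étale-local nature of the construction and the fact, already used in Section 6.1, that admissibility is preserved under the relevant pullbacks and gluings. Finally, the existence of a virtual fundamental class is then immediate from \cite{BF} (and \cite{LT}), so I would state that as a corollary without further argument.
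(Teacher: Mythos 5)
Your proposal follows exactly the route the paper takes: the paper's (very terse) proof is precisely "follow [\cite{HT09}, Theorem 4.1 and Lemma 4.2] together with Lemma \ref{perfect}," relying on the Atiyah-class construction of $\Phi$, Serre duality for the simple normal crossing family as in [\cite{Zhou1}, Section 7.2], and the Ext-vanishing of Lemma \ref{perfect} to get the two-term perfect amplitude. Your elaboration of the base-change argument for perfectness and of the square-zero-extension criterion is a correct unpacking of what the paper leaves implicit.
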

\begin{corollary}
There exists a virtual fundamental class $[\mathfrak{PT}^P_{\mathfrak{X}/\mathfrak{C}}]^{\mathrm{vir}}\in
A_{*}(\mathfrak{PT}^P_{\mathfrak{X}/\mathfrak{C}})$.
\end{corollary}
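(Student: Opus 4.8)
The plan is to deduce the existence of the virtual fundamental class $[\mathfrak{PT}^P_{\mathfrak{X}/\mathfrak{C}}]^{\mathrm{vir}}$ directly from the preceding theorem, which gives a perfect relative obstruction theory $\Phi: \mathbf{E}^\bullet\to\mathbb{L}_{\mathfrak{PT}^P_{\mathfrak{X}/\mathfrak{C}}/\mathfrak{C}^P}$ for $\mathfrak{PT}^P_{\mathfrak{X}/\mathfrak{C}}$ over $\mathfrak{C}^P$, together with the general machinery of Behrend--Fantechi \cite{BF} and Li--Tian \cite{LT}. First I would note that $\mathfrak{PT}^P_{\mathfrak{X}/\mathfrak{C}}$ is a proper Deligne--Mumford stack of finite type (Propositions \ref{boundedness}, \ref{separatedness}, \ref{properness}), and that $\mathfrak{C}^P$ is a smooth Artin stack, being built from affine spaces $\mathbb{A}^{k+1}$ modulo smooth group actions; in particular $\mathfrak{C}^P$ is smooth of pure dimension, so $\mathbb{L}_{\mathfrak{C}^P}$ is a line bundle in degree zero (up to the relevant shift), and the relative cotangent complex $\mathbb{L}_{\mathfrak{PT}^P_{\mathfrak{X}/\mathfrak{C}}/\mathfrak{C}^P}$ differs from the absolute one only by the pullback of this line bundle.

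The key step is the standard construction of the intrinsic normal cone. Given the perfect relative obstruction theory $\Phi$, one forms the relative intrinsic normal cone $\mathfrak{C}_{\mathfrak{PT}^P_{\mathfrak{X}/\mathfrak{C}}/\mathfrak{C}^P}$, which is a closed substack of the vector bundle stack $\mathfrak{E}^\bullet$ associated to $\mathbf{E}^\bullet$ by the very definition of an obstruction theory (the map $h^1/h^0((\mathbf{E}^\bullet)^\vee)\to \mathfrak{N}_{\mathfrak{PT}^P_{\mathfrak{X}/\mathfrak{C}}/\mathfrak{C}^P}$ being a closed immersion of cone stacks). Intersecting this cone with the zero section of $\mathfrak{E}^\bullet$ via refined Gysin pullback gives the class. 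Concretely, one applies \cite[Section 7]{BF}: for a Deligne--Mumford stack $M$ proper over a base $B$ with a perfect relative obstruction theory of virtual rank $r$, there is a well-defined class $[M]^{\mathrm{vir}} = 0^!_{\mathfrak{E}^\bullet}[\mathfrak{C}_{M/B}] \in A_{r + \dim B'}(M)$ locally, and these glue; here one must invoke the version of the theory valid over an Artin stack base (equivalently, use that $\mathfrak{C}^P$ is smooth to reduce to a scheme base étale-locally, or appeal to Kresch's intersection theory on Artin stacks). The Chow group is taken with $\mathbb{Q}$-coefficients as is standard for Deligne--Mumford stacks.

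The main obstacle — really the only subtlety beyond invoking \cite{BF,LT} — is that the base $\mathfrak{C}^P$ is an Artin stack rather than a scheme, so the cited existence results, stated for schemes or algebraic spaces, must be applied with some care. The remedy is that $\mathfrak{C}^P$ is smooth, so étale-locally it looks like a smooth scheme times the classifying stack of a smooth group, and $\mathbb{L}_{\mathfrak{PT}^P_{\mathfrak{X}/\mathfrak{C}}/\mathfrak{C}^P}$ has perfect amplitude in $[-1,0]$; then one works on an atlas and checks that the construction descends, or cites the formulation over Artin stacks via Kresch's Chow groups. This is precisely the situation already handled in \cite[Section 7.2]{Zhou1} in the Donaldson--Thomas setting and in \cite[Section 3.8]{MPT}, so I would simply point to those treatments. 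I do not expect any genuinely new difficulty here: once Lemma \ref{perfect} has supplied the vanishing $\mathrm{Ext}^i(\bar{\mathbf{I}}^\bullet,\bar{\mathbf{I}}^\bullet)_0 = 0$ for $i\neq 1,2$ that makes $\mathbf{E}^\bullet$ two-term, the obstruction theory is perfect and the cone construction produces the class mechanically.
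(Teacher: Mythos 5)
Your proposal is correct and follows essentially the same route as the paper, which gives no explicit proof for this corollary and simply invokes the Behrend--Fantechi and Li--Tian machinery applied to the perfect relative obstruction theory $\Phi$ established in the preceding theorem. Your additional care about the Artin-stack base $\mathfrak{C}^P$ (smoothness, Kresch's intersection theory, descent from an atlas) is exactly the implicit content of the paper's appeal to [\cite{Zhou1}, Section 7.2] and [\cite{MPT}, Section 3.8], so nothing genuinely new is needed.
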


For $\varrho=(\varrho_{-},\varrho_{+},\varrho_{0})\in\Lambda_{P}^{\mathrm{spl}}$, let 
\ben
&&\pi_{\mathfrak{PT}_{\mathfrak{X}_{0}^\dagger/\mathfrak{C}_{0}^\dagger}^\varrho}: \mathfrak{X}_{0}^{\dagger,\varrho}\times_{\mathfrak{C}_{0}^{\dagger,\varrho}}\mathfrak{PT}_{\mathfrak{X}_{0}^\dagger/\mathfrak{C}_{0}^\dagger}^\varrho\to\mathfrak{PT}_{\mathfrak{X}_{0}^\dagger/\mathfrak{C}_{0}^\dagger}^\varrho,\\
&&\pi_{\mathfrak{X}_{0}^{\dagger,\varrho}}: \mathfrak{X}_{0}^{\dagger,\varrho}\times_{\mathfrak{C}_{0}^{\dagger,\varrho}}\mathfrak{PT}_{\mathfrak{X}_{0}^\dagger/\mathfrak{C}_{0}^\dagger}^\varrho\to\mathfrak{X}_{0}^{\dagger,\varrho},\\
&&\pi_{\mathfrak{PT}_{\mathfrak{Y}_{\pm}/\mathfrak{A}}^{\varrho_{\pm},\varrho_{0}}}:\mathfrak{Y}_{\pm}^{\varrho_{\pm},\varrho_{0}}\times_{\mathfrak{A}^{\varrho_{\pm},\varrho_{0}}}\mathfrak{PT}_{\mathfrak{Y}_{\pm}/\mathfrak{A}}^{\varrho_{\pm},\varrho_{0}}\to\mathfrak{PT}_{\mathfrak{Y}_{\pm}/\mathfrak{A}}^{\varrho_{\pm},\varrho_{0}},\\
&&\pi_{\mathfrak{Y}_{\pm}^{\varrho_{\pm},\varrho_{0}}}: \mathfrak{Y}_{\pm}^{\varrho_{\pm},\varrho_{0}}\times_{\mathfrak{A}^{\varrho_{\pm},\varrho_{0}}}\mathfrak{PT}_{\mathfrak{Y}_{\pm}/\mathfrak{A}}^{\varrho_{\pm},\varrho_{0}}\to\mathfrak{Y}_{\pm}^{\varrho_{\pm},\varrho_{0}},
\een 
be the natural projections.
Let $\bar{\mathbb{I}}^\bullet_{\varrho}:=\{\mathcal{O}_{\mathfrak{X}_{0}^{\dagger,\varrho}\times_{\mathfrak{C}_{0}^{\dagger,\varrho}}\mathfrak{PT}_{\mathfrak{X}_{0}^\dagger/\mathfrak{C}_{0}^\dagger}^\varrho}\to\mathbb{F}_{\varrho}\}$ and $\bar{\mathbb{I}}^\bullet_{\pm}=\{\mathcal{O}_{\mathfrak{Y}_{\pm}^{\varrho_{\pm},\varrho_{0}}\times_{\mathfrak{A}^{\varrho_{\pm},\varrho_{0}}}\mathfrak{PT}_{\mathfrak{Y}_{\pm}/\mathfrak{A}}^{\varrho_{\pm},\varrho_{0}}}\to\mathbb{F}_{\pm}\}$ be the universal complexes over  $\mathfrak{X}_{0}^{\dagger,\varrho}\times_{\mathfrak{C}_{0}^{\dagger,\varrho}}\mathfrak{PT}_{\mathfrak{X}_{0}^\dagger/\mathfrak{C}_{0}^\dagger}^\varrho$ and  $\mathfrak{Y}_{\pm}^{\varrho_{\pm},\varrho_{0}}\times_{\mathfrak{A}^{\varrho_{\pm},\varrho_{0}}}\mathfrak{PT}_{\mathfrak{Y}_{\pm}/\mathfrak{A}}^{\varrho_{\pm},\varrho_{0}}$ respectively. Similarly,  we have the  perfect relative obstruction theories for $\mathfrak{PT}_{\mathfrak{X}_{0}^\dagger/\mathfrak{C}_{0}^\dagger}^\varrho$ and $\mathfrak{PT}_{\mathfrak{Y}_{\pm}/\mathfrak{A}}^{\varrho_{\pm},\varrho_{0}}$:
\ben
&&\hat{\Phi}: \hat{\mathbf{E}}^\bullet:=R\pi_{\mathfrak{PT}_{\mathfrak{X}_{0}^\dagger/\mathfrak{C}_{0}^\dagger}^\varrho*}(R\mathcal{H}om(\bar{\mathbb{I}}_{\varrho}^\bullet,\bar{\mathbb{I}}_{\varrho}^\bullet)_{0}\otimes\pi_{\mathfrak{X}_{0}^{\dagger,\varrho}}^*\omega_{\mathfrak{X}_{0}^{\dagger,\varrho}/\mathfrak{C}_{0}^{\dagger,\varrho}})[2]\to\mathbb{L}_{\mathfrak{PT}_{\mathfrak{X}_{0}^\dagger/\mathfrak{C}_{0}^\dagger}^\varrho/\mathfrak{C}_{0}^{\dagger,\varrho}},\\
&&\Phi_{\pm}:\mathbf{E}^\bullet_{\pm}:=R\pi_{\mathfrak{PT}_{\mathfrak{Y}_{\pm}/\mathfrak{A}}^{\varrho_{\pm},\varrho_{0}}*}(R\mathcal{H}om(\bar{\mathbb{I}}^\bullet_{\pm},\bar{\mathbb{I}}^\bullet_{\pm})_{0}\otimes\pi_{\mathfrak{Y}_{\pm}^{\varrho_{\pm},\varrho_{0}}}^*\omega_{\mathfrak{Y}_{\pm}^{\varrho_{\pm},\varrho_{0}}/\mathfrak{A}^{\varrho_{\pm},\varrho_{0}}})[2]\to\mathbb{L}_{\mathfrak{PT}_{\mathfrak{Y}_{\pm}/\mathfrak{A}}^{\varrho_{\pm},\varrho_{0}}/\mathfrak{A}^{\varrho_{\pm},\varrho_{0}}},
\een
and the virtual fundamental classes
$[\mathfrak{PT}_{\mathfrak{X}_{0}^\dagger/\mathfrak{C}_{0}^\dagger}^\varrho]^\mathrm{vir}\in A_{*}(\mathfrak{PT}_{\mathfrak{X}_{0}^\dagger/\mathfrak{C}_{0}^\dagger}^\varrho)$ and  $[\mathfrak{PT}_{\mathfrak{Y}_{\pm}/\mathfrak{A}}^{\varrho_{\pm},\varrho_{0}}]^\mathrm{vir}\in A_{*}(\mathfrak{PT}_{\mathfrak{Y}_{\pm}/\mathfrak{A}}^{\varrho_{\pm},\varrho_{0}})$.
Since we have the following Cartesian diagram
\ben
\xymatrixcolsep{3pc}\xymatrix{
\mathfrak{PT}_{\mathfrak{X}_{0}^\dagger/\mathfrak{C}_{0}^\dagger}^\varrho\ar[d] \ar[r] &  \mathfrak{PT}_{\mathfrak{X}/\mathfrak{C}}^P \ar[d] \\
	 \mathfrak{C}_{0}^{\dagger,\varrho}\ar[r]^-{\zeta_{\varrho}}  & \mathfrak{C}^{P} &
}
\een
by [\cite{BF}, Proposition 7.2] and Proposition \ref{linebd-split}-(ii), the perfect relative obstruction theory  for $\mathfrak{PT}_{\mathfrak{X}_{0}^\dagger/\mathfrak{C}_{0}^\dagger}^\varrho$ over $\mathfrak{C}_{0}^{\dagger,\varrho}$ can be also defined by pulling back of $\Phi: \mathbf{E}^\bullet\to\mathbb{L}_{\mathfrak{PT}^P_{\mathfrak{X}/\mathfrak{C}}/\mathfrak{C}^P}$, and  $\zeta_{\varrho}^![\mathfrak{PT}_{\mathfrak{X}/\mathfrak{C}}^P]^\mathrm{vir}=[\mathfrak{PT}_{\mathfrak{X}_{0}^\dagger/\mathfrak{C}_{0}^\dagger}^\varrho]^\mathrm{vir}$.

Now, we define the relative orbifold Pandharipande-Thomas invariants as in [\cite{Zhou1},  Section 8.1]. Let $\mathcal{Y}$ be a $3$-dimensional smooth projective Deligne-Mumford stack with a smooth divisor $\mathcal{D}\subset\mathcal{Y}$. 
Let $K(\mathcal{Y}):=K^0(\mathrm{Coh}(\mathcal{Y}))_{\mathbb{Q}}$ be Grothendieck group of $\mathcal{Y}$ over $\mathbb{Q}$. As before one can view $P\in\mathrm{Hom}(K(\mathcal{Y}),\mathbb{Q})$ as an element in $K(\mathcal{Y})$.
Assume $P\in F_{1}K(\mathcal{Y})$. If $P$ is represented by some admissible sheaf and $i: \mathcal{D}\hookrightarrow\mathcal{Y}$ is the inclusion, then $P_{0}:=i^*P\in F_{0}K(\mathcal{D})$. Using [\cite{Zhou1}, Corollary 5.3] and the identification between $\mathrm{Hom}(K(\mathcal{Y}),\mathbb{Q})$ and $K(\mathcal{Y})$, one can consider the moduli stack $\mathfrak{PT}^{P,P_{0}}_{\mathfrak{Y}/\mathfrak{A}}$ with  $P\in F_{1}K(\mathcal{Y})$ and $P_{0}\in F_{0}K(\mathcal{D})$. Again we have a virtual fundamental class $[\mathfrak{PT}^{P,P_{0}}_{\mathfrak{Y}/\mathfrak{A}}]^\mathrm{vir}$  for the stack $\mathfrak{PT}^{P,P_{0}}_{\mathfrak{Y}/\mathfrak{A}}\to\mathfrak{A}^{P,P_{0}}$. As in the absolute case, for $\gamma\in A_{\mathrm{orb}}^l(\mathcal{Y})$,  define the orbifold Chern character 
\ben
\widetilde{\mathrm{ch}}^\mathrm{orb}_{k+2}(\gamma): A_{*}(\mathfrak{PT}^{P,P_{0}}_{\mathfrak{Y}/\mathfrak{A}})\to A_{*-k+1-l}(\mathfrak{PT}^{P,P_{0}}_{\mathfrak{Y}/\mathfrak{A}})
\een
by
\ben
\widetilde{\mathrm{ch}}^\mathrm{orb}_{k+2}(\gamma)(\xi):=\pi_{2*}\left(\widetilde{\mathrm{ch}}^\mathrm{orb}_{k+2}(\mathbb{F})\cdot\iota^*\bar{\pi}_{1}^*\gamma\cap\pi_{2}^*\xi\right)
\een
where $\mathbb{F}$ is the universal sheaf on $\mathfrak{Y}^{P,P_{0}}\times_{\mathfrak{A}^{P,P_{0}}}\mathfrak{PT}^{P,P_{0}}_{\mathfrak{Y}/\mathfrak{A}}$ with $\widetilde{\mathrm{ch}}^{\mathrm{orb}}(\mathbb{F})\in A_{\mathrm{orb}}^*(\mathfrak{Y}^{P,P_{0}}\times_{\mathfrak{A}^{P,P_{0}}}\mathfrak{PT}^{P,P_{0}}_{\mathfrak{Y}/\mathfrak{A}})$ and the map $\pi_{i}$ is the projections from $I_{\mathfrak{A}^{P,P_{0}}}\mathfrak{Y}^{P,P_{0}}\times_{\mathfrak{A}^{P,P_{0}}}\mathfrak{PT}^{P,P_{0}}_{\mathfrak{Y}/\mathfrak{A}}$ to the $i$-th factor for $i=1,2$.
Here, $I_{\mathfrak{A}^{P,P_{0}}}\mathfrak{Y}^{P,P_{0}}$ is the inertia stack of $\mathfrak{Y}^{P,P_{0}}$ over $\mathfrak{A}^{P,P_{0}}$, the map $\bar{\pi}_{1}$ is the composition map of $\pi_{1}$ with the natural map $I_{\mathfrak{A}^{P,P_{0}}}\mathfrak{Y}^{P,P_{0}}\to I\mathcal{Y}$, and $\iota:I_{\mathfrak{A}^{P,P_{0}}}\mathfrak{Y}^{P,P_{0}}\to I_{\mathfrak{A}^{P,P_{0}}}\mathfrak{Y}^{P,P_{0}}$ is the canonical involution.
Denote by $\mathrm{ev}:\mathfrak{PT}^{P,P_{0}}_{\mathfrak{Y}/\mathfrak{A}}\to\mathrm{Hilb}^{P_{0}}_{\mathcal{D}}$ the envaluation morphism.

\begin{definition}\label{relative-def}
Assume $\gamma_{i}\in A^*_{\mathrm{orb}}(\mathcal{Y})$, $1\leq i\leq r$ and $C\in A^*(\mathrm{Hilb}^{P_{0}}_{\mathcal{D}})$,
the relative orbifold Pandharipande-Thomas invariant is defined by
\ben
\bigg\langle\prod_{i=1}^r\tau_{k_{i}}(\gamma_{i})\bigg|C\bigg\rangle_{\mathcal{Y},\mathcal{D}}^P:&=&\int_{[\mathfrak{PT}^{P,P_{0}}_{\mathfrak{Y}/\mathfrak{A}}]^\mathrm{vir}}\mathrm{ev}^*(C)\cdot\prod_{i=1}^r\widetilde{\mathrm{ch}}^\mathrm{orb}_{k_{i}+2}(\gamma_{i})\\
&=&\deg\bigg[\mathrm{ev}^*(C)\cdot\prod_{i=1}^r\widetilde{\mathrm{ch}}^\mathrm{orb}_{k_{i}+2}(\gamma_{i})\cdot[\mathfrak{PT}^{P,P_{0}}_{\mathfrak{Y}/\mathfrak{A}}]^\mathrm{vir}\bigg]_{0}.
\een
\end{definition}

\subsection{Cycle version of degeneration formula}
In this subsection, we will give a cycle version of degeneration formula following the similar argument for the Donaldson-Thomas case in  [\cite{LW}, Section 6.2] and [\cite{Zhou1}, Section 7.3].
Given a simple degeneration $\pi: \mathcal{X}\to\mathbb{A}^1$ which is a family of projective Deligne-Mumford stacks of relative dimension 3,  let $i_{c}: \{c\}\hookrightarrow\mathbb{A}^1$ be the inclusion for any point $c\in\mathbb{A}^1$. If $c\neq0$, the fiber $\mathcal{X}_{c}$ is a $3$-dimensional smooth projective Deligne-Mumford stack. As in Section 6.2, there is  a perfect obstruction theory for $\mathrm{PT}_{\mathcal{X}_{c}/\mathbb{C}}^P$ and a virtual fundamental class $[\mathrm{PT}_{\mathcal{X}_{c}/\mathbb{C}}^P]^\mathrm{vir}$.  Let the map $\mathfrak{PT}^P_{\mathfrak{X}/\mathfrak{C}}\to\mathbb{A}^1$ be the composition of $\mathfrak{PT}^P_{\mathfrak{X}/\mathfrak{C}}\to\mathfrak{C}^P$ with the canonical map $\mathfrak{C}^P\to\mathbb{A}^1:=\mathrm{Spec}\,\mathbb{C}[t]$. Now, we have two Cartesian diagrams as follows:
\ben
\xymatrixcolsep{3pc}\xymatrix{
	\mathrm{PT}_{\mathcal{X}_{c}/\mathbb{C}}^P\ar[d] \ar[r] &  \mathfrak{PT}^P_{\mathfrak{X}/\mathfrak{C}} \ar[d] \\
\{c\}\ar@{^{(}->}[r]^-{i_{c}} & \mathbb{A}^1 &
}
\een
and
\ben
\xymatrixcolsep{3pc}\xymatrix{
\mathfrak{PT}_{\mathfrak{X}_{0}^\dagger/\mathfrak{C}_{0}^\dagger}^\varrho\ar@{^{(}->}[r]^-{\varsigma_{\varrho}}	& \mathfrak{PT}_{\mathfrak{X}_{0}^\dagger/\mathfrak{C}_{0}^\dagger}^P\ar[d] \ar[r] & \mathfrak{PT}^P_{\mathfrak{X}/\mathfrak{C}} \ar[d] \\
	&\{0\}\ar@{^{(}->}[r]^-{i_{0}} & \mathbb{A}^1. 
}
\een
Combining the diagram \eqref{diagram} with Proposition \ref{split-data}, we have the following Cartesian diagram
\be\label{diagram2}
\xymatrixcolsep{3pc}\xymatrix{
\mathfrak{PT}_{\mathfrak{X}_{0}^\dagger/\mathfrak{C}_{0}^\dagger}^\varrho 	&\ar[l]_-{\Phi_{\varrho}}^-{\cong}\mathfrak{PT}_{\mathfrak{Y}_{-}/\mathfrak{A}}^{\varrho_{-},\varrho_{0}}\times_{\mathrm{Hilb}_{\mathcal{D}}^{\varrho_{0}}}\mathfrak{PT}_{\mathfrak{Y}_{+}/\mathfrak{A}}^{\varrho_{+},\varrho_{0}}\ar[d] \ar[r] &  	\mathfrak{PT}_{\mathfrak{Y}_{-}/\mathfrak{A}}^{\varrho_{-},\varrho_{0}}\times \mathfrak{PT}_{\mathfrak{Y}_{+}/\mathfrak{A}}^{\varrho_{+},\varrho_{0}} \ar[d] \\
&\mathfrak{C}_{0}^{\dagger,\varrho}\times	\mathrm{Hilb}_{\mathcal{D}}^{\varrho_{0}}\ar[r] \ar[d]& \mathfrak{C}_{0}^{\dagger,\varrho}\times\mathrm{Hilb}_{\mathcal{D}}^{\varrho_{0}}\times\mathrm{Hilb}_{\mathcal{D}}^{\varrho_{0}} \ar[d]&\\
	& \mathrm{Hilb}_{\mathcal{D}}^{\varrho_{0}}\ar[r]^-{\Delta}  &  \mathrm{Hilb}_{\mathcal{D}}^{\varrho_{0}}\times\mathrm{Hilb}_{\mathcal{D}}^{\varrho_{0}} &  
}
\ee
where we use  maps (to their bases) $\mathfrak{PT}_{\mathfrak{Y}_{\pm}/\mathfrak{A}}^{\varrho_{\pm},\varrho_{0}}\to\mathfrak{A}^{\varrho_{\pm},\varrho_{0}}$ and evaluation maps $\mathfrak{PT}_{\mathfrak{Y}_{\pm}/\mathfrak{A}}^{\varrho_{\pm},\varrho_{0}}\to\mathrm{Hilb}_{\mathcal{D}}^{\varrho_{0}}$. Denote  by $g$ the composition  $\mathfrak{PT}_{\mathfrak{Y}_{-}/\mathfrak{A}}^{\varrho_{-},\varrho_{0}}\times_{\mathrm{Hilb}_{\mathcal{D}}^{\varrho_{0}}}\mathfrak{PT}_{\mathfrak{Y}_{+}/\mathfrak{A}}^{\varrho_{+},\varrho_{0}}\to\mathrm{Hilb}_{\mathcal{D}}^{\varrho_{0}}\times\mathfrak{C}_{0}^{\dagger,\varrho}\to\mathrm{Hilb}_{\mathcal{D}}^{\varrho_{0}}$.

By Proposition \ref{linebd-split}-(ii) and Proposition \ref{split-data}-(ii), using the notion of refined Gysin homomorphism which is generalized to the case of Deligne-Mumford stacks in [\cite{Vis}] and to the case of Artin stacks in [\cite{Kre1}], as in [\cite{LW}, Section 6.2], one has the localized first Chern class  $c_{1}^{\mathrm{loc}}(L_{\varrho},s_{\varrho})$   satisfying
\ben
[\mathfrak{PT}_{\mathfrak{X}_{0}^\dagger/\mathfrak{C}_{0}^\dagger}^\varrho]^\mathrm{vir}=c_{1}^{\mathrm{loc}}(L_{\varrho},s_{\varrho})[\mathfrak{PT}^P_{\mathfrak{X}/\mathfrak{C}}]^\mathrm{vir}
\een
via the definition of 
$
c_{1}^{\mathrm{loc}}(L_{\varrho},s_{\varrho}): A_*(\mathfrak{PT}^P_{\mathfrak{X}/\mathfrak{C}})\to A_{*-1}(\mathfrak{PT}_{\mathfrak{X}_{0}^\dagger/\mathfrak{C}_{0}^\dagger}^\varrho)
$
as the refined Gysin homomorphism $0_{\pi_{P}^*L_{\varrho}}^!$ of some normal bundle as in [\cite{Li2}, Section 3.1] through the diagram
\ben
\xymatrixcolsep{5pc}\xymatrix{
	\mathfrak{PT}_{\mathfrak{X}_{0}^\dagger/\mathfrak{C}_{0}^\dagger}^\varrho\ar[d] \ar[r] &  \mathfrak{PT}^P_{\mathfrak{X}/\mathfrak{C}} \ar[d]^-{\pi_{P}^*s_{\varrho}} \\
	 \mathfrak{PT}^P_{\mathfrak{X}/\mathfrak{C}}\ar[r]^-{0_{\pi_{P}^*L_{\varrho}}} & \pi_{P}^*L_{\varrho} &
}
\een
and the pullback property in [\cite{BF}, Proposition 7.2].

By [\cite{BF}, Proposition 7.2] and Proposition \ref{split-data} again, we have
\begin{proposition}\label{split-cycles}
We have the following identity of cycle classes
\ben
i_{0}^![\mathfrak{PT}^P_{\mathfrak{X}/\mathfrak{C}}]^\mathrm{vir}=\sum_{\varrho\in\Lambda_{P}^{\mathrm{spl}}}\varsigma_{\varrho*}[\mathfrak{PT}_{\mathfrak{X}_{0}^\dagger/\mathfrak{C}_{0}^\dagger}^\varrho]^\mathrm{vir}
\een
where $\varsigma_{\varrho}:\mathfrak{PT}_{\mathfrak{X}_{0}^\dagger/\mathfrak{C}_{0}^\dagger}^\varrho\to\mathfrak{PT}_{\mathfrak{X}_{0}^\dagger/\mathfrak{C}_{0}^\dagger}^P$ is the inclusion.
\end{proposition}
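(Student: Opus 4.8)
The plan is to transcribe Jun Li's ``splitting of a Cartier divisor into localized first Chern classes'' to the present setting, exactly as in the Donaldson--Thomas case of [\cite{LW}, Section 6.2] and [\cite{Zhou1}, Section 7.3]. First I would record the geometry: the map $\mathfrak{PT}^P_{\mathfrak{X}/\mathfrak{C}}\to\mathbb{A}^1$ is the composition of $\pi_{P}:\mathfrak{PT}^P_{\mathfrak{X}/\mathfrak{C}}\to\mathfrak{C}^P$ with the canonical map $\mathfrak{C}^P\to\mathbb{A}^1=\mathrm{Spec}\,\mathbb{C}[t]$, so the uniformizer $t$ pulls back to the section $\pi_{P}^*\pi^*t$ of $\mathcal{O}_{\mathfrak{PT}^P_{\mathfrak{X}/\mathfrak{C}}}$, and $i_{0}:\{0\}\hookrightarrow\mathbb{A}^1$ is a regular embedding of codimension one. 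Hence, by the definition of the refined Gysin homomorphism (generalized to Deligne--Mumford and Artin stacks in [\cite{Vis}] and [\cite{Kre1}]) and its compatibility with virtual classes in [\cite{BF}, Proposition 7.2], the operator $i_{0}^!$ acts on $[\mathfrak{PT}^P_{\mathfrak{X}/\mathfrak{C}}]^\mathrm{vir}$ as intersection with the Cartier divisor $(\pi_{P}^*\pi^*t=0)$, i.e. as the refined Gysin homomorphism $0^!_{\pi_{P}^*\pi^*t}$ of the trivial line bundle equipped with this section.

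Next I would use Proposition \ref{split-data}. By part $(i)$ we have the factorization $\pi_{P}^*\pi^*t=\prod_{\varrho\in\Lambda_{P}^\mathrm{spl}}\pi_{P}^*s_{\varrho}$ under the canonical trivialization $\bigotimes_{\varrho}\pi_{P}^*L_{\varrho}\cong\mathcal{O}_{\mathfrak{PT}^P_{\mathfrak{X}/\mathfrak{C}}}$, and by part $(ii)$ the vanishing locus of $\pi_{P}^*s_{\varrho}$ is the closed substack $\mathfrak{PT}_{\mathfrak{X}_{0}^\dagger/\mathfrak{C}_{0}^\dagger}^\varrho$. I would then invoke the splitting lemma of [\cite{Li2}, Section 3.1] (see also [\cite{LW}, Section 6.2]): when a line bundle with section is written as a tensor product of line bundles each carrying a section so that the product of the sections recovers the given one, the associated localized first Chern class operator splits as the sum of the localized first Chern class operators of the factors, provided the component divisors are Cartier and reduced. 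Applied here this yields
\ben
i_{0}^![\mathfrak{PT}^P_{\mathfrak{X}/\mathfrak{C}}]^\mathrm{vir}=0^!_{\pi_{P}^*\pi^*t}[\mathfrak{PT}^P_{\mathfrak{X}/\mathfrak{C}}]^\mathrm{vir}=\sum_{\varrho\in\Lambda_{P}^\mathrm{spl}}c_{1}^{\mathrm{loc}}(L_{\varrho},s_{\varrho})[\mathfrak{PT}^P_{\mathfrak{X}/\mathfrak{C}}]^\mathrm{vir}.
\een
Finally, the identity recalled just before the proposition, $c_{1}^{\mathrm{loc}}(L_{\varrho},s_{\varrho})[\mathfrak{PT}^P_{\mathfrak{X}/\mathfrak{C}}]^\mathrm{vir}=[\mathfrak{PT}_{\mathfrak{X}_{0}^\dagger/\mathfrak{C}_{0}^\dagger}^\varrho]^\mathrm{vir}$ (pushed forward by $\varsigma_{\varrho}$), together with the earlier observation that, via [\cite{BF}, Proposition 7.2] applied to the lci immersion $\zeta_{\varrho}:\mathfrak{C}_{0}^{\dagger,\varrho}\hookrightarrow\mathfrak{C}^P$ and Proposition \ref{linebd-split}$(ii)$, the pulled-back relative obstruction theory agrees with the one built on $\mathfrak{PT}_{\mathfrak{X}_{0}^\dagger/\mathfrak{C}_{0}^\dagger}^\varrho$ in Section 6.2, gives $c_{1}^{\mathrm{loc}}(L_{\varrho},s_{\varrho})[\mathfrak{PT}^P_{\mathfrak{X}/\mathfrak{C}}]^\mathrm{vir}=\varsigma_{\varrho*}[\mathfrak{PT}_{\mathfrak{X}_{0}^\dagger/\mathfrak{C}_{0}^\dagger}^\varrho]^\mathrm{vir}$. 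Substituting into the displayed formula completes the proof.

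The step I expect to be the main obstacle is the rigorous justification of the splitting lemma in the stacky, Artin-stack-base setting: one must verify that each $\pi_{P}^*s_{\varrho}$ cuts out the reduced Cartier divisor $\mathfrak{PT}_{\mathfrak{X}_{0}^\dagger/\mathfrak{C}_{0}^\dagger}^\varrho$ with multiplicity one, which I would reduce to the analogous statement for the expanded degeneration stack $\mathfrak{C}^P$ over $\mathbb{A}^1$ coming from its construction in [\cite{Zhou1}, Section 7.1] and [\cite{Li2}, Section 3], and that the refined Gysin formalism and the pullback compatibility [\cite{BF}, Proposition 7.2] apply to the Artin stacks $\mathfrak{C}^P$ and $\mathfrak{C}_{0}^{\dagger,\varrho}$, which is covered by [\cite{Kre1}]. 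Once these foundational points are in place, the remainder is a formal transcription of the Donaldson--Thomas argument.
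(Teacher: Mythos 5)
Your proposal is correct and follows essentially the same route as the paper: the paper likewise reduces the identity to the factorization $\prod_{\varrho}\pi_{P}^{*}s_{\varrho}=\pi_{P}^{*}\pi^{*}t$ and the vanishing loci from Proposition \ref{split-data}, the splitting of the localized first Chern class from [\cite{Li2}, Section 3.1] as transcribed in [\cite{LW}, Section 6.2] and [\cite{Zhou1}, Section 7.3], and the identification $c_{1}^{\mathrm{loc}}(L_{\varrho},s_{\varrho})[\mathfrak{PT}^P_{\mathfrak{X}/\mathfrak{C}}]^\mathrm{vir}=[\mathfrak{PT}_{\mathfrak{X}_{0}^\dagger/\mathfrak{C}_{0}^\dagger}^\varrho]^\mathrm{vir}$ via [\cite{BF}, Proposition 7.2]. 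The foundational points you flag (reducedness of the component divisors and the validity of the refined Gysin formalism over Artin-stack bases) are exactly the ones the paper also delegates to the cited references rather than reproving.
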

By the similar argument in the proof of [\cite{Zhou1}, Theorem 7.11] (see also [\cite{LW}, Theorem 6.6]), we have the following degeneration formula in cycle version, which is the stacky case of [\cite{LW}, Theorem 6.9].
\begin{theorem}\label{cycle-degenerate}
We  have the following identities
\ben
&&i_{c}^![\mathfrak{PT}^P_{\mathfrak{X}/\mathfrak{C}}]^\mathrm{vir}=[	\mathrm{PT}_{\mathcal{X}_{c}/\mathbb{C}}^P]^\mathrm{vir},\\
&&i_{0}^![\mathfrak{PT}^P_{\mathfrak{X}/\mathfrak{C}}]^\mathrm{vir}=\sum_{\varrho\in\Lambda_{P}^\mathrm{spl}}\varsigma_{\varrho*}\Delta^!([\mathfrak{PT}_{\mathfrak{Y}_{-}/\mathfrak{A}}^{\varrho_{-},\varrho_{0}}]^\mathrm{vir}\times[\mathfrak{PT}_{\mathfrak{Y}_{+}/\mathfrak{A}}^{\varrho_{+},\varrho_{0}}]^\mathrm{vir}).
\een
\end{theorem}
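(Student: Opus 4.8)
The two identities are proved separately but by the same mechanism: comparing the virtual class of the degeneration family $\mathfrak{PT}^P_{\mathfrak{X}/\mathfrak{C}}$ restricted to fibers of the structure map to $\mathbb{A}^1$. For the first identity, the square with $i_c\colon\{c\}\hookrightarrow\mathbb{A}^1$ ($c\neq0$) Cartesian over $\mathfrak{PT}^P_{\mathfrak{X}/\mathfrak{C}}\to\mathbb{A}^1$ exhibits $\mathrm{PT}^P_{\mathcal{X}_c/\mathbb{C}}$ as the fiber, and since $\mathbb{A}^1$ is smooth and $\{c\}$ is a regularly embedded (codimension one) point, the Gysin pullback $i_c^!$ applies. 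The plan is to invoke the pullback compatibility of perfect relative obstruction theories under base change, namely [\cite{BF}, Proposition 7.2]: pulling back the relative obstruction theory $\Phi\colon \mathbf{E}^\bullet\to\mathbb{L}_{\mathfrak{PT}^P_{\mathfrak{X}/\mathfrak{C}}/\mathfrak{C}^P}$ along $\mathfrak{C}^P\to\mathbb{A}^1$ and then restricting to the fiber over $c$ recovers the absolute perfect obstruction theory on $\mathrm{PT}^P_{\mathcal{X}_c/\mathbb{C}}$ built in Section 6.2. Here one uses that over $c\neq0$ the base $\mathfrak{C}^P$ is (étale-locally) just $\mathbb{A}^1$ near $c$ with trivial weight data, so the relative-to-$\mathfrak{C}^P$ theory over the fiber is the absolute theory over $\mathbb{C}$. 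Then $i_c^![\mathfrak{PT}^P_{\mathfrak{X}/\mathfrak{C}}]^\mathrm{vir}=[\mathrm{PT}^P_{\mathcal{X}_c/\mathbb{C}}]^\mathrm{vir}$ follows from the functoriality of virtual pullback.

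\textbf{The second identity} is obtained by combining three inputs already assembled in the excerpt. First, Proposition \ref{split-cycles} gives
\ben
i_0^![\mathfrak{PT}^P_{\mathfrak{X}/\mathfrak{C}}]^\mathrm{vir}=\sum_{\varrho\in\Lambda_P^\mathrm{spl}}\varsigma_{\varrho*}[\mathfrak{PT}_{\mathfrak{X}_0^\dagger/\mathfrak{C}_0^\dagger}^\varrho]^\mathrm{vir},
\een
so it remains to identify each summand $[\mathfrak{PT}_{\mathfrak{X}_0^\dagger/\mathfrak{C}_0^\dagger}^\varrho]^\mathrm{vir}$ with $\Delta^!([\mathfrak{PT}_{\mathfrak{Y}_-/\mathfrak{A}}^{\varrho_-,\varrho_0}]^\mathrm{vir}\times[\mathfrak{PT}_{\mathfrak{Y}_+/\mathfrak{A}}^{\varrho_+,\varrho_0}]^\mathrm{vir})$ under the isomorphism $\Phi_\varrho$ of Proposition \ref{split-data}(iii). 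The second input is the Cartesian diagram \eqref{diagram2}: the fiber product $\mathfrak{PT}_{\mathfrak{Y}_-/\mathfrak{A}}^{\varrho_-,\varrho_0}\times_{\mathrm{Hilb}_{\mathcal{D}}^{\varrho_0}}\mathfrak{PT}_{\mathfrak{Y}_+/\mathfrak{A}}^{\varrho_+,\varrho_0}$ is cut out of the product by the diagonal $\Delta\colon\mathrm{Hilb}_{\mathcal{D}}^{\varrho_0}\to\mathrm{Hilb}_{\mathcal{D}}^{\varrho_0}\times\mathrm{Hilb}_{\mathcal{D}}^{\varrho_0}$ (a regular embedding, since $\mathcal{D}$ being a smooth projective DM stack of dimension two forces $\mathrm{Hilb}_{\mathcal{D}}^{\varrho_0}$ to be smooth for $0$-dimensional Hilbert data — or at least one has a well-defined refined Gysin $\Delta^!$ via the normal bundle of the diagonal). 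The third input is the compatibility of obstruction theories under the gluing/fiber-product: the relative obstruction theory on $\mathfrak{PT}_{\mathfrak{X}_0^\dagger/\mathfrak{C}_0^\dagger}^\varrho$ decomposes, via the Mayer–Vietoris sequence \eqref{SES-complexes} underlying Lemma \ref{perfect} (applied now in families over $\mathfrak{C}_0^{\dagger,\varrho}$), as the "difference" of the obstruction theories on the two sides $\mathfrak{PT}_{\mathfrak{Y}_\pm/\mathfrak{A}}^{\varrho_\pm,\varrho_0}$ and the one on the glued divisor data controlled by $\mathrm{Hilb}_{\mathcal{D}}^{\varrho_0}$. This is precisely the situation of the compatibility datum for virtual pullback along a Cartesian square, so one concludes
\ben
[\mathfrak{PT}_{\mathfrak{X}_0^\dagger/\mathfrak{C}_0^\dagger}^\varrho]^\mathrm{vir}=\Phi_{\varrho*}\Delta^!\big([\mathfrak{PT}_{\mathfrak{Y}_-/\mathfrak{A}}^{\varrho_-,\varrho_0}]^\mathrm{vir}\times[\mathfrak{PT}_{\mathfrak{Y}_+/\mathfrak{A}}^{\varrho_+,\varrho_0}]^\mathrm{vir}\big),
\een
and substituting into Proposition \ref{split-cycles} gives the stated formula. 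Throughout, the reference template is [\cite{Zhou1}, Theorem 7.11] and [\cite{LW}, Theorems 6.6 and 6.9], whose arguments in the Quot/ideal-sheaf setting transfer verbatim once the PT-specific perfect obstruction theories of Section 6.2 and Lemma \ref{perfect} are in place.

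\textbf{Main obstacle.} The delicate point is the obstruction-theory compatibility needed to pass from the bare cycle-class splitting of Proposition \ref{split-cycles} to the $\Delta^!$-product form. One must check that under $\Phi_\varrho$ the pulled-back relative obstruction theory $\hat{\Phi}\colon\hat{\mathbf{E}}^\bullet\to\mathbb{L}_{\mathfrak{PT}_{\mathfrak{X}_0^\dagger/\mathfrak{C}_0^\dagger}^\varrho/\mathfrak{C}_0^{\dagger,\varrho}}$ is genuinely identified — as a compatibility triple in the sense needed for functoriality of virtual pullback — with the one induced on $\mathfrak{PT}_{\mathfrak{Y}_-/\mathfrak{A}}^{\varrho_-,\varrho_0}\times_{\mathrm{Hilb}_{\mathcal{D}}^{\varrho_0}}\mathfrak{PT}_{\mathfrak{Y}_+/\mathfrak{A}}^{\varrho_+,\varrho_0}$ by the external product theories $\Phi_\pm$ together with the diagonal. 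This requires the families version of the long exact sequence \eqref{SES-complexes}: on $\mathfrak{X}_0^{\dagger,\varrho}$ the structure sheaf sits in a triangle with $\mathcal{O}_{\mathfrak{Y}_-}\oplus\mathcal{O}_{\mathfrak{Y}_+}$ and $\mathcal{O}_{\mathcal{D}}$, and tensoring with the universal complex $\bar{\mathbb{I}}^\bullet_\varrho$ and applying $R\mathcal{H}om(-,-)_0$ relative to the base produces a distinguished triangle relating $\hat{\mathbf{E}}^\bullet$, $\mathbf{E}^\bullet_-\boxplus\mathbf{E}^\bullet_+$, and the (shifted) cotangent complex of $\mathrm{Hilb}_{\mathcal{D}}^{\varrho_0}$ — here admissibility (so that $\bar{\mathbb{I}}^\bullet_\varrho|_{\mathcal{D}}$ restricts to an ideal-sheaf-type complex, as in Lemma \ref{perfect}) and the finiteness of autoequivalences are essential for the universal complexes to exist and for the trace-free parts to behave well. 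Verifying that this triangle is the right compatibility datum, rather than merely producing matching ranks, is where the real work lies; once it is in place, the formula follows formally.
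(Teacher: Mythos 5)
Your proposal follows essentially the same route as the paper: the first identity via the base-change property of perfect relative obstruction theories ([\cite{BF}, Proposition 7.2]), and the second by combining Proposition \ref{split-cycles} with the identification $[\mathfrak{PT}_{\mathfrak{X}_{0}^\dagger/\mathfrak{C}_{0}^\dagger}^\varrho]^\mathrm{vir}=\Delta^!([\mathfrak{PT}_{\mathfrak{Y}_{-}/\mathfrak{A}}^{\varrho_{-},\varrho_{0}}]^\mathrm{vir}\times[\mathfrak{PT}_{\mathfrak{Y}_{+}/\mathfrak{A}}^{\varrho_{+},\varrho_{0}}]^\mathrm{vir})$, which the paper also obtains from the short exact sequence of universal complexes (the families version of \eqref{SES-complexes}, using admissibility so the restriction to the divisor is the pullback of the Hilbert stack's universal ideal sheaf) and the resulting compatibility of obstruction theories over $\Delta$ in the sense of [\cite{BF}, Proposition 7.5]. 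The "main obstacle" you flag — verifying the distinguished triangle of obstruction theories really is the required compatibility datum — is exactly the content of the paper's displayed diagram of distinguished triangles and the isomorphism $R\pi_{\mathrm{Hilb}_{\mathcal{D}}^{\varrho_{0}}*}R\mathcal{H}om(\bar{\mathbb{I}}_{0},\bar{\mathbb{I}}_{0})_{0}^\vee\cong \mathbb{L}_{\mathrm{Hilb}_{\mathcal{D}}^{\varrho_{0}}/\mathrm{Hilb}_{\mathcal{D}}^{\varrho_{0}}\times\mathrm{Hilb}_{\mathcal{D}}^{\varrho_{0}}}$, so your plan is correct and matches the paper's proof.
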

\begin{proof}
 According to Proposition \ref{split-iso}, for each $\varrho\in\Lambda_{P}^\mathrm{spl}$,
we have the following isomorphism
\ben
&&\mathfrak{X}_{0}^{\dagger,\varrho}\times_{\mathfrak{C}_{0}^{\dagger,\varrho}}\mathfrak{PT}_{\mathfrak{X}_{0}^\dagger/\mathfrak{C}_{0}^\dagger}^\varrho\\
\cong&&((\mathfrak{Y}_{-}^{\varrho_{-},\varrho_{0}}\times\mathfrak{A}^{\varrho_{+},\varrho_{0}})\cup_{\mathfrak{A}^{\varrho_{-},\varrho_{0}}\times \mathcal{D}\times\mathfrak{A}^{\varrho_{+},\varrho_{0}}}(\mathfrak{A}^{\varrho_{-},\varrho_{0}}\times(\mathfrak{Y}_{+}^0)^{\varrho_{+},\varrho_{0}}))\times_{\mathfrak{C}_{0}^{\dagger,\varrho}}\mathfrak{PT}_{\mathfrak{X}_{0}^\dagger/\mathfrak{C}_{0}^\dagger}^\varrho\\
\cong&&((\mathfrak{Y}_{-}^{\varrho_{-},\varrho_{0}}\times\mathfrak{A}^{\varrho_{+},\varrho_{0}})\times_{\mathfrak{C}_{0}^{\dagger,\varrho}}\mathfrak{PT}_{\mathfrak{X}_{0}^\dagger/\mathfrak{C}_{0}^\dagger}^\varrho)\cup_{\mathcal{D}\times\mathfrak{PT}_{\mathfrak{X}_{0}^\dagger/\mathfrak{C}_{0}^\dagger}^\varrho}((\mathfrak{A}^{\varrho_{-},\varrho_{0}}\times(\mathfrak{Y}_{+}^0)^{\varrho_{+},\varrho_{0}})\times_{\mathfrak{C}_{0}^{\dagger,\varrho}}\mathfrak{PT}_{\mathfrak{X}_{0}^\dagger/\mathfrak{C}_{0}^\dagger}^\varrho).
\een
 Let $\mathrm{pr}_{\pm}: \mathfrak{PT}_{\mathfrak{Y}_{-}/\mathfrak{A}}^{\varrho_{-},\varrho_{0}}\times\mathfrak{PT}_{\mathfrak{Y}_{+}/\mathfrak{A}}^{\varrho_{+},\varrho_{0}}\to\mathfrak{PT}_{\mathfrak{Y}_{\pm}/\mathfrak{A}}^{\varrho_{\pm},\varrho_{0}}$ be the projections. Let $\mathrm{ev}_{\varrho}: \mathfrak{PT}_{\mathfrak{X}_{0}^\dagger/\mathfrak{C}_{0}^\dagger}^\varrho\to\mathrm{Hilb}_{\mathcal{D}}^{\varrho_{0}}$ be the evaluation morphism and $\iota_{\mathcal{D}}:\mathcal{D}\times\mathfrak{PT}_{\mathfrak{X}_{0}^\dagger/\mathfrak{C}_{0}^\dagger}^\varrho \hookrightarrow \mathfrak{X}_{0}^{\dagger,\varrho}\times_{\mathfrak{C}_{0}^{\dagger,\varrho}}\mathfrak{PT}_{\mathfrak{X}_{0}^\dagger/\mathfrak{C}_{0}^\dagger}^\varrho$ be the inclusion map of the universal distinguished divisor. Let $\bar{\mathbb{I}}_{0}$ be the universal ideal sheaf on $\mathcal{D}\times\mathrm{Hilb}_{\mathcal{D}}^{\varrho_{0}}$ of the Hilbert scheme $\mathrm{Hilb}_{\mathcal{D}}^{\varrho_{0}}$. Then on each fiber, $(\mathrm{id}_{\mathcal{D}}\times\mathrm{ev}_{\varrho})^*\bar{\mathbb{I}}_{0}$ is quasi-isomorphic to the restriction of the universal complex $\bar{\mathbb{I}}^\bullet_{\varrho}$ to $\mathcal{D}\times\mathfrak{PT}_{\mathfrak{X}_{0}^\dagger/\mathfrak{C}_{0}^\dagger}^\varrho$ due to the admissibility.
Combined with the above isomorphism, the diagram \eqref{diagram} and Proposition \ref{split-data}-(iii), we have the following short exact sequence
\ben
0\to\bar{\mathbb{I}}^\bullet_{\varrho}\to\Delta^*(\mathrm{pr}_{-}^*\bar{\mathbb{I}}^\bullet_{-}\oplus\mathrm{pr}_{+}^*\bar{\mathbb{I}}^\bullet_{+})\to \iota_{\mathcal{D}*}(\mathrm{id}_{\mathcal{D}}\times\mathrm{ev}_{\varrho})^*\bar{\mathbb{I}}_{0}\to0.
\een
where $\mathrm{pr}_{\pm}$ and $\Delta$ denote the base changes to the corresponding universal spaces.
Let $\pi_{\mathrm{Hilb}_{\mathcal{D}}^{\varrho_{0}}}: \mathcal{D}\times\mathrm{Hilb}_{\mathcal{D}}^{\varrho_{0}}\to\mathrm{Hilb}_{\mathcal{D}}^{\varrho_{0}}$ be the projection.
Applying $R\pi_{\mathfrak{PT}_{\mathfrak{X}_{0}^\dagger/\mathfrak{C}_{0}^\dagger}^\varrho*}R\mathcal{H}om(\bar{\mathbb{I}}^\bullet_{\varrho}, -)^\vee$ to the above short exact sequence, we have the following diagram of distinguished triangles
\ben
\xymatrixcolsep{2pc}\xymatrix{
 g^*R\pi_{\mathrm{Hilb}_{\mathcal{D}}^{\varrho_{0}}*}R\mathcal{H}om(\bar{\mathbb{I}}_{0},\bar{\mathbb{I}}_{0})_{0}^\vee\ar[r]	\ar[d]&\bigoplus R\pi_{\mathfrak{PT}_{\mathfrak{Y}_{\pm}/\mathfrak{A}}^{\varrho_{\pm},\varrho_{0}}*}R\mathcal{H}om(\bar{\mathbb{I}}^\bullet_{\pm},\bar{\mathbb{I}}^\bullet_{\pm})_{0}^\vee\ar[d] \ar[r] & R\pi_{\mathfrak{PT}_{\mathfrak{X}_{0}^\dagger/\mathfrak{C}_{0}^\dagger}^\varrho*}R\mathcal{H}om(\bar{\mathbb{I}}^\bullet_{\varrho},\bar{\mathbb{I}}^\bullet_{\varrho})_{0}^\vee \ar[d] \\
 \mathbb{L}_{\mathfrak{PT}_{\mathfrak{X}_{0}^\dagger/\mathfrak{C}_{0}^\dagger}^\varrho/\mathfrak{PT}_{\mathfrak{Y}_{-}/\mathfrak{A}}^{\varrho_{-},\varrho_{0}}\times\mathfrak{PT}_{\mathfrak{Y}_{+}/\mathfrak{A}}^{\varrho_{+},\varrho_{0}}}\ar[r]	& \bigoplus\mathbb{L}_{\mathfrak{PT}_{\mathfrak{Y}_{\pm}/\mathfrak{A}}^{\varrho_{\pm},\varrho_{0}}/\mathfrak{A}^{\varrho_{\pm},\varrho_{0}}}[1] \ar[r]  & \mathbb{L}_{\mathfrak{PT}_{\mathfrak{X}_{0}^\dagger/\mathfrak{C}_{0}^\dagger}^\varrho/\mathfrak{C}_{0}^{\dagger,\varrho}}[1] &
}
\een
where the second row is the distinguished triangle of truncated cotangent complexes for $\mathfrak{PT}_{\mathfrak{X}_{0}^\dagger/\mathfrak{C}_{0}^\dagger}^\varrho\to\mathfrak{PT}_{\mathfrak{Y}_{-}/\mathfrak{A}}^{\varrho_{-},\varrho_{0}}\times\mathfrak{PT}_{\mathfrak{Y}_{+}/\mathfrak{A}}^{\varrho_{+},\varrho_{0}}$ relative to $\mathfrak{C}_{0}^{\dagger,\varrho}$ and the vertical arrows are induced by perfect obstruction theories. As shown in [\cite{LW}, (6.13)] or [\cite{Zhou1}, Theorem 7.11], we have 
\ben
R\pi_{\mathrm{Hilb}_{\mathcal{D}}^{\varrho_{0}}*}R\mathcal{H}om(\bar{\mathbb{I}}_{0},\bar{\mathbb{I}}_{0})_{0}^\vee\cong \mathbb{L}_{\mathrm{Hilb}_{\mathcal{D}}^{\varrho_{0}}/\mathrm{Hilb}_{\mathcal{D}}^{\varrho_{0}}\times\mathrm{Hilb}_{\mathcal{D}}^{\varrho_{0}}}.
\een
Then combined with the Cartesian diagram \eqref{diagram2}, using Serre duality for the above distinguished triangles, two perfect relative obstruction theories for $\mathfrak{PT}_{\mathfrak{X}_{0}^\dagger/\mathfrak{C}_{0}^\dagger}^\varrho$ and 
$\mathfrak{PT}_{\mathfrak{Y}_{-}/\mathfrak{A}}^{\varrho_{-},\varrho_{0}}\times \mathfrak{PT}_{\mathfrak{Y}_{+}/\mathfrak{A}}^{\varrho_{+},\varrho_{0}}$ are compatible over the diagonal morphism $\Delta$ in the sense of [\cite{BF}, Section 7]. By [\cite{BF}, Proposition 7.5], we have
\ben
[\mathfrak{PT}_{\mathfrak{X}_{0}^\dagger/\mathfrak{C}_{0}^\dagger}^\varrho]^\mathrm{vir}=\Delta^!([\mathfrak{PT}_{\mathfrak{Y}_{-}/\mathfrak{A}}^{\varrho_{-},\varrho_{0}}]^\mathrm{vir}\times[\mathfrak{PT}_{\mathfrak{Y}_{+}/\mathfrak{A}}^{\varrho_{+},\varrho_{0}}]^\mathrm{vir}).
\een
Now, the proof of the second identity is completed by Proposition \ref{split-cycles}.
And the first identity is obtained by [\cite{BF}, Proposition 7.2].
\end{proof}

\subsection{Numerical  version of degeneration formula} In this subsection, we will derive the  degeneration formula relating absolute orbifold PT invariants with relative orbifold PT invariants following the similar argument in [\cite{Zhou1}, Section 8.2] (see also [\cite{LW}, Section 6.3]). With the same assumption on $\pi:\mathcal{X}\to\mathbb{A}^1$ as in Section 6.3, we consider the moduli stack
$\mathfrak{PT}^P_{\mathfrak{X}/\mathfrak{C}}$ where the generalized Hilbert polynomial with respect to  $P\in\mathrm{Hom}(K^0(\mathcal{X}),\mathbb{Z})$ is a  polynomial of degree one. On the smooth fiber $\mathcal{X}_{c}$ for $0\neq c\in \mathbb{A}^1$, one can view $P\in F_{1}K(\mathcal{X}_{c})$ as in Section 6.2. As $P$ is a fixed Hilbert homomorphism associated to some admissible pure sheaves of dimension one on $\mathcal{X}_{0}[k]$ for all possible $k\geq0$, using  [\cite{Zhou1}, Corollary 5.3], $P$ is also a fixed Hilbert homomorphism for the pushforward of the corresponding sheaves by the contraction map to $\mathcal{X}_{0}=\mathcal{Y}_{-}\cup_{\mathcal{D}}\mathcal{Y}_{+}$. Then we have the splitting data $(\varrho_{-},\varrho_{0},\varrho_{+})$ of $P$ on the fiber decomposition $\mathcal{Y}_{-}\cup_{\mathcal{D}}\mathcal{Y}_{+}$, i.e., $\varrho_{\pm}\in\mathrm{Hom}(K^0(\mathcal{Y}_{\pm}),\mathbb{Z})$ and $\varrho_{0}=\omega(\mathcal{D})$. By the assumption on $(\mathcal{Y}_{\pm},\mathcal{D})$, one can view $\varrho_{\pm}\in F_{1}K(\mathcal{Y}_{\pm})$ and $P_{0}:=\varrho_{0}=i_{-}^*\varrho_{-}=i_{+}^*\varrho_{+}\in F_{0}K(\mathcal{D})$ where $i_{\pm}:\mathcal{D}\hookrightarrow\mathcal{Y}_{\pm}$ are the inclusions. Then one  can take  $(\varrho_{-},P_{0},\varrho_{+})$ satisfying $\varrho_{-}+\varrho_{+}-P_{0}=P\in F_{1}K(\mathcal{X}_{0})$ as the splitting data on the fiber decompositions $\mathfrak{PT}_{\mathfrak{Y}_{-}/\mathfrak{A}}^{\varrho_{-},P_{0}}\times_{\mathrm{Hilb}_{\mathcal{D}}^{P_{0}}}\mathfrak{PT}_{\mathfrak{Y}_{+}/\mathfrak{A}}^{\varrho_{+},P_{0}}$ as in Section 6.2. 

Assume $\gamma\in A^{*}_{\mathrm{orb}}(\mathcal{X})$ and let $\gamma_{\pm}$ be the restriction of $\gamma$ on $\mathcal{Y}_{\pm}\subset\mathcal{X}_{0}$. Denote by $\gamma$ again the restriction of $\gamma$ on $\mathcal{X}_{c}$ with $c\neq0$.
As in [\cite{Zhou1}], a basis $\{C_{k}\}$ of $A^*(\mathrm{Hilb}^{P_{0}}_{\mathcal{D}})$ are chosen such that
\ben
\int_{\mathrm{Hilb}^{P_{0}}_{\mathcal{D}}}C_{k}\cup C_{l}=g_{kl}
\een
and we have the following Kunneth decomposition of the diagonal class 
\ben
[\Delta]=\sum_{k,l}g^{kl}C_{k}\otimes C_{l}\in A^*(\mathrm{Hilb}^{P_{0}}_{\mathcal{D}}\times\mathrm{Hilb}^{P_{0}}_{\mathcal{D}},\mathbb{Q})
\een
where  $(g^{kl})$ is the inverse matrix of $(g_{kl})$. 
By using the similar argument in  [\cite{Zhou1}, Theorem 8.2] (see also [\cite{LW}, Theorem 6.8]) for the PT case, we have 
\begin{theorem} \label{numerical-deg1}
Assume $\gamma_{i}\in A_{\mathrm{orb}}^*(\mathcal{X})$ and  $\gamma_{i,\pm}$ are disjoint with $\mathcal{D}$ for $1\leq i\leq r$. For a fixed $P\in F_{1}K(\mathcal{X}_{c})$, we have
\ben
\bigg\langle\prod_{i=1}^r\tau_{k_{i}}(\gamma_{i})\bigg\rangle^P_{\mathcal{X}_{c}}=\sum_{\substack{\varrho_{-}+\varrho_{+}-P_{0}=P\\T\subset\{1,\cdots,r\},k,l}}\bigg\langle\prod_{i\in T}\tau_{k_{i}}(\gamma_{i,-})\bigg|C_{k}\bigg\rangle_{\mathcal{Y}_{-},\mathcal{D}}^{\varrho_{-}}g^{kl}\bigg\langle\prod_{i\notin T}\tau_{k_{i}}(\gamma_{i,+})\bigg|C_{l}\bigg\rangle_{\mathcal{Y}_{+},\mathcal{D}}^{\varrho_{+}}
\een
where $\varrho_{\pm}\in F_{1}K(\mathcal{Y}_{\pm})$ in the sum is taken over all possible splitting data satisfying $\varrho_{-}+\varrho_{+}-P_{0}=P$.
\end{theorem}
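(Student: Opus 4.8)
The plan is to obtain the numerical identity as the degree-theoretic shadow of the cycle-level formula of Theorem~\ref{cycle-degenerate}, by inserting the descendent operators and invoking deformation invariance over $\mathbb{A}^1$. Under the finite resolution assumption the universal complex $\bar{\mathbb{I}}^\bullet$ over $\mathfrak{X}^P\times_{\mathfrak{C}^P}\mathfrak{PT}^P_{\mathfrak{X}/\mathfrak{C}}$ has a well-defined orbifold Chern character, so each $\gamma_{i}\in A^*_{\mathrm{orb}}(\mathcal{X})$ yields an operator $\widetilde{\mathrm{ch}}^{\mathrm{orb}}_{k_{i}+2}(\gamma_{i})$ on $A_*(\mathfrak{PT}^P_{\mathfrak{X}/\mathfrak{C}})$ which, along the Cartesian squares of Section~6.3, restricts to the corresponding operator on $A_*(\mathrm{PT}^P_{\mathcal{X}_{c}/\mathbb{C}})$ and on $A_*(\mathfrak{PT}^{\varrho}_{\mathfrak{X}_{0}^\dagger/\mathfrak{C}_{0}^\dagger})$. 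Since $\mathfrak{PT}^P_{\mathfrak{X}/\mathfrak{C}}$ is proper and of finite type over $\mathbb{A}^1$ (Propositions~\ref{boundedness} and~\ref{properness}) and all closed points of $\mathbb{A}^1$ are rationally equivalent, pushing $\prod_{i}\widetilde{\mathrm{ch}}^{\mathrm{orb}}_{k_{i}+2}(\gamma_{i})\cdot[\mathfrak{PT}^P_{\mathfrak{X}/\mathfrak{C}}]^{\mathrm{vir}}$ forward to $\mathbb{A}^1$ and applying the (pushforward-compatible) Gysin maps $i_{c}^!$ and $i_{0}^!$ gives
\[
\deg\!\Big(\prod_{i}\widetilde{\mathrm{ch}}^{\mathrm{orb}}_{k_{i}+2}(\gamma_{i})\cdot i_{c}^![\mathfrak{PT}^P_{\mathfrak{X}/\mathfrak{C}}]^{\mathrm{vir}}\Big)=\deg\!\Big(\prod_{i}\widetilde{\mathrm{ch}}^{\mathrm{orb}}_{k_{i}+2}(\gamma_{i})\cdot i_{0}^![\mathfrak{PT}^P_{\mathfrak{X}/\mathfrak{C}}]^{\mathrm{vir}}\Big).
\]
By Theorem~\ref{cycle-degenerate} the left-hand side is $\langle\prod_{i}\tau_{k_{i}}(\gamma_{i})\rangle^P_{\mathcal{X}_{c}}$, so it remains to evaluate the right-hand side.

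Using the second identity of Theorem~\ref{cycle-degenerate} I would reduce to computing, for each $\varrho\in\Lambda_{P}^{\mathrm{spl}}$, the number $\deg\big(\prod_{i}\widetilde{\mathrm{ch}}^{\mathrm{orb}}_{k_{i}+2}(\gamma_{i})\cdot\varsigma_{\varrho*}\Delta^!([\mathfrak{PT}_{\mathfrak{Y}_{-}/\mathfrak{A}}^{\varrho_{-},\varrho_{0}}]^{\mathrm{vir}}\times[\mathfrak{PT}_{\mathfrak{Y}_{+}/\mathfrak{A}}^{\varrho_{+},\varrho_{0}}]^{\mathrm{vir}})\big)$. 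On $\mathfrak{PT}^{\varrho}_{\mathfrak{X}_{0}^\dagger/\mathfrak{C}_{0}^\dagger}$ the universal complex restricts to $\bar{\mathbb{I}}^\bullet_{\varrho}$, and the short exact sequence
\[
0\to\bar{\mathbb{I}}^\bullet_{\varrho}\to\Delta^*(\mathrm{pr}_{-}^*\bar{\mathbb{I}}^\bullet_{-}\oplus\mathrm{pr}_{+}^*\bar{\mathbb{I}}^\bullet_{+})\to\iota_{\mathcal{D}*}(\mathrm{id}_{\mathcal{D}}\times\mathrm{ev}_{\varrho})^*\bar{\mathbb{I}}_{0}\to0
\]
from the proof of Theorem~\ref{cycle-degenerate}, together with the isomorphism $\Phi_{\varrho}$ of Proposition~\ref{split-data}(iii), yields $\widetilde{\mathrm{ch}}^{\mathrm{orb}}(\bar{\mathbb{I}}^\bullet_{\varrho})=\mathrm{pr}_{-}^*\widetilde{\mathrm{ch}}^{\mathrm{orb}}(\bar{\mathbb{I}}^\bullet_{-})+\mathrm{pr}_{+}^*\widetilde{\mathrm{ch}}^{\mathrm{orb}}(\bar{\mathbb{I}}^\bullet_{+})-\iota_{\mathcal{D}*}(\cdots)$ by additivity of the Chern character. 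Since $\gamma_{i,\pm}$ are disjoint from $\mathcal{D}$, the last term---supported over the universal distinguished divisor, hence over $\mathcal{D}$ at the level of inertia stacks---is annihilated by multiplication with $\iota^*\bar{\pi}_{1}^*\gamma_{i}$; thus $\widetilde{\mathrm{ch}}^{\mathrm{orb}}_{k_{i}+2}(\gamma_{i})$ acts on $A_*(\mathfrak{PT}_{\mathfrak{Y}_{-}/\mathfrak{A}}^{\varrho_{-},\varrho_{0}}\times_{\mathrm{Hilb}_{\mathcal{D}}^{\varrho_{0}}}\mathfrak{PT}_{\mathfrak{Y}_{+}/\mathfrak{A}}^{\varrho_{+},\varrho_{0}})$ as $\mathrm{pr}_{-}^*\widetilde{\mathrm{ch}}^{\mathrm{orb}}_{k_{i}+2}(\gamma_{i,-})+\mathrm{pr}_{+}^*\widetilde{\mathrm{ch}}^{\mathrm{orb}}_{k_{i}+2}(\gamma_{i,+})$.

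Expanding $\prod_{i=1}^{r}$ of these operators produces a sum over subsets $T\subset\{1,\dots,r\}$ of products of pulled-back descendents from the two factors. Combining this with the K\"unneth decomposition $[\Delta]=\sum_{k,l}g^{kl}C_{k}\otimes C_{l}$ on $\mathrm{Hilb}_{\mathcal{D}}^{\varrho_{0}}\times\mathrm{Hilb}_{\mathcal{D}}^{\varrho_{0}}$ and the projection formula applied through the Cartesian square \eqref{diagram2}, the number attached to $\varrho$ becomes
\[
\sum_{T,k,l}g^{kl}\Big(\!\int_{[\mathfrak{PT}_{\mathfrak{Y}_{-}/\mathfrak{A}}^{\varrho_{-},\varrho_{0}}]^{\mathrm{vir}}}\!\mathrm{ev}^*C_{k}\!\prod_{i\in T}\widetilde{\mathrm{ch}}^{\mathrm{orb}}_{k_{i}+2}(\gamma_{i,-})\Big)\Big(\!\int_{[\mathfrak{PT}_{\mathfrak{Y}_{+}/\mathfrak{A}}^{\varrho_{+},\varrho_{0}}]^{\mathrm{vir}}}\!\mathrm{ev}^*C_{l}\!\prod_{i\notin T}\widetilde{\mathrm{ch}}^{\mathrm{orb}}_{k_{i}+2}(\gamma_{i,+})\Big).
\]
By Definition~\ref{relative-def} the two factors are $\langle\prod_{i\in T}\tau_{k_{i}}(\gamma_{i,-})|C_{k}\rangle^{\varrho_{-}}_{\mathcal{Y}_{-},\mathcal{D}}$ and $\langle\prod_{i\notin T}\tau_{k_{i}}(\gamma_{i,+})|C_{l}\rangle^{\varrho_{+}}_{\mathcal{Y}_{+},\mathcal{D}}$. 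Summing over $\varrho\in\Lambda_{P}^{\mathrm{spl}}$ and using that admissibility forces $\varrho_{0}=i_{-}^*\varrho_{-}=i_{+}^*\varrho_{+}=P_{0}$ (so that the relevant data are precisely the splittings with $\varrho_{-}+\varrho_{+}-P_{0}=P$) recovers the claimed formula.

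The main obstacle is the splitting of the descendent insertions across the node carried out in the second step: one must check carefully that the correction sheaf $\iota_{\mathcal{D}*}(\mathrm{id}_{\mathcal{D}}\times\mathrm{ev}_{\varrho})^*\bar{\mathbb{I}}_{0}$, and the component of the inertia stack contributing to $\iota^*\bar{\pi}_{1}^*\gamma_{i}$, are indeed supported over $\mathcal{D}$, so that the disjointness hypothesis genuinely kills the cross term and no spurious contributions survive; one also needs the age and degree bookkeeping of $\widetilde{\mathrm{ch}}^{\mathrm{orb}}$ to be compatible with restriction to $\mathcal{Y}_{\pm}$ and with the virtual dimensions, so that each summand lands in the expected codimension. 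The remaining ingredients---deformation invariance, additivity of the Chern character, and the K\"unneth and projection-formula manipulation---are routine, following the Donaldson--Thomas arguments of [\cite{LW}, Section~6.3] and [\cite{Zhou1}, Section~8.2].
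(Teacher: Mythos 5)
Your proposal is correct and follows essentially the same route as the paper: deformation invariance over $\mathbb{A}^1$ combined with the two identities of Theorem \ref{cycle-degenerate}, the short exact sequence splitting the universal object over the node (the paper works with the universal sheaves $\mathbb{F}_{\varrho}$ rather than the complexes $\bar{\mathbb{I}}^\bullet_{\varrho}$, which is what the descendent operators of Definitions \ref{absolute-def} and \ref{relative-def} are actually built from, but this is immaterial), the disjointness hypothesis killing the term supported on $\mathcal{D}$, and finally the K\"unneth decomposition of the diagonal together with the projection formula. The "main obstacle" you flag is exactly the point the paper handles by the same support argument, so no new idea is needed.
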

\begin{proof}
Let $\varrho_{0}=P_{0}$. As in the proof of Theorem \ref{cycle-degenerate}, we have the following short exact sequence
\ben
0\to \mathbb{F}_{\varrho}\to\Delta^*(\mathrm{pr}_{-}^*\mathbb{F}_{-}\oplus\mathrm{pr}_{+}^*\mathbb{F}_{+})\to \iota_{\mathcal{D}*}\iota_{\mathcal{D}}^*\mathbb{F}_{\varrho}\to0.
\een
Given $\gamma\in A_{\mathrm{orb}}^*(\mathcal{X})$, the operator $\widetilde{\mathrm{ch}}_{k+2}^{\mathrm{orb}}(\gamma)$ can be similarly defined as in Section 6.2. Notice that the refined Gysin homomorphism defined in the case of Deligne-Mumford stacks (see [\cite{Vis,Kre1}]) has the formal properties of commuting with proper pushforward and flat pull back, and commuting with intersection products with Chern classes as in [\cite{Ful}, Proposition 6.3].
If $\gamma_{\pm}$ is disjoint with $\mathcal{D}$, by Theorem \ref{cycle-degenerate}, we have
\ben
&&\widetilde{\mathrm{ch}}_{k+2}^{\mathrm{orb}}(\gamma)\cdot i_{0}^![\mathfrak{PT}^P_{\mathfrak{X}/\mathfrak{C}}]^\mathrm{vir}\\
&=&\sum_{\varrho\in\Lambda_{P}^{\mathrm{spl}}}\pi_{2*}\left(\widetilde{\mathrm{ch}}^\mathrm{orb}_{k+2}(\mathbb{F}_{\varrho})\cdot\iota^*\bar{\pi}_{1}^*\gamma\cap\pi_{2}^*\varsigma_{\varrho*}\Delta^!([\mathfrak{PT}_{\mathfrak{Y}_{-}/\mathfrak{A}}^{\varrho_{-},\varrho_{0}}]^\mathrm{vir}\times[\mathfrak{PT}_{\mathfrak{Y}_{+}/\mathfrak{A}}^{\varrho_{+},\varrho_{0}}]^\mathrm{vir})\right)\\
&=&\sum_{\varrho\in\Lambda_{P}^{\mathrm{spl}}}\varsigma_{\varrho*}\pi_{2*}\left(\widetilde{\mathrm{ch}}^\mathrm{orb}_{k+2}(\Delta^*\mathrm{pr}_{-}^*\mathbb{F}_{-})\cdot\iota^*\bar{\pi}_{1}^*\gamma_{-}\cap\pi_{2}^*\Delta^!([\mathfrak{PT}_{\mathfrak{Y}_{-}/\mathfrak{A}}^{\varrho_{-},\varrho_{0}}]^\mathrm{vir}\times[\mathfrak{PT}_{\mathfrak{Y}_{+}/\mathfrak{A}}^{\varrho_{+},\varrho_{0}}]^\mathrm{vir})\right)\\
&&+\sum_{\varrho\in\Lambda_{P}^{\mathrm{spl}}}\varsigma_{\varrho*}\pi_{2*}\left(\widetilde{\mathrm{ch}}^\mathrm{orb}_{k+2}(\Delta^*\mathrm{pr}_{+}^*\mathbb{F}_{+})\cdot\iota^*\bar{\pi}_{1}^*\gamma_{+}\cap\pi_{2}^*\Delta^!([\mathfrak{PT}_{\mathfrak{Y}_{-}/\mathfrak{A}}^{\varrho_{-},\varrho_{0}}]^\mathrm{vir}\times[\mathfrak{PT}_{\mathfrak{Y}_{+}/\mathfrak{A}}^{\varrho_{+},\varrho_{0}}]^\mathrm{vir})\right)\\
&=&\sum_{\varrho\in\Lambda_{P}^{\mathrm{spl}}}\varsigma_{\varrho*}\pi_{2*}\Delta^!\left(\left(\widetilde{\mathrm{ch}}^\mathrm{orb}_{k+2}(\mathbb{F}_{-})\cdot\iota_{-}^*\bar{\pi}_{1,-}^*\gamma_{-}\cap\pi_{2,-}^*[\mathfrak{PT}_{\mathfrak{Y}_{-}/\mathfrak{A}}^{\varrho_{-},\varrho_{0}}]^\mathrm{vir}\right)\times\pi_{2,+}^*[\mathfrak{PT}_{\mathfrak{Y}_{+}/\mathfrak{A}}^{\varrho_{+},\varrho_{0}}]^\mathrm{vir}\right)\\
&&+\sum_{\varrho\in\Lambda_{P}^{\mathrm{spl}}}\varsigma_{\varrho*}\pi_{2*}\Delta^!\left(\pi_{2,-}^*[\mathfrak{PT}_{\mathfrak{Y}_{-}/\mathfrak{A}}^{\varrho_{-},\varrho_{0}}]^\mathrm{vir}\times\left(\widetilde{\mathrm{ch}}^\mathrm{orb}_{k+2}(\mathbb{F}_{+})\cdot\iota_{+}^*\bar{\pi}_{1,+}^*\gamma_{+}\cap\pi_{2,+}^*[\mathfrak{PT}_{\mathfrak{Y}_{+}/\mathfrak{A}}^{\varrho_{+},\varrho_{0}}]^\mathrm{vir}\right)\right)\\
&=&\sum_{\varrho\in\Lambda_{P}^{\mathrm{spl}}}\varsigma_{\varrho*}
\Delta^!\left(\left(\widetilde{\mathrm{ch}}^\mathrm{orb}_{k+2}(\gamma_{-})\cdot[\mathfrak{PT}_{\mathfrak{Y}_{-}/\mathfrak{A}}^{\varrho_{-},\varrho_{0}}]^\mathrm{vir}\right)\times[\mathfrak{PT}_{\mathfrak{Y}_{+}/\mathfrak{A}}^{\varrho_{+},\varrho_{0}}]^\mathrm{vir}\right)\\
&&+\sum_{\varrho\in\Lambda_{P}^{\mathrm{spl}}}\varsigma_{\varrho*} \Delta^!\left([\mathfrak{PT}_{\mathfrak{Y}_{-}/\mathfrak{A}}^{\varrho_{-},\varrho_{0}}]^\mathrm{vir}\times\left(\widetilde{\mathrm{ch}}^\mathrm{orb}_{k+2}(\gamma_{+})\cdot[\mathfrak{PT}_{\mathfrak{Y}_{+}/\mathfrak{A}}^{\varrho_{+},\varrho_{0}}]^\mathrm{vir}\right)\right)\\
\een
where the map $\pi_{i}$ is the projections from $I_{\mathfrak{C}_{0}^{\dagger,\varrho}}\mathfrak{X}_{0}^{\dagger,\varrho}\times_{\mathfrak{C}_{0}^{\dagger,\varrho}}\mathfrak{PT}_{\mathfrak{X}_{0}^\dagger/\mathfrak{C}_{0}^\dagger}^\varrho$ to the $i$-th factor for $i=1,2$, the map $\bar{\pi}_{1}$ is the composition map of $\pi_{1}$ with the natural map $I_{\mathfrak{C}_{0}^{\dagger,\varrho}}\mathfrak{X}_{0}^{\dagger,\varrho}\to I\mathcal{X}_{0}$ and $\iota: I_{\mathfrak{C}_{0}^{\dagger,\varrho}}\mathfrak{X}_{0}^{\dagger,\varrho}\to I_{\mathfrak{C}_{0}^{\dagger,\varrho}}\mathfrak{X}_{0}^{\dagger,\varrho}$ is the canonical involution. Similarly, the map $\pi_{i,\pm}$ is the projections from $I_{\mathfrak{A}^{\varrho_{\pm},\varrho_{0}}}\mathfrak{Y}^{\varrho_{\pm},\varrho_{0}}\times_{\mathfrak{A}^{\varrho_{\pm},\varrho_{0}}}\mathfrak{PT}^{\varrho_{\pm},\varrho_{0}}_{\mathfrak{Y}_{\pm}/\mathfrak{A}}$ to the $i$-th factor ($i=1,2$), the map $\bar{\pi}_{1,\pm}$ is the composition map with $I_{\mathfrak{A}^{\varrho_{\pm},\varrho_{0}}}\mathfrak{Y}^{\varrho_{\pm},\varrho_{0}}\to I\mathcal{Y}_{\pm}$, and $\iota_{\pm}: I_{\mathfrak{A}^{\varrho_{\pm},\varrho_{0}}}\mathfrak{Y}^{\varrho_{\pm},\varrho_{0}}\to I_{\mathfrak{A}^{\varrho_{\pm},\varrho_{0}}}\mathfrak{Y}^{\varrho_{\pm},\varrho_{0}}$ is the canonical involution.

On the other hand, we have $\widetilde{\mathrm{ch}}_{k+2}^{\mathrm{orb}}(\gamma)\cdot i_{c}^![\mathfrak{PT}^P_{\mathfrak{X}/\mathfrak{C}}]^\mathrm{vir}=\widetilde{\mathrm{ch}}_{k+2}^{\mathrm{orb}}(\gamma)\cdot [	\mathrm{PT}_{\mathcal{X}_{c}/\mathbb{C}}^P]^\mathrm{vir}$ by Theorem \ref{cycle-degenerate}.
Applying $\widetilde{\mathrm{ch}}_{k+2}^{\mathrm{orb}}(\gamma_{i})$ successively to both $i_{0}^![\mathfrak{PT}^P_{\mathfrak{X}/\mathfrak{C}}]^\mathrm{vir}$ and $i_{c}^![\mathfrak{PT}^P_{\mathfrak{X}/\mathfrak{C}}]^\mathrm{vir}$, and  as in the proof of [\cite{LW}, Theorem 6.8], using the following identity (where $[\cdot]_{1}
\in A_{1}(\mathfrak{PT}^P_{\mathfrak{X}/\mathfrak{C}})$ means taking degree one part)
\ben
\deg i_{c}^!\left[\prod_{i=1}^r\widetilde{\mathrm{ch}}_{k+2}^{\mathrm{orb}}(\gamma_{i})\cdot [\mathfrak{PT}^P_{\mathfrak{X}/\mathfrak{C}}]^\mathrm{vir}\right]_{1}=\deg i_{0}^!\left[\prod_{i=1}^r\widetilde{\mathrm{ch}}_{k+2}^{\mathrm{orb}}(\gamma_{i})\cdot [\mathfrak{PT}^P_{\mathfrak{X}/\mathfrak{C}}]^\mathrm{vir}\right]_{1}
\een
and the Kunneth decomposition of $[\Delta]$, the proof is completed by using Definitions \ref{absolute-def} and \ref{relative-def}.
\end{proof}

Next, we will consider the descendent Pandharipande-Thomas generating functions and the associated numerical version of degeneration formula as in [\cite{Zhou1}, Section 8.2].
We recall the notion of multi-regular classes and their properties in [\cite{Zhou1}] as follows.
Suppose $(\mathcal{Y},\mathcal{D})$ is a smooth pair.
\begin{definition}([\cite{Zhou1}, Definition 8.3])\label{multi-regular-class}
We say that a class $P\in K(\mathcal{Y})$ is multi-regular if it can be represented by some coherent sheaf such that the associated representation of the stabilizer group at the generic point is a multiple of the regular representation.
\end{definition}	 
Let $F_{1}^{\mathrm{mr}}K(\mathcal{Y})\subset F_{1}K(\mathcal{Y})$ and $F_{0}^{\mathrm{mr}}K(\mathcal{D})\subset F_{0}K(\mathcal{D})$ be the subgroups generated by multi-regular classes respectively. Assume  $P\in F_{1}^{\mathrm{mr}}K(\mathcal{Y})$. If $P$ is represented by some admissible sheaf, we have $P_{0}=i^*P\in F_{0}^{\mathrm{mr}}K(\mathcal{D})\cong F_{0}K(D)\cong\mathbb{Z}$ where $D$ is the coarse moduli space of $\mathcal{D}$. Suppose $(\beta,\epsilon)\in \left(F_{1}^{\mathrm{mr}}K(\mathcal{Y})/F_{0}K(\mathcal{Y})\right)\oplus F_{0}K(\mathcal{Y})$ 
is the graded K-classes of $P$, then $P_{0}=n[\mathcal{O}_{x}]=\beta\cdot\mathcal{D}$ where $x\in\mathcal{D}$ is the preimage of some point in $D$. Hence for a simple degeneration $\pi: \mathcal{X}\to\mathbb{A}^1$ which is also a family of projective Deligne-Mumford stacks, we have the following splitting data:
\ben
\beta_{-}+\beta_{+}=\beta;\;\;\;\;\epsilon_{-}+\epsilon_{+}=\epsilon+n
\een
where $(\beta_{\pm},\epsilon_{\pm})\in \left(F_{1}^{\mathrm{mr}}K(\mathcal{Y}_{\pm})/F_{0}K(\mathcal{Y}_{\pm})\right)\oplus F_{0}K(\mathcal{Y}_{\pm})$ and $(\beta,\epsilon)\in \left(F_{1}^{\mathrm{mr}}K(\mathcal{X}_{c})/F_{0}K(\mathcal{X}_{c})\right)\oplus F_{0}K(\mathcal{X}_{c})$.

Now, we have the following definition of  generating functions for PT case
as in [\cite{Zhou1}].
\begin{definition}\label{PT-gen-func}
The descendent Pandharipande-Thomas generating function of the smooth fiber $\mathcal{X}_{c}$ is defined by 
\ben
Z_{\beta}^{\mathrm{PT}}\bigg(\mathcal{X}_{c};q\bigg|\prod_{i=1}^r\tau_{k_{i}}(\gamma_{i})\bigg):=\sum_{\epsilon\in F_{0}K(\mathcal{X}_{c})}\bigg\langle\prod_{i=1}^r\tau_{k_{i}}(\gamma_{i})\bigg\rangle_{\mathcal{X}_{c}}^{(\beta,\epsilon)}q^\epsilon
\een
where $\gamma_{i}\in A_{\mathrm{orb}}^*(\mathcal{X}_{c})$. 

The relative descendent Pandharipande-Thomas  generating function of a smooth pair $(\mathcal{Y},\mathcal{D})$ is defined by 
\ben
Z_{\beta,C}^{\mathrm{PT}}\bigg(\mathcal{Y},\mathcal{D};q\bigg|\prod_{i=1}^r\tau_{k_{i}}(\gamma_{i})\bigg):=\sum_{\epsilon\in F_{0}K(\mathcal{Y})}\bigg\langle\prod_{i=1}^r\tau_{k_{i}}(\gamma_{i})\bigg|C\bigg\rangle_{\mathcal{Y},\mathcal{D}}^{(\beta,\epsilon)}q^\epsilon
\een
where $\gamma_{i}\in A^*_{\mathrm{orb}}(\mathcal{Y})$ and $C\in A^*\left(\mathrm{Hilb}^{\beta\cdot\mathcal{D}}_{\mathcal{D}}\right)$.	
\end{definition}	
\begin{remark}
There is another definition of the descendent Pandharipande-Thomas generating function of $\mathcal{X}_{c}$ given in [\cite{Lyj}, Definition 5.23], where the  sum is taken over all the Euler characteristic $\chi(F_{\mathcal{E}_{\mathcal{X}_{c}}}(\mathcal{F}))$ (different from $\epsilon$) for all orbifold PT pairs $\varphi:\mathcal{O}_{\mathcal{X}_{c}}\to\mathcal{F}$ in the moduli space $\mathrm{PT}_{\mathcal{X}_{c}/\mathbb{C}}^{(\beta,\epsilon)}$.
\end{remark}

Then we have the following form of  Theorem \ref{numerical-deg1} in multi-regular case.
\begin{theorem}\label{multi-deg-form}
Assume $\gamma_{i}\in A_{\mathrm{orb}}^*(\mathcal{X})$ and  $\gamma_{i,\pm}$ are disjoint with $\mathcal{D}$  for $1\leq i\leq r$. For a fixed $\beta\in F_{1}^{\mathrm{mr}}K(\mathcal{X}_{c})/F_{0}K(\mathcal{X}_{c})$, we have
\ben
\bigg\langle\prod_{i=1}^r\tau_{k_{i}}(\gamma_{i})\bigg\rangle_{\mathcal{X}_{c}}^{(\beta,\epsilon)}=\sum_{\substack{\beta_{-}+\beta_{+}=\beta\\ \epsilon_{-}+\epsilon_{+}=\epsilon+n\\ T\subset\{1,\cdots,r\},k,l}}\bigg\langle\prod_{i\in T}\tau_{k_{i}}(\gamma_{i,-})\bigg|C_{k}\bigg\rangle_{\mathcal{Y}_{-},\mathcal{D}}^{(\beta_{-},\epsilon_{-})}g^{kl}\bigg\langle\prod_{i\notin T}\tau_{k_{i}}(\gamma_{i,+})\bigg|C_{l}\bigg\rangle_{\mathcal{Y}_{+},\mathcal{D}}^{(\beta_{+},\epsilon_{+})}.
\een
\end{theorem}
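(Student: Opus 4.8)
The plan is to deduce Theorem \ref{multi-deg-form} from Theorem \ref{numerical-deg1} by organizing both sides of the numerical degeneration formula according to the graded $K$-theory decomposition. First I would recall that in the multi-regular setting the data of a fixed Hilbert homomorphism $P\in F_1^{\mathrm{mr}}K(\mathcal{X}_c)$ that is realized by an admissible sheaf is equivalent to the pair $(\beta,\epsilon)\in \left(F_1^{\mathrm{mr}}K(\mathcal{X}_c)/F_0K(\mathcal{X}_c)\right)\oplus F_0K(\mathcal{X}_c)$, since $F_\bullet K$ is the natural topological filtration and, as explained right before the statement, $P_0=i^*P=\beta\cdot\mathcal{D}=n[\mathcal{O}_x]\in F_0K(\mathcal{D})\cong\mathbb{Z}$ is determined by $\beta$ alone. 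So fixing $\beta$ and summing over $\epsilon$ on the left corresponds precisely to the choice of $P$ ranging over all admissible classes with the prescribed degree-one part $\beta$. The heart of the argument is then to track how the splitting data $\varrho=(\varrho_-,\varrho_+,\varrho_0)\in\Lambda_P^{\mathrm{spl}}$ appearing in Theorem \ref{numerical-deg1} corresponds, under $\varrho_\pm\in F_1^{\mathrm{mr}}K(\mathcal{Y}_\pm)$, to graded pairs $(\beta_\pm,\epsilon_\pm)$.

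The key computation is the bookkeeping of Euler characteristics across the normalization sequence $0\to\mathcal{O}_{\mathcal{X}_0}\to\mathcal{O}_{\mathcal{Y}_-}\oplus\mathcal{O}_{\mathcal{Y}_+}\to\mathcal{O}_{\mathcal{D}}\to0$. For an admissible sheaf on a fiber, the restrictions to $\mathcal{Y}_\pm$ have classes $\varrho_\pm$ with $\varrho_--\varrho_0$ and $\varrho_+-\varrho_0$ (glued along $\varrho_0$), and passing to graded $K$-classes the degree-one parts simply add: $\beta_-+\beta_+=\beta$. The subtlety is in the degree-zero (Euler characteristic) part: because the gluing identifies a common $0$-dimensional piece of class $\varrho_0=P_0=n[\mathcal{O}_x]$, the relation $\varrho_-+\varrho_+-\varrho_0=P$ at the level of $F_0$ becomes $\epsilon_-+\epsilon_+-n=\epsilon$, i.e. $\epsilon_-+\epsilon_+=\epsilon+n$. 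This is exactly the shift appearing in the statement, and it is the analogue of the shift by the length of the divisor intersection in the Donaldson–Thomas case [\cite{Zhou1}, Section 8.2]. I would spell out this identification as a short lemma: the map $\varrho\mapsto((\beta_-,\epsilon_-),(\beta_+,\epsilon_+))$ is a bijection between $\{\varrho\in\Lambda_P^{\mathrm{spl}}\}$ (with $\varrho_\pm$ admissible multi-regular) and $\{(\beta_\pm,\epsilon_\pm)\,:\,\beta_-+\beta_+=\beta,\ \epsilon_-+\epsilon_+=\epsilon+n\}$, under which $\varrho_0$ is the constant $P_0=n[\mathcal{O}_x]$ and $\mathrm{Hilb}_{\mathcal{D}}^{\varrho_0}=\mathrm{Hilb}_{\mathcal{D}}^{P_0}$ throughout.

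With this correspondence established, the proof is mostly substitution: I would take Theorem \ref{numerical-deg1}, replace the sum over $\varrho_-+\varrho_+-P_0=P$ by the sum over $\beta_-+\beta_+=\beta$, $\epsilon_-+\epsilon_+=\epsilon+n$, and observe that the relative invariants $\langle\prod_{i\in T}\tau_{k_i}(\gamma_{i,-})|C_k\rangle_{\mathcal{Y}_-,\mathcal{D}}^{\varrho_-}$ are by Definition \ref{relative-def} precisely $\langle\prod_{i\in T}\tau_{k_i}(\gamma_{i,-})|C_k\rangle_{\mathcal{Y}_-,\mathcal{D}}^{(\beta_-,\epsilon_-)}$ once one uses the identification of $\varrho_-$ with its graded class and of $\mathfrak{PT}^{\varrho_-,\varrho_0}_{\mathfrak{Y}_-/\mathfrak{A}}$ with $\mathfrak{PT}^{\varrho_-,P_0}_{\mathfrak{Y}_-/\mathfrak{A}}$; and similarly for the $+$ side. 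The basis $\{C_k\}$ of $A^*(\mathrm{Hilb}^{P_0}_{\mathcal{D}})$ and the metric $(g_{kl})$ are unchanged because $\varrho_0=P_0$ is constant over the whole sum. The main obstacle, and the only place requiring genuine care, is the degree-zero bookkeeping in the second paragraph: one must be precise about which $K$-class represents the pushforward under the contraction map (as opposed to the sheaf on the expanded target), invoke [\cite{Zhou1}, Corollary 5.3] to see that $P$ is still the Hilbert homomorphism after pushing forward, and verify that the normalization sequence contributes exactly the $+n$ shift and not, say, a shift involving higher cohomology of $\mathcal{O}_{\mathcal{D}}$-twists — here the hypothesis that $\gamma_{i,\pm}$ are disjoint from $\mathcal{D}$ and that $P_0$ is supported at points keeps all the correction terms $0$-dimensional. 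Once that is checked, the remaining steps are formal and the result follows.
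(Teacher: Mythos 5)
Your proposal is correct and follows essentially the same route as the paper: the paper states Theorem \ref{multi-deg-form} as an immediate consequence of Theorem \ref{numerical-deg1} after the preceding paragraph establishes that for multi-regular $P$ the class $P_0=i^*P=\beta\cdot\mathcal{D}=n[\mathcal{O}_x]$ is determined by $\beta$ alone and that the splitting relation $\varrho_-+\varrho_+-P_0=P$ translates in graded $K$-classes to $\beta_-+\beta_+=\beta$ and $\epsilon_-+\epsilon_+=\epsilon+n$. Your explicit bijection between splitting data and graded pairs, and the remark that $\mathrm{Hilb}_{\mathcal{D}}^{\varrho_0}=\mathrm{Hilb}_{\mathcal{D}}^{P_0}$ with $\{C_k\}$ and $(g_{kl})$ unchanged across the sum, simply spell out what the paper leaves implicit.
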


And hence we have

\begin{theorem}\label{mul-deg-form-genfun}
Assume $\gamma_{i}\in A_{\mathrm{orb}}^*(\mathcal{X})$ and  $\gamma_{i,\pm}$ are disjoint with $\mathcal{D}$  for $1\leq i\leq r$. For a fixed $\beta\in F_{1}^{\mathrm{mr}}K(\mathcal{X}_{c})/F_{0}K(\mathcal{X}_{c})$, we have
\ben
&&Z_{\beta}^{\mathrm{PT}}\bigg(\mathcal{X}_{c};q\bigg|\prod_{i=1}^r\tau_{k_{i}}(\gamma_{i})\bigg)\\
&=&\sum_{\substack{\beta_{-}+\beta_{+}=\beta\\T\subset\{1,\cdots,r\},k,l}}\frac{g^{kl}}{q^n}Z_{\beta_{-},C_{k}}^{\mathrm{PT}}\bigg(\mathcal{Y}_{-},\mathcal{D};q\bigg|\prod_{i\in T}\tau_{k_{i}}(\gamma_{i,-})\bigg)\times Z_{\beta_{+},C_{l}}^{\mathrm{PT}}\bigg(\mathcal{Y}_{+},\mathcal{D};q\bigg|\prod_{i\notin T}\tau_{k_{i}}(\gamma_{i,+})\bigg).
\een
\end{theorem}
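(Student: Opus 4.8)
The plan is to deduce Theorem \ref{mul-deg-form-genfun} directly from Theorem \ref{multi-deg-form} by multiplying both sides by the appropriate monomial in $q$ and summing over the relevant grading. First I would recall from Definition \ref{PT-gen-func} that
\ben
Z_{\beta}^{\mathrm{PT}}\bigg(\mathcal{X}_{c};q\bigg|\prod_{i=1}^r\tau_{k_{i}}(\gamma_{i})\bigg)=\sum_{\epsilon\in F_{0}K(\mathcal{X}_{c})}\bigg\langle\prod_{i=1}^r\tau_{k_{i}}(\gamma_{i})\bigg\rangle_{\mathcal{X}_{c}}^{(\beta,\epsilon)}q^\epsilon,
\een
and similarly for the relative generating functions on $(\mathcal{Y}_{\pm},\mathcal{D})$, where $F_{0}K(\mathcal{X}_{c})\cong F_{0}K(\mathcal{Y}_{\pm})\cong\mathbb{Z}$ via the coarse moduli space, so the exponents $\epsilon,\epsilon_{\pm}$ are integers and $q^{\epsilon}$ makes sense.

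The key step is then to substitute the numerical degeneration formula of Theorem \ref{multi-deg-form} for each coefficient $\langle\prod_{i=1}^r\tau_{k_{i}}(\gamma_{i})\rangle_{\mathcal{X}_{c}}^{(\beta,\epsilon)}$ and reorganize the triple sum. Explicitly, I would write
\ben
Z_{\beta}^{\mathrm{PT}}\bigg(\mathcal{X}_{c};q\bigg|\prod_{i=1}^r\tau_{k_{i}}(\gamma_{i})\bigg)=\sum_{\epsilon}\,\sum_{\substack{\beta_{-}+\beta_{+}=\beta\\ \epsilon_{-}+\epsilon_{+}=\epsilon+n\\ T,k,l}}\bigg\langle\prod_{i\in T}\tau_{k_{i}}(\gamma_{i,-})\bigg|C_{k}\bigg\rangle_{\mathcal{Y}_{-},\mathcal{D}}^{(\beta_{-},\epsilon_{-})}g^{kl}\bigg\langle\prod_{i\notin T}\tau_{k_{i}}(\gamma_{i,+})\bigg|C_{l}\bigg\rangle_{\mathcal{Y}_{+},\mathcal{D}}^{(\beta_{+},\epsilon_{+})}q^{\epsilon},
\een
and then use the constraint $\epsilon=\epsilon_{-}+\epsilon_{+}-n$ to eliminate the outer summation variable $\epsilon$: the pair $(\epsilon_{-},\epsilon_{+})$ ranges freely over $F_{0}K(\mathcal{Y}_{-})\times F_{0}K(\mathcal{Y}_{+})$, and for each such pair $\epsilon$ is determined. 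Writing $q^{\epsilon}=q^{\epsilon_{-}+\epsilon_{+}-n}=q^{-n}q^{\epsilon_{-}}q^{\epsilon_{+}}$ and interchanging the (finite, for fixed $\beta$, by boundedness/Proposition \ref{boundedness}) sums, the sum over $\epsilon_{-}$ and $\epsilon_{+}$ factors as a product of two geometric-type series, each of which is exactly the relative generating function in Definition \ref{PT-gen-func}. This yields
\ben
\sum_{\substack{\beta_{-}+\beta_{+}=\beta\\ T,k,l}}\frac{g^{kl}}{q^{n}}\bigg(\sum_{\epsilon_{-}}\bigg\langle\prod_{i\in T}\tau_{k_{i}}(\gamma_{i,-})\bigg|C_{k}\bigg\rangle_{\mathcal{Y}_{-},\mathcal{D}}^{(\beta_{-},\epsilon_{-})}q^{\epsilon_{-}}\bigg)\bigg(\sum_{\epsilon_{+}}\bigg\langle\prod_{i\notin T}\tau_{k_{i}}(\gamma_{i,+})\bigg|C_{l}\bigg\rangle_{\mathcal{Y}_{+},\mathcal{D}}^{(\beta_{+},\epsilon_{+})}q^{\epsilon_{+}}\bigg),
\een
which is the claimed identity after recognizing the two inner sums as $Z_{\beta_{-},C_{k}}^{\mathrm{PT}}$ and $Z_{\beta_{+},C_{l}}^{\mathrm{PT}}$.

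I do not expect a genuine obstacle here — this is a formal bookkeeping argument, and the only point requiring a little care is the legitimacy of interchanging the order of summation and of factoring the double sum over $(\epsilon_{-},\epsilon_{+})$ into a product. For a fixed curve class $\beta$ and fixed splitting $\beta_{-}+\beta_{+}=\beta$, the relevant moduli stacks $\mathfrak{PT}^{(\beta_{\pm},\epsilon_{\pm})}_{\mathfrak{Y}_{\pm}/\mathfrak{A}}$ are nonempty (hence contribute) only for $\epsilon_{\pm}$ bounded below, and the invariants vanish outside a range controlled by the virtual dimension, so all sums involved are either finite or are formal power series in $q$ in which the coefficient of each power of $q$ is a finite sum; thus the manipulations are valid either as identities of polynomials or of formal series, exactly as in \cite{Zhou1}, Section 8.2 and \cite{LW}, Section 6.3. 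I would therefore present the proof as: expand both sides using Definition \ref{PT-gen-func}, apply Theorem \ref{multi-deg-form}, change the summation variable from $\epsilon$ to $(\epsilon_{-},\epsilon_{+})$ using $\epsilon=\epsilon_{-}+\epsilon_{+}-n$, and factor, noting that the same notation as in Theorem \ref{intro-multi-deg-form}/\ref{multi-deg-form} is in force.
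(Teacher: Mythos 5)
Your proposal is correct and is exactly the argument the paper intends: the paper gives no separate proof of Theorem \ref{mul-deg-form-genfun}, deducing it from Theorem \ref{multi-deg-form} with the phrase ``and hence,'' which amounts to precisely your substitution of the numerical formula into Definition \ref{PT-gen-func}, the change of variables $\epsilon=\epsilon_{-}+\epsilon_{+}-n$, and the factoring of the resulting sum. Your added remarks on the finiteness of the coefficient of each power of $q$ justify the formal manipulation and match the cited treatments in [\cite{Zhou1}, Section 8.2] and [\cite{LW}, Section 6.3].
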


\end{document}